\newcommand{\R}{\mathbb{R}} 
\newcommand{\Z}{\mathbb{Z}}
\newcommand{\Q}{\mathbb{Q}}
\newcommand{\C}{\mathbb{C}}
\newcommand{\N}{\mathbb{N}}
\newcommand{\A}{\mathscr{A}}
\newcommand{\B}{\mathscr{B}}
\newcommand{\PPP}{\mathcal{P}}
\newcommand{\OO}{\mathcal{O}}
\newcommand{\ep}{\varepsilon}
\newtheorem{thm}{Theorem}[section]
\newtheorem{lem}[thm]{Lemma}
\newtheorem{defi}[thm]{Definition}
\newtheorem{prop}[thm]{Proposition}
\newtheorem{coro}[thm]{Corollary}
\DeclareMathOperator{\SL}{SL}
\DeclareMathOperator{\SU}{SU}
\DeclareMathOperator{\Vol}{Vol}
\DeclareMathOperator{\diag}{diag}
\DeclareMathOperator{\Gal}{Gal}
\DeclareMathOperator{\Lie}{Lie}
\DeclareMathOperator{\Ad}{Ad}
\DeclareMathOperator{\Res}{Res}
\DeclareMathOperator{\Nm}{Nm}
\DeclareMathOperator{\dist}{dist}
\DeclareMathOperator{\Perm}{Perm}
\newenvironment{psmallmatrix}
  {\left(\begin{smallmatrix}}
  {\end{smallmatrix}\right)}
\begin{document}

\title[Translates of maximal torus orbits]{Limiting distribution of translates of the orbit of a maximal $\mathbb{Q}$-torus from identity on $\SL_N(\mathbb{R})/\SL_N(\mathbb{Z})$}
\author[R.Zhang]{Runlin Zhang }

\address{Department of Mathematics, The Ohio State University\\ 231 W 18th Avenue\\
Columbus, Ohio 43210-1174}
\email{zhang.4749@osu.edu}
\date{Sep 10, 2018}

\begin{abstract}
    Given a closed orbit of the real points of a maximal $\Q$-torus in $\SL(N,\R)/\SL(N,\Z)$, it naturally carries a possibly infinite Haar measure. We classify all possible limit measures of it when translated by a sequence of elements from $\SL(N,\R)$. This is a natural extension of Shapira and Zheng's work where only $\Q$-split torus are considered.
\end{abstract}

\subjclass[2010]{37A17, 37A05, 37A45}

\maketitle

\tableofcontents

\section{Introduction}
\subsection{Motivation}
Observed by Duke-Rudnick-Sarnak \cite{DukRudSar93}, counting integer points on a homogeneous affine variety of the form $G/H$ for $G, H$ both reductive linear algebraic groups defined over $\Q$ is related to the limiting measure of translates of the $H(\R)-$invariant measure $\mu_{H(\R)G(\Z)/G(\Z)}$ in the quotient space $G(\R)/G(\Z)$. Assume $G$ to have no rational characters, equivalently, $G(\R)/G(\Z)$ has finite $G(\R)$ measure. Then the latter problem can be approached by theorems about unipotent-invariant probability measures known as Ratner's theorems(\cite{Rat91}) for the following reason:

Given a sequence $g_k$ in $G(\R)$ and take $\nu$ to be a limit point of $(g_k)_*\mu_{H(\R)G(\Z)/G(\Z)}$. Then $\nu$ is invariant under any limit points of $\exp(\Ad(g_k) v_k)$ for some sequence $v_k\in \text{Lie}(H(\R))$. Now suppose $\Ad(g_k) v_k/|v_k|$ diverges to infinity but $\Ad(g_k) v_k$ converges to a nonzero vector, then $v_k$ necessarily goes to 0 and all eigenvalues of $v_k$ goes to 0. But $\Ad(g_k) v_k$ has the same eigenvalues as $v_k$ hence any limit of $\Ad(g_k) v_k$ must have all eigenvalues vanishing, i.e. a nilpotent matrix. For this reason, the limit measure $\nu$ has to be invariant under exponential of some nontrivial nilpotent matrix, i.e. a nontrivial unipotent element. Of course, another important issue here is that $\nu$ needs to be a nonzero measure.

Indeed, this approach has been carried out successfully by Eskin-Mozes-Shah in \cite{EskMozSha96} and \cite{EskMozSha97}. In some special cases one may even approach by mixing as in the work of Eskin-McMullen \cite{EskMcM93}. However, one assumption that has always been made was that $H(\R)/H(\Z)$ has finite covolume. The barrier was crossed by Oh-Shah \cite{OhSha14}, where they allow $H$ to be a $\Q$-split torus in $\SL_2(\Q)$. A further refinement is obtained by Kelmer-Kontorovich \cite{KelKon18} using different methods, which further allows one to identity a lower order "distribution".  This was generalized later to the case $\SL_N(\Q)$ with $H$ maximal $\Q$-split torus by Shapira-Zheng \cite{ShaZhe18} where they give a complete classification of all possible limiting measures. As a corollary, they also classify all limiting measures for $H$ reductive and contains a maximal $\Q$-split torus. In the present work their results are further generalized to all maximal tori defined over $\Q$ in $\SL_N(\Q)$, which is the case when translating a closed orbit of the diagonal torus by a theorem of Tomanov-Weiss \cite{TomWei03}. With additional (unnecessary) assumptions on the sequence which is used to translate the measure, Shah had obtained a similar result(unpublished). 
The emphasize here would be a complete classification.

Let us remark that results in \cite{OhSha14}, \cite{KelKon18} are effective (\cite{OhSha14} treats other non-arithmetic lattices and \cite{KelKon18} treats moreover the case of thin groups). And methods from \cite{EskMcM93} can be made effective to treat the case when $H$ is a symmetric subgroup with finite covolume, for instance, the case when $H$ is an anisotropic torus in $\SL_2(\Q)$. 
This has been carried out by Benoist-Oh in \cite{BenOh12}. However, it seems that no such effective results is known for translation of maximal torus in $\SL_N(\Q)$ for $N$ at least 3. And the proof presented here as well as the proof in \cite{ShaZhe18} makes crucial use of Ratner's theorem, which only yields noneffective results.

A similar problem of translating a torus orbit on adelic quotients is considered by Zamojski in his thesis \cite{Zam10}. 
The situation is very different from ours as no unipotent invariance is present in their work.

One should also mention the work of Richard and Zamojski \cite{RicZam17}, where they consider translation of $\Omega x$ where $\Omega$ is a bounded open in $H(\R)$ and $x\in G(\R)/\Gamma$ is arbitrary. It might be possible to use their methods as a substitute of \cite{EskMozSha96}, but we did not try.

\subsection{Convergence of measures}

In this paper, we are concerned with convergence of possibly infinite measures(see \cite{ShaZhe18} Section 3 for a more thorough introduction). For $X$, a locally compact second countable Hausdorff space, we let $\mathcal{M}(X)$ be the set of locally finite measures on $X$ and $C_c(X)$ be the set of compactly supported continuous real-valued functions. For $\mu \in \mathcal{M}(X)$ and $f\in C_c(X)$, we use $(f,\mu)$ to denote $\int f d\mu$, the integral of $f$ with respect to $\mu$.

Two nonzero measures $\mu$ and $\nu$ from $\mathcal{M}(X)$ 
are said to be equivalent if there is a positive real number 
$a>0$ such that $\mu=a \nu$. 
The set of equivalence classes are denoted by $\mathbb{P}\mathcal{M}(X)$. This set is equipped with a topology such that a sequence
$\{[\mu_i]\}_{i\in \N} \subset \mathbb{P}\mathcal{M}(X)$
converges to 
$[\nu]\in \mathbb{P}\mathcal{M}(X)$ 
if and only if one of the following equivalent condition is satisfied (see Proposition 3.3 in \cite{ShaZhe18}):

\begin{enumerate}
    \item  For all $f_1,f_2\in C_c(X)$, compactly supported continuous functions on $X$ such that $(f_2,\nu)\neq 0$ we have
\begin{equation*}
    \lim_{i\to \infty}
    \frac{ \int f_1(x) d\mu_i(x)  }  
    {\int f_2(x) d\mu_i(x)} =\frac{\int f_1(x) d\nu(x)}{\int f_2(x) d\nu(x)}
\end{equation*}
    \item There exists a sequence of positive real number $a_i >0$ such that for all $f\in C_c(X)$, $a_i (f,\mu_i)\to ( f, \nu)$.
\end{enumerate}

Note that if all measures $\mu_i$, $\nu$ above are already probability measures, we retrieve the classical weak-$*$ convergence. But a sequence of probability measures $\mu_i \to 0$ in the weak-$*$ topology may converge to a nonzero measure in this new topology.  The phenomenon we shall see below is that class of infinite measures may converge to a finite one. 

We also want to remark here that although we are proving statements about possibly infinite measures in this paper, the proof goes via chopping them into finite measures and summing together the results for those finite measures. 

We also define, for a finite nonzero measure $\mu$,  $\hat{\mu}$ to be the unique probability measure in the equivalence class of $\mu$, i.e. $[\mu]=[\hat{\mu}]$ and $\hat{\mu}(X)=1$.

\subsection{Generic case}

Let $G= \SL_N$ with standard $\Q$-structures and $\Gamma \subset \SL _N(\Q)$ an arithmetic lattice. We fix a right invariant Riemannian metric on $G(\R)$ such that with respect to this metric $G(\R)/\Gamma$ has volume $1$. We let $\mu_{G}$ be the unique $G(\R)$-invariant probability measure on $X:=G(\R)/\Gamma$.

Let $H \leq G$ be a reductive subgroup defined over $\Q$. Then $H(\R)\Gamma/\Gamma \subset G(\R)/\Gamma$  is a closed submanifold and hence $H(\R)\Gamma/\Gamma$ supports a $H(\R)$-invariant locally finite measure induced from the metric of $G(\R)/\Gamma$, which we will denote by $\mu_H$. 

For a sequence $\{g_i\}_{i\in \N} \subset G(\R)$, we have the following criterion as to when $(g_i)_*[\mu_T]$ converges to $[\mu_G]$.

\begin{thm}\label{theorem0}
Given a maximal $\Q$-torus $T$ of $G= \SL_N$ with the standard $\Q$-structure. Let $\Gamma \subset G(\Q)$ be an arithmetic lattice. Assume $\{g_i\} \subset G(\R)$ diverges when projecting to $G(\R)/Z_GS(\R)$ for every nontrivial 
$\Q$-subtorus $S$ of $T$, then $(g_i)_*[\mu_T]$ converges to $[\mu_G]$.
\end{thm}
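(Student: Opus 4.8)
The plan is to follow the Ratner–Dani–Margulis strategy for measure limits of translated homogeneous measures, along the lines of Eskin–Mozes–Shah and Shapira–Zheng, but with the torus now allowed to be anisotropic over $\Q$. First I would reduce the problem to a statement about probability measures by the ``chopping'' remark: since $\mu_T$ is possibly infinite, write $X = \bigcup_n X_n$ with $X_n$ an exhaustion by compact sets (or use a partition of unity), so that $\mu_T = \sum_n \mu_T^{(n)}$ with each $\mu_T^{(n)}$ finite; it suffices to show that each $(g_i)_*\widehat{\mu_T^{(n)}}$, after passing to a subsequence, converges weak-$*$ to a multiple of $\mu_G$ (with the ``lost mass'' accounted for), and that in fact no mass escapes to infinity. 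So the heart of the matter is: for a compactly supported piece, $(g_i)_*\widehat{\mu_T^{(n)}} \to \mu_G$.

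Next I would pass to a subsequence so that $(g_i)_*\widehat{\mu_T^{(n)}}$ converges in $\mathbb{P}\mathcal{M}(X)$ to some $[\nu]$. The first key step is \emph{non-escape of mass}: here one invokes the (quantitative) non-divergence results for unipotent-type or more general flows — concretely, one needs that a positive proportion of a torus orbit, when translated, does not escape to the cusp. In the $\Q$-split case this is handled by Shapira–Zheng via explicit geometry of numbers; in the general anisotropic case the orbit of $T(\R)$ need not be non-compact, but the relevant input is still a Dani–Margulis type statement applied after identifying the unipotent directions that will act on $\nu$. The second, central step is the \emph{unipotent invariance} of $\nu$: following the argument sketched in the introduction, since $g_i$ diverges modulo $Z_G S(\R)$ for every nontrivial $\Q$-subtorus $S$ of $T$, the conjugates $\Ad(g_i)$ applied to vectors in $\Lie(T(\R))$ must (after suitable rescaling) accumulate on nonzero nilpotent elements; decomposing $\Lie(T(\R))$ into $\Q$-rational pieces according to the action and using that $Z_G S$ is precisely the group whose Lie algebra centralizes $\Lie(S)$, one produces, for a suitable choice of sequence $v_i \in \Lie(T(\R))$, a nonzero nilpotent limit of $\Ad(g_i)v_i$, hence a nontrivial unipotent element $u$ preserving $\nu$. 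Then $\nu$ is invariant under the (necessarily nontrivial) subgroup $U$ generated by all such limiting unipotents.

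Then I would apply Ratner's measure classification (via Mozes–Shah / Dani–Margulis linearization) to $\nu$: $\nu$ is a convex combination of algebraic (homogeneous) measures, each supported on a closed orbit $Lx$ of some intermediate $\Q$-group $L$ with $U \subseteq L \subseteq G$. To conclude $\nu = \mu_G$ one must rule out every proper $L$. This is done through the linearization technique of Dani–Margulis and Eskin–Mozes–Shah: if a positive proportion of the translated torus orbits spent a definite amount of time near a subvariety $\{g : \Ad(g)\text{ fixes a chosen vector in }\bigwedge^\bullet \Lie(G)\}$ corresponding to $L$, then — pulling back along $g_i$ and using that the original measure is the $T(\R)$-orbit from the identity coset — one would force $g_i$ to be trapped, modulo $N_G(L)$ and in particular modulo $Z_G S(\R)$ for some nontrivial $\Q$-torus $S \leq T$, contradicting the divergence hypothesis. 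Here the fact that $T$ is maximal is what guarantees that any relevant intermediate $L$ has its ``return obstruction'' captured by a subtorus of $T$ and hence by some $Z_G S$.

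The main obstacle, I expect, is precisely this last step of excluding intermediate subgroups in the anisotropic setting: in the $\Q$-split case the subtori $S$ and the groups $Z_G S$ are combinatorially transparent (parabolics, block-diagonal Levi pieces), and the linearization dynamics can be made very explicit, whereas for a general maximal $\Q$-torus $T = \bigoplus \Res_{K_j/\Q} \mathbb{G}_m$ arising from an étale $\Q$-algebra, one must track the Galois action on the root spaces and carefully match the stabilizers of linearization vectors with the centralizers $Z_G S(\R)$ of $\Q$-subtori $S$. A secondary technical point is ensuring non-escape of mass when $T(\R)$ has compact factors, so that the ``chopping'' pieces genuinely recombine to give a finite limit rather than losing mass; this requires the uniform-in-the-piece version of the non-divergence estimate, together with the inductive structure on dimension (applying the theorem to proper subgroups) to absorb the boundary terms.
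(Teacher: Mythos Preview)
Your overall outline—non-divergence, unipotent invariance, Ratner plus linearization to rule out intermediate groups—is morally the right template, but the concrete reduction you describe has a real gap, and the paper's mechanism is quite different from what you propose.

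\textbf{The chopping problem.} You suggest writing $\mu_T=\sum_n\mu_T^{(n)}$ for a \emph{fixed} exhaustion of $X$ and then analyzing each $(g_i)_*\widehat{\mu_T^{(n)}}$. This does not work when $T_s\neq\{1\}$: for any fixed bounded piece of $T(\R)\Gamma/\Gamma$, the translate by $g_i$ may escape entirely to the cusp, so $(g_i)_*\widehat{\mu_T^{(n)}}\to 0$ and you learn nothing about $[\mu_G]$. The paper handles this by defining, for each $g_i$, a convex polytope $\Omega^s_{g_i,\ep}\subset\Lie(T_s(\R))$ consisting exactly of those $t$ for which $g_i\exp(t)\cdot(\text{weight vectors})$ stay large; one then shows (via Kleinbock--Margulis) that $\exp(\Omega^s_{g_i,\ep})T_a(\R)\Gamma$ carries all the mass that hits $X_\ep$ after translation. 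The ``pieces'' are thus $g_i$-dependent, and the crucial analytic content is the volume asymptotics $\Vol(\widetilde\Omega^s_{g_i,\ep})/\Vol(\Omega^s_{g_i,\ep})\to 1$ for a slightly shrunk polytope; this is proved by a graph-theoretic argument (connectedness of a graph built from the divergence pattern of the blocks of $u_i$), not by anything resembling a fixed exhaustion.

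\textbf{The equidistribution step.} You propose to produce unipotent invariance of a limit $\nu$ directly from $\Ad(g_i)v_i\to\text{nilpotent}$ and then run linearization yourself. The paper does not do this. Instead, once non-divergence is known on a bounded tile $\exp(\Omega)\Gamma$ with $t_i+\Omega_s$ meeting $\widetilde\Omega^s_{g_i,\ep}$, one writes $g_i\exp(t_i)=\delta_i\gamma_i$ with $\delta_i$ bounded and $\gamma_i\in\Gamma$, sets $\rho_i(t)=\gamma_i t\gamma_i^{-1}$, and invokes the Eskin--Mozes--Shah theorem as a black box. The entire burden then becomes an \emph{algebraic} one: show that no proper $\Q$-subgroup of $G$ contains $\bigcup_i\gamma_i T\gamma_i^{-1}$. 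Parabolics are excluded because any $T$-stable $\Q$-line in $\bigwedge^d\Q^N$ is spanned by some $e_I$, and the polytope growth forces $\|\gamma_i e_I\|\to\infty$; reductive containers are excluded by a short Weyl-group argument showing that the existence of a common centralizing $\Q$-torus would trap $\gamma_i$ in some $Z_G(S)$, contradicting the hypothesis. Your linearization sketch, by contrast, never explains how the divergence condition on \emph{$\Q$-subtori} of $T$ translates into the needed avoidance of the singular set; the paper's route via $\gamma_i$ and the weight vectors $e_I$ is precisely what makes this tractable.

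In short: replace the fixed exhaustion by the $g_i$-dependent polytopes $\Omega^s_{g_i,\ep}$, establish their volume stability via the graph argument, and then feed bounded tiles into Eskin--Mozes--Shah rather than redoing Ratner plus linearization by hand.
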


The theorem is illustrated by two examples below. 
We will give rigorous proofs for them in Section \ref{section7'}.

We then generalize Theorem \ref{theorem0} to other number fields.

\begin{thm}\label{theorem2}
Let $G'=R_{M/\Q}\SL_N$ be the restriction of scalars of the $M$-group $\SL_N$, for $M$ a  number field. Let $T'$ be a maximal $\Q$-torus in $G'$ and $\Gamma'\subset G'(\Q)$ be an arithmetic lattice. Given a sequence $\{g_i\} \subset G'(\R)$ that diverges in $G'(\R)/Z_{G'}(S')(\R)$ for every nontrivial $\Q$-subtorus $S' \subset T'$, we have $\lim_i (g_i)_{*}[\mu_{T'}]=[\mu_{G'}]$.
\end{thm}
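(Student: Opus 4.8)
The plan is to reduce the statement for $G' = R_{M/\Q}\SL_N$ to Theorem \ref{theorem0} for $\SL_{N[M:\Q]}$. The key observation is that restriction of scalars is compatible with the arithmetic structures involved: if $M$ has degree $d = [M:\Q]$, then choosing a $\Z$-basis of the ring of integers $\OO_M$ gives an embedding $\iota\colon G' = R_{M/\Q}\SL_N \hookrightarrow \SL_{Nd}$ defined over $\Q$, under which $G'(\R) \cong \SL_N(M\otimes_\Q \R) \cong \prod_{v\mid\infty}\SL_N(M_v)$ and $G'(\Z)$ is commensurable with the image of $\SL_N(\OO_M)$, an arithmetic lattice in $\SL_{Nd}(\Z)$. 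First I would set up this embedding carefully and check that $\iota$ carries the maximal $\Q$-torus $T'$ of $G'$ into a maximal $\Q$-torus $T$ of $\SL_{Nd}$: since $T'$ is its own centralizer in $G'$ and $R_{M/\Q}$ preserves centralizers, $\iota(T')$ is its own centralizer in $\iota(G')$, but one must still verify it is maximal as a torus in the \emph{ambient} $\SL_{Nd}$, not merely in $\iota(G')$. This last point requires that $T'$ be a maximal torus in $G'$ over $\Q$ and that $R_{M/\Q}$ of a maximal torus of $\SL_N$ over $M$ becomes, after the regular representation embedding, a maximal torus of $\SL_{Nd}$ — which holds because a maximal $M$-torus of $\SL_N$ is the multiplicative group of a degree-$N$ étale $M$-algebra (mod norm-one condition), and applying $R_{M/\Q}$ and then the regular representation yields the multiplicative group of a degree-$Nd$ étale $\Q$-algebra inside $\SL_{Nd}$, i.e. a maximal $\Q$-torus.

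Next I would transport the measures: under $\iota$, the quotient $G'(\R)/\Gamma'$ maps to a closed submanifold of $\SL_{Nd}(\R)/\SL_{Nd}(\Z)$ (after passing to a finite-index subgroup to handle the commensurability, which only changes measures by finite covering and so does not affect the \emph{class} $[\mu]$), and the $G'(\R)$-invariant measure $\mu_{G'}$ pushes forward to $\mu_H$ where $H = \iota(G')$, while $\mu_{T'}$ pushes forward to $\mu_{\iota(T')}$. Thus $(g_i)_*[\mu_{T'}]$ converges to $[\mu_{G'}]$ in $G'(\R)/\Gamma'$ if and only if $(\iota(g_i))_*[\mu_{\iota(T')}]$ converges to $[\mu_H]$ in $\SL_{Nd}(\R)/\SL_{Nd}(\Z)$. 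But this is exactly the situation handled by the corollary to Theorem \ref{theorem0} for reductive $H$ containing — here equal to the centralizer of — a maximal torus, provided the divergence hypothesis transfers. So I would verify: the sequence $\{g_i\}$ diverges in $G'(\R)/Z_{G'}(S')(\R)$ for every nontrivial $\Q$-subtorus $S'\subset T'$ \emph{if and only if} $\{\iota(g_i)\}$ diverges in $\SL_{Nd}(\R)/Z_{\SL_{Nd}}(S)(\R)$ for every nontrivial $\Q$-subtorus $S \subset \iota(T')$; this follows because $\Q$-subtori of $\iota(T') \cong R_{M/\Q}(T'_M)$ correspond, and centralizers are preserved by $\iota$, so the two divergence conditions are literally the same condition read in two models.

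The main obstacle I anticipate is \emph{not} in the soft functoriality but in two technical points. First, the result being invoked for $\SL_{Nd}$ with $H$ merely reductive (rather than $H = G$) — the excerpt states Theorem \ref{theorem0} for $(g_i)_*[\mu_T] \to [\mu_G]$ with $T$ a maximal $\Q$-torus of the \emph{full} $\SL_{Nd}$, so if $\iota(T')$ is genuinely maximal in $\SL_{Nd}$ one can apply Theorem \ref{theorem0} directly with $G = \SL_{Nd}$; but one must be certain the pushed-forward target is $[\mu_{\SL_{Nd}}]$ restricted appropriately, i.e. that $H(\R)\Gamma/\Gamma$ is where the mass concentrates and that "convergence to $[\mu_G]$" in the ambient space is equivalent to "convergence to $[\mu_{G'}]$" in the submanifold — this is a statement about relative topology of $\mathbb{P}\mathcal{M}$ and needs the observation that any limit measure is supported on the closed set $\overline{\iota(G'(\R))\iota(\Gamma')/\SL_{Nd}(\Z)}$. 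Second, the commensurability $\iota(\SL_N(\OO_M)) \sim \SL_{Nd}(\Z) \cap \iota(G')$ must be handled so that "arithmetic lattice $\Gamma'$" on the $G'$ side matches "arithmetic lattice" on the $\SL_{Nd}$ side; since both theorems allow arbitrary arithmetic lattices (commensurable with the integer points), this is routine but should be stated. Modulo these bookkeeping items, the proof is a translation through $\iota$ and an appeal to Theorem \ref{theorem0}.
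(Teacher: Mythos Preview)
Your reduction has a genuine gap at the step where you claim $\iota(T')$ is a maximal torus of $\SL_{Nd}$. This is false whenever $d=[M:\Q]>1$. A maximal $\Q$-torus of $G'=R_{M/\Q}\SL_N$ has dimension $d(N-1)=dN-d$, while a maximal torus of $\SL_{Nd}$ has dimension $Nd-1$; these differ by $d-1>0$. In the \'etale-algebra picture you invoke, the maximal $M$-torus $T$ of $\SL_N$ corresponds to a degree-$N$ \'etale $M$-algebra $E$, and $T'=R_{M/\Q}T$ consists of the elements of $E^\times$ with $\Nm_{E/M}=1$. But the maximal torus $T''$ of $\SL_{Nd}$ determined by $E$ (now viewed as a degree-$Nd$ \'etale $\Q$-algebra) consists of elements with $\Nm_{E/\Q}=1$. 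So $\iota(T')\subsetneq T''$, with quotient $T''/\iota(T')\cong R^1_{M/\Q}\mathbb{G}_m$, the norm-one torus of $M$. Hence Theorem~\ref{theorem0} for $\SL_{Nd}$ is simply not applicable to the pair $(\iota(g_i),\iota(T'))$.

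You cannot easily repair this by enlarging to $T''$ and invoking one of the intermediate results. If you enlarge to $T''$ and try to use Theorem~\ref{theorem1} (or Theorem~\ref{theorem3}) for $\SL_{Nd}$, you run into a circularity: in the paper's architecture those theorems are \emph{deduced from} Theorem~\ref{theorem2}, not the other way around. Moreover, $\iota(G')$ itself is not of the form $Z_{\SL_{Nd}}(S)$ for a torus $S$ (its center is finite), so it does not fit the hypotheses of Theorem~\ref{theorem3} directly; the natural candidate $Z_{\SL_{Nd}}(R^1_{M/\Q}\mathbb{G}_m)$ is the strictly larger group $G'\cdot R^1_{M/\Q}\mathbb{G}_m$. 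The paper therefore does not proceed by embedding into $\SL_{Nd}$ and quoting the rational case. Instead it reruns the entire machinery of Sections~\ref{section2}--\ref{section4} for $G'$ directly, the substantive new point being nondivergence: one can only verify the Kleinbock--Margulis hypothesis on the sub-poset $\PPP_M$ of $\OO_M$-stable primitive lattices, and one then needs the finer ``$\delta$-protected'' form of their theorem together with Lemmas~\ref{decreseVol}--\ref{classnumber} to pass from control over $\PPP_M$ to control over all of $\PPP_0$.
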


\subsection{Intermediate cases}

Back to the case $G=\SL_N$ defined over $\Q$ and $T$ a maximal $\Q$-torus.
To handle the case when the limit measure of $(g_i)_*[\mu_T]$ is not the $G(\R)$-invariant measure, we make the following definition.

For a maximal
$\Q$-torus $T$ and a $\Q$-subtorus $S_0$, we say a sequence $\{g_i\}_{i\in \N} \subset G(\R)$ is $(S_0, T)$-clean if the following holds:\\

\textit{
 $\{g_i\}\subset Z_G(S_0)(\R)$, the real points of the centralizer of $S_0$ in $G$. And for any $\Q$-subtorus $S$ of $T$ properly containing $S_0$, 
 $\{g_i\}$ diverges in $G(\R)/Z_G(S)(\R)$.
}\\

Fixing a maximal $\Q$-torus $T$ and arbitrary sequence $\{g_i\} \subset G(\R)$, by passing to a subsequence and modifying from left by a bounded sequence, we may always assume that $\{g_i\}$ is $(S_0,T)$-clean for some $\Q$-subtorus $S_0$ of $T$. This is because $S_0$ satisfies $Z(Z_G(S_0))=S_0$ and there are only finitely many of them in $T$ (For details see Section \ref{section6}).
Also note that the centralizer of a torus in a reductive group (our $G$ is semisimple and hence reductive) is reductive (see \cite{Spr98}, Corollary 7.6.4), 
so $\mu_{H_0}$ is well-defined for $H_0 =Z_G(S_0)$.

\begin{thm}\label{theorem1}
Assume we are given a maximal $\Q$-torus $T$ and a $\Q$-subtorus $S_0$. Assume the sequence $\{g_i\}_{i\in \N} \subset G(\R)$ is $(S_0, T)$-clean.
Let $H_0:=Z_G(S_0)$ then $(g_i)_*[\mu_T]$ converges to $[\mu_{H_0}]$.
\end{thm}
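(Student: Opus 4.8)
The plan is to reduce Theorem \ref{theorem1} to the generic case, Theorem \ref{theorem0}, applied inside the smaller reductive group $H_0 = Z_G(S_0)$. The key observation is that $T$ is a maximal $\Q$-torus of $H_0$ as well (since $S_0 \subset T$ forces $T \subset Z_G(S_0) = H_0$, and $T$ being maximal in $G$ is a fortiori maximal in $H_0$), and by hypothesis $\{g_i\} \subset H_0(\R)$. Moreover the $(S_0,T)$-cleanness says precisely that for every $\Q$-subtorus $S$ of $T$ strictly containing $S_0$, the sequence diverges in $G(\R)/Z_G(S)(\R)$; since for such $S$ we have $Z_{H_0}(S) = Z_G(S) \cap H_0 = Z_G(S)$ (as $S_0 \subset S$ implies $Z_G(S) \subset Z_G(S_0) = H_0$), this is the same as divergence in $H_0(\R)/Z_{H_0}(S)(\R)$. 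Now one would like to apply Theorem \ref{theorem0} \emph{within the homogeneous space} $H_0(\R)/(H_0(\R) \cap \Gamma)$, concluding that $(g_i)_*[\mu_T]$ converges, inside that space, to the $H_0(\R)$-invariant measure; pushing forward under the closed embedding $H_0(\R)\Gamma/\Gamma \hookrightarrow G(\R)/\Gamma$ then gives convergence to $[\mu_{H_0}]$ in $X = G(\R)/\Gamma$.

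The first issue to address is that Theorem \ref{theorem0} as stated concerns $\SL_N$, not an arbitrary semisimple or reductive $\Q$-group; so one needs the analogous statement for $H_0$. Here I would invoke Theorem \ref{theorem2}: the derived group of $H_0$ is, up to isogeny and central factors, a product of restrictions of scalars of special linear groups over number fields (this is the structure of centralizers of tori in $\SL_N$ — the centralizer of a semisimple element decomposes according to the eigenspace/field decomposition), and the torus $T$ and the sequence split accordingly. Some care is needed with the central torus of $H_0$ and with the difference between $H_0$ and its derived group: one should check that $\mu_{H_0}$ is, after the natural identifications, the invariant measure being produced, and that divergence hypotheses transfer to each simple factor. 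This bookkeeping — matching the $\Q$-structures, handling isogenies, and verifying that the cleanness condition for $(S_0,T)$ unwinds into the divergence conditions needed factor-by-factor — is, I expect, the main technical obstacle; it is not deep but it is where the proof could become lengthy.

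A second, more delicate point is the passage between "convergence in $H_0(\R)/\Gamma_{H_0}$" and "convergence in $G(\R)/\Gamma$." Because $\mu_T$ may be an infinite measure, one cannot simply cite weak-$*$ continuity of pushforward. Following the remark in the excerpt, I would chop $\mu_T$ into countably many finite pieces supported on the orbit, prove the convergence statement for each finite piece inside $H_0(\R)/\Gamma_{H_0}$, push forward each piece along the proper embedding into $X$, and then reassemble — using that a sequence in $\PP\mathcal{M}(X)$ converges iff the ratios $(f_1,\mu_i)/(f_2,\mu_i)$ converge for all test functions, and that the embedding being proper means $C_c(X)$ restricts to (an exhausting family within) $C_c(H_0(\R)\Gamma/\Gamma)$. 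One must also confirm non-escape of mass: that $(g_i)_*[\mu_T]$ does not degenerate to the zero measure, which again is exactly the content of the generic-case theorem applied in $H_0$. Finally I would note the consistency check that if $S_0$ is trivial then $H_0 = G$ and Theorem \ref{theorem1} reduces exactly to Theorem \ref{theorem0}.
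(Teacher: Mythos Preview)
Your proposal is correct and follows essentially the same route as the paper: reduce to the ambient group $H_0$, invoke the structural classification $H_0 \cong (S_0)_s \times R_{K_1/\Q}\SL_{k_1} \times \cdots \times R_{K_r/\Q}\SL_{k_r}$ (the paper's Proposition~\ref{intermediate1}), and then apply Theorem~\ref{theorem2} factor by factor. The only remark is that what you call ``bookkeeping'' --- establishing that $H_0$ really has this product form over~$\Q$ --- is the entire content of Section~\ref{section6} and uses the $(S_0,T)$-cleanness hypothesis in an essential way (via $Z(Z_G(S_0))=S_0$) together with a Galois-theoretic analysis extending Tomanov's work; your second concern about passing from $H_0(\R)\Gamma/\Gamma$ to $G(\R)/\Gamma$ is handled implicitly in the paper since the embedding is closed and all measures involved are supported there.
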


This theorem will be deduced from \ref{theorem2} after we classify all possible $H_0$, which we will carry out in Section \ref{section6}. \\

Now we turn to the problem of translating Haar measures supported on $H(\R)\Gamma/\Gamma$ for a reductive $\Q$-subgroup $H$ that contains a maximal $\Q$-torus.

Let us observe that any connected reductive subgroup $H_1$ in $G=\SL_N$ containing a maximal torus is of the form $Z_G(S_1)$ with $S_1=Z(H_1)$ being a torus. We assume $H_1$ is defined over $\Q$ then so is $S_1$. The following results about translating the closed orbit $H_1(\R)\Gamma/\Gamma \subset G(\R)/\Gamma$ will be deduced from our Theorem \ref{theorem1}.

For a $\Q$-torus $S_1$ and a $\Q$-subtorus $S_0$,  
a sequence $\{g_i\}_{i\in \N} \subset G(\R)$ is said to be $(S_0,S_1)$-clean if  the following condition is satisfied:\\

 \textit{
 $\{g_i\}\subset Z_G(S_0)(\R)$ and for any $\Q$-subtorus $S$ of $S_1$ strictly containing $S_0$, $\{g_i\}$ diverges in $G(\R)/Z_G(S)(\R)$.
 }\\

 Fixing any $\Q$-torus $S_1$ of $G$ and any sequence $\{g_i\}$ of $G(\R)$, by passing to a subsequence and modifying from left by a bounded sequence in $G(\R)$, we may always assume 
  $\{g_i\}$ is $(S_0,S_1)$-clean for some $\Q$-subtorus $S_0$ of $S_1$. The reason is similar to why we can assume $(S_0,T)$-clean above in Theorem \ref{theorem1}.

\begin{thm}\label{theorem3}
Assume we are given two $\Q$-tori $S_0\subset S_1$ and a sequence $\{g_i\} \subset G(\R)$ that is $(S_0,S_1)$-clean. 
Let $H_i$ be $Z_G(S_i)$ for $i=0,1$, then $(g_i)_*[\mu_{H_1}]$ converges to $[\mu_{H_0}]$.
\end{thm}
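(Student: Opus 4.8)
The plan is to reduce Theorem \ref{theorem3} to Theorem \ref{theorem1} by exploiting the fact that the closed orbit $H_1(\R)\Gamma/\Gamma$ is, up to finite index and commensurability issues, itself a homogeneous space of the same type: $H_1 = Z_G(S_1)$ is a connected reductive $\Q$-group, $T \subset H_1$ is a maximal $\Q$-torus of $G$ and hence of $H_1$, and $S_0 \subset S_1 = Z(H_1)$. First I would set up the internal space $Y := H_1(\R)/\Lambda$ where $\Lambda = H_1(\R)\cap \Gamma$ (an arithmetic lattice in $H_1(\R)$, since $H_1$ is defined over $\Q$), together with the natural proper immersion $\iota : Y \to X = G(\R)/\Gamma$ whose image is the closed set $H_1(\R)\Gamma/\Gamma$. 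Under $\iota$, the measure $\mu_{H_1}$ on $X$ pushes forward from the $H_1(\R)$-invariant measure $\mu^Y_{H_1}$ on $Y$, and likewise the closed torus orbit $T(\R)\Gamma/\Gamma$ sits inside $H_1(\R)\Gamma/\Gamma$ as the image of the closed orbit $T(\R)\Lambda/\Lambda \subset Y$ carrying the (possibly infinite) Haar measure $\mu^Y_T$, so that $\mu_T = \iota_*\mu^Y_T$. Since $\{g_i\}\subset Z_G(S_0)(\R)$ and $S_0 \subset S_1$, each $g_i$ in fact lies in $Z_{H_1}(S_0)(\R)$, so left translation by $g_i$ preserves $Y$ and is compatible with $\iota$; hence it suffices to prove $(g_i)_*[\mu^Y_T] \to [\mu^Y_{H_0}]$ inside $\PP\mathcal{M}(Y)$, because a sequence in $\PP\mathcal{M}(Y)$ converging to a nonzero limit maps, under the proper map $\iota_*$, to a convergent sequence in $\PP\mathcal{M}(X)$ with the corresponding limit (properness guarantees $\iota_*$ is continuous for this topology and sends nonzero locally finite measures to nonzero locally finite measures).

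Next I would verify that $\{g_i\}$, viewed as a sequence in $H_1(\R)$, is $(S_0, T)$-clean \emph{relative to the ambient group} $H_1$: we need $\{g_i\} \subset Z_{H_1}(S_0)(\R)$, which holds as noted, and we need that for every $\Q$-subtorus $S$ of $T$ \emph{properly containing $S_0$}, the sequence $\{g_i\}$ diverges in $H_1(\R)/Z_{H_1}(S)(\R)$. For those $S$ that are contained in $S_1$, divergence in $G(\R)/Z_G(S)(\R)$ is given by the $(S_0,S_1)$-clean hypothesis, and one checks $Z_{H_1}(S) = H_1 \cap Z_G(S)$ so that the inclusion $H_1(\R)/Z_{H_1}(S)(\R) \hookrightarrow G(\R)/Z_G(S)(\R)$ is proper, transporting divergence downstairs to upstairs. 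The subtlety is the subtori $S$ of $T$ properly containing $S_0$ but \emph{not} contained in $S_1$: for such $S$, the centralizer $Z_{H_1}(S)$ might be strictly smaller than $H_1$ and there is no a priori divergence hypothesis. I expect this to be the main obstacle, and the resolution should come from the structure of $H_1 = Z_G(S_1)$: if $S \not\subset S_1 = Z(H_1)$ then $S$ has a nontrivial component acting nontrivially by conjugation on $H_1$, so $Z_{H_1}(S) = Z_{H_1}(S \cdot S_1)$ and $S \cdot S_1$ is a $\Q$-torus strictly larger than $S_1$ — but this forces one to re-examine which $S_0$ one should really be centering at. In fact the clean cleanup: since $S_0 \subset S_1$ and we only care about $S$ with $Z_G(S)$ contributing, the relevant $S$ to track are exactly those with $S_0 \subsetneq S \subseteq S_1$; any $S$ with $Z_{H_1}(S) = H_1$ (equivalently $S \subset S_1$) gives no constraint, and any $S$ with $Z_{H_1}(S) \subsetneq H_1$ satisfies $Z_{H_1}(S) = Z_{H_1}(S S_1 \cap T')$ for an appropriate larger torus inside $T$, reducing to the case already covered once we note $Z_G(S S_1) \subseteq Z_G(S_1) = H_1$ and the $(S_0,S_1)$-clean hypothesis only needed divergence for $S \subseteq S_1$ because $Z(H_1) = S_1$. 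Working this out carefully — i.e. showing that the $(S_0,S_1)$-clean condition for $\{g_i\}$ in $G$ is exactly equivalent to the $(S_0,T)$-clean condition for $\{g_i\}$ in $H_1$ — is the technical heart.

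Granting that equivalence, Theorem \ref{theorem1} applied inside $H_1$ yields $(g_i)_*[\mu^Y_T] \to [\mu^Y_{H_0'}]$ where $H_0' = Z_{H_1}(S_0)$. It remains to identify $H_0' = Z_{H_1}(S_0)$ with $H_0 = Z_G(S_0)$; but $S_0 \subset S_1$ implies $S_1 = Z(Z_G(S_1)) \subset Z_G(S_0) = H_0$ (since $S_1$ centralizes $S_0$), hence $H_1 = Z_G(S_1) \supseteq Z_G(H_0) $... more directly $H_0 = Z_G(S_0) \supseteq Z_G(S_1) = H_1$ is \emph{false} in general; rather $S_0 \subseteq S_1$ gives $Z_G(S_1) \subseteq Z_G(S_0)$, i.e. $H_1 \subseteq H_0$, so $Z_{H_1}(S_0) = H_1 \cap Z_G(S_0) = H_1 \cap H_0 = H_1$. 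That cannot be right either unless $S_0$ is central in $H_1$. The correct statement is: $Z_{H_1}(S_0) = H_1 \cap Z_G(S_0)$, and since $H_1 \subseteq H_0 = Z_G(S_0)$ we get $Z_{H_1}(S_0) = H_1$, meaning Theorem \ref{theorem1} inside $H_1$ gives limit $[\mu^Y_{H_1}]$ — but that is the wrong conclusion. So the genuine content is that one must \emph{not} center at $S_0$ inside $H_1$ but rather recognize that the $(S_0,S_1)$-clean hypothesis, when $S_0 \subsetneq S_1$, forces divergence of $\{g_i\}$ in $G(\R)/Z_G(S)(\R)$ for all $S$ with $S_0 \subsetneq S \subseteq S_1$, and one applies Theorem \ref{theorem1} to the pair (some maximal $\Q$-torus $T \supseteq S_1$, the subtorus $S_0$) directly in $G$ — but now the starting measure is $\mu_{H_1}$, not $\mu_T$. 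The honest route, which I would adopt, is therefore: decompose $\mu_{H_1}$ over the closed $T(\R)$-orbits inside $H_1(\R)\Gamma/\Gamma$ (using that $T$ is a maximal $\Q$-torus of the reductive $\Q$-group $H_1$, a closed $T(\R)$-orbit exists and its $H_1(\R)$-translates, i.e. a single orbit under $H_1(\R)$ acting on the finitely many closed $T(\R)$-orbit components, sweep out $H_1(\R)\Gamma/\Gamma$ up to the required measure-theoretic precision by Tomanov–Weiss), write $\mu_{H_1} = \int_{H_1(\R)} (h)_*\mu_T \, d\bar{h}$ against the $H_1(\R)$-invariant measure on a suitable quotient, push forward by $g_i$, and apply Theorem \ref{theorem1} together with a uniform-over-compacta version of it plus a dominated-convergence / equicontinuity argument to pass the limit under the integral. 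The conclusion $(g_i)_*[\mu_{H_1}] \to [\mu_{H_0}]$ then follows because $\int_{H_1(\R)} (h)_*[\mu_{H_0}] \, d\bar h = [\mu_{H_0}]$, the latter using $H_1 \subseteq H_0$ so $H_1(\R)$ preserves $\mu_{H_0}$ up to scalar. The main obstacle in this approach is making the interchange of limit and integral rigorous for possibly infinite measures in the projective topology, which will require the chopping-into-finite-pieces technique the author advertised earlier in the excerpt and a uniform-over-a-compact-set strengthening of Theorem \ref{theorem1}.
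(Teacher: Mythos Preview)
Your first attempt correctly self-destructs: since $S_0\subset S_1=Z(H_1)$, the torus $S_0$ is central in $H_1$, so $Z_{H_1}(S_0)=H_1$ and Theorem \ref{theorem1} applied inside $H_1$ can only return $[\mu_{H_1}]$. Good that you caught this.

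Your fallback route---writing $\mu_{H_1}$ as an integral $\int (h)_*\mu_T\,d\bar h$ and pushing the limit through---has a genuine gap, and it is not the interchange-of-limits issue you flag. The real problem is that for a \emph{fixed} $h\in H_1(\R)$ the sequence $\{g_i h\}$ is in general \emph{not} $(S_0,T)$-clean: for a $\Q$-subtorus $S\subset T$ with $S\supsetneq S_0$ but $S\not\subset S_1$, nothing in the hypothesis forces $g_i h$ (equivalently $g_i$, since $h$ is fixed) to diverge in $G(\R)/Z_G(S)(\R)$. So Theorem \ref{theorem1} applied to $\{g_i h\}$ may return $[\mu_{Z_G(S')}]$ for some $S'\supsetneq S_0$, not $[\mu_{H_0}]$, and your integrated limit does not assemble into $[\mu_{H_0}]$.

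The paper's argument supplies exactly the missing divergence by letting $h$ vary with $i$. One first uses an Iwasawa decomposition $H_0(\R)=K_0 U_0(\R) T'(\R)$ to reduce to $g_i=u_i\in U_0(\R)\subset U_1(\R)$ (translation by $T'(\R)$ fixes $[\mu_{H_1}]$). Then one picks an auxiliary $(S_1,T)$-clean sequence $\{h_m\}$, so that $(h_m)_*[\mu_T]\to[\mu_{H_1}]$ by Theorem \ref{theorem1}, and rewrites the desired limit as the double limit $\lim_n\lim_m (g_nh_m)_*[\mu_T]$. If this fails to equal $[\mu_{H_0}]$, a diagonal extraction produces sequences with $(g_{n}h_{n})_*[\mu_T]\not\to[\mu_{H_0}]$. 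The contradiction comes from checking that $\{g_n h_n\}$ is $(S_0,T)$-clean: for $t\in S_1(\R)\setminus S_0(\R)$ divergence of $g_nh_n t (g_nh_n)^{-1}=g_n t g_n^{-1}$ comes from the $(S_0,S_1)$-clean hypothesis on $g_n$; for $t\in T(\R)\setminus S_1(\R)$ one writes $g_n h_n t(g_nh_n)^{-1}$ as a product of a $U_1(\R)$-part and an $H_1(\R)$-part, and the $H_1$-part $h_n t_0 h_n^{-1} t_1$ diverges by the $(S_1,T)$-cleanness of $h_n$. Thus the divergence you were missing for $S\not\subset S_1$ is manufactured by sending $h\to\infty$ along an $(S_1,T)$-clean sequence rather than holding $h$ fixed.
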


\subsection{A counting problem}

We then treat a counting problem by applying the method of \cite{DukRudSar93} (c.f.\cite{EskMcM93}) combined with Theorem \ref{theorem2} and some volume computation. 

We take $M$ to be a number field, i.e. a finite extension of $\Q$. Assume the extension degree is $m_0$. Fix $p(x)\in \OO_M[x]$, a polynomial of degree N. Assume in $M[X]$, $p(x)=p_0(x)p_1(x)\cdot ...\cdot p_{a_0}(x)$ with $p_0$ of degree $l_0$ completely reducible and all other $p_i$'s are irreducible. We require this polynomial to have distinct roots in a fixed algebraic closure $\overline{\Q}$ of $\Q$. We let $X_p(\OO_M)$ to be the collection of $N$-by-$N$ matrices with $\OO_M$ coefficients whose characteristic polynomial is equal to $p(x)$. 

Let $\{\alpha_1,...,\alpha_{r_0}\}$ be all real embeddings of $M$ and 
$\{\beta_1,...\beta_{s_0}\} \sqcup \{\overline{\beta_1},...\overline{\beta_{s_0}}\}$ be all conjugate pairs of complex embeddings.  So $r_0+2s_0$ is equal to $m_0$, the extension degree of $M/\Q$.

We may now identify the set $X_p(\OO_M)$ with 
$$X'_p(\Z) := \{(A_1,...,A_{r_0}, B_1,...,B_{s_0}) \,\vert\, \exists A \in X_p(\OO_M), \, A_i= \alpha_i(A),\, B_j=\beta_j(A) \},$$
 which is a discrete subset in $M_N(\R)\times...\times M_N(\R) \times M_N(\C)\times...\times M_N(\C)$ with $r_0$ copies of $M_N(\R)$ and $s_0$ copies of $M_N(\C)$ where we define a norm $||(x_1,...,x_{r_0+s_0})||$ $:=(||x_1||^2+...+||x_{r_0+s_0}||^2)^{1/2}$ and we use here the Euclidean norm for each individual matrix. With this norm we let $B_R$ be the ball of radius $R$.

\begin{thm}\label{theorem4}
There exists a constant $c_p > 0$ such that  
$$\lim_{R\to + \infty} \frac{|X'_p(\Z)\cap B_R|}{c_pR^{m_0N(N-1)/2} (\log{R})^{a_0+l_0-1} } =1$$
\end{thm}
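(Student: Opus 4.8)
The plan is to realise $X'_p(\Z)$ as a finite union of orbits of an arithmetic group on a homogeneous affine variety and then run the Duke--Rudnick--Sarnak counting scheme, with Theorem~\ref{theorem2} playing the r\^ole classically played by the equidistribution of a finite-volume orbit. \emph{Step 1: the homogeneous variety.} Let $G'=R_{M/\Q}\SL_N$ and let $A_p\in M_N(\OO_M)$ be the companion matrix of $p$. Since $p$ is separable, over any field a matrix with characteristic polynomial $p$ has a single companion block as rational canonical form, hence is conjugate to $A_p$; as the centre of $\mathrm{GL}_N$ acts trivially by conjugation, the variety $V_p$ of matrices with characteristic polynomial $p$ is a single $G'$-orbit over $\overline{\Q}$ with $\mathrm{Stab}_{G'}(A_p)=Z_{\SL_N}(A_p)=:T'$ a maximal $\Q$-torus of $G'$ -- explicitly the $\Nm=1$ subtorus of $\prod_{j=1}^{l_0}R_{M/\Q}\G_m\times\prod_{i=1}^{a_0}R_{M_i/\Q}\G_m$ with $M_i=M[x]/(p_i)$. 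Identifying $X_p(\OO_M)$ with $X'_p(\Z)$, this set is $G'(\Z)$-stable and decomposes into finitely many orbits $A_p^{(1)}G'(\Z),\dots,A_p^{(h)}G'(\Z)$ -- this finiteness is finiteness of the class set of $T'$, equivalently of the relevant ideal-class data of the orders $\OO_M[x]/(p_i)$ -- each with stabiliser a maximal $\Q$-torus of $G'$ (a $\Q$-form of $T'$). Each orbit injects into $V_p(\R)\subset M_N(\R)^{r_0}\times M_N(\C)^{s_0}$, and $V_p(\R)=\coprod_{v\mid\infty}V_p^{(v)}$ carries the $G'(\R)$-invariant measure $m$ coming from the unimodularity of $G'$ and $T'$.

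\emph{Step 2: from the count to a volume, via Theorem~\ref{theorem2}.} For each $k$ I would estimate $\#\bigl(A_p^{(k)}G'(\Z)\cap B_R\bigr)$ by the method of \cite{DukRudSar93}, \cite{EskMcM93}: write $B_R\cap V_p(\R)$ as an exhausting increasing family of translates of a fixed relatively compact piece of $V_p(\R)$, form the associated counting function $F_R$ on $G'(\R)/\Gamma'$, and pair it against test functions; the main term of $\langle F_R,\psi\rangle$ is governed by the distribution of translated pieces of the torus orbit, which is supplied by Theorem~\ref{theorem2}. The reason the older Eskin--Mozes--Shah equidistribution does not suffice is that as soon as $p$ factors nontrivially over $M$, i.e.\ $a_0+l_0\ge 2$, the torus $T'$ has positive $\Q$-split rank $a_0+l_0-1$, so $\mu_{T'}$ is an \emph{infinite} measure and the usual unfolding -- which requires $\int_{T'(\Z)\backslash T'(\R)}$ to converge -- is unavailable. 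One works instead, as in the formalism of Section~1.2, with finite compactly supported pieces $\mu_{T'}=\sum_n\nu_n$, runs the argument for each $\nu_n$ -- checking that the translating elements produced by exhausting the Euclidean ball $B_R$ diverge in $G'(\R)/Z_{G'}(S')(\R)$ for every nontrivial $\Q$-subtorus $S'\subset T'$, so that by Theorem~\ref{theorem2} the translates of $\nu_n$ converge projectively to $[\mu_{G'}]$ -- and reassembles, controlling the tails in $n$ uniformly in $R$. This yields $\#\bigl(A_p^{(k)}G'(\Z)\cap B_R\bigr)\sim c^{(k)}\,m\bigl(B_R\cap V_p(\R)\bigr)$ with $c^{(k)}>0$, hence $\#\bigl(X'_p(\Z)\cap B_R\bigr)\sim c''_p\,m\bigl(B_R\cap V_p(\R)\bigr)$.

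\emph{Step 3: the volume asymptotics.} It remains to show $m\bigl(B_R\cap V_p(\R)\bigr)=c'_p\,R^{m_0N(N-1)/2}(\log R)^{a_0+l_0-1}(1+o(1))$ with $0<c'_p<\infty$, which I would do one archimedean place at a time using $V_p^{(v)}\cong (T')^{(v)}\backslash\SL_N(M_v)$ and a Cartan decomposition: $\|gA_pg^{-1}\|$ is comparable to the exponential of twice the relevant Cartan coordinate, so the leading exponent of $R$ is half the real dimension of $V_p(\R)$, namely $\tfrac12 m_0N(N-1)$, while integration along the $a_0+l_0-1$ split directions of $T'$ -- the directions in which $V_p^{(v)}$ escapes to infinity, as the eigenlines of the matrix degenerate -- produces exactly the power $(\log R)^{a_0+l_0-1}$. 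A finiteness-and-positivity check on the resulting constant then finishes the computation, and $c_p:=c''_p c'_p$ proves the theorem.

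\emph{The main obstacle} is Step~2: making the Duke--Rudnick--Sarnak argument run when the torus period is infinite. One has to carry out the chopping $\mu_{T'}=\sum_n\nu_n$ compatibly with the counting function and control the tails uniformly in $R$, so that the limit $R\to\infty$ commutes with the sum over $n$; and one has to verify that the family of translating elements coming from the exhaustion of $B_R$ genuinely satisfies the divergence hypothesis of Theorem~\ref{theorem2}, with the lower-dimensional strata -- where it only diverges relative to some $Z_{G'}(S')$ and the relevant limit is an intermediate $[\mu_{H_0}]$ -- contributing only to the error term. The volume bookkeeping of Step~3, in particular confirming that the split rank responsible for the logarithmic powers is exactly $a_0+l_0-1$, is routine by comparison.
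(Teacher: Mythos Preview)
Your overall architecture is correct and matches the paper: decompose $X'_p(\Z)$ into finitely many $G'(\Z)$-orbits, unfold the counting function \`a la Duke--Rudnick--Sarnak, and feed in Theorem~\ref{theorem2} in place of the classical finite-covolume equidistribution. But the bookkeeping of \emph{where the logarithms come from} is wrong, and this is not cosmetic --- it is exactly the new content of the infinite-covolume case.

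Your Step~3 claim that $m\bigl(B_R\cap V_p(\R)\bigr)\asymp R^{m_0N(N-1)/2}(\log R)^{a_0+l_0-1}$ is false: the paper's Proposition~\ref{VolumeAsymp} (following \cite{EskMozSha96}, Section~6) gives $\mu_{G'/T_0}(B_R)\sim c_2\,R^{m_0N(N-1)/2}$ with \emph{no} logarithmic factor. The split directions of $T'$ are precisely the directions that have been quotiented out of $V_p(\R)\cong G'(\R)/T'(\R)$, so they cannot contribute to the volume of $B_R$ there. Correspondingly, your Step~2 conclusion $\#\bigl(A_p^{(k)}G'(\Z)\cap B_R\bigr)\sim c^{(k)}\,m(B_R\cap V_p(\R))$ cannot hold with a finite positive $c^{(k)}$: in the finite-covolume DRS formula that constant is $\Vol\bigl(T'(\R)/T'(\R)\cap\Gamma'\bigr)$, which here is infinite.

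What actually happens is that after unfolding one obtains
\[
(\psi_R,\phi)=\int_{B_R}\bigl(\phi,\,g_*\mu_{T'}\bigr)\,d\mu_{G'/T_0}(gT_0),
\]
and for each $g$ the inner pairing $\bigl(\phi,\,g_*\mu_{T'}\bigr)$ is finite because $\phi$ has compact support, but its size depends on $g$: by Lemma~\ref{theorem1'} (and its restriction-of-scalars analogue) it is asymptotic to $\Vol\bigl(\Omega'_{g,\ep}\bigr)\cdot(\phi,\mu_{G'})$, where $\Omega'_{g,\ep}\subset\Lie(T'_s(\R))$ is the non-divergence polytope of Section~\ref{section2}. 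The logarithms come from \emph{this} polytope volume: the paper shows (Section~\ref{section8}) that for $g$ in a set $B'_{R,\ep}\subset B_R$ of asymptotically full measure one has $\Vol(\Omega'_{g,\ep})\sim c_4(\log R)^{a_0+l_0-1}$, while on $B_R\setminus B'_{R,\ep}$ there is only the upper bound $O\bigl((\log R)^{a_0+l_0-1}\bigr)$. The split rank $a_0+l_0-1$ enters as the dimension of $\Lie(T'_s(\R))$, i.e.\ as the number of non-compact directions in the \emph{torus orbit}, not in the variety $V_p$. So your intuition ``split rank $=$ number of logs'' is right, but it lives on the $T'$ side of the unfolding, not on the $G'/T'$ side; the delicate part of the proof --- which you flagged as ``routine by comparison'' --- is constructing $B'_{R,\ep}$ and proving the uniform polytope-volume asymptotic on it.
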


\subsection{Strategy of the proof}

As mentioned before, our proof ultimately relies on the description of finite unipotent-invariant measures by Ratner \cite{Rat91} and the linearization technique developed by Dani-Margulis \cite{DanMar93}. These results enter into the proof via the work of Eskin-Mozes-Shah \cite{EskMozSha96}.
Elsewhere we follow the framework of Shapira-Zheng \cite{ShaZhe18}, but several additional difficulties arises when the maximal torus $T$ is not $\Q$-split.

In Section \ref{section2} we will define certain families of convex polytopes depending on the sequence $g_i$ that lie in the Lie$(T_s(\R))$, where $T_s$ denotes the $\Q$-split part of the maximal torus $T$. The main result in this section is that we show this region corresponds to points that come back to a compact subset of $G(\R)/\Gamma$ under translation by $g_i$. This section is written in a way that is useful for both $G=\SL_N$ and $G'=R_{M/\Q}G$.

In Section \ref{section3} we adapt the method of \cite{ShaZhe18} to our situation, using an auxiliary graph and properties of the convex polytopes to show that the polytopes defined in Section \ref{section2} contains another family of polytopes with volume keeping asymptotically the same.

In Section \ref{section4} we show that for all bounded pieces in this new family of polytopes, their tranlates under $g_i$ get equidistributed towards $\mu_G$, the probability Haar measure on $G(\R)/\Gamma$. This implies our Theorem \ref{theorem0} .

In Section \ref{section5} we treat the case $G'=R_{M/\Q}G$. 
There are several places where the generalization is not so straightforward. Theorem \ref{theorem2} will be proved here.
 
And in Section \ref{section6} we treat the remaining cases of Theorem \ref{theorem1} by reducing it to  Theorem \ref{theorem2}. For this purpose, we classifies all possible $S_0$'s that are allowed to appear in the definition of $(S_0,T)$-clean, which partially generalizes the work of Tomanov \cite{Tom00}. 

In Section \ref{section7'} we give some examples and compare our theorem to those of \cite{EskMozSha96} and \cite{ShaZhe18}.
And in Section \ref{section7} we deduce Theorem \ref{theorem3} from Theorem \ref{theorem1}. 

In the last Section \ref{section8} we prove Theorem \ref{theorem4}. This is based on the last section of \cite{ShaZhe18} and some additional computations to handle the case when the torus is not $\R$-split.

\section{The polytope of non-divergence}\label{section2}

\subsection{A concrete form of maximal torus}
We begin with some basic set-ups.
Let $G=\SL_N$ viewed as defined over $M$, a finite field extension of $\Q$. $T$ is a maximal torus in $G$ defined over $M$. Any such torus will be conjugate(over $M$) to one that we describe below (see for instance Lemma 6.2, \cite{LinWei01} when $M=\Q$ and the argument applies equally well to the general case) and we shall assume throughout the paper $T$ takes such a form.

Take $L_1,...,L_{a_0}$ field extensions of $M$ of finite degree. Take a set of representatives for each $\Gal(\overline{\Q}/M)/\Gal(\overline{\Q}/L_i)$ and number them as 
$\{\sigma^i_1,\sigma^i_2,..., \sigma^i_{l_i}\}$ with $l_i$'s the degree of extensions of $L_i/M$. 
Choose basis
$\{v_1^i,...,v_{l_i}^i\}$ 
in $\OO_{L_i}$ for the $M$-vector space $L_i$. 
For $x\in L_i$, we let $A^i(x)$ be the diagonalized $l_i$-by-$l_i$ matrix $\text{diag}\{\sigma^i_1x,...,\sigma^i_{l_i}x\}$. 
Consider the set of $l_i$-by-$l_i$ matrices 

$$\{(\sigma^i_{\xi}v^i_{\zeta})_{\xi,\zeta}^{-1} A^i(x) (\sigma^i_{\xi}v^i_{\zeta})_{\xi,\zeta} \vert \, \text{Nm}_{L_i/M}(x)=1\}.$$ 

By checking that they are fixed by $\Gal(\overline{\Q}/M)$-action we see they are all matrices of coefficients in $M$. Taking its Zariski closure we arrive at a $M$-torus in $\SL_{l_i}$ of dimension $l_i-1$, hence maximal in $\SL_{l_i}$ which We denote by $T^i_a$ as it is $M$-anisotropic.  We let $l_0$ be a number such that $l_0+l_1+...+l_{a_0}=N$ and assume it is non-negative. We now define $T$ by specifying its $M$-split part $T_s$ and $M$-anisotropic part $T_a$.

$$T_s=\{A=\diag(x_1,...,x_{l_0}, y_1I_{l_1},...,y_{a_0}I_{l_{a_0}}  ) \,\vert\, \det{A}=1\, \}$$
and 
$$T_a=\{ A=\text{diag}(I_{l_0},A_1,...,A_{a_0}  )  \, \vert \, A_i\in T^i_a \,\}.$$

\subsection{Definition of polytope}

Take $M=\Q$ for the discussion in this subsection. When $T=T_a$, the orbit $T(\R)\Gamma/\Gamma$ is compact and nothing needs to be done. Otherwise 
$T(\R)\Gamma/\Gamma$ would have a divergent part. We expect that after translating by $B\in G(\R)$, some part of the orbit, say $\exp(t)\Gamma$, that is far away will be brought closer. 
In light of Mahler's criterion, this means $B\exp{(t)}v$ will be bounded away from the origin for all non-zero integer vectors in $\R^N$ and also those in $\bigwedge^d \R^N$. This motivates the following definition(c.f. Definition 4.1, \cite{ShaZhe18} ).

To avoid unnecessary complications, we introduce some notations.
We view $\{1,2,...,N\}$ as $\{1,...,l_0\} \sqcup \{ 1,...,l_1   \} \sqcup...\sqcup \{1,...,l_{a_0}\} $. We use $[l_i]$ as a shorthand for $\{1,...,l_i\}$. We use the notation $b^{(0)},...,b^{(a_0)}$ when there is a possible confusion about from which set this "$b$" comes from. 
We also let $\A_0$ to be 
$2^{\{1,...,l_0\}} \times \{\emptyset, \{1,...,l_1\}\} \times...\times \{\emptyset, \{1,...,l_{a_0}\}\} $, a collection of subsets of $\{1,...,N\}$. 
$\{e_1,...,e_{l_{a_0}}\}$ will be the standard basis for $\R^N $ and for 
$\xi \in \A_0$,  we let $e_\xi:= \wedge_{i\in \xi}e_i$. We note that $\{e_\xi\}_{\xi \in \A_0}$ are exactly(up to scalar) the collection of $M$-weight vectors with respect to the $T$-action.

Now finally, when $M=\Q$ and for $B\in G(\R)$ and $\ep>0$, the definition of polytope is:

\begin{defi}
\begin{equation*}
     \begin{aligned}
         \Omega^s_{B,\ep} :&= 
         \{ \, t\in \Lie(T_s(\R))\, \big\vert\,\, ||B(\exp{t}) e_{\xi}||\geq \ep, \; \forall \xi \in \A_0\setminus \{1,...,N\} \}   
    \end{aligned}
\end{equation*}
\end{defi}

To see it is a convex polytope, write $\chi_\xi$ for the weight $\text{Lie}(T(\R)) \to \C$ corresponding to $e_{\xi}$, then
$||B(\exp{t})e_{\xi}||= \exp{(\chi_{\xi}(t))}||Be_{\xi}||
$, hence by taking logarithm on both sides,

\begin{equation}\label{defiPoly1}
    \Omega^s_{B,\ep} = \{ t\in \text{Lie}(T_s(\R)) \,\big\vert \,
    \chi_{\xi}(t)\geq \log(\ep)  - \log{||Be_\xi||}, \; \forall \xi\in \A_0\setminus \{1,...,N\}
    \}
\end{equation}

In the case when $T$ is 
$\Q$-anisotropic, the above definition is trivial as $T_s=\{id\}$. 
And when $T$ is $\Q$-split we retrieve the Definition 4.1 from \cite{ShaZhe18}.

Now one might wish to show for $t\in \Omega^s_{B,\ep}$ and arbitrary $t'\in \text{Lie}(T_a(\R))$, \\
$||B\exp{(t+t')}v||$ is bounded from below 
for \textbf{all} nonzero $v\in\Z^N$. 
This turns out to be not true, however. What we can show, which is sufficient for our purpose, is that it is true for most $t\in \Omega^s_{B,\ep}$. 

Let us fix a bounded nonempty open subset $\Omega_a$ and $\Omega_s$ in $\text{Lie}(T_a(\R))$ and $\text{Lie}(T_s(\R))$ respectively and let $\Omega:=\Omega_a+\Omega_s$. For any natural number $d$, $B\in G(\R)$ and $v\in \bigwedge^d \R^N$, consider the function $\phi_{\Omega,B,v}:3^{N^2}\Omega \to \R$ defined by $\phi_{\Omega,B,v}(t)=||B\exp{(t)}v||$. It is known that there are constants $C(\Omega), \alpha(\Omega)$ depending only on $\Omega$ but not on $B,v$ such that $\phi_{\Omega,B,v}$ is $(C,\alpha)$-good (For a definition, see \cite{Kle10}). At this point it is best to quote Theorem 5.2 from Kleinbock-Margulis \cite{KleMar98}(c.f. \cite{EskMozSha97}, Theorem 3.9). Here it is slightly modified (and downgraded) to suit our situation:

\begin{thm}\label{kleMarg1}
$M=\Q$. Fix $N$ and $T\leq \SL_N(\Q)$ a maximal $\Q$-torus.  Let $\Omega$ be a bounded nonempty open set in $\Lie(T(\R))$.
There exist constants $C,\alpha, \kappa>0$.
For all $B\in \SL_N(\R)=G(\R)$ and $t\in \Lie(T(\R))$, $\ep' \in  (0,1)$ such that for any $d$ and 
nonzero $v\in \bigwedge^d \Z^N $, 
\begin{equation*}
\sup_{t'\in\Omega}|\phi_{\Omega,B\exp{t},v}(t')|>\ep'
\end{equation*}
Then for any $\delta \leq 1$ we have
\begin{equation*}
\frac{1}{|\Omega|} 
|\{t'\in \Omega \,\vert \, \inf_{\lambda\in \Z^N\setminus\{0\}}||(B\exp{(t+t')}\lambda)|| \leq \ep' \delta \}| \leq \kappa \cdot (\delta)^{\alpha}
\end{equation*}
\end{thm}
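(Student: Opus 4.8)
The plan is to obtain Theorem~\ref{kleMarg1} as a direct application of the quantitative nondivergence estimate of Kleinbock--Margulis (Theorem~5.2 of \cite{KleMar98}, in the packaged form of Theorem~3.9 of \cite{EskMozSha97}); the only point needing attention is that the family of functions to which it is applied is \emph{uniformly} $(C,\alpha)$-good, with constants depending only on $\Omega$ (and on $N$ and $T$), and this was already recorded in the paragraph preceding the statement. Concretely, I would feed that machinery the lattice $\Z^N\subset\R^N$, the bounded set $\Omega$, and, for each $B$ and $t$, the map $t'\mapsto B\exp(t+t')$; the associated ``norm functions'' are then $\phi_{\Omega,B\exp t,v}(t')=\|B\exp(t+t')v\|$ with $v$ running over nonzero elements of $\bigwedge^{d}\Z^{N}$ for all $d$. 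The hypothesis $\sup_{t'\in\Omega}|\phi_{\Omega,B\exp t,v}(t')|>\ep'$ for every such $v$ is precisely (indeed, more than) the nondivergence input required: the Kleinbock--Margulis theorem only needs it for the $v$ coming from rational subspaces, i.e.\ for primitive decomposable vectors, so no reduction is necessary on that side.

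Next I would recall, for the reader's convenience, why the $\phi_{\Omega,B\exp t,v}$ are uniformly $(C,\alpha)$-good on $3^{N^2}\Omega$. Diagonalizing $T$ over $\C$ and working in a fixed basis of $\bigwedge^{d}\R^{N}$, the entries of the matrix of $\exp(t')$ are (real and imaginary parts of) $e^{\chi(t')}$ for the finitely many weights $\chi$ of $T$, which depend only on $N$ and $T$; on a bounded set these are $(C,\alpha)$-good with uniform constants, and the anisotropic directions of $T$ contribute only bounded oscillatory factors $\cos(b(t')),\sin(b(t'))$, themselves $(C,\alpha)$-good on bounded sets. The coordinates of $B\exp(t+t')v$ are linear combinations of these model functions with coefficients absorbing $B$, $t$, and $v$, and $\|\cdot\|$ is a finite $\ell^{2}$ sum of their squares; the standard closure properties of the $(C,\alpha)$-good class (stable under products, bounded sums, scalar multiples, and $(\cdot)^{1/2}$, with controlled constants) then give $(C,\alpha)$-goodness with $C,\alpha$ independent of $B,t,v$. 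The enlargement from $\Omega$ to $3^{N^{2}}\Omega$ is exactly the room the covering/subdivision step in the proof of the Kleinbock--Margulis theorem requires. One should also note that the conclusion, as stated, is a weakening of what that theorem delivers: it is restricted to $\delta\le1$ and to vectors $\lambda\in\Z^{N}$ itself rather than in all exterior powers, so nothing is lost in the reduction.

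With the hypotheses matched, I would simply invoke the theorem: it produces $\kappa,\alpha>0$ (and the $(C,\alpha)$-good constant $C$), depending only on $N$, $T$, and $\Omega$, such that for all $\delta\le1$,
\[
\frac{1}{|\Omega|}\,\bigl|\{\,t'\in\Omega:\ \inf_{\lambda\in\Z^{N}\setminus\{0\}}\|B\exp(t+t')\lambda\|\le\ep'\delta\,\}\bigr|\ \le\ \kappa\,\delta^{\alpha},
\]
which is exactly the asserted inequality. I expect no serious obstacle here: the entire mathematical content is the uniform $(C,\alpha)$-goodness, which is routine given the explicit form of $\exp$ on $\Lie(T(\R))$ and the closure properties quoted above, and the remainder is a matter of citation and matching quantifiers. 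The one thing to be careful about is tracking that the constants genuinely depend on $\Omega$ alone and not on $B$ or $v$; this is guaranteed because $B$, $t$, and $v$ enter only as the coefficients of a linear combination of a fixed finite list of model functions $e^{\chi(\cdot)}$, $\cos(b(\cdot))$, $\sin(b(\cdot))$ determined by $N$ and $T$.
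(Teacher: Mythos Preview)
Your proposal is correct and matches the paper's approach: the paper does not give an independent proof of this theorem but simply quotes it from Kleinbock--Margulis \cite{KleMar98}, Theorem~5.2 (cf.\ \cite{EskMozSha97}, Theorem~3.9), having already recorded in the preceding paragraph that the functions $\phi_{\Omega,B,v}$ are $(C,\alpha)$-good with constants depending only on $\Omega$. Your write-up supplies more detail on why the $(C,\alpha)$-goodness holds uniformly in $B,t,v$, but the underlying strategy---cite the quantitative nondivergence machinery once the $(C,\alpha)$-good input is in hand---is exactly what the paper does.
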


In order to apply this theorem we need to verify the assumption. That is done by the following proposition.

\begin{prop}\label{nondivgEst1}
$M=\Q$. Fix N and $T\leq \SL_N$ a maximal $\Q$-torus.  Let $\Omega$ be a bounded nonempty open set in $\Lie(T(\R))$. Fix $\ep>0$.  Then there exists a constant $\ep'=\ep'(\ep, \Omega)$. For any 
$B\in G(\R)$, $t\in \Omega_{B,\ep}$, $d \in \{1,...,N-1\}$ and nonzero $v\in \bigwedge^d \Z^N$,
\begin{equation*}
    \sup_{t'\in \Omega} ||B\exp{(t+t')}v||\geq \ep'
\end{equation*}
\end{prop}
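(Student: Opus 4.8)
The plan is to reduce the estimate for an arbitrary nonzero $v\in\bigwedge^d\Z^N$ to an estimate for one of the $T$-weight vectors $e_\xi$, $\xi\in\A_0$, using the fact that $T$ acts diagonally on $\bigwedge^d\R^N$ in the weight basis. First I would fix the decomposition of $\Lie(T(\R))=\Lie(T_s(\R))\oplus\Lie(T_a(\R))$ and write $t=t_s$ for $t\in\Omega_{B,\ep}$ (recall $\Omega_{B,\ep}$ lives in $\Lie(T_s(\R))$ by the definition of the polytope), while the supremum variable $t'\in\Omega$ ranges over both blocks. Expand $v=\sum_{\eta}c_\eta w_\eta$ in a basis of $T$-weight vectors for the representation on $\bigwedge^d\R^N$; each $w_\eta$ has a weight $\chi_\eta\colon\Lie(T(\R))\to\C$, and restricting to $\Lie(T_s(\R))$ the weight $\chi_\eta$ factors through one of the characters $\chi_\xi$ attached to an $e_\xi$ with $\xi\in\A_0$ of size $d$ (this is because on the split part only the $T_s$-weight, i.e.\ the coordinate-block data recorded by $\A_0$, survives). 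Since $\exp(t')$ for $t'\in\Omega$ moves things by a bounded amount — $\Omega$ is a fixed bounded set, so $\|B\exp(t+t')v\|$ and $\|B\exp(t)v\|$ differ by a multiplicative constant $C_1(\Omega)$ — it suffices to bound $\|B\exp(t)v\|$ from below by some $\ep'$ depending only on $\ep$ and $\Omega$.

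Next I would exploit that $\exp(t)$ acts on the weight component $w_\eta$ by the scalar $\exp(\chi_\eta(t))=\exp(\chi_\xi(t))$ where $\xi=\xi(\eta)\in\A_0$; here I use $t\in\Lie(T_s(\R))$ so only the real split character matters. Pick the index $\eta_0$ maximizing $|c_{\eta_0}|\exp(\mathrm{Re}\,\chi_{\eta_0}(t))$ among the finitely many $\eta$ with $c_\eta\neq0$. By comparing $B\exp(t)v$ against its projection onto the line through $B\exp(t)w_{\eta_0}$ — or more simply against any fixed linear functional separating that weight line, with operator norm bounded independently of $B$ once $B$ is on $G(\R)$ — I get
\begin{equation*}
\|B\exp(t)v\|\;\geq\; c_2\,|c_{\eta_0}|\,\exp(\mathrm{Re}\,\chi_{\eta_0}(t))\,\|B w_{\eta_0}\|
\end{equation*}
for a constant $c_2>0$ depending only on $N$ (the number of weights and the geometry of the weight-space decomposition). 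Now $\|Bw_{\eta_0}\|$ is comparable, up to a constant depending only on the choice of weight basis, to $\|Be_\xi\|$ for $\xi=\xi(\eta_0)$, and the polytope condition $t\in\Omega_{B,\ep}$ says precisely $\exp(\chi_\xi(t))\|Be_\xi\|\geq\ep$ when $\xi\neq\{1,\dots,N\}$ — which holds since $d\le N-1$ forces $|\xi|=d<N$. Finally $|c_{\eta_0}|\geq c_3>0$ because $v$ has integer coordinates in $\bigwedge^d\Z^N$ hence, after expressing the integral lattice in the weight basis, its nonzero coordinates are bounded below by a constant depending only on that fixed basis change. Chaining these inequalities yields $\sup_{t'\in\Omega}\|B\exp(t+t')v\|\geq \ep'$ with $\ep'=\ep'(\ep,\Omega)$, as required.

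The main obstacle I anticipate is the bookkeeping around the anisotropic directions: the weight $\chi_\eta$ of a basis vector of $\bigwedge^d\R^N$ need not itself be one of the $\chi_\xi$ globally — only its restriction to $\Lie(T_s(\R))$ is — and one must be careful that the weight spaces for the full $T$ are defined over $\overline{\Q}$ rather than $\Q$, so the integrality lower bound on $|c_{\eta_0}|$ has to be extracted after a fixed (torus-dependent but $B$- and $v$-independent) change of basis that rationalizes the $\Q$-weight spaces, and then within each $\Q$-weight space one may still have a nontrivial anisotropic action that one simply absorbs into the bounded factor coming from $\Omega$ and into the comparison constants. The other point needing care is that $\|Be_\xi\|$ comparisons are for the $\Q$-weight vectors $e_\xi$, $\xi\in\A_0$, whereas the $\overline{\Q}$-weight vectors $w_\eta$ are finer; here one uses that within a single $\Q$-weight space the torus $T_a(\R)$ acts by a bounded orthogonal-type transformation on the relevant exterior power, or more robustly, that $\exp(t)$ with $t\in\Lie(T_s(\R))$ acts on the whole $\Q$-weight space by the \emph{same} scalar $\exp(\chi_\xi(t))$, so no refinement is actually needed for the exponential factor and the comparison $\|Bw_\eta\|\asymp\|Be_\xi\|$ is just a fixed linear-algebra constant. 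Once these are organized, the estimate is a one-line consequence of the defining inequalities of $\Omega_{B,\ep}$.
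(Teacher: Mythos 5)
Your proposal has two genuine gaps, and they sit exactly at the points where the paper's proof of Proposition~\ref{lemmanondivergence} does its real work.

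\textbf{Discarding $\sup_{t'\in\Omega}$ too early.} You argue that since $\Omega$ is bounded, $\|B\exp(t+t')v\|\asymp\|B\exp(t)v\|$ uniformly, reducing to a lower bound on $\|B\exp(t)v\|$; then you project onto the dominant weight line $w_{\eta_0}$ to claim $\|B\exp(t)v\|\geq c_2\,|c_{\eta_0}|\exp(\mathrm{Re}\,\chi_{\eta_0}(t))\,\|Bw_{\eta_0}\|$ for some $c_2$ independent of $B$. No such $c_2$ exists. Take $N=2$, $l_0=0$, $a_0=1$, $l_1=2$ (so $T=T_a$, $T_s$ trivial, and the polytope condition is vacuous), $v=e_1$, and $B=\diag(s,s^{-1})$ with $s\to0$. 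In the $\overline{\Q}$-weight basis $w_\pm=(1,\pm\sqrt p)$ one has $v=\tfrac12(w_++w_-)$, $\|Bw_\pm\|\asymp s^{-1}\sqrt p$, while $\|Bv\|=s\to0$: the left side of your inequality goes to $0$ while the right side diverges. The linear functional you would use must annihilate the span of $\{Bw_\eta : \eta\neq\eta_0\}$, and as $B$ varies that hyperplane can approach the line $\R\,Bw_{\eta_0}$, so the functional's operator norm is not uniformly controlled. The paper keeps the sup precisely because Lemma~\ref{indepFunctLem} — linear independence of distinct characters over the open set $\Omega_a$ — is what supplies the uniform estimate $\sup_{t_a\in\Omega_a}\|\sum_i\exp(\chi_i(t_a))v_i\|\geq\kappa_2\sup_i\|v_i\|$. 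That estimate is unavailable once you have pinned $t'$.

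\textbf{The integrality bound on $|c_{\eta_0}|$.} After conjugating by $A_0$ to diagonalize $T$ over $\overline{\Q}$, the coefficients of a vector in $\bigwedge^d\Z^N$ with respect to the $\overline{\Q}$-weight basis lie in $\OO_L$, not $\Z$, and a nonzero element of $\OO_L$ is \emph{not} bounded below in absolute value: $a-b\sqrt p\in\OO_{\Q(\sqrt p)}$ can be made arbitrarily small with $a,b\in\Z$. You acknowledge the field-of-definition subtlety in your final paragraph but conclude a fixed basis change repairs it; it does not — the coordinate is still a genuine algebraic number with no uniform lower bound. The paper's repair, which is the core of Proposition~\ref{lemmanondivergence}, is to take the product over the entire Galois orbit $\B_0=\Gal\cdot\xi_0$, so that $\prod_{\xi\in\B_0}\sum_\zeta\alpha_\zeta\beta^\xi_\zeta$ becomes (a power of) $\Nm_{L/\Q}$ of a nonzero algebraic integer, hence a nonzero rational integer, hence at least $1$. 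The price is the companion factor $\prod_{\xi\in\B_0}\|BA_0^{-1}e_{J_0}\wedge e_\xi\|$, which must then be reassembled into powers of $\|Be_I\|$ for $I\in\A_0$ — so that the defining inequalities of $\Omega_{B,\ep}$ can be applied — via the covolume and distance computations at the end of the proof of Proposition~\ref{lemmanondivergence}. None of this machinery appears in your proposal, and without it the argument does not close.
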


In the case of $T_a=T$, there is no "polytopes" and this proposition is a special case of one in \cite{EskMozSha97}, Proposition 4.4. 
In the case of $T_s=T$, this is explained in \cite{ShaZhe18}.
They both use the following consequence of properties of linearly independent functions

\begin{lem}\label{indepFunctLem}
Fix $N$, $T$, $\Omega_a \subset \Lie(T_a(\R))$ and 
$\Omega_s \subset \Lie(T_s(\R))$ as before. 
For $\{\chi_1,...,\chi_r\}$ set of $r$ distinct functionals on $\Omega_a$
there exists a positive constant $\kappa_2=\kappa_2(\Omega,N,\chi)>0$. For any natural number $d$  any $\{v_1,...,v_r\} \subset \bigwedge^d \C^N$, we have 
\begin{equation*}
\sup_{t\in \Omega_a}||\sum_1^r \exp{\chi_i(t)} v_i || \geq \kappa_2 \sup_{i=1,...,r}||v_i||
\end{equation*}
Same statement is true if replacing $\Omega_a$ by $\Omega_s$.
\end{lem}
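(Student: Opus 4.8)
The plan is to reduce the statement to the elementary fact that finitely many exponential functions with distinct exponents are linearly independent, and then to upgrade this to a quantitative lower bound by a compactness argument carried out on a \emph{fixed} finite–dimensional space of coefficients. The point of organizing it this way is that the space of coefficients will be $\C^r$ with $r$ fixed, so the constant produced will not see the dimension $\binom{N}{d}$ of $\bigwedge^d\C^N$, and hence will be uniform in $d$.

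First I would record that the functions $f_i\colon \Omega_a\to\C$, $f_i(t)=\exp(\chi_i(t))$ for $i=1,\dots,r$, are $\C$-linearly independent. Since the $\chi_i$ are pairwise distinct $\R$-linear functionals on $\Lie(T_a(\R))$, one can choose $u$ with the numbers $\chi_i(u)\in\C$ pairwise distinct (a generic $u$ works, as each $(\chi_i-\chi_j)(u)\neq 0$ holds off a finite union of proper real subspaces); fixing a base point $t_0\in\Omega_a$ and restricting to the segment $s\mapsto t_0+su$, which lies in $\Omega_a$ for small $s$ by openness, any relation $\sum_i c_i f_i\equiv 0$ becomes $\sum_i\bigl(c_i e^{\chi_i(t_0)}\bigr)e^{s\chi_i(u)}\equiv 0$, and the one–variable exponentials $e^{s\chi_i(u)}$ with distinct rates are independent (Wronskian equals a Vandermonde determinant times a nonvanishing factor), forcing all $c_i e^{\chi_i(t_0)}=0$, hence all $c_i=0$.

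Next I would observe that $c\mapsto \sup_{t\in\Omega_a}\bigl|\sum_{i=1}^r c_i f_i(t)\bigr|$ is then a norm on $\C^r$: it is a seminorm for obvious reasons, and it is positive on $\C^r\setminus\{0\}$ because by the previous step a nontrivial combination $\sum_i c_i f_i$ is a not-identically-zero real–analytic function on the nonempty open set $\Omega_a$. Comparing this norm with $\|c\|_\infty$ on the compact unit sphere of $\C^r$ yields a constant $\kappa_2=\kappa_2(\Omega,N,\chi)>0$, depending only on $\Omega_a$ and the finite collection $\{\chi_i\}$ — and crucially not on $d$ — such that $\sup_{t\in\Omega_a}\bigl|\sum_i c_i f_i(t)\bigr|\ge \kappa_2\,\|c\|_\infty$ for all $c\in\C^r$.

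Finally I would pass from scalars to $\bigwedge^d\C^N$. Given $d$ and $v_1,\dots,v_r\in\bigwedge^d\C^N$, pick $i_0$ with $\|v_{i_0}\|=\max_i\|v_i\|$; we may assume $v_{i_0}\neq 0$, else the inequality is trivial. Choose a linear functional $\ell$ on $\bigwedge^d\C^N$ of norm $\le 1$ with $|\ell(v_{i_0})|=\|v_{i_0}\|$ (pairing with $v_{i_0}/\|v_{i_0}\|$). Applying the scalar bound to $c_i:=\ell(v_i)$, which satisfies $\|c\|_\infty\ge|c_{i_0}|=\max_i\|v_i\|$, and using $\bigl|\ell\bigl(\sum_i f_i(t)v_i\bigr)\bigr|\le\bigl\|\sum_i f_i(t)v_i\bigr\|$ for every $t$, gives $\sup_{t\in\Omega_a}\bigl\|\sum_i \exp(\chi_i(t))v_i\bigr\|\ge \kappa_2\sup_i\|v_i\|$. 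The argument never uses anything specific to $T_a$, so running it verbatim with $\Omega_s\subset\Lie(T_s(\R))$ gives the last assertion. There is no serious obstacle here; the only point requiring care is precisely the uniformity in $d$, which is delivered by keeping the compactness step inside $\C^r$ and absorbing the passage to $\bigwedge^d\C^N$ into the norm-$1$ functional $\ell$. (One could equally well choose $r$ points $t_1,\dots,t_r\in\Omega_a$ making the matrix $\bigl(\exp(\chi_i(t_j))\bigr)_{i,j}$ invertible and invert it blockwise; this is the same idea in more concrete form.)
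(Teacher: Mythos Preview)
Your argument is correct. The paper itself does not supply a proof of this lemma; it simply refers the reader to Proposition~7.5 of \cite{ShaZhe18}. What you wrote is the standard route one expects behind that citation: linear independence of the exponentials $t\mapsto e^{\chi_i(t)}$ on the nonempty open set $\Omega_a$ (via restriction to a generic line and the one-variable fact), a compactness argument on the unit sphere of $\C^r$ to upgrade independence to a quantitative lower bound, and then a norm-$1$ functional to transfer the scalar inequality to $\bigwedge^d\C^N$ without losing uniformity in $d$. The only point worth flagging is that in the paper's applications the functionals $\chi_i$ can be $\C$-valued on $\Lie(T_a(\R))$ (since $T_a$ need not be $\R$-split); your proof already accommodates this, as the independence of $s\mapsto e^{s\chi_i(u)}$ only needs the complex numbers $\chi_i(u)$ to be pairwise distinct.
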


See for instance Proposition 7.5 of \cite{ShaZhe18} for a proof. We will prove Proposition \ref{nondivgEst1} using Lemma \ref{indepFunctLem} in the next subsection. For now, we note the following corollary of Proposition \ref{nondivgEst1}.

\begin{coro}\label{nondivergence1}
  Fix $N$, $T$, and $\Omega \subset \Lie(T(\R))$ as before. Also fix $\ep>0$.
  For any sequence $g_i$ in $G(\R)$ and $t_i\in \Lie(T(\R))$ 
  satisfying $(t_i+\Omega_s) \cap \Omega^s_{g_k,\ep}\neq \varnothing$, 
   all weak-$*$ limits of $(g_i\exp{t_i})_* (\widehat{\mu_T|_{\exp{\Omega}\Gamma}})$ are probability measures on $G(\R)/\Gamma$.
\end{coro}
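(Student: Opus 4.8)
The plan is to combine the non-divergence estimate (Proposition \ref{nondivgEst1}) with the quantitative recurrence statement (Theorem \ref{kleMarg1}) to control the mass that escapes to infinity, and then to exploit the $(C,\alpha)$-good property together with Mahler's criterion to conclude that no mass is lost in the weak-$*$ limit. Concretely, let $\nu$ be any weak-$*$ limit of the sequence $(g_i\exp{t_i})_*(\widehat{\mu_T|_{\exp{\Omega}\Gamma}})$, after passing to a subsequence. Since each pushforward is a probability measure, $\nu$ is a subprobability measure on $G(\R)/\Gamma$, and the only thing to prove is $\nu(G(\R)/\Gamma)=1$, i.e. no escape of mass. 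By Mahler's criterion, for each compact set one can take the exhaustion by $K_{\ep'\delta} := \{x\Gamma : \inf_{0\neq\lambda\in\bigwedge^d\Z^N,\ 1\le d\le N-1}\|x\lambda\|\ge \ep'\delta\}$, so it suffices to show that $\limsup_i (g_i\exp{t_i})_*(\widehat{\mu_T|_{\exp\Omega\Gamma}})(X\setminus K_{\ep'\delta}) \to 0$ as $\delta\to 0$, uniformly in $i$.

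The key steps, in order, would be: (1) Fix $\ep>0$ as in the statement and let $\ep'=\ep'(\ep,\Omega)$ be the constant produced by Proposition \ref{nondivgEst1}. The hypothesis $(t_i+\Omega_s)\cap\Omega^s_{g_i,\ep}\ne\varnothing$ means we may pick $s_i\in\Omega_s$ with $t_i+s_i\in\Omega^s_{g_i,\ep}$ (after writing $t_i = (t_i)_a + (t_i)_s$ and absorbing; here one should be slightly careful that $\Omega^s$ lives in $\Lie(T_s(\R))$ while $t_i$ may have a $T_a$-component, but the polytope condition only constrains the split coordinates, so the argument goes through by translating the $\Omega_s$-factor of $\Omega$). (2) Apply Proposition \ref{nondivgEst1} with $B=g_i$ and the translated base point to get, for every $d\in\{1,\dots,N-1\}$ and every nonzero $v\in\bigwedge^d\Z^N$, that $\sup_{t'\in\Omega}\|g_i\exp(t_i+s_i+t')v\|\ge\ep'$ — this verifies precisely the hypothesis of Theorem \ref{kleMarg1} (applied to the exterior power representations as well, not just $d=1$; this is what the $(C,\alpha)$-good machinery of Kleinbock–Margulis handles uniformly). (3) Conclude from Theorem \ref{kleMarg1} that for every $\delta\le 1$,
\begin{equation*}
\frac{1}{|\Omega|}\left|\left\{t'\in\Omega : \inf_{0\ne\lambda\in\Z^N}\|g_i\exp(t_i+s_i+t')\lambda\|\le\ep'\delta\right\}\right|\le\kappa\delta^\alpha,
\end{equation*}
and, by applying the same to each exterior power, the same bound (with possibly adjusted constants) for the full Mahler function. (4) Translate this back: $\widehat{\mu_T|_{\exp\Omega\Gamma}}$ is, up to normalization by $|\Omega|$, Lebesgue measure on $\Omega$ transported to $\exp\Omega\Gamma$, and the set of $t'$ above is exactly the preimage under $g_i\exp(t_i)$ of the complement of the compact set $K_{\ep'\delta}$; hence $(g_i\exp t_i)_*(\widehat{\mu_T|_{\exp\Omega\Gamma}})(X\setminus K_{\ep'\delta})\le\kappa\delta^\alpha$ uniformly in $i$. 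Letting $i\to\infty$ along the subsequence defining $\nu$, then $\delta\to 0$, gives $\nu(X)\ge 1-\kappa\delta^\alpha\to 1$, so $\nu$ is a probability measure.

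The main obstacle I expect is bookkeeping around the shift by $\Omega_s$ and the fact that $t_i$ need not lie in $\Lie(T_s(\R))$: one must check that Proposition \ref{nondivgEst1}, whose polytope $\Omega^s_{B,\ep}$ is defined inside $\Lie(T_s(\R))$, still applies after adding an arbitrary $\Lie(T_a(\R))$-component, and that the $\Omega=\Omega_a+\Omega_s$ used there is compatible with the normalizing measure $\widehat{\mu_T|_{\exp\Omega\Gamma}}$ — essentially unwinding the statement so that the ``$\sup_{t'\in\Omega}$'' in Proposition \ref{nondivgEst1} matches the ``$\sup_{t'\in\Omega}$'' hypothesis of Theorem \ref{kleMarg1}. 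A secondary, more routine point is passing from the $d=1$ formulation of Theorem \ref{kleMarg1} to all $d\in\{1,\dots,N-1\}$ simultaneously to invoke Mahler's criterion in its sharp form; this is standard (the exterior power representations are all algebraic, hence their matrix coefficients are $(C,\alpha)$-good with uniform constants), but it needs to be stated. Everything else — the definition of the topology on $\PP\mathcal M(X)$, weak-$*$ compactness of subprobability measures, and Mahler's criterion — is off the shelf.
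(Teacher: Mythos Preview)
Your proposal is correct and follows exactly the same approach as the paper's proof, which is the one-line ``this is simply an interpretation using Mahler's criterion of Theorem \ref{kleMarg1} whose assumption is verified by Proposition \ref{nondivgEst1}.'' One small remark: your ``secondary obstacle'' about extending the conclusion of Theorem \ref{kleMarg1} to all exterior powers is unnecessary, since Mahler's criterion for $\SL_N(\R)/\SL_N(\Z)$ only requires control of the shortest nonzero vector ($d=1$); the exterior powers enter only in the \emph{hypothesis} of Theorem \ref{kleMarg1}, which is precisely what Proposition \ref{nondivgEst1} supplies.
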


\begin{proof}
Indeed, this is simply an interpretation using Mahler's criterion of Theorem \ref{kleMarg1} whose assumption is verified by Proposition \ref{nondivgEst1}. 
\end{proof}

\subsection{Proof of nondivergence}
We don't require $M=\Q$ in this subsection unless explicitly specified. But the reader should feel free to assume $M=\Q$ in a first reading.
\\

Our main result here is the following technical statement. The reader is reminded that we identify $\{1,..., N\}=\{1,...,l_0\}\sqcup \{1,...,l_1\}\sqcup...\sqcup \{1,...,l_{a_0}\}$ and abbreviate those $\{1,...,l_i\}$ as $[l_i]$. 
We also let $[l_i]_{d_i}$ be the collection of subsets of $[l_i]$ of cardinality $d_i$. 
And a Galois number field $L$ in $\overline{\Q}$ that contains all $L_i$'s is fixed .
\begin{prop}\label{lemmanondivergence}
There exists a constant $\kappa_3=\kappa_3(\Omega)>0$. For any $d =1,..., N$ and nonzero
$$\displaystyle{
v=\sum_{ \substack{
I \in [l_0]_{d_0}\sqcup [l_1]_{d_1}\sqcup ...\sqcup [l_{a_0}]_{d_{a_0}}\\ \sum d_i =d
} } 
}
\alpha_I e_I \in \bigwedge^d \OO_M^N ,$$
 there exists
\begin{enumerate}
    \item $(d_1,...,d_{a_0}) \in \{0,1,...,l_1\} \times ... \times \{0,1,...,l_{a_0}\}$;
    \item $J_0\subset [l_0]$ ;\\
(If $v=\sum_{J\in [l_0]_{d_0}} \alpha_J e_J$ then one should omit item 3,4,5. Also one should omit the terms containing $\xi_0$, $\theta$ and let $a_0=0$ in the inequality below.) 
     \item  $\B_0 \subset [l_1]_{d_1}\sqcup ...\sqcup [l_{a_0}]_{d_{a_0}}$ and $\xi_0\in \B_0$;
     \item $\theta \in \Perm \{1,...,a_0\}$ such that $d_{\theta j}/l_{\theta j}-d_{\theta (j+1)}/l_{\theta(j+1)} \geq 0$;
     \item $\sum_{\zeta \in [l_1]_{d_1}\sqcup ...\sqcup [l_{a_0}]_{d_{a_0}}}  \alpha_{J_0\sqcup \zeta}\beta^{\xi_0}_{\zeta} \neq 0$
\end{enumerate}

such that for all $B\in G(\R)$, $t\in \Lie(T_s(\R))$,
\begin{equation*}
    \begin{aligned}
    &\sup_{t'\in \Omega} 
    ||B\exp{(t+t')} v ||
     \geq 
     \kappa_3 
    \big\vert
    \Nm_{L/M}(
    \sum_{\zeta \in [l_1]_{d_1}\sqcup ...\sqcup [l_{a_0}]_{d_{a_0}} } 
     \alpha_{J_0\sqcup \zeta}\beta^{\xi_0}_{\zeta}
    )
    \big\vert
    ^{\frac{1}{|\Gal_0|}} \\
    &\cdot \prod_{j=0}^{a_0} \Big(
    \exp{\big(\sum_{i=1}^j\chi_{[l_{\theta(i)}]}(t)+\chi_{J_0}(t) \big) } 
    || B(e_{J_0}\wedge e_{[l_{\theta 1}]}\wedge...\wedge e_{[l_{\theta j}]})  ||
    \Big)^
    {
    |\B_0|
    (
     \frac{d_{\theta j}}{l_{\theta j}}- \frac{d_{\theta (j+1)}}{l_{\theta(j+1)} } 
    )
    }
    \end{aligned}
\end{equation*}
 $\Gal_0$ is a subgroup of $\Gal(L/M)$ and we set $d_{\theta 0}/l_{\theta 0}:=1$, $d_{\theta (a_0+1)}/l_{\theta (a_0+1)}:=0$. 
 And $\beta^{\xi_0}_{\zeta}$ are some algebraic integers in $\OO_L$ depending on $\xi_0, \zeta$ but nothing else.

\end{prop}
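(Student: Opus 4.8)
The plan is to reduce the multi-field statement to the two already-known extreme cases (fully anisotropic and fully split) via an inductive "peeling" of anisotropic blocks, organized by the relative sizes of the ratios $d_i/l_i$. Concretely, write $v = \sum_I \alpha_I e_I \in \bigwedge^d \OO_M^N$ and group the multi-indices $I$ by their "profile" $(d_0,\dots,d_{a_0})$, the number of indices drawn from each block $[l_i]$. I would first fix the profile $(d_1,\dots,d_{a_0})$ and the subset $J_0\subset[l_0]$ by the following selection: among all $(d_0,J_0,(d_i))$ for which the corresponding slice $v_{J_0,(d_i)}=\sum_{\zeta}\alpha_{J_0\sqcup\zeta}e_{J_0\sqcup\zeta}$ is nonzero, choose one that "survives" a sequence of lower bounds to be proved below; the permutation $\theta$ of $\{1,\dots,a_0\}$ is then chosen to reorder the anisotropic blocks so that $d_{\theta 1}/l_{\theta 1}\ge d_{\theta 2}/l_{\theta 2}\ge\cdots$, which is exactly item (4).

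The analytic core is to iterate Lemma \ref{indepFunctLem}. Decompose $\Lie(T(\R)) = \Lie(T_s(\R))\oplus \Lie(T_a(\R))$ and further $\Lie(T_a(\R)) = \bigoplus_i \Lie(T^i_a(\R))$. Working one anisotropic block at a time (in the order prescribed by $\theta$), the weight vectors $e_\xi$ for $\xi$ ranging over $[l_{\theta 1}]_{d_{\theta 1}}$ are permuted transitively (up to the fixed data $v^i_\zeta$, $\sigma^i_\xi$) by $\Gal(L/M)$, and the corresponding characters on $\Lie(T^{\theta 1}_a(\R))$ are pairwise distinct. So applying Lemma \ref{indepFunctLem} on $\Omega_a$ restricted to that block, together with an arithmetic estimate (the product of a full Galois orbit of a nonzero algebraic integer has absolute value $\ge 1$, giving the norm term $|\Nm_{L/M}(\cdots)|^{1/|\Gal_0|}$ with $\Gal_0$ the relevant stabilizer), extracts a factor $\exp(\chi_{[l_{\theta 1}]}(t))\,\|B(e_{J_0}\wedge e_{[l_{\theta 1}]})\|$ raised to an exponent measuring "how much" of the original profile is concentrated in the first block versus the rest — this is where the telescoping exponents $|\B_0|(d_{\theta j}/l_{\theta j}-d_{\theta(j+1)}/l_{\theta(j+1)})$ come from, with the convention $d_{\theta 0}/l_{\theta 0}=1$, $d_{\theta(a_0+1)}/l_{\theta(a_0+1)}=0$ bookkeeping the top ($J_0$ only) and bottom ($v$ itself) of the tower. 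Peeling off blocks $\theta 1,\theta 2,\dots$ in turn and multiplying the resulting estimates yields the product over $j=0,\dots,a_0$; the ordering $d_{\theta j}/l_{\theta j}$ nonincreasing guarantees all exponents are $\ge 0$, so each factor genuinely contributes a lower bound rather than an unwanted upper bound.

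I would organize the induction on the number $a_0$ of anisotropic blocks: the base case $a_0=0$ is the $T=T_s$ statement from \cite{ShaZhe18}, and in the inductive step one applies Lemma \ref{indepFunctLem} to split off the block realizing the largest ratio $d_{\theta 1}/l_{\theta 1}$, then applies the inductive hypothesis to the remaining vector (which lives in a product of fewer blocks, after a suitable $B\mapsto B(e_{J_0}\wedge e_{[l_{\theta 1}]}\wedge\cdots)$ substitution and a splitting of $t$). The coefficients $\beta^{\xi_0}_\zeta$ are the matrix entries of the change-of-basis matrices $(\sigma^i_\xi v^i_\zeta)$ and their exterior powers; crucially they depend only on the torus, so the implied constant $\kappa_3$ depends only on $\Omega$ (and $N,T$). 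The $(C,\alpha)$-good / non-divergence input of Theorem \ref{kleMarg1} is not used here — this proposition only feeds Proposition \ref{nondivgEst1}, which in turn verifies the hypothesis of that theorem.

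The main obstacle I anticipate is the bookkeeping that makes the exponents come out exactly as the telescoping differences $|\B_0|(d_{\theta j}/l_{\theta j}-d_{\theta(j+1)}/l_{\theta(j+1)})$: one must track how the "mass" of $v$ redistributes under each application of Lemma \ref{indepFunctLem}, and verify that the choice of surviving slice $(J_0,(d_i))$ and of the index $\xi_0$ can be made coherently so that the nonvanishing condition (5) holds simultaneously with all the estimates — i.e. that pigeonholing over slices doesn't conflict with pigeonholing over Galois orbits within a slice. A secondary technical point is justifying that within each anisotropic block the relevant characters are pairwise distinct (so Lemma \ref{indepFunctLem} applies), which follows from the explicit form of $T^i_a$ and the separability/distinctness of the embeddings $\sigma^i_\xi$, but needs to be spelled out. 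Everything else — the norm estimate $|\Nm_{L/M}(\cdot)|\ge 1$ for nonzero algebraic integers, the reduction to $\Omega = \Omega_a+\Omega_s$, the passage between $\|B\exp(t)e_\xi\|$ and $\exp(\chi_\xi(t))\|Be_\xi\|$ — is routine.
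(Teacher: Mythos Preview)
Your approach differs from the paper's, and as described it has a real gap. The paper does not induct on $a_0$: it applies Lemma~\ref{indepFunctLem} exactly twice, once on $\Omega_s$ to pick $(J_0,(d_i))$, and once on the \emph{entire} $\Omega_a$ after conjugating by the diagonalizing matrix $A_0$, to pick a single $\xi_0$ in the full product $[l_1]_{d_1}\sqcup\cdots\sqcup[l_{a_0}]_{d_{a_0}}$. The set $\B_0$ is then the $\Gal(L/M)$-orbit of this $\xi_0$; raising the sup to the $|\B_0|$-th power and bounding below by the product over $\B_0$ turns the scalar coefficient into the norm term. The telescoping exponents arise from pure algebra, not from induction: since $\Gal$ acts transitively on each $[l_i]$, every index of $[l_i]$ is hit by the same number $c_i=|\B_0|d_i/l_i$ of elements of $\B_0$, so $\sum_{\xi\in\B_0}\chi_{J_0\cup\xi}=|\B_0|\chi_{J_0}+\sum_i c_i\chi_{[l_i]}$, and Abel summation rewrites this as the telescope.

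The step you are missing entirely is how one passes from $\prod_{\xi\in\B_0}\|BA_0^{-1}(e_{J_0}\wedge e_\xi)\|$ to the product of the full-block quantities $\|B(e_{J_0}\wedge e_{[l_{\theta 1}]}\wedge\cdots\wedge e_{[l_{\theta j}]})\|$. This is the geometric heart of the proof and is where the ordering $\theta$ is actually used: one writes each factor as a covolume, expands it as $\|Be_{J_0}\|\cdot\prod_k\dist(BA_0^{-1}e_k,\,V_{\text{span of previous indices}})$, and then enlarges each ``previous'' subspace to the corresponding full-block flag $V_{J_0\sqcup[l_{\theta 1}]\sqcup\cdots}$ using the monotonicity $\dist(w,V_I)\ge\dist(w,V_J)$ for $I\subset J$. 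Regrouping the distances (again counting multiplicities via transitivity) reconstitutes the covolumes of the full-block flags with exactly the exponents $|\B_0|(d_{\theta j}/l_{\theta j}-d_{\theta(j+1)}/l_{\theta(j+1)})$; nonnegativity of these exponents is what makes the monotonicity go the right way. Your inductive peeling cannot sidestep this: after one application of Lemma~\ref{indepFunctLem} on a single block you are left holding $e_{\xi^{(1)}}$ for some particular $\xi^{(1)}\in[l_{\theta 1}]_{d_{\theta 1}}$, not $e_{[l_{\theta 1}]}$, so there is no clean ``substitution $B\mapsto B(e_{J_0}\wedge e_{[l_{\theta 1}]}\wedge\cdots)$'' available to feed into an inductive hypothesis. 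Some version of the distance-monotonicity argument is unavoidable, and it is absent from your plan.
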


\begin{proof}[Proof of Proposition \ref{nondivgEst1} assuming Proposition \ref{lemmanondivergence}]

Let $M=\Q$. 

$\sum_{\zeta}\alpha_{\zeta}{\beta^{\xi_0}_{\zeta}} \in \OO_L$ is nonzero,
hence $\text{Nm}_{L/\Q}\sum_{\zeta}\alpha_{\zeta}{\beta^{\xi_0}_{\zeta}} \in \Z$ is also nonzero  
and \\
$|\text{Nm}_{L/\Q}(\sum_{\zeta }  \alpha_{\zeta}\beta^{\xi_0}_{\zeta})|^{\frac{1}{|\Gal_0|}}\geq 1$.

On the other hand, $t\in\Omega_{B,\ep}$ implies for all $j$,
\begin{equation*}
    \begin{aligned}
          \ep &\leq ||B\exp{(t)} (e_{J_0}\wedge e_{[l_{\theta 1} ]} \wedge...\wedge e_{[l_{\theta j}]} )|| \\
          & = \exp{\big(\sum_{i=1}^j\chi_{[l_{\theta(i)}]}(t) + \chi_{J_0}(t) \big) } \, || B (e_{J_0}\wedge e_{[l_{\theta 1}]}\wedge...\wedge e_{[l_{\theta j}]}  )  ||
    \end{aligned}
\end{equation*}

Hence by Proposition \ref{lemmanondivergence} above, 
\begin{equation*}
    \begin{aligned}
          \sup_{t'\in \Omega} ||B\exp{(t+t')v }||  
          &\geq 
          \kappa_3 \cdot 1 \cdot \prod_{j=0}^{a_0} \ep^{|\B_0|(d_{\theta j}/l_{\theta j} - d_{\theta(j+1)}/l_{\theta(j+1)}    )} \\
          &= \kappa_3\ep^{|\B_0|}
    \end{aligned}
\end{equation*}

Now, taking $\ep':=\kappa_3 \min\{\ep, \ep^{2^l}\}$ completes the proof.

\end{proof}

Now we turn to the proof of the proposition. 

\begin{proof}[Proof of Proposition \ref{lemmanondivergence}]

Take such a vector $v$ we may rewrite as 
$$v = \sum_{ d_0+d_1+...+d_{a_o}=d} 
\sum_{  \substack{
J \in [l_0]_{d_0}\\
\zeta \in [l_1]_{d_1}\sqcup ...\sqcup [l_{a_0}]_{d_{a_0}}
}
}
\alpha_{J\sqcup \zeta} e_J \wedge e_{\zeta}.$$ 
All coefficients $\{\alpha_{\bullet}\}$ are in $\OO_M$.

Take $t_s\in \text{Lie}(T_s(\R))$, and observe that $\chi_{J\sqcup \zeta}(t_s)$ does not depend on $\zeta$ as long as the tuple $(d_1,...,d_{a_0})$ is fixed. We have 

\begin{equation*}
    \begin{aligned}
         \exp{(t_s)}v &= \sum_{J,d_1,...,d_{a_0}} \sum_{\zeta} \exp(\chi_{J\sqcup \zeta}(t_s)) \alpha_{J\sqcup \zeta} e_{I}\wedge e_{\zeta}\\
         &=\sum_{J,d_1,...,d_{a_0}} \exp(\chi_{J\sqcup \zeta}(t_s)) \sum_{\zeta} \alpha_{J\sqcup \zeta} e_{I}\wedge e_{\zeta}
    \end{aligned}
\end{equation*}

Now the family of characters $\{\chi_{J\sqcup \zeta}\}_{J, d_1,...,d_{a_0}}$ are distinct on $\text{Lie}(T_s(\R))$ hence on $\Omega_s$. By Lemma \ref{indepFunctLem},
 
 \begin{equation} \label{11}
 \sup_{t_s \in \Omega_s} ||B \exp(t+t_a+t_s)v|| \geq 
 \kappa_2 \sup_{|J|+d_1+...+d_{a_0}= d} 
 ||B\exp(t+t_a)\sum_{\zeta}  \alpha_{J\sqcup \zeta} e_J\wedge e_\zeta||
 \end{equation}

Take $d=(d_1,...,d_{a_0})$, $J_0$ that achieves the supreme on the right hand side. 
If $v=\sum_{J\in [l_0]_k} \alpha_J e_J$ then we are already done with $\kappa_3 = \kappa_2$. So assume this is not true.

In order to apply Lemma \ref{indepFunctLem} again to $\sup_{t_a\in \Omega_a}$ we need to "diagonalize" $\exp{t_a}$. Recall that according to our special form of $T$, if we take 
$$A_0:= \text{diag}( I_{l_0}, (\sigma_{\xi}^i v^i_{\zeta})_{(\xi,\zeta)\in [l_1] \times [l_1]},..., (\sigma_{\xi}^i v^i_{\zeta})_{(\xi,\zeta)\in [l_{a_0}] \times [l_1]}  )$$
then $A_0^{-1} \exp{(t)} A_0$ is diagonalized for all $t \in \text{Lie}(T(\R))$ and $A_0$ commutes with $T_s(\R)$. 

For simplicity we abbreviate
$\alpha_{J_0\sqcup \zeta}= \alpha_{\zeta}$, And all the $\beta$'s appearing in the computation below are some minors of $A_0$ or products of them. 
They are algebraic integers in $\OO_L$. 

\begin{equation*}
   \begin{aligned}
        \sum_{\zeta} \alpha_{\zeta} e_{J_0} \wedge e_{\zeta} 
        &= A_0^{-1} \sum_{\zeta} \alpha_{\zeta} A_0 e_{J_0}\wedge e_{\zeta}\\
        &= A_0^{-1}\sum_{\zeta} \alpha_{\zeta} e_{J_0}\wedge (\sum_{\xi_1 \in [l_1]_{d_1}} \beta^{\xi_1}_{\zeta_1}e^1_{\xi_1}  )\wedge ...\wedge (\sum_{\xi_{a_0} \in [l_{a_0}]_{d_{a_0}}} \beta^{\xi_{a_0}}_{\zeta_{a_0}}e^{a_0}_{\xi_{a_0}}  )\\
        &= A_0^{-1}\sum_{\zeta} \alpha_{\zeta} e_{J_0} \wedge \sum_{\xi=\xi_1\sqcup...\sqcup \xi_{a_0}} \beta^{\xi}_{\zeta}e_{\xi}\\     
        &= A_0^{-1}\sum_{\xi} (\sum_{\zeta} \alpha_{\zeta} \beta^{\xi}_{\zeta} ) 
        e_{J_0} \wedge e_{\xi}
   \end{aligned}
\end{equation*}
 
 Let us continue,
 
\begin{equation}\label{13}
   \begin{aligned}
       & \sup_{t_a \in \Omega_a} ||B\exp(t+t_a) \sum_{\zeta} \alpha_{\zeta} e_{J_0} \wedge e_{\zeta}|| \\
       =& \sup_{t_a \in \Omega_a} ||B A_0^{-1} \exp (t + \Ad(A_0)t_a) \sum_{\xi} (\sum_{\zeta} \alpha_{\zeta} \beta^{\xi}_{\zeta} ) e_{J_0} \wedge e_{\xi} || \\
       =&  \sup_{t_a \in \Omega_a} ||B A_0^{-1} \exp (t) \sum_{\xi} (\sum_{\zeta} \alpha_{\zeta} \beta^{\xi}_{\zeta} )  \exp\circ \chi_{J_0\sqcup \xi} (\Ad(A_0)t_a)  e_{J_0} \wedge e_{\xi} ||
   \end{aligned}
\end{equation}

   Now one can verify that the collection of functionals 
   $$\{ t_a \mapsto \chi_{J_0\sqcup \xi}(\Ad(A_0)t_a)  ,\,{\xi \in [l_1]_{d_1}\sqcup...\sqcup [l_{a_0}]_{d_{a_0}}} \}$$
   are distinct on $\Lie(T_a(\R))$ and hence also distinct on $\Omega_a$. Hence 
   
   \begin{equation}\label{13}
   \begin{aligned}
       & \sup_{t_a \in \Omega_a} ||B\exp(t+t_a) \sum_{\zeta} \alpha_{\zeta} e_{J_0}\wedge e_{\zeta}|| \\
       \geq & \kappa_2 
       \sup_{\xi} |(\sum_{\zeta} \alpha_{\zeta} \beta^{\xi}_{\zeta} ) |  ||B A_0^{-1} \exp(t)  e_{J_0} \wedge e_{\xi} || 
   \end{aligned}
\end{equation}

 Take $\xi_0$ such that $\sum_{\zeta} \alpha_{\zeta}\beta^{\xi_0}_{\zeta}\neq 0$. Instead of showing this single term is bounded from below we actually look at the product over its Galois orbit.

 Before we proceed, let us say a few words about the induced actions of Galois group $\Gal=\Gal(\overline{\Q}/M)$. 
 $\Gal$ naturally acts on 
 $\Gal(\overline{\Q}/M)/\Gal(L_i/M)=\{\sigma^i_1,...,\sigma^i_{l_i}\}$ 
 which can be identified with
 $ [l_i]$ under $\sigma_j^i\mapsto j$.
 Hence $\Gal(\overline{\Q}/M)$ acts on $[l_i]$, $[l_i]_{d_i}$ and $[l_1]_{d_1}\sqcup...\sqcup[l_{a_0}]_{d_{a_0}}$. 
 Also, for $\sigma \in \Gal$, $\sigma (\beta^{\xi}_{\zeta})=sgn(\sigma,\xi) \beta^{\sigma(\xi)}_{\zeta}$ 
 for some function $sgn=sgn(\sigma,\xi)$(independent of $\zeta$) taking value in $\{-1,1\}$. 
 Remember $\beta$'s are minors of the matrix $A_0$ and Galois action permutes the rows of $A_0$.

 Now define $\B_0:=\Gal\cdot \xi_0 \subset [l_1]_{d_1}\sqcup...\sqcup[l_{a_0}]_{d_{a_0}}$. And let $\Gal_0$ be the stabilizer of $\xi_0$ in $\Gal(L/M)$ ( Note the $\Gal$ action factors through that of $\Gal(L/M)$ ). Recall $L$ is defined to be a Galois number field that contains all $L_i$'s. So far we've got

\begin{equation}\label{14}
\begin{aligned}
   &\sup_{t'\in \Omega} ||B\exp{(t+t')}v||^{|\B_0|}\\
    \geq &  
    \kappa_2^{|\B_0|} \big(  \sup_{t_a \in \Omega_a} ||B\exp(t+t_a) \sum_{\zeta} \alpha_{\zeta} e_{J_0}\wedge e_{\zeta}|| \, \big) ^{|\B_0|} \\    
\geq & 
\kappa_2^{2|\B_0|} \big\vert 
\prod_{\xi \in\B_0} \sum_{\zeta} \alpha_{\zeta}\beta_{\zeta}^{\xi}
\big\vert
\prod_{\xi \in \B_0} ||B A_0^{-1} \exp{(t)} e_{J_0}\wedge e_{\xi}||\\
=&  \kappa_2^{2|\B_0|} 
\big\vert \prod_{\sigma\in \Gal({L}/\Q)/\Gal_0} 
 \sigma(\sum_{\zeta} \alpha_{\zeta}\beta_{\zeta}^{\xi_0})
 \big\vert 
\prod_{\xi \in \B_0} (\exp{\chi_{J_0\cup \xi}(t)})||BA_0^{-1} e_{J_0}\wedge e_{\xi}||\\
=&  \kappa_2^{2|\B_0|} 
|\text{Nm}_{{L}/\Q}(  \sum_{\zeta} \alpha_{\zeta}\beta_{\zeta}^{\xi_0}  ) |^{1/|\Gal_0|}
 \exp{(\sum_{\xi\in \B_0} \chi_{J_0\cup \xi}(t)) } \prod_{\xi \in \B_0}  ||BA_0^{-1} e_{J_0}\wedge e_{\xi}||
\end{aligned}
\end{equation}

Now it remains to estimate $\exp{(\sum_{\xi\in \B_0} \chi_{J_0\cup \xi}(t)) } \prod_{\xi \in \B_0}  ||BA_0^{-1}  e_{J_0}\wedge e_{\xi}||$. 

Because of the transitivity of Galois action on each $[l_i]$, we can find a non-negative constant $c_i$ for $i=1,...,a_0$ such that 
$c_i=|\{\xi\in\B_0\, \vert\, j^{(i)}\in \xi\}|$ 
for all $j= j^{(i)}\in[l_i]$. 
One can see that 
\begin{equation*}
    \sum_{\xi\in\B_0} \chi_{J_0 \cup\xi } = |\B_0|\chi_{J_0}+\sum_{i=1}^{a_0}c_i\chi_{[l_i]}
\end{equation*}

For each $i$, Applying both sides to $\diag(0,...,0,0 I_{l_{i-1}},I_{l_i},0 I_{l_{i-1}},...0 )$ 
we see $d_i|\B_0|=c_il_i\implies c_i=|\B_0|d_i/l_i$.

Take $\theta \in \Perm\{1,...,a_0\}$ such that $1\geq d_{\theta 1}/l_{\theta 1} \geq d_{\theta 2}/l_{\theta 2} \geq ...\geq d_{\theta a_0}/l_{\theta a_0} \geq 0$.
Also define $d_0/l_0=d_{\theta 0}/l_{\theta 0}=1$ and $d_{a_0+1}/l_{a_0+1}=d_{\theta (a_0+1)}/l_{\theta (a_0+1)}=0$. To lighten the notation we shall omit $\theta$ in the following. 
 
With these notations we have 
\begin{equation*}
\chi_{J_0}+ \sum_{j=1}^{a_0} c_j/|\B_0| \chi_{[l_j]}
=
\chi_{J_0}+ \sum_{j=1}^{a_0} \frac{d_j}{l_j} \chi_{[l_j]} 
= \sum_{j=0}^{a_0} (d_j/l_j - d_{j+1}/l_{j+1}) (\sum_{i=1}^{j} \chi_{[l_i]}+\chi_{J_0})
\end{equation*}
Hence 
\begin{equation}\label{17}
\begin{aligned}
& \exp{(\sum_{\xi\in \B_0} \chi_{J_0\cup \xi}(t)) } \prod_{\xi \in \B_0}  ||BA_0^{-1} e_{J_0}\wedge e_{\xi}|| \\
  =&
  \exp{\bigg( 
  |\B_0|  \sum_{j=0}^{a_0} (d_j/l_j - d_{j+1}/l_{j+1}) (\sum_{i=1}^{j} \chi_{J_0}(t)+ \chi_{[l_i]}(t))  
  \bigg) }
  \prod_{\xi \in \B_0}  ||B A_0^{-1} e_{J_0}\wedge e_{\xi}||  
\end{aligned}
\end{equation}

We take care of the last expression $\prod_{\xi \in \B_0}  ||B A_0^{-1} e_{J_0}\wedge e_{\xi}||  $ now.

For each $\xi=\xi_1\sqcup...\sqcup \xi_{a_0} \in \B_0$ we write $\xi_i=\{k_1^i<...<k_{d_i}^i \}$.  For $I \subset [l_0]\sqcup ...\sqcup [l_{a_0}]$, define
$V_I:= \text{the $\R$-subspace of $\C^N$ represented by }BA_0^{-1}e_I$. Observe that if $I\subset J$, then $V_I \subset V_J$ and hence $\dist(V_I,v)\geq \dist(V_J,v)$ for any vector v (distance is computed with respect to the Euclidean metric).
 For $i\in\{1,...,a_0\}$, 
$k\in [l_i]$, define $k^{<}:=J_0\sqcup[l_1]\sqcup...\sqcup[l_{i-1}]\sqcup\{1,...,k-1\}$.

Now we compute:

\begin{equation}\label{18}
\begin{aligned}
&\prod_{\xi \in\B_0} ||BA_0^{-1} (e_{J_0}\wedge  e_{\xi}) || \\
=& \prod_{\xi\in\B_0}\text{covol} (\oplus_{i\in J_0\cup \xi } \Z BA_0^{-1}e_{i} )\\
= & \prod_{\xi \in\B_0}\text{covol}(\oplus_{i\in J_0} \Z B e_i ) 
\dist(BA_0^{-1}e_{k_1^1} , V_{J_0}) \dist(BA_0^{-1}e_{k_2^1} , V_{J_0\cup\{k_1^1\}}) \cdot... \\
& \cdot \dist(BA_0^{-1} e_{k_{d_{a_0}}^{a_0}} , V_{J_0\cup\{k_1^1,...,k_{d_{a_0}-1}^{a_0}  \}}) \\
\geq & \prod_{\xi\in\B_0}
||B e_{J_0}|| \dist(BA_0^{-1}e_{k_1^1} , V_{(k_1^1)^<}) \dist(BA_0^{-1}e_{k_2^1} , V_{(k_2^1)^<}) \cdot ...\\
& \cdot \dist(BA_0^{-1}e_{k_{d_{a_0}}^{a_0}} , 
    V_{  (k_{d_{a_0}}^{a_0})^{<} })
\\
= & ||B e_{J_0}||^{|\B_0|} \prod_{j=1}^{a_0} \prod_{k\in[l_j]} \dist(BA_0^{-1}e_k, V_{(k)^<})^{|\B_0| d_j/l_j}
\end{aligned}
\end{equation}
Here "covol" is computed from the Euclidean metric.

The right hand side can be transformed into:

\begin{equation}\label{19}
\begin{aligned}
 & ||Be_{J_0}||^{|\B_0|} \prod_{j=1}^{a_0} \prod_{k\in[l_j]} \dist(BA_0^{-1} e_k, V_{(k)^<})^{|\B_0| d_j/l_j}\\
= & ||Be_{J_0}||^{|\B_0|} \prod_{j=1}^{a_0} \prod_{k\in[l_j]} \dist(BA_0^{-1}e_k, V_{(k)^<})^{|\B_0| \sum_{a\geq j}d_a/l_a-d_{a+1}/l_{a+1}} \\
=& \prod_{j=1}^{a_0} ||Be_{J_0}||^{|\B_0|(\frac{d_j}{l_j}- 
\frac{d_{j+1}}{l_{j+1}} )}
  \prod_{k\in[l_1]\sqcup...\sqcup[l_{j}]} \dist(BA_0^{-1}e_k, V_{(k)^<})^{|\B_0|(\frac{d_j}{l_j}- 
\frac{d_{j+1}}{l_{j+1}}) }\\
=& \prod_{j=0}^{a_0} ||Be_{J_0}\wedge BA_0^{-1}e_{[l_1]} \wedge...\wedge BA_0^{-1}e_{[l_j]} ||^{ |\B_0|(\frac{d_j}{l_j}- 
\frac{d_{j+1}}{l_{j+1}}) }\\
=&  \kappa_4 \prod_{j=0}^{a_0} ||Be_{J_0}\wedge Be_{[l_1]} \wedge...\wedge B e_{[l_j]} ||^{ |\B_0|(\frac{d_j}{l_j}/- 
\frac{d_{j+1}}{l_{j+1}}) }
\end{aligned}
\end{equation}
The last equality is because by definition $A_0 e_{[l_i]} = c_i e_{[l_i]}$ for some nonzero constant $c_i \in L$ and we denote by $\kappa_4$ for the absolute value of certain power of the product of $c_i^{-1}$'s.
 
Combining equations  (\ref{14}), (\ref{17}), (\ref{18}) and (\ref{19}) completes the proof.

\end{proof}

\section{Split part, graph and convex polytope}\label{section3}
Take $M=\Q$ in this section.

By Section \ref{section2}, we already have the non-divergence of the sequence of measures $(g_i)_{*} \mu_T\vert_{\exp{\Omega}\Gamma}$. 
One might think we are done with the polytopes. 
However, this is not the case. That the polytopes $\Omega_{g_i,\ep}$ grow "in all directions" under suitable conditions is crucial in showing the limit measure is the $G(\R)$-invariant Haar measure on $G(\R)/\Gamma$(again, under suitable conditions). 
To show this, we adapt the method of Shapira-Zheng \cite{ShaZhe18} to our situation. Indeed, this part only involves the split part of the torus.

The main proposition we establish is the following:

\begin{prop}\label{stablevolume}
Let $\{g_i\}$ be a sequence in $G(\R)$ and assume $\{g_i\}$ diverges in $G(\R)/Z_G(S)(\R)$ for any nontrivial subtorus $S$ of $T_s$.
There exists a sequence of real numbers $\omega_i \to +\infty$. If we define $\widetilde{\Omega}_{g_i,\ep}:=\Omega_{g_i,\ep+\omega_i}$, its volume remains asymptotically the same as $\Omega_{g_i,\ep}$, i.e.
\begin{equation*}
    \lim_{i\to \infty} \frac{\Vol  (\widetilde{\Omega}_{g_i,\ep})}  {\Vol (\Omega_{g_i,\ep})} =1
\end{equation*}
\end{prop}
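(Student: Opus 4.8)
\emph{Proof strategy.} The plan is to reduce the assertion to the single geometric fact that the inradii of the polytopes $\Omega_{g_i,\ep}$ diverge, and then to derive that fact from the divergence hypothesis by the graph-theoretic analysis of \cite{ShaZhe18}, suitably adapted. Write $\bar\chi_\xi$ for the restriction of $\chi_\xi$ to $\Lie(T_s(\R))$, and $\rho_i$ for the inradius of $\Omega_{g_i,\ep}$, i.e.\ the supremum of $r\geq 0$ for which $\Omega_{g_i,\ep}$ contains a Euclidean ball of radius $r$. Since $\Omega_{g_i,\ep}$ is cut out in \eqref{defiPoly1} by the half-spaces $\chi_\xi(t)\geq\log(\ep)-\log\|g_ie_\xi\|$ with $\xi\in\A_0\setminus\{1,\dots,N\}$, and since each $\bar\chi_\xi$ is nonzero (only the full weight is trivial on $T_s$), the polytope is bounded and $c_0:=\min_\xi\|\bar\chi_\xi\|>0$. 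First I would record an elementary convexity estimate: if a convex body $K=\bigcap_j\{\ell_j\geq 0\}$ with the $\ell_j$ affine of unit gradient contains a ball $B(x_0,r)$, then $(1-\delta/r)(K-x_0)+x_0\subseteq\bigcap_j\{\ell_j\geq\delta\}$ whenever $0<\delta<r$, so the latter polytope has volume at least $(1-\delta/r)^{\dim K}\Vol(K)$. Passing from $\ep$ to $\ep+\omega_i$ in \eqref{defiPoly1} raises each constant by $\delta_i:=\log(1+\omega_i/\ep)$, that is, replaces the unit-gradient form $(\chi_\xi+\mathrm{const})/\|\bar\chi_\xi\|$ by one smaller by $\delta_i/\|\bar\chi_\xi\|$; hence
\[
1\;\geq\;\frac{\Vol(\widetilde\Omega_{g_i,\ep})}{\Vol(\Omega_{g_i,\ep})}\;\geq\;\Bigl(1-\frac{\delta_i}{c_0\,\rho_i}\Bigr)^{\dim T_s}.
\]
Therefore, once $\rho_i\to+\infty$ is known, it suffices to take $\omega_i:=\ep\bigl(e^{\lceil\sqrt{\rho_i}\,\rceil}-1\bigr)$: then $\omega_i\to+\infty$, $\delta_i/\rho_i\to 0$, and the displayed ratio tends to $1$.

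It thus remains to prove $\rho_i\to+\infty$. I would first make a minimax reformulation. One has $\rho_i=\sup_{t\in\Lie(T_s(\R))}\ \min_\xi\ \bigl(\chi_\xi(t)+\log\|g_ie_\xi\|-\log(\ep)\bigr)/\|\bar\chi_\xi\|$, and a supremum over $t$ of a minimum of affine functions equals, by linear-programming duality over the probability simplex indexed by the $\xi$'s, a minimum over dual weightings --- with the inner supremum over $t$ contributing $+\infty$ (hence nothing) unless the chosen weighting annihilates all the $\bar\chi_\xi$. Unwinding this, $\rho_i\to+\infty$ turns out to be equivalent to the following: for every extreme ray of the fixed rational cone
\[
\mathcal{C}:=\bigl\{(\nu_\xi)_\xi\ :\ \nu_\xi\geq 0\ \ \forall\xi,\ \ \textstyle\sum_\xi\nu_\xi\,\bar\chi_\xi=0\bigr\},
\]
with primitive integral generator $(n_\xi)$, one has $\prod_\xi\|g_ie_\xi\|^{\,n_\xi}\to+\infty$. (The cone $\mathcal{C}$ is nonzero: for a complementary pair $\xi,\xi^c\in\A_0$ the vector with $\nu_\xi=\nu_{\xi^c}=1$ and all other entries $0$ lies in it, since $\bar\chi_\xi=-\bar\chi_{\xi^c}$.) This reduction is purely formal and uses nothing about the sequence $g_i$.

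The remaining --- and main --- task is to deduce each of these monomial divergences from the hypothesis, which is where the auxiliary graph of \cite{ShaZhe18} enters. Following that scheme I would attach to $g_i$ a finite graph on the ``atoms'' of $T_s$, namely the indices $1,\dots,l_0$ together with one vertex for each block $[l_1],\dots,[l_{a_0}]$, carrying combinatorial data that records the relative magnitudes of the numbers $\log\|g_ie_\xi\|$; and I would establish, on this restricted set of weights, the dictionary relating a nonnegative integral relation $\sum_\xi n_\xi\,\bar\chi_\xi=0$ (i.e.\ an extreme ray of $\mathcal{C}$) to a nontrivial $\Q$-subtorus $S\leq T_s$ such that $\prod_\xi\|g_ie_\xi\|^{\,n_\xi}$ measures, up to a bounded multiplicative error, the size of $g_i$ in $G(\R)/Z_G(S)(\R)$. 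The connectivity/chaining argument of \cite{ShaZhe18} then upgrades the hypothesis --- that $g_i$ diverges in $G(\R)/Z_G(S)(\R)$ for every nontrivial $\Q$-subtorus $S\leq T_s$ --- to $\prod_\xi\|g_ie_\xi\|^{\,n_\xi}\to+\infty$ for every extreme ray. This last step is where I expect the real difficulty to lie. The features special to our setting are that $\A_0$ is not the full Boolean lattice --- each anisotropic block enters all-or-nothing --- so the atom set, the admissible partitions, and the multiplicities $l_k$ attached to the block-weights $\chi_{[l_k]}$ all differ from the $\Q$-split situation of \cite{ShaZhe18}; one must check both that the relation/subtorus dictionary persists under these changes and that the subtori it produces, being spanned by differences of characters of the $\Q$-split torus $T_s$, are indeed defined over $\Q$. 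By contrast, the convexity estimate, the minimax reduction, and the final choice of $\omega_i$ are routine.
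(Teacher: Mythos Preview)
Your reduction to $\rho_i\to+\infty$ is correct and matches Proposition~\ref{bigball}; the scaling estimate you give is equivalent to the surface-area bound the paper invokes. The LP-duality reformulation into extreme-ray monomials is also valid.

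The difficulty is in your last step. You propose to attach to each extreme ray $(n_\xi)$ a subtorus $S$ so that $\prod_\xi\|g_ie_\xi\|^{n_\xi}$ is comparable to the displacement of $g_i$ in $G(\R)/Z_G(S)(\R)$, and then invoke the hypothesis ray by ray. Even in the split case the extreme rays of $\mathcal{C}$ are far more numerous than the subtori of $T_s$ (already for $\SL_3$ one has, besides the three complementary pairs, the ray $n_{\{1\}}=n_{\{2\}}=n_{\{3\}}=1$), and establishing such a comparability for each of them is not routine; the paper neither states nor uses any such dictionary. More to the point, your plan applies the graph/connectivity argument directly to $g_i$, but the graph is only well-defined after two preliminary reductions that you omit.

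The paper proceeds as follows. First, an Iwasawa-type decomposition $g_i=\delta_i u_i h_i\exp(t_i)$ with $\delta_i\in\SO_N(\R)$, $u_i$ in a fixed unipotent $U_T$, and $h_i\exp(t_i)\in Z_G(T_s)(\R)$ gives $\Omega_{g_i,\ep}=\Omega_{u_i,\ep}-t_i$, so one may replace $g_i$ by $u_i$ (the divergence hypothesis passes to $u_i$). Second, after passing to a subsequence and a bounded left modification by Gauss elimination, each block $u_i^{(\xi,\zeta)}$ either vanishes identically or diverges; only then is the graph $\mathcal{G}(\{u_i\})$ defined. The payoff is Proposition~\ref{graphUDS}: for each $\xi\in\A_0$ one has either $u_ie_\xi=e_\xi$ for all $i$ (the UDS case) or $\|u_ie_\xi\|\to\infty$. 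Thus the facets of $\Omega_{u_i,\ep}$ split into a \emph{fixed} cone $\{\chi_\xi(t)\geq\log\ep:\xi\text{ UDS}\}$ and half-spaces whose boundaries recede to infinity. The divergence hypothesis is equivalent to connectivity of the graph (Proposition~\ref{graphConnected}), and Lemma~\ref{graphLem} then directly produces a point $t$ with $\chi_\xi(t)>0$ for every proper UDS $\xi$, showing the fixed cone has nonempty interior and hence $\rho_i\to\infty$. In your extreme-ray language this amounts (via Gordan duality) to showing that no extreme ray of $\mathcal{C}$ is supported entirely on UDS indices --- but the bridge from the hypothesis to that statement runs through graph connectivity, not through a per-ray centralizer interpretation.
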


\subsection{Reduction to unipotents}
First we reduce to the case when $g_i$'s are some special unipotents.

Take $H_{ss}$ to be the subgroup of $G$ consisting of 
$\{\text{diag}(I_{l_0}, A_1,...,A_{a_0})\, \vert \, A_i \in \SL_{l_i} \}$. 
Let $U_T$ be the subgroup of upper triangular matrices that are contracted by 
$\text{diag}(1,2,...,l_0, (l_0+1) I_{l_1},..., (l_{0}+a_0) I_{l_{a_0}})^{-1}$, i.e. matrices of the form: \\

\setcounter{MaxMatrixCols}{20}
$u=
\begin{bmatrix}
  1  &  *  & \cdots& *      & &     & &  &&&  \\
     & 1  & \cdots & *      & &      & & &&&\\
     &    & \ddots & \vdots & & & & \scalebox{4}{$*$}  &&&\\
     &    &      &   1      & &      & &  &&&\\
       &    &    &          & &      & &  &&&\\
       &    &    &          & &   \scalebox{1.5}{$I_{l_1}$}    & & \cdots  && \scalebox{2}{$*$}&\\
       &    &    &          & &      & &  &&&\\
       &    &    &    &&&& \ddots & &\vdots&\\
       &    &    &    &&&&&&&\\
       &    &    &    &&&&&&&\\
        &    &    &    &&&&&&\scalebox{1.5}{$I_{l_a}$}&\\
         &    &    &    &&&&&&&\\
  \end{bmatrix}
$\\

Now $H_{ss}$ commutes with $T_s$, and together with $U_T$ and $T_s$ they form a parabolic subgroup with $U_T$ being the unipotent radical. Hence we may write $g_i=\delta_i u_i h_i \exp{(t_i)}$ 
with $\delta_i$ in $SO(\R)$ and $u_i\in U_T(\R)$, $h_i\in H_{ss}(\R)$, $t_i\in \text{Lie}(T_s(\R))$. Referring to the definition of $\Omega_{B,\ep}$, one sees that $\Omega_{g_i,\ep}= \Omega_{u_ih_i} -t_i =\Omega_{u_i}-t_i$ hence $\text{Vol}{(\Omega_{g_i})}=\text{Vol}(\Omega_{u_i})$. Moreover, as $h_i\exp{(t_i)}$ are in the centralizer of $T_s$, we have $u_i$ diverges in $G(\R)/Z_G(S)(\R)$ for all nontrivial subtorus $S$ of $T_s$. Hence we assume from now on that $g_i=u_i$.

Recall $\A_0:=2^{\{1,...,l_0\}} \times \{\emptyset, [l_1]\} \times ... \times \{\emptyset, [l_{a_0}]\} $ is a collection of subsets of $\{1,...,l_0\}\sqcup [l_1]\sqcup ...\sqcup [l_{a_0}]$.
For $(\xi,\zeta)\in \A_0 \times \A_0 $ and $u \in U_T$ we let $u^{(\xi,\zeta)}$ be the sub-matrix whose rows come from $\xi$ and columns from $\zeta$. Those blocks correspond to direct sums of weight spaces with respect to the action of $\Ad(T_s)$.

\subsection{Graph}
We now want to define a graph associated to this sequence $g_i=u_i$.

To define the graph correctly, we further:
\begin{enumerate}
    \item Divide $\{u_i\}$ into union of finitely many disjoint subsequences and replace $\{u_i\}$ by any one of them.
    \item Modify  $\{u_i\}$ by a bounded sequence in $G(\R)$ from left.
\end{enumerate}

such that for each pair $(\xi,\zeta)\in \A_0\times \A_0$ with $\xi \neq \zeta$, $\{u_i^{(\xi,\zeta)}\}$ diverges or remains $0$. This can always be done using Gauss's elimination.
We note that such a modification is necessary for Proposition \ref{graphConnected} below to hold.

Now we can define a graph $ \mathcal{G}(\{u_i\})=(\mathcal{V},\mathcal{E})$. $\mathcal{V}:=\{1,...,l_0\}\sqcup\{[l_1],...,[l_{a_0}]\}$ and $\{\xi, \zeta\} \subset \mathcal{V} $ is in $\mathcal{E}$ iff $\{u^{(\xi,\zeta)}_i\}$ diverges.

\begin{prop}\label{graphConnected}
Assume $\{u_i\}$ is a sequence in $U_T(\R)$ that satisfies conditions described above, i.e. $\{u_i^{(\xi,\zeta)}\}$ either diverges or remains $0$ for all $\xi \neq \zeta$. Then TFAE:
\begin{enumerate}
\item $\{u_i\}$ diverges in $G(\R)/Z_G(S)(\R)$ for every nontrivial subtorus $S$ of $T_s$.
\item The associated graph $\mathcal{G}(u_i)$ is connected.
\end{enumerate}
\end{prop}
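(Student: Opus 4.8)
\textbf{Proof proposal for Proposition \ref{graphConnected}.}

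The plan is to prove the contrapositive in both directions, working with the standard criterion for divergence in $G(\R)/Z_G(S)(\R)$: since $Z_G(S)$ is reductive and hence $G/Z_G(S)$ is an affine variety, the sequence $\{u_i\}$ diverges in $G(\R)/Z_G(S)(\R)$ if and only if for every regular function $f$ on $G/Z_G(S)$ (equivalently, on every finite-dimensional representation realizing $G/Z_G(S)$ as an orbit), $f(u_i)\to\infty$; more usefully, it fails to diverge precisely when some subsequence of $\{\Ad(u_i)\}$ acting on suitable $\Ad$-invariant data (the Lie algebra of $Z_G(S)$, or its decomposition into $\Ad(S)$-weight spaces) stays bounded. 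For a $\Q$-split subtorus $S$ of $T_s$, $\Lie(G)$ decomposes into $\Ad(S)$-weight spaces, and $Z_G(S)$ is generated by $T$ together with the root groups on which $S$ acts trivially; boundedness in $G(\R)/Z_G(S)(\R)$ amounts to boundedness of the components of $\Ad(u_i)$ in the non-zero $S$-weight directions.

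First I would prove $(2)\Rightarrow(1)$ in contrapositive form: suppose $\{u_i\}$ does not diverge in $G(\R)/Z_G(S)(\R)$ for some nontrivial subtorus $S\le T_s$. Passing to a subsequence we may assume $u_i z_i$ is bounded for some $z_i\in Z_G(S)(\R)$. A subtorus $S$ of $T_s$ is cut out by its character lattice; nontriviality of $S$ means there is a partition-type grouping of the index set $\{1,\dots,l_0\}\sqcup\{[l_1],\dots,[l_{a_0}]\}$ into at least two blocks such that $Z_G(S)$ is block-diagonal with respect to that grouping (the centralizer of a subtorus of the diagonal $T_s$ is a Levi, i.e.\ block upper-triangular-free, block-diagonal in a reordering). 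Then the condition that $u_i$ stays bounded modulo this block-diagonal Levi forces every off-block entry $u_i^{(\xi,\zeta)}$, with $\xi,\zeta$ in different blocks, to stay bounded; by our normalization each such block is then identically $0$. Hence the graph $\mathcal{G}(\{u_i\})$ has no edges between the two blocks, so it is disconnected. This gives $\neg(1)\Rightarrow\neg(2)$.

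Conversely, for $(1)\Rightarrow(2)$ in contrapositive: suppose $\mathcal{G}(\{u_i\})$ is disconnected, splitting $\mathcal{V}$ into a nonempty proper subset $\mathcal{W}$ and its complement with no edges between them. Since $U_T$ consists of matrices that are \emph{upper} triangular with respect to the ordering $1<2<\dots<l_0<[l_1]<\dots<[l_{a_0}]$, I need to argue that the absence of edges across the partition, combined with the specific shape of $U_T$, lets us realize a nontrivial subtorus $S\le T_s$ (the one whose centralizer is the Levi respecting a coarsening of the partition into two intervals, after reordering the blocks so that all of $\mathcal{W}$ precedes its complement — this reordering is where one must be careful, since $U_T$ is only upper triangular for the \emph{given} order) for which $u_i^{(\xi,\zeta)}=0$ whenever $\xi\in\mathcal{W}$, $\zeta\notin\mathcal{W}$. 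Then $u_i$ commutes with $S$ up to the block structure, i.e.\ $u_i$ actually lies in $Z_G(S)(\R)$ (after the reordering makes the partition into two intervals, the surviving entries of $u_i$ are block-diagonal with respect to $S$), so $\{u_i\}$ is constant $=$ identity coset in $G(\R)/Z_G(S)(\R)$ and in particular does not diverge. This yields $\neg(2)\Rightarrow\neg(1)$ and completes the equivalence.

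The main obstacle I anticipate is the bookkeeping in $(1)\Rightarrow(2)$: because $U_T$ is upper triangular only for one fixed ordering of the blocks, a disconnection of the graph need not immediately present itself as a ``two interval'' partition, and one has to check that one can always choose the disconnecting cut $\mathcal{W}$ (e.g.\ by taking a connected component, or the down-set/up-set generated by a component in the partial order induced by which off-diagonal blocks of $U_T$ can be nonzero) so that the surviving entries of $u_i$, together with $T_s$, genuinely lie in the centralizer of an honest $\Q$-subtorus of $T_s$ — rather than merely in some unipotent-by-Levi subgroup. Making precise which subtorus $S$ to use, and verifying $Z_G(S) = Z_G(\langle S\rangle)$ has exactly the block-diagonal shape that contains all the nonzero $u_i^{(\xi,\zeta)}$, is the crux; the rest is the standard reductive-quotient divergence criterion applied blockwise.
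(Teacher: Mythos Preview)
Your plan has the two directions reversed in difficulty. The direction you treat as routine --- $\neg(1)\Rightarrow\neg(2)$, i.e.\ $(2)\Rightarrow(1)$ --- is where the real work lies, and your sketch contains a genuine gap there. You assert that if $u_i$ stays bounded modulo the Levi $Z_G(S)$, then every off-block entry $u_i^{(\xi,\zeta)}$ is bounded. This is not immediate: writing $u_iz_i$ bounded with $z_i$ block-diagonal, the off-block entries of $u_iz_i$ are sums $\sum_{\eta} u_i^{(\xi,\eta)}z_i^{(\eta,\zeta)}$ over $\eta$ in the same block as $\zeta$, and both factors can be unbounded. Equivalently, via the conjugacy-class embedding $g\mapsto gsg^{-1}$, the relation $u_is=x_iu_i$ (with $x_i:=u_isu_i^{-1}$ bounded) gives
\[
u_i^{(\xi,\zeta)}(\lambda_\zeta-\lambda_\xi)=\sum_{\xi<k<\zeta}x_i^{(\xi,k)}u_i^{(k,\zeta)}+x_i^{(\xi,\zeta)},
\]
and the cross-terms $x_i^{(\xi,k)}u_i^{(k,\zeta)}$ couple bounded with potentially unbounded quantities.

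The paper resolves this by an induction on $\zeta-\xi$ that uses the dichotomy hypothesis in an essential way: one shows not merely that off-block entries are bounded, but that \emph{all} strictly-upper blocks of $x_i$ vanish, so $x_i=s$ and $u_i\in Z_G(s)$ exactly. For $\zeta=\xi+1$ the cross-terms are absent and the left side above is either $0$ or divergent (by normalization), the right side bounded, so both vanish. Inductively the cross-terms then drop out for larger $\zeta-\xi$. Once $u_i\in Z_G(s)$, connectedness of the graph forces $s=\mathrm{id}$.

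By contrast, the direction you flag as problematic, $\neg(2)\Rightarrow\neg(1)$, is the easy one and your worry about reordering is unfounded. If $\mathcal{W}\subsetneq\mathcal{V}$ is a union of connected components, then by definition $u_i^{(\xi,\zeta)}=0$ whenever $\xi,\zeta$ lie in different parts (edges are unordered, so this covers both $\xi<\zeta$ and $\zeta<\xi$). Any $s\in T_s(\R)$ with one eigenvalue on $\mathcal{W}$ and a different one on $\mathcal{W}^c$ then literally commutes with every $u_i$; no reordering or two-interval structure is needed. Taking $S$ to be the one-dimensional $\Q$-subtorus through $s$, one has $u_i\in Z_G(S)(\R)$ for all $i$, so $\{u_i\}$ does not diverge in $G(\R)/Z_G(S)(\R)$.
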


We put a partial order on the set $\mathcal{V}$ by setting $1<2<...<l_0<[l_1]<...<[l_{a_0}]$. Also we use the convention that $l_0+1=[l_1]$, $[l_1]+1=[l_2]$ and so on. 

\begin{proof}
First we assume the graph is connected. This part of the proof is almost identical to that of Lemma 5.2 in \cite{ShaZhe18} which we sketch below. For $s \in T_s(\R)$, assume $x_i:=u_i s u_i^{-1} $ bounded, we want to show $s=id$.

We write $s=\text{diag}(\lambda_1,...,\lambda_{l_0},\lambda_{[l_1]}I_{l_1},...\lambda_{[l_{a_0}]}I_{l_{a_0}}  )$.  By equating the $(\cdot)^{\xi,\zeta}$ term on both sides of $u_i s= x_i u_i$ we get
\begin{equation*}
u_i^{\xi,\xi+1}\lambda_{\xi+1}=\lambda_{\xi} u_i^{\xi,\xi+1}+ \sum_{\xi<k < \zeta} x_i^{\xi,k} u_i^{k,\zeta} + x_i^{\xi,\zeta}
\end{equation*}
Let us fix $\zeta$ for the moment and let $\xi$ vary. We start from $\xi=\zeta-1$, where the  equation above becomes
\begin{equation*}
u_i^{\zeta-1,\zeta}(\lambda_{\zeta}-\lambda_{\zeta-1})= x_i^{\zeta-1,\zeta}
\end{equation*}
Left hand side is either zero or divergent but right hand sides is bounded. Hence we conclude both sides are zero, $x_i^{\zeta-1,\zeta}=0$. Now take $\xi=\zeta-2$,we have 
\begin{equation*}
u_i^{\zeta-2,\zeta-1}(\lambda_{\zeta-1}-\lambda_{\zeta-2})= x_i^{\zeta-2,\zeta}.
\end{equation*}
By the same reasoning, $x_i^{\zeta-2,\zeta}=0$. And continuing to argue this way will show $x_i= s$ which means $s $ commutes with $u_i$. If $s\neq id$, we may find a proper and nonempty subset $I$ of $\mathcal{V}=\{1,...,l_0,[l_1],...,[l_{a_0}]\} $ such that $\lambda_i \neq \lambda_j$ for all $i\in I, j\in I^{c}$. 
In order to commute with such an $s$, $u_i^{\xi,\zeta}$ has to be zero for all $(\xi, \zeta)$ coming from $I \times I^c \cup I^c \times I$. This contradicts the assumption that the graph is connected.

Now let us assume $\mathcal{G}(u_i)$ is not connected(proof is the same as \cite{ShaZhe18}, Proposition 5.5), take a proper subset of vertices $I$ to be a nontrivial connected component. Then it is easy to find $s_{\neq id}= \text{diag}(\lambda_1,...,\lambda_{l_0},\lambda_{[l_1]}I_{l_1},...\lambda_{[l_{a_0}]}I_{l_{a_0}}  )$ in $T_s(\R)$ such that all $\lambda_k$ are the same for $k \in I$ and all $\lambda_j$ are the same for $j\in I^c$. Clearly such an element will commutes with $u_i$ for all $i$.
\end{proof}

Recall the definition of UDS from \cite{ShaZhe18}, Definition 5.6:
\begin{defi}
A subset $J\subset \mathcal{V}$ is called UDS iff for any $j\in J$, any $i\in \mathcal{V}$ that is smaller than $j$, $\{i,j\}\in \mathcal{E}$ implies $i\in J$.
\end{defi}

\begin{prop}\label{graphUDS}
Let $I\subset \mathcal{V}$, $\{u_i\}$ satisfies same assumption as in last proposition. $\{u_i e_I\}$ is bounded iff $I$ is UDS, in which case $u_ie_I=e_I$ for all $i$. Moreover $\{u_ie_I\}$ diverges if $I$ is not UDS.
\end{prop}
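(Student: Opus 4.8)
The plan is to split the proposition into two one-sided statements, ``$I$ UDS $\Rightarrow u_ie_I=e_I$ for every $i$'' and ``$I$ not UDS $\Rightarrow\{u_ie_I\}$ diverges'', which together give the asserted equivalence and both extra clauses. For $s\in\{1,\dots,N\}$ write $m(s)\in\mathcal V$ for the vertex (a singleton from $[l_0]$, or a whole block $[l_r]$) containing $s$, recall that $I\subset\mathcal V$ read as a subset of $\{1,\dots,N\}$ is a union of such atoms, and set $W_I:=\mathrm{span}\{e_s:s\in I\}$. The one input I keep using is the dichotomy from the reductions of this section: for $\xi\neq\zeta$ the block $\{u_i^{(\xi,\zeta)}\}$ either diverges or is identically $0$. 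Its corollary is the dictionary: for $s\neq t$, some entry $(u_{i_0})_{ts}$ is nonzero \emph{iff} the block $u_\bullet^{(m(t),m(s))}$ is not identically $0$ \emph{iff} it diverges \emph{iff} $\{m(t),m(s)\}\in\mathcal E$; and since each $u_i$ is block upper triangular and is the identity inside every block, a nonzero $(u_{i_0})_{ts}$ with $t\neq s$ forces $m(t)<m(s)$ in $\mathcal V$.

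\emph{Easy direction.} Here I would argue directly. If $I$ is UDS and $(u_{i_0})_{ts}\neq0$ for some $i_0$, some $s\in I$ and some $t\notin I$, then by the dictionary $\{m(t),m(s)\}\in\mathcal E$ with $m(t)<m(s)$; as $I$ is a union of atoms, $s\in I$ gives $m(s)\in I$ and $t\notin I$ gives $m(t)\notin I$, so this edge violates the UDS property --- contradiction. Hence $(u_i)_{ts}=0$ for all $i$, all $s\in I$, all $t\notin I$, i.e.\ $u_ie_s\in W_I$ for every $s\in I$, so $u_iW_I\subseteq W_I$, hence $u_iW_I=W_I$ since $u_i$ is invertible, and therefore $u_ie_I=e_I$.

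\emph{Hard direction.} Assume $I$ is not UDS and write $u_ie_I=\sum_{I''}\det\!\big((u_i)_{I'',I}\big)e_{I''}$ over $|I|$-element $I''\subset\{1,\dots,N\}$; it suffices to produce one $I''$ with $|\det((u_i)_{I'',I})|\to\infty$. I would induct on $|\mathcal V|$, peeling off the largest vertex $v_{\max}$. If $v_{\max}\notin I$, block upper triangularity gives $(u_i)_{ts}=0$ whenever $t$ lies in the atom $v_{\max}$ and $s\in I$, so $u_ie_I$ already lives in $\bigwedge\mathrm{span}\{e_t:t\notin v_{\max}\}$ and both $I$ and the hypothesis restrict to $\mathcal V\setminus\{v_{\max}\}$; apply induction. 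If $v_{\max}\in I$, write $e_I=e_{v_{\max}}\wedge e_{I'}$, $I'=I\setminus\{v_{\max}\}$: not being UDS means either $I'$ is not UDS on $\mathcal V\setminus\{v_{\max}\}$ (apply induction to the factor $u_ie_{I'}$ and check that wedging with $u_ie_{v_{\max}}=e_{v_{\max}}+(\text{lower terms})$ cannot kill the growth), or there is an edge $\{i_*,v_{\max}\}\in\mathcal E$ with $i_*\notin I$, so the block $u_\bullet^{(i_*,v_{\max})}$ diverges; then, after passing to a further subsequence and picking indices $t_0$ in the atom $i_*$ and $s_0$ in the atom $v_{\max}$ for which $|(u_i)_{t_0s_0}|$ is of maximal order, take $I''=(I\setminus\{s_0\})\cup\{t_0\}$. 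In the Leibniz expansion of $\det((u_i)_{I'',I})$ the unique monomial involving the entry $(u_i)_{t_0s_0}$ equals $\pm(u_i)_{t_0s_0}$ (the complementary submatrix $(u_i)_{I\setminus\{s_0\},I\setminus\{s_0\}}$ being upper triangular with unit diagonal), and the extremal choice of the edge --- exactly as in the combinatorial selection of \cite{ShaZhe18} --- is what shows that no other monomial attains the same order, so the minor diverges. Passing to subsequences is harmless given the reductions already made, and ``unbounded'' then upgrades to ``diverges''.

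The main obstacle is precisely this last point: showing the growth of a divergent block is genuinely seen by \emph{some} Plücker coordinate, i.e.\ ruling out cancellation, uniformly in $i$, between the principal monomial and the remainder $R_i$ (which itself can involve divergent entries). In \cite{ShaZhe18} --- the $\Q$-split case, every vertex a singleton --- this is done by an induction organized along the graph; the new feature here is that distinct vertices of $\mathcal V$ can be blocks of different sizes, so one trades a single index in and out of $I$ rather than a whole vertex, and one only has the divergent-or-zero dichotomy for submatrices indexed by elements of $\A_0$, not for individual entries, so the bookkeeping of \cite{ShaZhe18} must be redone at the level of matrix entries and rectangular minors. Everything else --- the reduction to the behaviour of $W_I$, the dictionary between nonzero blocks and edges, and the easy direction --- is routine linear algebra together with the dichotomy.
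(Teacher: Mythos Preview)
Your approach is correct, and in fact your induction is doing exactly what the paper's short proof needs but does not spell out. The paper simply picks a violating edge $\{\xi,\zeta\}$ with $\xi\in I^c$, $\zeta\in I$, chooses $j\in\tilde\xi$, $k\in\tilde\zeta$ with $(u_i)_{j,k}$ diverging, and asserts that the coefficient of $e_{(\tilde\zeta\setminus\{k\})\cup\{j\}}$ in $u_ie_{\tilde\zeta}$ is $(u_i)_{j,k}$; to pass from $u_ie_{\tilde\zeta}$ to $u_ie_I$ one tacitly needs $\zeta$ to be the \emph{minimal} violating vertex in $I$, so that $I_{<\zeta}$ is UDS and the easy direction gives $u_ie_I=e_{I_{<\zeta}}\wedge u_ie_{\tilde\zeta\cup I_{>\zeta}}$. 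Your induction reaches the same endpoint from the other side: in your subcase~(b), $I'=I\setminus\tilde v_{\max}$ is UDS, so $v_{\max}$ \emph{is} the minimal violating vertex.

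This means the ``main obstacle'' you worry about does not exist in your own setup. In subcase~(b), expand $\det\big((u_i)_{I'',I}\big)$ along row $t_0$. For $c\in I'$ the entry $(u_i)_{t_0,c}$ lies in the block $(i_*,m(c))$ with $i_*\notin I$ and $m(c)\in I'$; since $I'$ is UDS, either $i_*>m(c)$ (entry zero by block upper triangularity) or $\{i_*,m(c)\}\notin\mathcal E$ (entry identically zero by the dichotomy). For $c\in\tilde v_{\max}\setminus\{s_0\}$ the cofactor has a zero row (row $c$, whose entries are $\delta_{c,l}$ with $l\neq c$). So only $c=s_0$ survives and the minor equals $\pm(u_i)_{t_0,s_0}$ \emph{exactly}: there is no remainder $R_i$, no order comparison, and no need to choose $(t_0,s_0)$ extremal beyond requiring $(u_i)_{t_0,s_0}$ to diverge along a subsequence. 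Your subcase~(a) is likewise clean: every $u_ie_l$ with $l\in I'$ is supported on indices below $\tilde v_{\max}$, so the Pl\"ucker coordinate of $u_ie_I=u_ie_{I'}\wedge u_ie_{\tilde v_{\max}}$ at any index set containing all of $\tilde v_{\max}$ is exactly the corresponding coordinate of $u_ie_{I'}$, and the ``lower terms'' in $u_ie_{\tilde v_{\max}}$ only touch coordinates missing some element of $\tilde v_{\max}$.
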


\begin{proof}
The proof is really the same as \cite{ShaZhe18}, Proposition 5.7 except that it is messier here.

Assume $I$ is UDS, that means for all pair $(\xi,\zeta)\in I^c \times I$ with $\xi<\zeta$, $u^{(\xi,\zeta)}_i= 0$. Hence for arbitrary $J \neq I$ with $J\in \{1,...,N\}_{|I|}$, we have the $e_J$-coefficients of $u_ie_I$ to be $0$. Hence $u_i e_I =e_I$.

Assume it is not, take  $(\xi,\zeta)\in I^c \times I$ with $\xi<\zeta$, and $u^{(\xi,\zeta)}_i$ diverges. We use $\tilde{\xi}$ to denote the underlying set of $\xi$(e.g. the underlying set of $\xi=\{1,[l_1]\}$ is $\{1,l_0+1,...,l_0+l_1\}$). 

Hence we may find $j\in \tilde{\xi}$ and $k\in \tilde{\zeta}$ such that $(u_i)_{(j,k)}$ diverges and the  coefficients in front of $e_{(\tilde{\zeta}\setminus {k}) \cup \{j\}}$ for 
$u_i e_{\tilde{\zeta}}$ is $(u_i)_{(j,k)}$ which diverges.
\end{proof}

The following lemma, copied from Lemma 5.8 of \cite{ShaZhe18}, is the key thing we require from the graph theory.
\begin{lem}\label{graphLem}
Let $\mathcal{G} (V,E)$ be a connected graph with $V=\{v_1,...,v_n\}$  an ordered set. 
Then we can assign real numbers $x_1,...,x_n$ to the vertices $v_1,...,v_n$ such that
\begin{enumerate}
\item $\sum_{v_i\in V} x_i =0$;
\item For any proper \text{UDS} subset $S\subset V$, $\sum_{v_i\in S}x_i > 0$.
\end{enumerate}
\end{lem}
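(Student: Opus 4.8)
The plan is to prove Lemma \ref{graphLem} by induction on the number of vertices $n$, exploiting the recursive structure of UDS subsets relative to the ordering. The key observation is that if we remove the largest vertex $v_n$ from the ordered graph $\mathcal{G}$, the resulting graph on $\{v_1,\dots,v_{n-1}\}$ need not be connected, but we can still control its UDS subsets: a proper UDS subset $S$ of $V$ either contains $v_n$ (in which case $S \setminus \{v_n\}$, together with the fact that every smaller neighbor of $v_n$ lies in $S$, records substantial information) or it does not contain $v_n$ (in which case $S$ is a UDS subset of the smaller vertex set, with the same neighbor condition, since edges to $v_n$ impose no constraint on membership in $S$ when $v_n \notin S$). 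So the induction should split on whether $v_n$ is isolated-at-the-top or genuinely attached.

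First I would set up the base case $n=1$: assign $x_1 = 0$; the only UDS subsets are $\emptyset$ and $V$, neither of which is a proper nonempty UDS subset, so condition (2) is vacuous and (1) holds. For the inductive step, consider the set $N^{-}(v_n) = \{v_i : i < n, \{v_i,v_n\}\in E\}$ of smaller neighbors of $v_n$. The idea is to assign $v_n$ a large positive weight and correct for it. Concretely, I would first apply the inductive hypothesis to a suitable connected graph on $\{v_1,\dots,v_{n-1}\}$ — for instance the graph obtained by adding enough edges among $\{v_1,\dots,v_{n-1}\}$ to make it connected while not creating any new proper UDS subsets that weren't already forced (note adding edges only shrinks the collection of UDS subsets, so this is safe) — obtaining weights $y_1,\dots,y_{n-1}$ summing to zero with $\sum_{v_i\in S} y_i > 0$ for every proper nonempty UDS subset $S$ of the enlarged graph, hence for every proper nonempty UDS subset of $\mathcal{G}$ not containing $v_n$. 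Then set $x_n := c$ for a large constant $c>0$ to be chosen, and $x_i := y_i - c/(n-1)$ for $i<n$, so that $\sum x_i = 0$. For a proper UDS subset $S$ containing $v_n$: since $S\ni v_n$ forces $N^{-}(v_n)\subseteq S$, and $S$ is proper so $S^c$ is nonempty, I would show $\sum_{v_i\in S} x_i = c + \sum_{v_i\in S, i<n} y_i - c|S\cap\{v_1,\dots,v_{n-1}\}|/(n-1) \geq c/(n-1) + \sum_{v_i\in S,i<n} y_i$, and since the second term is bounded below by a constant depending only on the finitely many UDS subsets, choosing $c$ large makes this positive. For a proper nonempty UDS subset $S$ not containing $v_n$, $S\subseteq\{v_1,\dots,v_{n-1}\}$ and $\sum_{v_i\in S}x_i = \sum_{v_i\in S} y_i - c|S|/(n-1)$, which is \emph{not} obviously positive — so this naive scaling is the wrong correction.

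The main obstacle, as that last sentence flags, is handling UDS subsets \emph{not} containing $v_n$ simultaneously with those containing it: pushing weight onto $v_n$ and subtracting uniformly from the rest can destroy the positivity we inherited from the inductive hypothesis. The fix I would pursue is to subtract the correction $c$ not uniformly but only from vertices that are forced to accompany $v_n$, i.e. redistribute the deficit along a spanning structure: since $\mathcal{G}$ is connected, $v_n$ has at least one neighbor, and using connectivity one can route the negative correction so that it only affects sums over subsets that must contain $v_n$. Alternatively — and this is likely cleaner — I would induct more cleverly by choosing $v_n$ to be a vertex whose removal keeps the graph connected (a non-cut vertex, which always exists in a connected graph with $n\geq 2$), so that the inductive hypothesis applies directly to $\mathcal{G}-v_n$ without enlarging it; then the only new UDS subsets to worry about are those containing $v_n$, and one checks that a UDS subset $S$ with $v_n\in S$, $S\neq V$ must omit some vertex, and because $v_n$ is non-cut the induced weights behave well. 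I would then assign $y_i$ from the inductive hypothesis on $\mathcal{G}-v_n$, set $x_n$ to be a large positive constant $c$, and subtract $c$ from a single well-chosen vertex $v_{i_0}$ that lies in every proper UDS subset containing $v_n$ (existence of such a vertex, or a small set over which to spread the deficit, is the crux and follows from analyzing which vertices are forced into UDS supersets of $\{v_n\}$); verifying condition (2) then reduces to the finitely many UDS subsets and a large enough choice of $c$, while condition (1) is immediate by construction.
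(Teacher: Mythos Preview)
The paper does not actually prove this lemma; it simply cites Lemma~5.8 of \cite{ShaZhe18}. So there is no ``paper's own proof'' to compare against, and your attempt must stand on its own.

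Your inductive outline has two genuine gaps. First, the ``add edges to make $\{v_1,\dots,v_{n-1}\}$ connected'' step runs in the wrong direction: adding edges \emph{shrinks} the family of UDS subsets, so the inductive hypothesis on the enlarged graph only guarantees positivity on a \emph{subfamily} of the UDS subsets of $\mathcal{G}\vert_{\{v_1,\dots,v_{n-1}\}}$, not on all of them. Your ``hence'' is backwards. Second, in your alternative approach you propose ``choosing $v_n$ to be a non-cut vertex'', but the ordering on $V$ is given as part of the data and the notion of UDS depends on it; $v_n$ is the fixed top vertex, not a vertex you are free to select. The final suggestion, to subtract the correction from a single vertex $v_{i_0}$ lying in every proper UDS subset containing $v_n$, is left unjustified --- such a vertex need not exist in general.

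There is a much cleaner direct construction that avoids induction entirely. Set
\[
x_i \;:=\; \bigl|\{\,j>i : \{v_i,v_j\}\in E\,\}\bigr| \;-\; \bigl|\{\,j<i : \{v_i,v_j\}\in E\,\}\bigr|.
\]
Each edge $\{v_a,v_b\}$ with $a<b$ contributes $+1$ to $x_a$ and $-1$ to $x_b$, so $\sum_i x_i=0$. For a proper nonempty UDS subset $S$, summing over edges shows that $\sum_{v_i\in S}x_i$ equals the number of edges with smaller endpoint in $S$ and larger endpoint in $S^c$. The UDS condition forces \emph{every} edge between $S$ and $S^c$ to be of this type (if the larger endpoint were in $S$, the smaller would be too), and connectivity gives at least one such edge. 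Hence $\sum_{v_i\in S}x_i\geq 1$.
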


Note hypothesis of this lemma is satisfied by the $\mathcal{G}(u_i)$ by Proposition \ref{graphConnected}.

\subsection{Back to convex polytope}

The last step towards Proposition \ref{stablevolume} is the following(see also Lemma 6.2 of \cite{ShaZhe18}):

\begin{prop}\label{bigball}
Assume $\{u_i\}$ satisfies the same conditions as in 
Proposition \ref{graphConnected}.
There exists $R_i \to +\infty$ such that $\Omega_{u_i,\ep}$ contains a ball of radius $R_i$.
\end{prop}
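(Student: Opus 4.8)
The plan is to feed the weights $\chi_\xi$ appearing in the defining inequalities \eqref{defiPoly1} of $\Omega_{u_i,\ep}$ with the numbers supplied by the graph lemma (Lemma \ref{graphLem}), applied to the connected graph $\mathcal{G}(u_i)$, which is connected by Proposition \ref{graphConnected}. Concretely, Lemma \ref{graphLem} produces real numbers $x_v$ attached to the vertices $v\in\mathcal{V}=\{1,\dots,l_0,[l_1],\dots,[l_{a_0}]\}$ with $\sum_v x_v=0$ and $\sum_{v\in S}x_v>0$ for every proper UDS subset $S$. I want to view $(x_v)_v$ as specifying a point $t_0\in\Lie(T_s(\R))$: writing a general element of $\Lie(T_s(\R))$ as $\diag(s_1,\dots,s_{l_0},s_{[l_1]}I_{l_1},\dots,s_{[l_{a_0}]}I_{l_{a_0}})$ with the trace condition, I set $s_v$ proportional to $x_v/\#v$ (where $\#v$ is $1$ for $v\le l_0$ and $l_j$ for $v=[l_j]$), scaled by a large parameter $\rho_i\to+\infty$ to be chosen. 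Note that for $\xi\in\A_0\setminus\{1,\dots,N\}$ the underlying vertex-set of $\xi$ is a proper subset $S_\xi\subsetneq\mathcal{V}$, and $\chi_\xi(t_0)=\rho_i\sum_{v\in S_\xi}x_v$ up to the normalization; so the linear functionals $\chi_\xi$ take a strictly positive value at $t_0$ precisely on the proper UDS subsets — and for the non-UDS ones we will instead use Proposition \ref{graphUDS}, which says $\{u_ie_I\}$ diverges, i.e. $\log\|u_ie_I\|\to+\infty$, which is exactly the slack we need on those inequalities.

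Next I would make this quantitative. For a ball $B(t_0,R)$ around $t_0$ to be contained in $\Omega_{u_i,\ep}$, I need, for every $\xi\in\A_0\setminus\{1,\dots,N\}$ and every $t\in B(t_0,R)$,
\begin{equation*}
\chi_\xi(t)\ \ge\ \log\ep-\log\|u_ie_\xi\|.
\end{equation*}
Split the weights into two groups. If $S_\xi$ is UDS, then by Proposition \ref{graphUDS} $u_ie_\xi=e_\xi$, so $\log\|u_ie_\xi\|$ is a bounded constant, and $\chi_\xi(t)\ge \chi_\xi(t_0)-\|\chi_\xi\|\,R\ge c\rho_i-CR$ for a constant $c>0$ coming from $\min_{S\ \mathrm{UDS}}\sum_{v\in S}x_v>0$ (finite set of subsets, so the min is attained and positive). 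Thus as long as $\rho_i$ grows faster than $R$, say $\rho_i=R^2$, this is satisfied for all large $i$. If $S_\xi$ is not UDS, then $\|u_ie_\xi\|\to+\infty$, so $-\log\|u_ie_\xi\|\to-\infty$; meanwhile $\chi_\xi(t)\ge \chi_\xi(t_0)-CR\ge -C'\rho_i-CR\ge -2C'R^2$, which beats $\log\ep-\log\|u_ie_\xi\|$ once $\|u_ie_\xi\|$ is large enough. The only subtlety is that the required lower bound on $\|u_ie_\xi\|$ depends on $R$, so I should choose $R=R_i$ growing slowly enough relative to the (diverging) quantities $\min_{\xi\ \mathrm{not\ UDS}}\|u_ie_\xi\|$; e.g. take $R_i=\tfrac12\min\big(\min_{\xi\ \mathrm{not\ UDS}}\log\|u_ie_\xi\|,\ \sqrt{i}\,\big)^{1/2}$ or any diagonalizing choice that tends to $+\infty$ more slowly than all the divergent data. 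Then $\rho_i:=R_i^2\to+\infty$ and both families of inequalities hold for $i$ large, so $B(t_0^{(i)},R_i)\subset\Omega_{u_i,\ep}$.

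The main obstacle, as in \cite{ShaZhe18}, is the bookkeeping that makes the above uniform: one must check that the implied constants ($\|\chi_\xi\|$, the positivity gap $c$, the bounded values $\log\|e_\xi\|$) depend only on $N$ and the combinatorial type of $T$ (the partition $l_0,l_1,\dots,l_{a_0}$) and not on $i$, and that the reduction steps in the previous subsections — dividing $\{u_i\}$ into finitely many subsequences and modifying by a bounded left sequence, after which $\Vol(\Omega_{g_i,\ep})=\Vol(\Omega_{u_i,\ep})$ and the "diverges or stays $0$" dichotomy holds — are legitimately in force here. With the dichotomy in hand, the split into UDS versus non-UDS weights is clean, and the rest is the calibration of the two growth rates $\rho_i$ and $R_i$ against the divergence rates of the entries of $u_i$. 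Once $R_i\to+\infty$ is produced, Proposition \ref{bigball} follows, and combined with the non-divergence estimates of Section \ref{section2} it yields Proposition \ref{stablevolume} after comparing volumes of $\Omega_{u_i,\ep}$ and $\Omega_{u_i,\ep+\omega_i}$, both of which contain a ball of radius $\to\infty$ and are cut out by the same finite system of half-spaces with only the constant terms shifted by a bounded amount.
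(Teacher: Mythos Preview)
Your proof is correct and follows essentially the same approach as the paper: split the defining inequalities of $\Omega_{u_i,\ep}$ into those indexed by UDS subsets (where $u_ie_\xi=e_\xi$ by Proposition~\ref{graphUDS}) and those indexed by non-UDS subsets (where $\|u_ie_\xi\|\to\infty$), and use Lemma~\ref{graphLem} to handle the former. The paper phrases the conclusion slightly more geometrically---the UDS constraints with $\ep<1$ contain a fixed open cone (independent of $i$), while the non-UDS half-spaces recede to infinity, so their intersection contains arbitrarily large balls---whereas you carry out the same idea by explicitly constructing the center $t_0^{(i)}$ as a scaling $\rho_i\cdot t_0$ of the point supplied by Lemma~\ref{graphLem} and then calibrating $R_i$ against $\rho_i$ and the divergence rates; but this is a cosmetic difference, not a genuinely different route.
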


\begin{proof}{Proof of Proposition \ref{stablevolume} assuming Proposition \ref{bigball}}
We may assume $g_i=u_i$ and $u_i$ satisfies the conditions as in Proposition \ref{graphConnected}.

Re-centering the convex polytopes, Proposition \ref{bigball} implies $\Omega_{u_i,\ep}$ is a sequence of polytopes with all defining facets diverging to infinity. Thus for any fixed number $\iota>0$, if each side is shrunk by $\iota$, asymptotically the volume remains the same. This is because the difference of their volume is bounded by $\iota \text{Vol}(\partial \Omega_{u_i,\ep})$ which, by Lemma 4.4 from \cite{ShaZhe18}, is bounded by 
$\big( \text{dim}(T_s)/R_i \big) \Vol(\Omega_{u_i,\ep}) $.

As $\iota>0$ is arbitrary, we may now choose $\iota_i >0$ that diverges to $+\infty$ such that after shrinking each facets of $\Omega_{u_i,\ep}$ by $\iota_i$ the volume is still asymptotically unchanged. Referring back to the definition of polytope in the form of equation \ref{defiPoly1}, this means
$\lim_i \text{Vol}(\Omega_{u_i,\ep e^{\iota_i}}) / \text{Vol}(\Omega_{u_i,\ep }) =1$. Now let $\omega_i:= \ep e^{\iota_i}-\ep$ finishes the proof.
\end{proof}

Now we turn to the proof of the last proposition.

Recall by Proposition \ref{graphUDS}, $u_ie_I$ either diverges or remains $e_I$ constantly.
Let $\A_0(bdd):=\{I\in \A_0 \,,\,u_ie_I=e_I \} \setminus \{1,...,N\} $, the set of vertices (except $\{1,...,N\}$) that are UDS(by Lemma \ref{graphUDS}). And 
$\A_0(div):=\{I\in \A_0 \,,\,u_ie_I \,\text{ diverges} \}$. 
Define
\begin{equation}\label{defibddPoly1}
    \Omega^s_{B,\ep}(bdd) = \{ t\in \Lie(T_s(\R)) \,\big\vert \,
    \chi_{\xi}(t)\geq \log(\ep)  - \log{||Be_\xi||}, \; \forall \xi\in \A_0(bdd)
    \}
\end{equation}
Similarly we define $\Omega^s_{B,\ep}(div)$. Then 
$\Omega^s_{B,\ep}=\Omega^s_{B,\ep}(bdd)\cap\Omega^s_{B,\ep}(div) $.
Set $\ep\in (0,1)$ for convenience, then 
$\Omega^s_{B,\ep}(bdd) \supset  \{ t\in \text{Lie}(T_s(\R)) \big\vert \,    \chi_{\xi}(t) > 0\}, \; \forall \xi\in \A_0(bdd)    \}=: \Omega^s_{B,\ep}(bddd) $.
We will show the latter set contains a non-empty open cone which clearly implies Proposition \ref{bigball} as all sides of the other family of polytopes $\Omega^s_{B,\ep}(div)$ already go to infinity by definition.
    
\begin{proof}
It suffices to find a single point in $\Omega^s_{B,\ep}(bddd)$. 

By Lemma \ref{graphLem}, there are $\{x_1,...,x_{l_0}, x_{[l_1]},...,x_{[l_{a_0}]}\}$ such that $\sum_I x_i>0$ for all proper UDS $I$ and $\sum_{\mathcal{V}} x_i=0$. Then the point $t=(t_i)_{i\in\{ 1,..,N\}} \in \text{Lie}(T_s(\R))$ with $t_i=x_i$ for $i=1,..., l_0$ and $t_i=x_{[l_j]}/l_j$ if $i\in[l_j]$ and $j=1,...,a_0$ clearly lies in $\Omega^s_{B,\ep}(bddd)$.
\end{proof}

\section{Limiting measures in the generic case}\label{section4}
We let $M=\Q$ throughout this section.

In this section we shall prove Theorem \ref{theorem0}. 
Let us remind the reader that this means we are given a sequence $\{g_i\} \subset G(\R)=\SL_N(\R)$ diverging in $G(\R)/Z_G(S)(\R)$ for all $S\subset T$ nontrivial $\Q$-subtori. Our goal is to show $\lim_i (g_i)_{*}[\mu_T]=[\mu_G]$. To make things look better, we shall assume $\Gamma \subset G(\Z)$.

\subsection{Reduction to translates of bounded pieces}

\begin{lem}\label{theorem1'}
Given a sequence $\{g_i\} \subset G(\R)$ as above, for any $\ep>0$, we have 
$$\lim_{i \to \infty} \frac{(g_i)_{*}(\mu_T\vert_{\exp{(\Omega^s_{g_i,\ep})}T_a(\R)\Gamma} )}
{{\Vol}( \exp(\Omega^s_{g_i,\ep}) T_a(\R)/\Gamma)} = \mu_G.$$
\end{lem}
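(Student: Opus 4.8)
The plan is to prove equidistribution of the translated bounded pieces $\exp(\Omega^s_{g_i,\ep})T_a(\R)$ by combining three ingredients already set up in the excerpt: (i) the non-divergence provided by Corollary \ref{nondivergence1}, which guarantees that any weak-$*$ limit of the normalized translated measures is a probability measure and no mass escapes to infinity; (ii) the ``growth in all directions'' of the polytopes, i.e. Proposition \ref{stablevolume} together with Proposition \ref{bigball}, which says $\Omega^s_{g_i,\ep}$ contains balls $B(R_i)$ with $R_i\to\infty$ and loses no asymptotic volume under the shrinkage $\ep\mapsto\ep+\omega_i$; and (iii) the theorem of Eskin--Mozes--Shah \cite{EskMozSha96} on equidistribution of translates of orbits of reductive subgroups of finite covolume. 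Concretely, I would first fix a large box $\Omega=\Omega_a+\Omega_s$ and, using a tiling/Vitali-type argument, write $\exp(\widetilde\Omega_{g_i,\ep})T_a(\R)\Gamma$ as an essentially disjoint union of translates $\exp(t_{i,j}+\Omega)\Gamma$ of the fixed bounded piece $\exp(\Omega)\Gamma$ (with the error in volume negligible because boundary effects are controlled by $\dim(T_s)/R_i\to0$, exactly as in the proof of Proposition \ref{stablevolume}).

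The heart of the argument is then: for each fixed bounded piece, $(g_i\exp(t_{i,j}))_*\widehat{\mu_T|_{\exp\Omega\,\Gamma}}$ equidistributes to $\mu_G$ as $i\to\infty$, uniformly enough over the relevant $j$'s. To get equidistribution I would feed the sequence $g_i\exp(t_{i,j})$ into the Eskin--Mozes--Shah machinery applied to the reductive group $T_a(\R)$ (whose orbit $T_a(\R)\Gamma/\Gamma$ is compact, hence finite covolume): by Ratner plus the linearization/Dani--Margulis technique, any non-equidistributing subsequence would force the translating elements to stay, modulo bounded perturbations, inside $Z_G(S)(\R)$ for some nontrivial $\Q$-subtorus $S$ of $T$ — but this is exactly excluded by our standing divergence hypothesis on $\{g_i\}$ in $G(\R)/Z_G(S)(\R)$ for all nontrivial $\Q$-subtori $S$. (One must check that translating by the extra factor $\exp(t_{i,j})$, where $t_{i,j}\in\Lie(T_s(\R))$, does not ruin this: since $\exp(t_{i,j})$ centralizes $T_a$ and normalizes $T$, the divergence in $G(\R)/Z_G(S)(\R)$ is unaffected for subtori $S$ of $T$, which is the point where cleanness/the structure from Section \ref{section2} is used.) Then a standard averaging: the normalized restriction to $\exp(\widetilde\Omega_{g_i,\ep})T_a(\R)\Gamma$ is an average of the pieces, each of which converges to $\mu_G$; dominated convergence plus the non-divergence (no escape of mass) upgrades this to convergence of the average to $\mu_G$.

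Finally I would remove the tilde: by Proposition \ref{stablevolume}, $\Vol(\widetilde\Omega_{g_i,\ep})/\Vol(\Omega_{g_i,\ep})\to1$, and the extra annular region $\Omega_{g_i,\ep}\setminus\widetilde\Omega_{g_i,\ep}$ carries asymptotically negligible $\mu_T$-mass while still being non-divergent (Corollary \ref{nondivergence1} again), so replacing $\widetilde\Omega_{g_i,\ep}$ by $\Omega_{g_i,\ep}$ in both numerator and denominator does not change the limit; this gives precisely the statement of Lemma \ref{theorem1'}. The main obstacle I anticipate is the \emph{uniformity} in step two: the Eskin--Mozes--Shah theorem gives equidistribution for a single sequence $g_i\exp(t_i)$, whereas here for each $i$ we have many translates $t_{i,j}$, whose number grows with $i$, so one needs either a uniform (over compact sets of translating parameters) version of the equidistribution theorem — which is available since the ``obstruction'' data (finitely many intermediate subgroups, the compact sets avoided in the linearization) can be chosen uniformly — or a diagonal/dominated-convergence argument that reduces the average over $j$ to countably many sequences. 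Handling this uniformity carefully, and verifying that the translates $\exp(t_{i,j})$ genuinely sit in the window where non-divergence and the divergence hypothesis both apply, is where the real work lies; everything else is bookkeeping with volumes and the already-established polytope estimates.
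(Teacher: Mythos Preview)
Your outline is essentially the paper's own route: reduce Lemma \ref{theorem1'} to the shrunk polytope $\widetilde{\Omega}^s_{g_i,\ep}$ via Proposition \ref{stablevolume}, tile by translates of a fixed box $\Omega=\Omega_a+\Omega_s$, and then prove equidistribution of each translated bounded piece (the paper packages this as Lemma \ref{theorem1''} and Theorem \ref{theorem1'''}). Two points where you should align with the paper rather than improvise:

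\medskip
\noindent\textbf{(a) Apply EMS to $T$, not $T_a$.} The paper invokes Theorem \ref{Theom2.1} with $H=T$ and $\rho_i(t)=\gamma_i t\gamma_i^{-1}$; the finite-covolume hypothesis on $H$ has been deliberately dropped there (only reductivity is needed once one restricts to a bounded piece). Using $T_a$ instead would break the obstruction analysis: Lemma \ref{reductiveLem1} needs $T$ maximal, and more importantly the \emph{parabolic} obstruction (Proposition \ref{parabolicQ}) is ruled out not by the divergence hypothesis on $g_i$ but by the polytope condition $t_i\in\widetilde{\Omega}^s_{g_i,\ep}$, which forces $\|\gamma_i e_I\|\to\infty$ for every $I\in\A_0\setminus\{1,\dots,N\}$. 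Your sketch only mentions the reductive (centralizer) obstruction; you must also kill the parabolic one, and that is precisely why the passage to $\widetilde{\Omega}$ is made before tiling.

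\medskip
\noindent\textbf{(b) Uniformity.} Your worry is legitimate, and the paper handles it exactly by one of the mechanisms you suggest: Theorem \ref{theorem1'''} is stated for an \emph{arbitrary} sequence $t_i$ with $(t_i+\Omega_s)\cap\widetilde{\Omega}^s_{g_i,\ep}\neq\emptyset$. Since every choice function $i\mapsto j_i\in\mathcal{I}_i$ produces such a sequence, one gets $\sup_{j\in\mathcal{I}_i}|a_{i,j}-(f,\mu_G)|\to 0$, and the average over $\mathcal{I}_i$ follows immediately. No quantitative or compact-parameter version of EMS is needed.
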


\begin{proof}[Proof of Theorem \ref{theorem1} assuming Lemma \ref{theorem1'}]

For any $f\in C_c(G(\R)/\Gamma)$ take $\ep>0$ small enough such that the support of $f$ is contained in 
$$X_{\ep}:=\{g\Gamma \,\vert\, \inf_{v\in \Z^N,\,v\neq 0}||gv|| \geq \ep\}.$$
Note if $t\in \text{Lie}(T_s(\R))$, $t_a\in \text{Lie}(T_a(\R)) $  and $g_i\exp{(t+t_a)}\Gamma \in X_{\ep}$ 
then for all $I\in \A_0$,
$||g_i\exp{(t+t_a)}e_I||=||g_i\exp{(t)}e_I||\geq \ep $ and hence 
$t\in \Omega^s_{g_i,\ep}$.

Hence we have
$$\frac{( f, (g_i)_{*}\mu_T )}{{\Vol}( \exp(\Omega^s_{g_i,\ep}) T_a(\R)/\Gamma)}
=\frac{( f , (g_i)_{*} (\mu_T\vert_{\exp{(\Omega^s_{g_i,\ep})}}))}
{{\Vol}( \exp(\Omega^s_{g_i,\ep}) T_a(\R)/\Gamma)}
 $$
with the latter converging to $( f , \mu_{G})$. Hence we are done.
\end{proof}

By assumption, $\{g_i\}$ diverges in $G(\R)/Z_GS(\R)$ for all nontrivial $S\subset T_s$ since any subtorus of a $\Q$-split torus is automatically defined over $\Q$. Hence Proposition \ref{stablevolume} applies. Thus the above lemma follows from

\begin{lem}\label{theorem1''}
Given a sequence $\{g_i\} \subset G(\R)$ as above, for any $\ep>0$, we have 
$$\lim_{i \to \infty} \frac{(g_i)_{*}(\mu_T\vert_{\exp{(\widetilde{\Omega}^s_{g_i,\ep})}T_a(\R)\Gamma} )}
{\Vol(\exp(\widetilde{\Omega}^s_{g_i,\ep})T_a(\R)/T_a(\R)\Gamma)} = \mu_G.$$
\end{lem}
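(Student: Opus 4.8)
\textbf{Proof proposal for Lemma \ref{theorem1''}.}

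The plan is to decompose the translated measure into countably many \emph{bounded} pieces indexed by a fine mesh in $\Lie(T_s(\R))$, apply the equidistribution input from Eskin--Mozes--Shah to each bounded piece, and then reassemble. Concretely, fix once and for all a bounded fundamental domain $\Omega_s^{0}$ for a lattice $\Lambda \subset \Lie(T_s(\R))$ together with a bounded fundamental domain $\Omega_a^0$ for the compact $\Lie(T_a(\R))/(\Lie(T_a(\R))\cap\log(\text{stab}))$-direction, and set $\Omega = \Omega_a^0 + \Omega_s^0$. For each $\lambda \in \Lambda$ with $(\lambda + \Omega_s^0) \subset \widetilde{\Omega}^s_{g_i,\ep}$ we get a ``tile'' $\exp(\lambda + \Omega)\Gamma$; the union of these tiles, as $\lambda$ ranges over the admissible lattice points, fills up $\exp(\widetilde{\Omega}^s_{g_i,\ep})T_a(\R)$ up to a boundary layer of relative volume $O(\dim(T_s)/R_i)$ by Proposition \ref{bigball} (this is exactly the estimate already used to prove Proposition \ref{stablevolume}). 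So it suffices to prove that each individual translated tile $(g_i\exp(\lambda_i))_*\widehat{\mu_T|_{\exp\Omega\Gamma}}$ equidistributes to $\mu_G$, uniformly enough that the weighted average over the $\sim \Vol(\widetilde{\Omega}^s_{g_i,\ep})$ tiles still converges to $\mu_G$.

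The heart of the matter is the single-tile statement: if $h_i := g_i\exp(\lambda_i) \in G(\R)$ is any sequence with $(\lambda_i + \Omega_s^0)\cap \Omega^s_{g_i,\ep}\neq\varnothing$ (so Corollary \ref{nondivergence1} guarantees no escape of mass), then $(h_i)_*\widehat{\mu_T|_{\exp\Omega\Gamma}} \to \mu_G$. First I would invoke Corollary \ref{nondivergence1} to conclude every weak-$*$ limit $\nu$ is a \emph{probability} measure on $G(\R)/\Gamma$. Next, following the linearization/unipotent-invariance philosophy recalled in the introduction, I would argue that $\nu$ is invariant under a nontrivial unipotent subgroup: the orbit $\exp(\Omega)\Gamma$ carries, in the $T_s$-directions, arbitrarily long translates, and because $g_i$ diverges in $G(\R)/Z_G(S)(\R)$ for \emph{every} nontrivial $\Q$-subtorus $S$ of $T$, any limit of $\Ad(h_i)$ applied to the relevant $\Lie(T_s)$-directions degenerates — the eigenvalue argument from the introduction forces the limiting invariance to come from a unipotent. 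Then Ratner's measure classification, packaged through Eskin--Mozes--Shah \cite{EskMozSha96} (the abstract equidistribution theorem for translates of homogeneous measures under sequences going to infinity modulo every intermediate subgroup's normalizer), identifies the only possibility as $\nu = \mu_G$: the divergence hypothesis rules out every proper intermediate subgroup $L$ with $T \subset L \subsetneq G$ because such an $L$ would be $Z_G(S)$ for a nontrivial $\Q$-subtorus $S$ of the center of $L$ contained in $T$, contradicting divergence in $G(\R)/Z_G(S)(\R)$. Since every subsequential limit equals $\mu_G$ and no mass escapes, the full sequence converges.

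With the single-tile equidistribution in hand, I would upgrade to the averaged statement by a standard uniformity/diagonal argument: suppose the averaged measures did \emph{not} converge to $\mu_G$; then along a subsequence there is $f\in C_c(G(\R)/\Gamma)$ and $\eta>0$ with the averaged integral of $f$ bounded away from $(f,\mu_G)$; but the average is a convex combination (after normalizing by the total volume) of the per-tile integrals, each of which — by the single-tile result applied to the sequence $h_i = g_i\exp(\lambda_i)$ of representative centers, which still diverges appropriately because $\exp(\lambda_i)$ lies in the centralizer $Z_G(T_s)(\R) \supset$ the directions that matter — tends to $(f,\mu_G)$; a compactness/pigeonhole argument on which tiles carry non-negligible mass then yields a contradiction. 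The main obstacle, and the step I expect to need the most care, is precisely this last reassembly: making the single-tile convergence \emph{sufficiently uniform} over the growing family of centers $\lambda_i$ so that it survives averaging — this is where one must either appeal to a uniform version of the Eskin--Mozes--Shah equidistribution (uniform over compact sets of base points and over sequences diverging at a controlled rate) or else run the argument tile-by-tile with a quantitative bookkeeping of the boundary-layer mass from Proposition \ref{bigball}. A secondary technical point is checking that the normalizing volumes $\Vol(\exp(\widetilde\Omega^s_{g_i,\ep})T_a(\R)/T_a(\R)\Gamma)$ are comparable to the number of admissible tiles times the per-tile mass, which follows from the lattice-tiling set-up together with the fact (Proposition \ref{graphUDS}) that the relevant weight blocks of $u_i$ are either trivial or divergent.
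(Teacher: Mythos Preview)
Your overall architecture matches the paper's: tile $\widetilde{\Omega}^s_{g_i,\ep}$ by translates of a fixed cube $\Omega_s$, establish a single-tile equidistribution statement (the paper's Theorem \ref{theorem1'''}), and average. But you have the location of the difficulty backwards, and the sketch of the single-tile step has two real gaps.

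\textbf{The reassembly is not the obstacle.} Once the single-tile statement is phrased, as the paper does, for an \emph{arbitrary} sequence of centers $t_i$ with $(t_i+\Omega_s)\cap\widetilde{\Omega}^s_{g_i,\ep}\neq\varnothing$, the uniformity you worry about is automatic. Writing $a_{i,j}:=(f,(g_i\exp t_j)_*\widehat{\mu_T|_{\exp\Omega\Gamma}})$ for $j\in\mathcal{I}_i$, the hypothesis ``$a_{i,j_i}\to(f,\mu_G)$ for \emph{every} choice function $i\mapsto j_i$'' is equivalent to $\sup_{j\in\mathcal{I}_i}|a_{i,j}-(f,\mu_G)|\to 0$ (if the sup did not go to zero, pick near-maximizers to build a bad choice function), and then the average $|\mathcal{I}_i|^{-1}\sum_j a_{i,j}$ converges trivially. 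No uniform or quantitative version of Eskin--Mozes--Shah is needed; this is the content of the short paragraph after Theorem \ref{theorem1'''} in the paper.

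\textbf{The single-tile step is where the work is, and your sketch is incomplete in two places.} First, your claim that every proper $\Q$-subgroup $L\supset T$ is of the form $Z_G(S)$ is false: parabolic subgroups containing $T$ are not centralizers of tori. The paper treats the parabolic and reductive cases separately (Propositions \ref{parabolicQ} and \ref{reductiveQ}). The parabolic case is excluded using $\|\gamma_i e_I\|\geq\omega_i\to\infty$ for every $I\in\A_0\setminus\{1,\ldots,N\}$, and this growth comes precisely from the \emph{tilde} in $\widetilde{\Omega}^s_{g_i,\ep}=\Omega^s_{g_i,\ep+\omega_i}$; at one point you write the intersection condition with $\Omega^s_{g_i,\ep}$ (no tilde), which would not suffice. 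So the shrinking in Proposition \ref{stablevolume} is not mere volume bookkeeping --- it is essential input to ruling out parabolics. Second, even in the reductive case your argument is too quick: if a $\Q$-torus $S$ centralizes $\bigcup_i\gamma_i T\gamma_i^{-1}$, then $S$ lies in each $\gamma_i T\gamma_i^{-1}$, not in $T$ itself, so you cannot directly invoke divergence of $g_i$ in $G(\R)/Z_G(S)(\R)$. The paper closes this by conjugating $S$ back into $T$ via an element of $N_G(T)(\C)$ (Lemmas \ref{reductiveLem1}, \ref{reductiveLem2}) and then using that the resulting subtorus is still defined over $\Q$.
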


This is still not translates of a fixed piece yet. 

\begin{thm}\label{theorem1'''}
$\{g_i\} \subset G(\R)$ same as above.
Fix bounded nonempty open subsets  $\Omega_a$ $(\Omega_s)$ of $\Lie(T_a(\R))$($\Lie(T_s(\R))$ resp.) and let $\Omega:=\Omega_a + \Omega_s$. Fix $\ep >0$. 
 
Take $t_i \in \Lie(T_s(\R)) $ s.t. 
$t_i + \Omega_s \bigcap \widetilde{\Omega}^s_{g_i,\ep} \neq \emptyset$. 
Then 
$$\lim_{i \to \infty}(g_i\exp{t_i})_* (\widehat{\mu_T|_{\exp{\Omega}\Gamma}})=\mu_G$$
\end{thm}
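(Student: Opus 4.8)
The plan is to reduce Theorem \ref{theorem1'''} to the known equidistribution results of Eskin--Mozes--Shah \cite{EskMozSha96} for translates of a fixed bounded piece of an orbit, using the non-divergence machinery already established in Section \ref{section2}. First I would observe that by Corollary \ref{nondivergence1} (whose hypotheses are satisfied precisely because $(t_i+\Omega_s)\cap\widetilde{\Omega}^s_{g_i,\ep}\neq\varnothing$), every weak-$*$ limit $\nu$ of the sequence $(g_i\exp t_i)_*(\widehat{\mu_T|_{\exp\Omega\Gamma}})$ is a \emph{probability} measure on $G(\R)/\Gamma$; in particular no mass escapes to infinity, so it suffices to identify each such $\nu$ with $\mu_G$. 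I would then pass to a subsequence realizing a given limit $\nu$.

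The core step is to show $\nu$ is invariant under a large unipotent subgroup and then invoke Ratner/linearization via \cite{EskMozSha96}. The measure $\mu_T|_{\exp\Omega\Gamma}$ is a fixed bounded piece of the $T(\R)$-orbit; writing $h_i:=g_i\exp t_i$, the relevant object is the sequence $(h_i)_*$ of this fixed piece. Following the Eskin--Mozes--Shah framework, $\nu$ is invariant under any limit of $h_i\exp(\Ad(\cdot)\text{stuff})h_i^{-1}$ and, more to the point, the divergence hypothesis on $\{g_i\}$ forces the ``focusing'' to spread the translated pieces so that $\nu$ cannot be supported on any proper homogeneous subvariety: one must check that for every intermediate group $L$ arising in the linearization (i.e. every $L$ with $T\subseteq L$, $L$ the stabilizer of a rational point in the relevant representation), the portion of the $\Omega$-piece landing near the singular set $\bigcup L$-sublevel has asymptotically negligible volume. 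Here is where the divergence of $\{g_i\}$ in $G(\R)/Z_G(S)(\R)$ for all nontrivial $\Q$-subtori $S\subset T$ enters: the centralizers $Z_G(S)$ are exactly the reductive $\Q$-subgroups through which the orbit could partially ``get stuck,'' and divergence in those quotients kills precisely those contributions. Combined with the $(C,\alpha)$-good estimates of Theorem \ref{kleMarg1} and Proposition \ref{nondivgEst1}, which bound the measure of the part of $\Omega$ on which translates come within $\ep'\delta$ of the cusp or of a singular set by $\kappa\delta^\alpha$, one concludes $\nu$ gives zero mass to every proper homogeneous subvariety, hence $\nu=\mu_G$.

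Concretely, the steps in order are: (1) fix a convergent subsequence with limit $\nu$; (2) apply Corollary \ref{nondivergence1} to get $\nu$ a probability measure; (3) set up the linearization as in \cite{EskMozSha96}, listing the countably many intermediate $\Q$-groups $L$ between (a conjugate of) $T$ and $G$; (4) for each such $L$, use the divergence hypothesis together with the non-divergence/$(C,\alpha)$-good estimates to show the $\Omega$-mass concentrating near the $L$-tube is $o(1)$ as $i\to\infty$ --- this uses that $L\supseteq T$ implies $L=Z_G(Z(L))$ with $Z(L)$ a nontrivial $\Q$-torus in $T$, precisely the $S$ appearing in the hypothesis; (5) conclude by the measure-classification input that $\nu$ charges no proper homogeneous subvariety; (6) since $\nu$ is a probability measure invariant under enough unipotents with no lower-dimensional support, $\nu=\mu_G$; (7) as the limit is the same along every subsequence, the full sequence converges.

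The main obstacle I expect is step (4): verifying uniformly in $i$ that the translated fixed piece $h_i(\exp\Omega\,\Gamma)$ does not concentrate near any of the intermediate singular sets. This is delicate because $T$ is not $\Q$-split, so the weight decomposition of the representations $\bigwedge^d\R^N$ (and the symmetric powers used to detect general $L$) mixes the anisotropic directions $\text{Lie}(T_a(\R))$ with the split ones; the clean polytope description $\Omega^s_{B,\ep}$ only controls the split directions, and one must combine it with Lemma \ref{indepFunctLem} applied in the anisotropic directions --- exactly the mechanism of Proposition \ref{lemmanondivergence} --- to get the needed lower bounds $\sup_{t'\in\Omega}\|B\exp(t+t')v\|\geq \ep'$ along the full torus. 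Getting these bounds to interact correctly with the Dani--Margulis linearization (so that the ``$(C,\alpha)$-good'' estimate can be applied simultaneously to all the finitely-many relevant representation vectors at scale $i$) is the technical heart; everything else is bookkeeping built on \cite{ShaZhe18} and \cite{EskMozSha96}.
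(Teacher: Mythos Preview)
Your overall strategy---non-divergence via Corollary~\ref{nondivergence1}, then eliminate intermediate groups---is correct in spirit, but step~(4) contains a genuine gap. You assert that every proper intermediate $\Q$-group $L$ with $T\subseteq L\subsetneq G$ satisfies $L=Z_G(Z(L))$ with $Z(L)$ a nontrivial $\Q$-torus in $T$. This is only true for \emph{reductive} $L$; it fails for parabolic subgroups. A maximal $\Q$-subgroup of $\SL_N$ containing $T$ is either reductive or parabolic, and both types genuinely occur, so the divergence hypothesis on $G(\R)/Z_G(S)(\R)$ alone cannot rule out the parabolic obstructions. Your plan, as written, gives no mechanism to exclude them.

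The paper proceeds differently and more cleanly. Rather than redoing linearization from scratch, it uses non-divergence to write $g_i\exp t_i=\delta_i\gamma_i$ with $\delta_i$ bounded and $\gamma_i\in\Gamma$, and then invokes Theorem~2.1 of \cite{EskMozSha96} as a black box applied to the inner automorphisms $\rho_i:t\mapsto\gamma_i t\gamma_i^{-1}$. This reduces everything to the single algebraic statement that the $\Q$-Zariski closure of $\bigcup_i\gamma_iT\gamma_i^{-1}$ is all of $G$, which is then checked by excluding parabolic and reductive maximal $\Q$-subgroups separately (Propositions~\ref{parabolicQ} and~\ref{reductiveQ}). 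The reductive case uses exactly the ingredient you identified. The parabolic case, however, is where the \emph{tilde} in $\widetilde\Omega^s_{g_i,\ep}$ becomes essential: the hypothesis $t_i+\Omega_s\cap\widetilde\Omega^s_{g_i,\ep}\neq\varnothing$ yields $\|\gamma_ie_I\|\geq\omega_i\to\infty$ for every $I\in\A_0\setminus\{1,\dots,N\}$, and since any rational $T$-eigenvector in $\bigwedge^{\bullet}\Q^N$ is a scalar multiple of some such $e_I$, no fixed rational subspace can be stabilized by all the $\gamma_iT\gamma_i^{-1}$. Your proposal never exploits the gap between $\Omega^s_{g_i,\ep}$ and $\widetilde\Omega^s_{g_i,\ep}$ (Corollary~\ref{nondivergence1} only needs the former), and that gap is precisely what kills the parabolic case.
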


\begin{proof}[Proof of Lemma \ref{theorem1''} assuming Theorem \ref{theorem1'''}]
The lemma follows by decomposition $\Omega^s_{g_i,\ep}T_a(\R)\Gamma/\Gamma$ into smaller pieces. Let us take $\Omega_a$ whose image under $\exp$ is a fundamental domain for $T_a(\R)$ with respect to $T_a(\R)\cap \Gamma$. Take $\Omega_s$ to be a fixed cube in $\text{Lie}(T_s(\R))$ and for each $i$, take a tiling, disjoint union of translates of $\Omega_s$, $\Omega_i:=\bigsqcup_{j\in \mathcal{I}_i}t_j+ \Omega_s$ that covers $\Omega^s_{g_i,\ep}$ and each cube intersects $\widetilde{\Omega}^s_{g_i,\ep}$ nontrivially. Then the left hand side in the equation in Lemma \ref{theorem1''} is unchanged if  $\widetilde{\Omega}^s_{g_i,\ep}$  is replaced by $\Omega_i$.

Fix $f \in C_c(G(\R)/\Gamma)$. Let 
$a_{ij}:=(f , \widehat{ \mu_T\vert_{\exp{(t_j+\Omega_s)+\Omega_a} } }    )$ 
for $j \in \mathcal{I}_i$. 
Then by Theorem \ref{theorem1'''}, for all $j_i\in \mathcal{I}_i$, 
$\lim_i a_{i,j_i}$ = $(f,\mu_G)$. 
Hence
$$\frac{(f, \mu_{T}\vert_{\exp(\Omega_i)T_a(\R)})}
{\Vol(\exp(\Omega_i)T_a(\R)/\Gamma)} 
= \frac{\sum_{j\in \mathcal{I}_i}a_{i,j}}{|\mathcal{I}_i|}$$
whose limit is equal to $(f,\mu_G)$.
\end{proof}

The remaining part of this section devotes to a proof of Theorem \ref{theorem1'''}

\subsection{Equidistribution}

By Corollary \ref{nondivergence1}, we already know the sequence of measures $(g_i\exp{t_i})_* (\widehat{\mu_T|_{\exp{\Omega}\Gamma}})$ is non-divergent. Hence there exist a bounded sequence $\delta_i$ in $G(\R)$, and $\gamma_i\in\Gamma$ such taht $g_i\exp{t_i}=\delta_i \gamma_i$. Thus it suffices to show $(\gamma_i)_* (\widehat{\mu_T|_{\exp{\Omega}\Gamma}}) \to \mu_G$.  

We start by quoting Theorem 2.1 from \cite{EskMozSha96} with some modifications
\begin{thm}\label{Theom2.1}
Let G be a connected real algebraic groups defined over $\Q$ and $H$ a reductive $\Q$-subgroup. Let $\Gamma\subset G(\R)$ be an arithmetic lattice and
$\Lambda:= \Gamma\cap H(\R)$. 
Let $\rho_i: H \to G $ a sequence of $\Q$-homomorphisms with the following properties:
\begin{enumerate}
\item No proper $\Q$-subgroup of G contains $\rho_i(H)$ for infinitely many $i\in \N$.
\item For every $h\in H(\Q)$, there exists $k\in \N$ such that $\{\rho_i(h)\}_{i\in \N} \subset G(\frac{1}{k}\Z)$.
\item For any sequence $h_i \to e$ in $H(\R)$, all the eigenvalues for the action of $\Ad(\rho_i(h_i))$ on the Lie algebra of $G$ tend to $1$ as $i \to \infty$.
\item For any regular algebraic function $f$ on $G$, the functions $f\circ \rho_i$ span a finite dimensional space of functions on $H$.
\item For all $i\in\N$. $\rho_i(\Lambda) \subset \Gamma$.
\end{enumerate}
Fix a bounded nonempty open subset $O$ of $H(\R)$ and take $\mu_i$ to be the normalization of the restriction of the  $\rho_i(H(\R))$-invariant measure to $\rho_i(O)\Gamma$. Then $\mu_i \to \mu_G$ as $i \to \infty$.
\end{thm}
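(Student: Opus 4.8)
The plan is to reduce the statement to the linearization technique of Dani--Margulis together with Ratner's measure classification, in essentially the way it is done in the original Eskin--Mozes--Shah paper; the role of conditions (1)--(5) is to guarantee that the relevant ingredients are uniform in $i$. First I would observe that, since the $\mu_i$ are probability measures on $G(\R)/\Gamma$, after passing to a subsequence we may assume $\mu_i \to \mu$ for some measure $\mu$ with $\mu(G(\R)/\Gamma) \le 1$; the whole game is to show $\mu = \mu_G$, and in particular that no mass escapes to infinity. Non-divergence is the first hard input: one uses condition (3) (boundedness of $\rho_i(O)$ combined with the fact that eigenvalues of $\Ad\rho_i(h_i)\to 1$ for $h_i\to e$, so that $\rho_i(O)$ stays in a fixed neighborhood of the identity in the relevant sense) plus the $(C,\alpha)$-good / Kleinbock--Margulis quantitative non-divergence estimates, exactly of the flavor quoted in Theorem~\ref{kleMarg1}, to conclude that $\mu$ is again a probability measure. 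This is where condition (3) does its essential work.

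Next I would establish unipotent invariance of $\mu$. Since $H$ is reductive and $\rho_i(H)$ is not contained in any proper $\Q$-subgroup (condition (1)), for $i$ large $\rho_i(H)$ contains enough of a semisimple piece that after a suitable normalization one extracts a one-parameter unipotent subgroup $u(t)$ of $G(\R)$ under which $\mu$ is invariant: concretely, one picks a diverging sequence of elements in $\rho_i(H(\R))$ whose Lie-algebra logarithms, rescaled, converge to a nonzero nilpotent element (this is possible precisely because $\rho_i(H)$ sits inside no proper $\Q$-subgroup and hence cannot be ``degenerating to a torus''), and pushes the invariance of $\mu_i$ under $\rho_i(H(\R))$ through to the limit. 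Then Ratner's theorem says $\mu$ is an algebraic measure supported on a countable union of closed orbits $gLg^{-1}\cap\Gamma$-type sets for intermediate subgroups $L$, i.e. $\mu = \sum c_j \mu_{L_j, x_j}$.

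The decisive step is then to rule out every proper intermediate subgroup, and this is the main obstacle. Here one runs the Dani--Margulis linearization machinery: if a positive proportion of the $\rho_i(O)$-orbit mass accumulates near a ``singular set'' associated to a proper $\Q$-subgroup $L$, then by the uniform polynomial-like behavior of the functions $h\mapsto \rho_i(h)\cdot(\text{a fixed vector in a representation cutting out }L)$ — uniformity coming precisely from condition (4), which forces $f\circ\rho_i$ to lie in a fixed finite-dimensional space and hence have bounded degree — one derives that $\rho_i(H)$ itself must be contained in a conjugate of such an $L$ for infinitely many $i$, and then conditions (2) and (5) (the arithmeticity/compatibility constraints $\rho_i(h)\in G(\tfrac1k\Z)$ and $\rho_i(\Lambda)\subset\Gamma$) promote this to a $\Q$-rational statement contradicting condition (1). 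Carrying this out requires the careful EMS bookkeeping of how the measures $\mu_i$ distribute relative to the tubular neighborhoods of these singular sets, uniformly in $i$; I would quote the relevant propositions from \cite{DanMar93} and \cite{EskMozSha96} rather than reprove them. Once all proper $L_j$ are excluded, $\mu$ must be $G(\R)$-invariant, hence $\mu = \mu_G$, and since the limit is independent of the subsequence the full sequence converges.
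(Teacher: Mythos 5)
The paper does not actually give a proof of this statement: it quotes Theorem~2.1 of Eskin--Mozes--Shah \cite{EskMozSha96} with conditions (1)--(5), and then explains only that the two places where the statement above departs from loc.\ cit.\ --- restricting the $\rho_i(H(\R))$-invariant measure to the bounded piece $\rho_i(O)\Gamma$, and replacing the hypothesis that $H(\Z)$ is a lattice in $H(\R)$ by the weaker hypothesis that $H$ is reductive --- do not affect the proof as it is actually written in \cite{EskMozSha96} and its erratum. Your proposal instead re-sketches the entire Eskin--Mozes--Shah argument, which is a legitimate alternative but is far more than the paper undertakes at this point.

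Within your sketch there are two misattributions that would mislead a reader following it. First, non-divergence: you credit condition (3) and claim $\rho_i(O)$ ``stays in a fixed neighborhood of the identity.'' That is false in the relevant applications --- $\rho_i$ is typically conjugation by a diverging $\gamma_i$, so $\rho_i(O)$ is unbounded. The uniformity in $i$ of the Dani--Margulis/Kleinbock--Margulis non-divergence estimate comes from condition (4) (the coordinate functions of $h\mapsto\rho_i(h)v$ live in a fixed finite-dimensional space, hence are uniformly $(C,\alpha)$-good) together with the integrality supplied by conditions (2) and (5). Second, unipotent invariance: you credit condition (1) with ``preventing degeneration to a torus.'' In fact condition (3) is what gives unipotent invariance --- if $h_i\to e$ yet $\rho_i(h_i)$ does not, any accumulation point has adjoint with all eigenvalues~$1$ by condition (3), hence is unipotent. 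Condition (1) is deployed later, in the linearization step, to exclude proper intermediate $\Q$-subgroups, and also to handle the complementary case in which no nontrivial unipotent can be extracted (there $\rho_i$ stabilizes and condition (1) directly forces the $\Q$-Zariski closure to be all of $G$). Finally, writing blind you could not have known it, but the content the paper actually adds here --- why the lattice hypothesis on $H$ may be dropped once one restricts to a bounded open $O$ --- is left untouched by your sketch.
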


There are only two modifications that we made in the statement. One is that we restrict the $\rho_i(H(\R))-$invariant measure to a bounded piece.
Indeed, this is how the original proof goes. 
The second is that we replace the assumption $H(\Z)\subset H(\R)$ is a lattice by requiring $H$ to be reductive. Indeed, the fact that $H(\Z)\subset H(\R)$ is a lattice is never used in the proof after we restrict to a bounded piece. See \cite{EskMozSha96} and \cite{EskMozSha96Errat} for details.

We now set $H=T$ and $\rho_i$ defined 
by $t \mapsto \gamma_i t \gamma_i^{-1}$. All assumptions $2\sim 5$ above can be verified. So the proof of Thoerem \ref{theorem1'''} boils down to verifying the first one:

\begin{prop}
Assume the sequence $\{g_i\} $ diverges in $G(\R)/Z_GS(\R)$ for all nontrivial $\Q$-subtori $S\leq T$ and $t_i + \Omega_s \bigcap \widetilde{\Omega}^s_{g_i,\ep} \neq \emptyset$.
 Assume $\{\gamma_i\} \subset G(\Z)$ satisfies
 $g_i\exp{t_i}=\delta_i \gamma_i$ with $\delta_i$ bounded. 
 Then the $\Q$-Zariski closure of the group generated by $\bigcup_i \gamma_i T \gamma_i^{-1}$ is $G$.
\end{prop}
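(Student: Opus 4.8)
\emph{Proof sketch (plan).} I will argue by contradiction: suppose the $\Q$-Zariski closure $\mathbf{L}$ of the subgroup generated by $\bigcup_i \gamma_i T\gamma_i^{-1}$ is a proper $\Q$-subgroup of $G$. Since each $\gamma_i T\gamma_i^{-1}$ is connected and $\gamma_i\in G(\Q)$, the group $\mathbf{L}$ is connected and defined over $\Q$. Each $\gamma_i T\gamma_i^{-1}$ is a maximal torus of $G$ contained in $\mathbf{L}$, hence $\mathbf{L}_i:=\gamma_i^{-1}\mathbf{L}\gamma_i$ contains $T$; its Lie algebra therefore contains $\Lie(T)$ and is $\Ad(T)$-invariant, so (as root spaces in $\SL_N$ are one-dimensional) it is cut out by a subset of the root system of $(G,T)$, of which there are finitely many. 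Passing to a subsequence I may assume $\mathbf{L}_i=\mathbf{L}_0$ is a fixed proper $\Q$-subgroup containing $T$; and since $\exp(t_i)\in T\subset\mathbf{L}_0$, the relation $g_i\exp(t_i)=\delta_i\gamma_i$ gives $g_i\mathbf{L}_0 g_i^{-1}=\delta_i\mathbf{L}\delta_i^{-1}$ for all $i$ in the subsequence.

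\emph{Reductive case.} Suppose $\mathbf{L}_0$ is reductive. As recalled in the introduction, a connected reductive subgroup of $\SL_N$ containing a maximal torus is the centralizer of its own center, so $\mathbf{L}_0=Z_G(S)$ with $S:=Z(\mathbf{L}_0)$ a $\Q$-subtorus of $T$, and $S$ is nontrivial because $\mathbf{L}_0\neq G$. Here $Z(Z_G(S))=S$, which forces $N_G(\mathbf{L}_0)=N_G(S)$, and $N_G(S)/Z_G(S)$ is finite. Fixing an index $\ast$ in the subsequence, so $\gamma_\ast\mathbf{L}_0\gamma_\ast^{-1}=\mathbf{L}$, the identity $g_i\mathbf{L}_0 g_i^{-1}=(\delta_i\gamma_\ast)\mathbf{L}_0(\delta_i\gamma_\ast)^{-1}$ shows $(\delta_i\gamma_\ast)^{-1}g_i\in N_G(\mathbf{L}_0)(\R)=N_G(S)(\R)$, hence $\{g_i\}$ stays bounded modulo $Z_G(S)(\R)$. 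This contradicts the hypothesis that $\{g_i\}$ diverges in $G(\R)/Z_G(S)(\R)$ for every nontrivial $\Q$-subtorus $S$ of $T$. (This case uses neither the polytope nor the hypothesis on $t_i$.)

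\emph{Non-reductive case.} Suppose $\mathbf{L}_0$ is not reductive. Then $\mathbf{L}$ is not reductive, and by the Borel--Tits theorem $\mathbf{L}$ is contained in a proper $\Q$-parabolic $\mathbf{P}$ of $G$. Repeating the finiteness argument with $\mathbf{P}$ in place of $\mathbf{L}$ (there are only finitely many $\Q$-parabolics containing $T$), after passing to a further subsequence I may assume $\gamma_i^{-1}\mathbf{P}\gamma_i=\mathbf{P}_0$ is a fixed proper $\Q$-parabolic containing $T$. Write $\mathbf{P}_0=P(\lambda)$ for a cocharacter $\lambda$; since $T_a$ is anisotropic, the $\Q$-rational cocharacters of $T$ are, up to isogeny, those of $T_s$, so after averaging over $\Gal(\overline{\Q}/\Q)$ I may take $\lambda\in X_*(T_s)$. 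Then $\Lie(R_u(\mathbf{P}_0))\otimes\C=\bigoplus_{\alpha\in\Psi_0}\g_\alpha$ with $\Psi_0$ a $\Gal$-stable set of roots lying in the open half-space $\{\langle\cdot,\lambda\rangle>0\}$, so $u_0:=\bigwedge\bigl(\Lie(R_u(\mathbf{P}_0))\otimes\C\bigr)$ is a $T$-weight vector of weight $\beta_0:=\sum_{\alpha\in\Psi_0}\alpha$ with $\langle\beta_0,\lambda\rangle>0$; in particular $\beta_0\neq 0$ and its restriction to $\Lie(T_s(\R))$ is nonzero. Put $w:=\bigwedge\Lie(R_u(\mathbf{P}))$, a fixed nonzero $\Q$-rational vector in an exterior power of $\g$. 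From $\gamma_i^{-1}\mathbf{P}\gamma_i=\mathbf{P}_0$ one gets $\Ad(\gamma_i^{-1})w=c_i u_0$ with $c_i\in\Q^{\times}$, and since $\Ad(\gamma_i^{\pm1})$ preserve fixed lattices while $u_0,w$ are fixed nonzero, $|c_i|$ is bounded above and below by positive constants. Combining $g_i=\delta_i\gamma_i\exp(-t_i)$ with $\Ad(\exp(-t_i))u_0=e^{-\beta_0(t_i)}u_0$ and the boundedness of $\delta_i$ then shows $e^{\beta_0(t_i)}\lVert\Ad(g_i)u_0\rVert$ is bounded above and below; since $s_i\in t_i+\Omega_s$ with $\Omega_s$ bounded, the same holds for $\lVert\Ad(g_i\exp s_i)u_0\rVert$.

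\emph{The polytope yields the contradiction.} Since $\mathbf{P}_0\supset T$, the flag in $\R^N$ defining $\mathbf{P}_0$ is $\Ad(T)$-stable, so each of its proper nonzero terms is a weight subspace $e_{\xi_j}$ with $\xi_j\in\A_0\setminus\{1,\dots,N\}$. Because $s_i\in\widetilde\Omega^s_{g_i,\ep}=\Omega^s_{g_i,\ep+\omega_i}$ with $\omega_i\to+\infty$ by Proposition~\ref{stablevolume}, we have $\lVert g_i\exp(s_i)\,e_{\xi_j}\rVert\geq\ep+\omega_i\to+\infty$ for each $j$. An exterior-algebra computation in the spirit of the proof of Proposition~\ref{lemmanondivergence} — expressing, for $B=g_i\exp s_i$, the norm $\lVert\Ad(B)u_0\rVert$ up to a bounded multiplicative factor as a monomial in the flag minors $\lVert Be_{\xi_j}\rVert$ with strictly positive exponents, the positivity being exactly the contracting property $\langle\beta_0,\lambda\rangle>0$ of $R_u(\mathbf{P}_0)$ — then forces $\lVert\Ad(g_i\exp s_i)u_0\rVert\to+\infty$, contradicting the bound just obtained. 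Carrying out this last computation in the presence of the block decomposition attached to the field extensions appearing in the description of $T$ (so that the relevant monomial involves the $\lVert Be_{\xi_j}\rVert$, with $\xi_j\in\A_0$, rather than individual coordinates) is the main technical point; everything else is bookkeeping with $\Gal$-stable subsets of roots and the structure theory of $\SL_N$ recalled above. This completes the contradiction, so $\mathbf{L}=G$.
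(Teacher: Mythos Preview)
Your reductive case is correct and, if anything, slightly more streamlined than the paper's: you pass to a single $\Q$-subgroup $\mathbf{L}_0=Z_G(S)$ containing $T$ and use $N_G(Z_G(S))=N_G(S)$ together with finiteness of $N_G(S)/Z_G(S)$, whereas the paper (Proposition~\ref{reductiveQ}) argues via the set $Z(S,T)=N_G(T)\,Z_G(S)$ (Lemma~\ref{reductiveLem1}) and then passes to a subsequence over the Weyl group. Both routes land on the same contradiction with divergence in $G(\R)/Z_G(S)(\R)$.

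In the parabolic (non-reductive) case, however, your detour through the adjoint representation is unnecessary and leaves a real gap. You already observe that the flag defining the $\Q$-parabolic $\mathbf P_0\supset T$ consists of $T$-stable $\Q$-subspaces, hence is represented by vectors $e_{\xi_j}$ with $\xi_j\in\A_0\setminus\{1,\dots,N\}$, and that $\lVert g_i\exp(s_i)\,e_{\xi_j}\rVert\to\infty$. The paper's argument (Proposition~\ref{parabolicQ}) stops right here: pick \emph{one} proper nonzero member $V$ of the flag of $\mathbf P$, let $e_V\in\bigwedge^{\dim V}\Z^N$ represent it; since $\gamma_i^{-1}\mathbf P\gamma_i=\mathbf P_0$, we have $\gamma_i^{-1}e_V=\lambda_i e_{I}$ with $I\in\A_0$ and $\lambda_i\in\Z\setminus\{0\}$, hence $|\lambda_i|\ge 1$ and $\lVert e_V\rVert=|\lambda_i|\,\lVert\gamma_i e_I\rVert\ge\lVert\gamma_i e_I\rVert\to\infty$, a contradiction. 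No adjoint representation, no exterior-algebra identity relating $\lVert\Ad(B)u_0\rVert$ to a monomial in the $\lVert B e_{\xi_j}\rVert$ is needed.

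That deferred identity is also not as routine as you suggest. Your appeal to ``a computation in the spirit of Proposition~\ref{lemmanondivergence}'' is misplaced: that proposition works entirely inside $\bigwedge^d\R^N$ and exploits the explicit Galois-orbit structure of the coefficients there; it says nothing about vectors in $\bigwedge^{\bullet}\g$. One can make your claim precise via highest-weight theory (writing $2\rho_P=\sum c_j\omega_{d_j}$ with $c_j>0$ and using the embedding of the irreducible with lowest weight $-2\rho_P$ into a tensor product of fundamental representations, together with equivalence of $K$-invariant norms), but this is genuine extra work and is nowhere in the paper. Since you already have the flag minors $e_{\xi_j}$ and their divergence, the clean fix is simply to drop $u_0$ and argue with a single $e_{\xi_j}$ exactly as above.
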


\begin{proof}
It is known that any maximal subgroup of $G$ defined over $\Q$ is either parabolic or reductive. Hence it suffices to show it is impossible to find groups of such types that contains $\bigcup_i \gamma_i T \gamma_i^{-1}$. In the first case, contained in a parabolic implies there exist a flag, which we may assume to be minimum, $\{\{0\}< V<\Q^N\}$ that is stablized simultaneously by $\bigcup_i \gamma_i T \gamma_i^{-1}$. In the second case, contained in a reductive group containing a maximal torus implies there exist a nontrivial $\Q$-torus that  centralize $\bigcup_i \gamma_i T \gamma_i^{-1}$. Then our proof is complete assuming Proposition \ref{parabolicQ} and \ref{reductiveQ} below.
\end{proof}

Before we goes into the proof of these two propositions, let us translate the properties enjoyed by $g_i$ and $t_i$ to $\gamma_i$. 

First, that $g_i $ diverges in $G(\R)/Z_GS(\R)$ for all nontrivial $\Q$-subtori $S\leq T$, 
becomes: 
\begin{enumerate}
     \item[] $\{\gamma_i\} $ diverges in $G(\R)/Z_GS(\R)$ for all nontrivial $\Q$-subtori $S\leq T$.
\end{enumerate}
 
Second, that $t_i + \Omega_s \bigcap \widetilde{\Omega}^s_{g_i,\ep} \neq \emptyset$ implies: 
\begin{enumerate}
   \item[] For all $I\in \A_0 \setminus{\{1,...,N\}}$, $||\gamma_ie_I||=||\delta_i^{-1}g_i\exp(t_i) e_I|| \geq \omega_i$ for some $\omega_i \to +\infty$.
\end{enumerate}


\begin{prop}\label{parabolicQ}
  Any nontrivial proper $\Q$-subspace $V$ of $\Q^N$ can not be stablized by $\bigcup_i \gamma_i T \gamma_i^{-1}$.
  \end{prop}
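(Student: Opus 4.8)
The plan is to argue by contradiction: suppose a nontrivial proper $\Q$-subspace $V$ is invariant under every $\gamma_i T \gamma_i^{-1}$, equivalently $\gamma_i^{-1} V$ is $T$-invariant for all $i$. Since $T$ is a maximal torus acting on $\Q^N$, its invariant subspaces are exactly the sums of $T$-weight spaces, and over $\Q$ these are the sums indexed by Galois-stable collections of weights; concretely, each such subspace is $\bigoplus_{J \in \mathcal{S}} (\text{weight space})$ for some subset $\mathcal{S}$ built from the blocks $\{1\},\dots,\{l_0\},[l_1],\dots,[l_{a_0}]$, and its associated line in $\bigwedge^{\dim V}\Q^N$ is spanned by $e_I$ for $I$ in $\A_0$ (up to the $A_0$-conjugation that diagonalizes $T_a$, the relevant line is a rational multiple of some $e_I$). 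So there is a fixed $I \in \A_0 \setminus \{1,\dots,N\}$, $I \neq \varnothing$, with $\gamma_i^{-1} e_I \in \Q\cdot(\text{something}) $ — more precisely, writing $w_i := \gamma_i^{-1}(\text{generator of }\bigwedge^{\dim V} V)$, each $w_i$ is (a rational multiple of) an $A_0$-conjugate of $e_I$ for a single $I$ independent of $i$ after passing to a subsequence (there are finitely many choices of $I$). Because $V$ is a fixed rational subspace, $\bigwedge^{\dim V}V$ has a fixed nonzero integer generator $\Lambda_0$, so $w_i = \gamma_i^{-1}\Lambda_0$ is a nonzero integer vector in $\bigwedge^{\dim V}\Q^N$.

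Next I would combine this with the two translated properties of $\gamma_i$ listed just above the proposition. On one hand, $\|\gamma_i e_I\| \geq \omega_i \to +\infty$ for all $I \in \A_0\setminus\{1,\dots,N\}$. On the other hand, I want to turn the invariance of $V$ into a boundedness statement that contradicts divergence. The key point is that $w_i = \gamma_i^{-1}\Lambda_0$ is a nonzero integer vector, hence $\|w_i\| \geq 1$; but I also want an \emph{upper} bound forcing it bounded, or rather I want to extract from "$\gamma_i$ stabilizes $V$" that $\gamma_i$ essentially lies in (a bounded neighborhood of) the real points of the stabilizer of $V$, which is a proper parabolic $Q_V$, and then relate $\gamma_i$ modulo $Q_V(\R)$ to the divergence hypothesis. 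Concretely: the stabilizer of the line $\Q\Lambda_0$ in $\bigwedge^{\dim V}\Q^N$ is a proper $\Q$-parabolic $Q_V \supsetneq$ (nothing containing $T$ as a Levi in a uniform way) — but $T$ is contained in $\gamma_i^{-1}Q_V\gamma_i$, and since a maximal torus lies in only finitely many parabolics, the parabolics $\gamma_i^{-1}Q_V\gamma_i$ all belong to a finite set; passing to a subsequence they are all equal to a single parabolic $P$ containing $T$, with $P \supsetneq Z_G(S)$ for $S$ the connected center of a Levi of $P$ — and $S$ is a nontrivial $\Q$-subtorus of $T$ since $P$ is proper. Then $\gamma_i$ normalizes... no: $\gamma_i P \gamma_i^{-1} = Q_V$ is fixed, so $\gamma_i$ lies in a single coset space, in fact $\gamma_i \in g_0 \cdot N_G(P)(\R) \cdot (\text{something})$, which should force $\{\gamma_i\}$ to stay bounded modulo $Z_G(S)(\R)$, contradicting the first translated property.

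The main obstacle, and the step I would spend the most care on, is the last one: correctly bridging "all $\gamma_i^{-1}Q_V\gamma_i$ equal a fixed parabolic $P \supseteq T$" to "$\{\gamma_i\}$ does not diverge in $G(\R)/Z_G(S)(\R)$ for the nontrivial $\Q$-torus $S = Z(L_P)^\circ$." The clean way is: if $\gamma_i P \gamma_i^{-1} = P'$ is a single fixed parabolic for all $i$ (after subsequence), then $\gamma_i \gamma_1^{-1}$ normalizes $P'$, hence lies in $P'(\R) = N_G(P')(\R)$; so $\gamma_i \in P'(\R)\gamma_1$, a bounded-times-$P'(\R)$ set. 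But $P'(\R)$ retracts onto $L_{P'}(\R)$ modulo its unipotent radical, and $Z_G(S')(\R) \subseteq P'(\R)$ where $S' = \gamma_1 S \gamma_1^{-1}$... I need instead divergence measured by $S \subseteq T$ itself, so I should run the argument with $\gamma_i^{-1}$: $\gamma_i^{-1}$ sends the fixed $Q_V$ to a parabolic containing $T$, these are finitely many, fix one $P \supseteq T$ after subsequence, so $\gamma_i^{-1}\gamma_1 \in N_G(P)(\R) = P(\R) \supseteq Z_G(S)(\R)$ where $S$ is the nontrivial central torus of the Levi of $P$; hence $\gamma_i \in \gamma_1 P(\R)$, and in particular $\{\gamma_i\}$ has bounded image in $G(\R)/P(\R)$, a fortiori... — the subtlety is that boundedness in $G(\R)/P(\R)$ is automatic ($G(\R)/P(\R)$ is compact) and does NOT contradict divergence in $G(\R)/Z_G(S)(\R)$. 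So the real content must come from the \emph{norm growth}: I must instead use $\|\gamma_i e_I\| \geq \omega_i$ directly against the fact that $V$ (or its conjugate) being $\gamma_i^{-1}$-preimage of a fixed rational subspace forces some minor of $\gamma_i$ to be a bounded integer. The honest resolution, which I expect the paper's actual proof to use, is: $T$-invariance of $\gamma_i^{-1}V$ means $\bigwedge^{\dim V}(\gamma_i^{-1}V)$ is spanned by a weight vector, i.e. (after the $A_0$-diagonalization) by some $e_I$ with $I \in \A_0$, so $\gamma_i^{-1}\Lambda_0 = c_i A_0^{-1} e_I$ for scalars $c_i$; taking norms, $\|A_0\Lambda_0\| = |c_i|^{-1}\cdot(\text{const})\cdot\|\gamma_i A_0^{-1}e_I\|$-type relation, and combined with $\Lambda_0$ a fixed nonzero integer vector and $\|\gamma_i e_I\|\to\infty$, one forces $|c_i|\to 0$, but $\gamma_i^{-1}\Lambda_0$ is a nonzero integer vector of norm $\geq 1$ — contradiction once the bookkeeping of $A_0$, the sign of the weight, and which $e_I$ appears is done carefully. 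Nailing down exactly which $e_I$ can occur (it must be one with $I \neq \{1,\dots,N\}$ and $I \neq \varnothing$, so the norm lower bound $\omega_i$ applies) is the crux, and is where the reduction to $(S_0,T)$-clean sequences and the structure of $\A_0$ as Galois-stable weight subsets does the work.
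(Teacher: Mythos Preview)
Your proposal eventually arrives at the paper's argument, but by a circuitous route. The long parabolic detour (trying to show $\gamma_i$ stays bounded modulo $Z_G(S)(\R)$ via $N_G(P)=P$) is a dead end, as you yourself recognize: boundedness in $G(\R)/P(\R)$ is automatic since that quotient is compact, so no contradiction can come from there. The paper skips this entirely and goes straight to the norm argument you reach at the end.

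One genuine simplification you are missing: no $A_0$-conjugation is needed. For $I\in\A_0$ the vector $e_I$ is \emph{already} a rational $T$-weight vector in $\bigwedge^{|I|}\Q^N$, because $T_a$ acts on each block $e_{[l_j]}$ by its determinant (which is $1$) and $T_s$ acts by the character $\chi_I$. Thus the rational $T$-eigenlines in $\bigwedge^d\Q^N$ are precisely the $\Q e_I$ with $I\in\A_0$, $|I|=d$. (Even if you insist on writing $A_0^{-1}e_I$, note that $A_0 e_I$ is a scalar multiple of $e_I$ for $I\in\A_0$, so the complication is harmless---but it obscures the integrality step.)

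With that in hand the paper's proof is four lines. Let $e_V\in\bigwedge^{\dim V}\Z^N$ be a primitive integer representative of $V$. Since $T$ stabilizes $\gamma_i^{-1}V$, the vector $\gamma_i^{-1}e_V$ lies on a rational $T$-eigenline, so $\gamma_i^{-1}e_V=\lambda_i e_{I_i}$ for some $I_i\in\A_0\setminus\{\varnothing,\{1,\dots,N\}\}$; passing to a subsequence, $I_i=I$ is fixed. Since $\gamma_i^{-1}\in G(\Z)$ preserves integer vectors and $e_I$ is primitive, $\lambda_i\in\Z$, so $|\lambda_i|\geq 1$. Then
\[
\|e_V\|=\|\gamma_i(\lambda_i e_I)\|=|\lambda_i|\,\|\gamma_i e_I\|\geq \|\gamma_i e_I\|\geq \omega_i\to\infty,
\]
contradicting that $\|e_V\|$ is a fixed finite number. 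Your version reaches the same contradiction via $|c_i|\to 0$ against $\|\gamma_i^{-1}\Lambda_0\|\geq 1$, which is the same inequality read the other way.
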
 
  
  \begin{proof}
  Assume not true and take such a subspace. Let $e_V$ be an integer vector in $\wedge^{\text{dim} V}\Z^N$ that represents $V$. $\gamma_iT(\R)\gamma_i^{-1}e_V \subset \R e_V$ implies $\gamma_ie_V$ are rational eigenvectors for $T$-action. Hence there exist $I_i\in\A_0 \setminus \{1,...,N\}$ such that $\gamma_ie_V=\lambda_i e_{I_i}$. Passing to a subsequence we may assume $I_i=I$ for all $i$. Now $\gamma_i^{-1}$ preserves integral points and hence $\lambda_i \in \Z$, implying $|\lambda_i| \geq 1$. This in turn implies that $||e_V||=|\lambda_i|||\gamma_i e_I||$. This is a contradiction because the right hand side is unbounded as $i$ varies.
  \end{proof}
  
  It remains to take care of the reductive case. We need the following lemma.
  
  \begin{lem}\label{reductiveLem1}
  Let $T$ be a maximal torus of $G$ and $S$ is a subtorus, the set
  $Z(S,T)(\C)$  $:=\{g\in G(\C)\, , \, gS(\C)g^{-1}\in T(\C)\}$ 
  is equal to 
  $N_G(T)(\C)Z_G(S)(\C)$. 
  \end{lem}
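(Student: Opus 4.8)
The plan is to prove $Z(S,T)(\C) = N_G(T)(\C) Z_G(S)(\C)$ by two inclusions, the reverse inclusion being immediate and the forward inclusion being the substance.

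For the reverse inclusion: if $g = n z$ with $n \in N_G(T)(\C)$ and $z \in Z_G(S)(\C)$, then since $S \subset T$ we have $z S z^{-1} = S \subset T$, and then $n S n^{-1} \subset n T n^{-1} = T$, so $g S g^{-1} \subset T$, i.e. $g \in Z(S,T)(\C)$.

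For the forward inclusion: take $g \in Z(S,T)(\C)$, so $g S g^{-1}$ is a subtorus of $T$. Both $S$ and $g S g^{-1}$ are subtori of $T$. The key point is that $T$ is a maximal torus of the reductive group $Z_G(S)$ as well — or more precisely, I want to produce a single element of $G(\C)$ conjugating $T$ to $T$ while fixing the situation. Here is the cleanest route: set $S' := g S g^{-1} \subset T$. Both $S$ and $S'$ sit inside $T$, and $g$ conjugates $S$ to $S'$. Now consider the reductive group $H' := Z_G(S')$; it contains $T$ as a maximal torus (since $T$ is a maximal torus of $G$ centralizing $S' \subset T$). Likewise $g Z_G(S) g^{-1} = Z_G(S')= H'$, and $g T g^{-1}$ is a maximal torus of $H'$ because $T$ is a maximal torus of $Z_G(S) = g^{-1} H' g$. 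Since all maximal tori of the connected reductive group $H'$ are conjugate under $H'(\C)$, there is $h \in H'(\C) = Z_G(S')(\C)$ with $h (g T g^{-1}) h^{-1} = T$, i.e. $n := h g \in N_G(T)(\C)$. Then $g = h^{-1} n$, and it remains to check $h^{-1} \in Z_G(S)(\C)$: but $h \in Z_G(S')(\C)$ centralizes $S' = g S g^{-1}$, so $g^{-1} h^{-1} g$ centralizes $S$; this is not quite $h^{-1}$ itself, so I need to reorganize. The fix is to conjugate on the correct side: instead write $g^{-1} \in Z(S', S)(\C)$ by symmetry... Actually the robust argument is: $h g \in N_G(T)(\C)$ and $h \in Z_G(S')(\C)$, so $g = h^{-1}(hg) = (h^{-1})\, n$ with $n \in N_G(T)(\C)$ and $h^{-1} \in Z_G(S')(\C) = g Z_G(S)(\C) g^{-1}$. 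Thus $h^{-1} = g z g^{-1}$ for some $z \in Z_G(S)(\C)$, giving $g = g z g^{-1} n$, hence $z^{-1} = g^{-1} n$... this shows $g^{-1} n \in Z_G(S)(\C)$, so $n' := g^{-1} n$ lies in $Z_G(S)(\C)$ and $g = n (n')^{-1}$ with $n \in N_G(T)(\C)$ — but I need the normalizer factor and centralizer factor in the stated order $N_G(T) Z_G(S)$. Since $N_G(T)(\C) Z_G(S)(\C)$ and $Z_G(S)(\C) N_G(T)(\C)$ — hmm, these need not obviously coincide, so I should be careful to land in the stated order; running the same argument with the roles of $S, S'$ and $g, g^{-1}$ swapped, or conjugating $S$ (not $S'$) and using that $T$ is maximal in $Z_G(S)$, gives directly $g = n z$ with $n \in N_G(T)(\C)$, $z \in Z_G(S)(\C)$.

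The main obstacle is bookkeeping the order of the factors and making sure the element that conjugates maximal tori lands in the right centralizer; the only nontrivial input is the standard theorem that all maximal tori of a connected reductive group over an algebraically closed field are conjugate, applied to $Z_G(S')$ (or $Z_G(S)$), which is reductive by Corollary 7.6.4 of \cite{Spr98} as already noted in the text. Everything else is elementary manipulation of conjugations.
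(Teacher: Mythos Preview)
Your argument is correct, but you lose track of your own bookkeeping at the crucial moment. When you reach $g = n(n')^{-1}$ with $n \in N_G(T)(\C)$ and $n' \in Z_G(S)(\C)$, you are done: $(n')^{-1}$ also lies in $Z_G(S)(\C)$, so this \emph{is} a factorization in $N_G(T)(\C)\, Z_G(S)(\C)$, in exactly the stated order. There is no need to worry about whether $N_G(T)Z_G(S)$ coincides with $Z_G(S)N_G(T)$. Your closing suggestion to work instead inside $Z_G(S)$ (where both $T$ and $g^{-1}Tg$ are maximal tori, giving $z \in Z_G(S)(\C)$ with $zg^{-1} \in N_G(T)(\C)$, hence $g = (zg^{-1})^{-1}z$) is the cleaner route and avoids the detour entirely; either version is a complete proof.

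The paper's proof is different and more concrete, exploiting that $G = \SL_N$. It diagonalizes $T$, picks a generic $s \in S(\C)$ with $Z_G(s) = Z_G(S)$, observes that $s$ and $AsA^{-1}$ are diagonal matrices with the same multiset of eigenvalues, and therefore finds a permutation matrix $w \in N_G(T)(\C)$ with $w^{-1}AsA^{-1}w = s$; then $w^{-1}A \in Z_G(s) = Z_G(S)$ and $A = w\,(w^{-1}A)$ is the desired factorization. Your approach, by contrast, uses only the conjugacy of maximal tori in the connected reductive group $Z_G(S)$ (or $Z_G(S')$), and so works verbatim for any connected reductive $G$, not just $\SL_N$. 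The paper's argument is shorter in this specific context; yours is the structurally general one.
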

  
  \begin{proof}
  $N_G(T)(\C)N_G(S)(\C) \subset Z(S,T)(\C)$ is easy, we only need to show the other inclusion. So take $A\in Z(S,T)(\C)$. As we are working over an algebraically closed field, we may assume $T$ consists of diagonal matrices. Take $s \in S(\C)$ generic in the sense that $Z_G(S)=Z_G(s)$. By assumption, $AsA^{-1}$ and $s$ are both diagonalized and hence we may take $w\in N_G(T)(\C)$ a permutation matrix such that $w^{-1}AsA^{-1}w=s$. Therefore $w^{-1}A \in Z_G(s)(\C)=Z_G(S)(\C)$ and $A\in N_G(T)(\C)$.
  \end{proof}
  
  \begin{lem}\label{reductiveLem2}
  Suppose a sequence $\{g_i\}$ in $G(\R)$ diverges in $G(\R)/Z_G(S)(\R)$, then it also diverges in $G(\C)/Z_G(S)(\C)$.
  \end{lem}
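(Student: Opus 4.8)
\textbf{Proposal for the proof of Lemma \ref{reductiveLem2}.}

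The statement is that real divergence in $G(\R)/Z_G(S)(\R)$ forces complex divergence in $G(\C)/Z_G(S)(\C)$. The plan is to argue by contrapositive: assume $\{g_i\}$ does \emph{not} diverge in $G(\C)/Z_G(S)(\C)$, so after passing to a subsequence there are $z_i \in Z_G(S)(\C)$ with $g_i z_i \to g_\infty$ in $G(\C)$; I want to produce a bounded sequence in $Z_G(S)(\R)$ witnessing non-divergence of $\{g_i\}$ in the real quotient. The key point is that $Z_G(S)$ is a connected reductive $\R$-group (indeed $\Q$-group, by Corollary 7.6.4 of \cite{Spr98} as already noted in the excerpt), so one has good structural control of $Z_G(S)(\C)$ relative to $Z_G(S)(\R)$.

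The concrete mechanism I would use is a Cartan-type decomposition. Fix a maximal compact subgroup $K$ of $G(\R)$ containing the relevant $\SO$; then $G(\R) = K \cdot (Z_G(S)(\R))$ is false in general, so instead I would work inside $H := Z_G(S)$ directly. Write $g_i = g_i z_i z_i^{-1}$; since $g_i z_i$ converges in $G(\C)$ it is bounded there, and the whole content is to show $z_i^{-1}$ can be replaced by something bounded in $H(\R)$ without destroying boundedness of the product in $G(\R)$. Here is the cleaner route: the map $H(\C)/H(\R) \to G(\C)/G(\R)$ need not be proper, but we do not need that. Instead, use that $g_i = \delta_i' \cdot (\text{real part})$ is not available either. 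Let me reorganize around the genuinely robust fact: \emph{$H(\C)$ carries a Cartan decomposition $H(\C) = H(\R)\, A_H\, H(\R)$-type structure} is again overkill. The efficient statement is: for a reductive $\R$-group $H$, the inclusion $H(\R) \hookrightarrow H(\C)$ is a \emph{cocompact} statement is false.

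So the approach I would actually commit to is via \emph{real points of a quotient variety}. The quotient $G/Z_G(S)$ is an affine $\Q$-variety (geometric quotient, since $Z_G(S)$ is reductive — this is where reductivity of the centralizer is essential), and the orbit map $G \to G/Z_G(S)$, $g \mapsto g Z_G(S)$, is a morphism of $\Q$-varieties. Divergence of $\{g_i\}$ in $G(\R)/Z_G(S)(\R)$ means the images $\pi(g_i)$ leave every compact subset of the \emph{real locus} $(G/Z_G(S))(\R)$ — provided one first checks that $G(\R) \to (G/Z_G(S))(\R)$ has closed image and that the fibers over real points are single $Z_G(S)(\R)$-orbits (this uses $H^1(\R, Z_G(S))$ being finite, true for reductive groups, possibly after passing to a subsequence landing in one Galois-cohomology class, which only discards a bounded factor). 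Granting that, $\{\pi(g_i)\}$ is unbounded in $(G/Z_G(S))(\R) \subset (G/Z_G(S))(\C)$, hence unbounded in $(G/Z_G(S))(\C)$, which is exactly divergence of $\{g_i\}$ in $G(\C)/Z_G(S)(\C)$ (again using the fiber description over $\C$, which is automatic since $\C$ is algebraically closed).

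\textbf{Main obstacle.} The delicate step is the identification of divergence in the homogeneous space with divergence in the real/complex points of the quotient \emph{variety}: one must rule out the pathology where $\pi(g_i)$ stays bounded in $(G/Z_G(S))(\R)$ while $g_i Z_G(S)(\R)$ escapes — i.e. the map $G(\R)/Z_G(S)(\R) \to (G/Z_G(S))(\R)$ being non-proper or non-surjective onto its image's closure. This is controlled by the finiteness of the Galois cohomology $H^1(\mathrm{Gal}(\C/\R), Z_G(S)(\C))$ together with properness of orbit maps for reductive-group actions on affine varieties; after passing to a subsequence so that all the $g_i$ lie over a single cohomology class, the obstruction disappears. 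I would write this up carefully, citing Borel--Serre or Platonov--Rapinchuk for the relevant properness and finiteness statements, and remark that an entirely elementary alternative is available when $S$ (hence $Z_G(S)$) is one of the explicit tori of Section \ref{section2}, by exhibiting the quotient coordinates as explicit minors — but the variety-theoretic argument is the one that generalizes cleanly.
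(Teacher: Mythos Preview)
Your final approach via the affine quotient variety $G/Z_G(S)$ and Galois cohomology is correct, but the paper's proof is far shorter and avoids all the cohomological bookkeeping you flag as the ``main obstacle.'' The paper simply picks a nonzero $v\in\Lie(S(\R))$ (implicitly generic, so $Z_G(v)=Z_G(S)$) and observes that the adjoint orbit $\Ad(G)v$ is a closed subset of the vector space $\Lie(G)$; hence $g_i$ diverges in $G(\R)/Z_G(S)(\R)$ iff $\Ad(g_i)v$ diverges in $\Lie(G(\R))$. Writing any nonzero $w\in\Lie(S(\C))$ as $w=v_1+iv_2$ with $v_j\in\Lie(S(\R))$, one has $\Ad(g_i)w=\Ad(g_i)v_1+i\Ad(g_i)v_2$, and since at least one $v_j$ is nonzero, divergence in the real part forces divergence in $\Lie(G(\C))$, hence divergence of $g_i$ in $G(\C)/Z_G(S)(\C)$.

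The difference is that the paper chooses a \emph{specific} affine embedding of $G/Z_G(S)$---namely the adjoint orbit sitting inside a vector space---for which the comparison of real and complex points is trivial linear algebra. Your abstract quotient-variety argument is morally the same, but by not naming the embedding you are forced to invoke finiteness of $H^1(\Gal(\C/\R),Z_G(S))$ and properness of orbit maps to relate $G(\R)/Z_G(S)(\R)$ to $(G/Z_G(S))(\R)$. That machinery is valid and does generalize better (e.g.\ to homogeneous spaces not of the form $G/Z_G(S)$), but for the lemma at hand it is substantial overkill: the three-line adjoint-orbit argument suffices. The rambling first half of your proposal, where you try and discard Cartan decompositions, should simply be deleted.
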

  
  \begin{proof}
  It suffices to show for $v_{\neq 0}\in \text{Lie}(S(\C))$, 
  $ \Ad(g_i)v $ diverges. 
  Indeed such a $v$ is a semisimple element and can be written as 
  $v=v_1+iv_2$ with $v_i ,v_2 \in \Lie(S(\R))$ and semisimple. 
  Hence $ \Ad(g_i)(v_1+iv_2)=  \Ad(g_i )v_1 + i \Ad(g_i)v_2 $ 
  diverges and we are done.
  \end{proof}
  
  \begin{prop}\label{reductiveQ}
  Take $T\leq G$ a maximal $\Q$-torus, $\gamma_i$ same as above. Then $\bigcup_i \gamma_i T \gamma_i^{-1}$ can not centralize any nontrivial $\Q$-torus $S$.
  \end{prop}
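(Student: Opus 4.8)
The plan is to argue by contradiction: suppose some nontrivial $\Q$-torus $S$ is centralized by $\gamma_i T \gamma_i^{-1}$ for every $i$. Since $Z_G(S)$ is a reductive $\Q$-subgroup containing the maximal torus $\gamma_i T \gamma_i^{-1}$, each $\gamma_i T \gamma_i^{-1}$ is a maximal torus of $Z_G(S)$, and in particular $S$ lies in the center of $Z_G(S)$, hence $S \subset \gamma_i T \gamma_i^{-1}$ for all $i$. Equivalently, $\gamma_i^{-1} S \gamma_i \subset T$, so $\gamma_i \in Z(S,T)(\C)$ in the notation of Lemma \ref{reductiveLem1}, which gives $\gamma_i = w_i z_i$ with $w_i \in N_G(T)(\C)$ and $z_i \in Z_G(S)(\C)$.

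Next I would exploit the divergence hypothesis. We are given that $\{\gamma_i\}$ diverges in $G(\R)/Z_G(S')(\R)$ for every nontrivial $\Q$-subtorus $S'$ of $T$; by Lemma \ref{reductiveLem2} this upgrades to divergence in $G(\C)/Z_G(S')(\C)$. The key point is to produce a fixed nontrivial $\Q$-subtorus $S'$ of $T$ such that $\gamma_i \in Z_G(S')(\C)$ up to a bounded (finite) error, contradicting divergence. Write $S'' := \gamma_i^{-1} S \gamma_i \subset T$; a priori $S''$ depends on $i$, but there are only finitely many subtori of $T$ that arise as $\gamma_i^{-1}S\gamma_i$ — indeed $N_G(T)(\C)/Z_G(T)(\C)$ is the finite Weyl group, and if $\gamma_i = w_i z_i$ as above then $\gamma_i^{-1} S \gamma_i = z_i^{-1} w_i^{-1} S w_i z_i$; since $z_i$ centralizes $S$... wait, $z_i$ centralizes $S$ but we need $S \subset \gamma_i T \gamma_i^{-1}$, so $\gamma_i^{-1} S \gamma_i \subset T$ and equals $z_i^{-1}(w_i^{-1} S w_i) z_i$ — here $w_i^{-1} S w_i$ need not lie in $T$. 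The cleaner route: the conjugates $w^{-1} S_0 w$ for $w$ in the finite set of Weyl representatives, where $S_0$ ranges over the finitely many subtori of $T$, form a finite family; pass to a subsequence so that $\gamma_i^{-1} S \gamma_i = S'$ is a \emph{fixed} $\Q$-subtorus of $T$ (rationality holds because $S$ and the relevant conjugation data can be taken over $\Q$, or one observes $S' = \gamma_i^{-1} S \gamma_i$ is independent of $i$ hence equals a $\Q$-torus). Then $\gamma_i \in Z(S', S)(\C)$, and since $S$ is fixed and $S'$ is fixed, $\gamma_i$ normalizes neither necessarily, but $\gamma_i Z_G(S')(\C) \gamma_i^{-1} = Z_G(S)(\C)$, so $\gamma_i = n_i z_i'$ where $n_i$ runs through a coset of $Z_G(S')(\C)$ determined by a single element (any fixed $\gamma_1$ works: $\gamma_i \gamma_1^{-1}$ conjugates $S$ to $S$, i.e. lies in $Z(S,S)(\C) = N_{G(\C)}(S)$... ). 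This shows $\{\gamma_i\}$ stays in a single coset $\gamma_1 N_{G}(S)(\C)$... not quite bounded.

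The correct final step: $\gamma_i^{-1} S \gamma_i = S'$ fixed implies $\gamma_i \gamma_1^{-1}$ normalizes $S$, and the image of $N_G(S)/Z_G(S)$ is a finite group (automorphisms of the torus $S$), so after passing to a further subsequence $\gamma_i \gamma_1^{-1} \in Z_G(S)(\C) \cdot (\text{fixed element})$, i.e. $\gamma_i \in Z_G(S)(\C) \cdot c$ for a fixed $c$; by symmetry $\gamma_i^{-1} \in Z_G(S')(\C) \cdot c'$. Hence $\{\gamma_i\}$ has bounded image in $G(\C)/Z_G(S')(\C)$ (the finite set of cosets given by $N_G(S')(\C)/Z_G(S')(\C)$), contradicting Lemma \ref{reductiveLem2} applied to the nontrivial $\Q$-torus $S'$. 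I expect the main obstacle to be the bookkeeping that makes $S'$ a \emph{fixed} $\Q$-torus independent of $i$ after passing to a subsequence — one must check that the finitely-many-subtori argument (using that $T$ has finitely many algebraic subtori that are intersections of kernels of characters, together with finiteness of the Weyl group) genuinely applies and that rationality survives, since divergence in $G/Z_G(S')$ is only hypothesized for $\Q$-subtori $S'$ of $T$. All remaining steps are formal manipulations with centralizers and normalizers of tori over $\C$ combined with Lemmas \ref{reductiveLem1} and \ref{reductiveLem2}.
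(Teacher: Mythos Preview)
Your overall strategy matches the paper's: produce a fixed nontrivial $\Q$-subtorus $S'\subset T$ such that $\gamma_i$ is eventually constant in $G(\C)/Z_G(S')(\C)$, then invoke Lemma~\ref{reductiveLem2}. However, two points in your write-up need repair.

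First, your invocation of Lemma~\ref{reductiveLem1} in the opening paragraph is not legitimate as stated. That lemma requires the subtorus $S$ to sit inside the maximal torus $T$ (its proof uses that a generic $s\in S$ is already diagonal). In your situation $S\subset\gamma_iT\gamma_i^{-1}$, not $S\subset T$, so writing $\gamma_i=w_iz_i$ with $w_i\in N_G(T)(\C)$ and $z_i\in Z_G(S)(\C)$ does not follow. The paper sidesteps this cleanly by \emph{first} setting $S':=\gamma_1^{-1}S\gamma_1\subset T$ (a $\Q$-torus, since $\gamma_1\in G(\Z)$) and then applying Lemma~\ref{reductiveLem1} to $\gamma_i^{-1}\gamma_1\in Z(S',T)(\C)$, obtaining $\gamma_i^{-1}\gamma_1=w_iz_i$ with $w_i\in N_G(T)(\C)$ and $z_i\in Z_G(S')(\C)$. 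Finiteness of $N_G(T)(\C)/T(\C)$ then lets one take $w_i=w_0$ constant after a subsequence.

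Second, your justification that ``there are only finitely many subtori of $T$ that arise as $\gamma_i^{-1}S\gamma_i$'' is not right as written: a torus of dimension $\geq 2$ has infinitely many subtori, so ``intersections of kernels of characters'' does not give finiteness. What is true (and what the paper's route makes transparent) is that all the $\gamma_i^{-1}S\gamma_i$ are $N_G(T)$-conjugate inside $T$, hence lie in a single finite Weyl-group orbit. Once $w_i=w_0$ is fixed, the relevant torus is $w_0S'w_0^{-1}\subset T$; its $\Q$-rationality is checked by rewriting it as $\gamma_i^{-1}\gamma_1 S'\gamma_1^{-1}\gamma_i=\gamma_i^{-1}S\gamma_i$, a $\Q$-conjugate of a $\Q$-torus. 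Your ``correct final step'' (using finiteness of $N_G(S)/Z_G(S)$ and then conjugating $Z_G(S)$ to $Z_G(S')$ via $\gamma_1$) does reach the same conclusion, but only after you have already fixed $S'$; the paper's use of Lemma~\ref{reductiveLem1} handles both the finiteness and the identification of $S'$ in one stroke.
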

  
  \begin{proof}
  Assume the proposition is false and such an $S$ exists. Let $S':= \gamma_1^{-1}S\gamma_1$. Then $\gamma_1^{-1}S\gamma_1 \subset T$ and is centralized by and hence contained in $\gamma_1^{-1} \gamma_i T \gamma_i^{-1} \gamma_1 $ for all $i$. In other words conjugation by $\gamma_i^{-1}\gamma_1$ takes $S'$ into $T$. 
  
  By Lemma \ref{reductiveLem1}, $\gamma_i^{-1}\gamma_1 =w_i z_i$ for some 
  $w_i \in N_G(T)(\C)$ and $z_i \in Z_G(S')(\C)$. But $N_G(T)(\C)$ is finite modulo $T(\C) \subset Z_G(S')(\C)$, we may therefore assume $w_i$ comes from a fixed finite set and by passing to a subsequence, we may further assume $w_i=w_0$ constantly.

  Now $\gamma_i=\gamma_1 z_i^{-1}w_0^{-1} = \gamma_1 w_0^{-1} (w_0 z_i w_0^{-1}) $. 
  And $w_0 z_i w_0^{-1} \in Z_G(w_0^{-1}S'w_0)(\C)$ hence $\gamma_i$ remains constant in $G(\C)/ Z_G(w_0^{-1}S'w_0)(\C) $. 
  By definition of $w_0$, $w_0^{-1}S'w_0$ is a subtorus of $T$.
  It is also defined over $\Q$ because $w_0 = z_i \gamma_1^{-1} \gamma_i$ and $w_0^{-1}S'w_0 =  \gamma_i^{-1} \gamma_1 S' \gamma_1^{-1} \gamma_i $. By Lemma \ref{reductiveLem2} this contradicts our assumption that $\gamma_i$ is supposed to diverge in such a quotient.

  \end{proof}

\section{Restriction of scalars}\label{section5}

In this section prove Theorem \ref{theorem2}. 
We don't require $M=\Q$ anymore. $G= \SL_N$ is viewed as an algebraic group defined over $M$ and we let $G':=R_{M/\Q}G$. Similarly, $T':=R_{M/\Q}T$. It is known that $T'$ is a maximal torus of $G'$ defined over $\Q$ and every maximal torus of $G'$ defined over $\Q$ arises this way 
(see for instance the appendix of \cite{ConGabPra15}).
Let $\Gamma' \subset G'(\R)$ be a lattice commensurable with $G'(\Z)=G(\OO_M)$.

Take a sequence $\{g_i\} \subset G'(\R)$ that diverges in $G'(\R)/Z_{G'}S'(\R)$ for all nontrivial $\Q$-subtorus $S'$ of $T'$ . 
We need to show 
$\lim_i(g_i)_*[\mu_{T'}] = [\mu_{G'}]$. 

The basic structure of the proof remains the same as Section \ref{section2}, \ref{section3} and \ref{section4}. For this reason, at most places we only indicate the necessary modifications on the statement and omit the proof. However, the proof of the non-divergence requires some extra argument and we shall be more careful. 

\subsection{Preparations}
We want to collect some notations and lemmas on restriction of scalars here. See \cite{Spr98} or \cite{PlaRap94} for a reference.

Take $\{w_1,...,w_{m_0}\}$ to be a basis for $\OO_M$ as a free $\Z-$Module. 
Let $\{\tau_1,...,\tau_{m_0}\}$ be a full set of representatives in $\Gal(\overline{\Q}/\Q)$ of $\Gal(\overline{\Q}/\Q)/\Gal(\overline{\Q}/M)$.
We let $\{\alpha_1,...,\alpha_{r_0}\}$ be the set of real embeddings of $M$ 
and $\{\beta_1,...,\beta_{s_0}\}\sqcup \{\overline{\beta_1},...,\overline{\beta_{s_0}}\}$ 
be conjugate pairs of complex embeddings of $M$. 
So $r_0+2s_0 = m_0$ and
we may identify $\alpha_i$'s, $\beta_i$'s with $\{\tau_1,..,\tau_{m_0}\}$.

Take $V$ to be $M^N$ we have been working with. $V':=\Res_{M/\Q}V$ is defined to be $V'(\R)=V(\R)^{\oplus r_0}\bigoplus V(\C)^{\oplus s_0}$ as a ($m_0 N$-dimensional) real vector space with an integral($\Z$) structure given by the following basis:
\begin{equation*}
    \big\{     (\alpha_1(w_k e_j),...,\beta_{s_0}(w_k e_j)   )\, \big\vert \, 
    k\in \{1,...,m_0\},
    j\in \{1,...,N\}
    \big\}
\end{equation*}

Actually, $V'$ is not so different from $V$ in that $V(M)\cong V'(\Q)$, $V(\OO_M)\cong V'(\Z)$ where the bijection is given by $v \mapsto v':=(\alpha_1v,...,\beta_{s_0}v)$. And $V'(\Q)$ inherits an $M$-module structure via this bijection, $w\cdot (v_1,...,v_{m_0})= (\alpha_1(w)v_1,...,\beta_{s_0}(w)v_{r_0+s_0})$ for $w\in M$ and $(v_1,...,v_{m_0})\in V'(\Q)$. There is a crucial difference, however, between $\bigwedge^aV$ and $\bigwedge^a V'$. For instance, when $m_0 > 1$ and $a=N+1$, the former space is reduced to $\{0\}$ while the latter is not.

For $v=v_1\wedge...\wedge v_a \in \bigwedge^a V(M)$, define a "norm" map $N_{M/\Q}: \bigwedge^a V(M) \to \bigwedge^{am_0} V'(\Q) $ (it also sends $\bigwedge^a V(\OO_M)$ to $\bigwedge^{am_0} V'(\Z)$) by


\begin{equation*}
\begin{aligned}
    &N_{M/\Q}(v_i)\\
    :=& w_1 \cdot v'_i \wedge w_2 \cdot v'_i \wedge ...\wedge w_{m_0}\cdot v'_i \\
    =&  (\alpha_1(w_1v_i),...,\beta_{s_0}(w_1v_i))\wedge ...\wedge (\alpha_1(w_{m_0}v_i),...,\beta_{s_0}(w_{m_0}v_i))\\
    =& \Big( \alpha_1(w_1)(\alpha_1v_i,0,...,0)+...+ \alpha_{r_0}(w_1)(0,...,\alpha_{r_0}v_i,0...)+ \Re(\beta_1 w_1)(0,...,\beta_1v_i,0,...)+ \\
    & \Im(\beta_1 w_1)(0,...,i\beta_1v_i,0,...)+...+ \Im(\beta_{s_0} w_1)(0,...,0,i\beta_{s_0}v_i) \Big) \wedge....\wedge \\
    &\Big( \alpha_1(w_{m_0})(\alpha_1v_i,0,...,0)+...
    +  \Im(\beta_{s_0} w_{m_0})(0,...,0,i\beta_{s_0}v_i)\Big)  \\
    =&  \frac{1}{2i}\det{(\tau_k(w_j))_{k,j}} (\alpha_1v_i,0,...,0) \wedge...\wedge (0,...,0,i\beta_{s_0}v_i)
\end{aligned}
\end{equation*}
For the last equality we used the fact that for two complex number $z,w$, 
$2i \cdot \text{det}\begin{psmallmatrix} \Re z & \Im z\\ \Re w & \Im w\end{psmallmatrix} = \text{det}\begin{psmallmatrix} z & \overline{z}\\ w & \overline{w}\end{psmallmatrix} $.

\begin{equation*}
\begin{aligned}
    &N_{M/\Q}(v)\\
    :=& N_{M/\Q}(v_1)\wedge...\wedge N_{M/\Q}(v_{a})\\
    =& (-1)^{a(a-1)m_0(m_0-1)/4} (\frac{1}{2i}\det{(\tau_k(w_j))_{k,j}})^a (\alpha_1v,0,...,0)\wedge...\wedge(0,...,0,\beta_1 v,0,...)\wedge \\
    &(0,...,0,i\beta_1 v ,0,...)\wedge...\wedge (0,...,0,i\beta_{s_0}v)
\end{aligned}
\end{equation*}

We now explain the motivation to define this $N_{M/\Q}$ function. for $v_1,...,v_a$ in $V(M)$ independent over $M$, we take the $\OO_M$-module spanned by them and regard it as a $\Z$-module sitting inside $V'(\Z)$, then it has a set of basis given by $\{w_i\cdot v'_j\}_{i,j}$ whose exterior product gives $N_{M/\Q}(v)$ up to sign.

We equip $V'(\R)=V(\R)^{\oplus r_0}\bigoplus V(\C)^{\oplus s_0}$ with the product Euclidean metric under which 
$||(v_1,...,v_{r_0+s_0})||^2= \sum ||v_i||^2 $ . 
Under this metric, we can talk about covolume of a $\Z$-submodule in the $\R$-span of this lattice. Taking exterior products of a set of (real) orthonormal basis induces metrics on all exterior powers of $V'$.
We have if $\Lambda'= \Z v'_1 \oplus...\oplus \Z v'_{a}$ then $\text{covol}(\Lambda')=||v'_1\wedge...\wedge v'_{a}||$. Also note that 
\begin{equation*}
    ||N_{M/\Q}v||= |\frac{1}{2}\det{(\tau_k(w_j))_{k,j}} |^a \prod_{i=1,..,m_0}||\tau_iv||
\end{equation*}

\begin{lem}\label{decreseVol}[Decrease of covolume]
There is a constant $\kappa_5>0$ depending only on the field $M$ and dimension $N$ of the vector space such that for all $v\in V(\OO_M)$, 
\begin{equation*}
    ||N_{M/\Q}v||\leq \kappa_4 ||v'||^{m_0}
\end{equation*}
\end{lem}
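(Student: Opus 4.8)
The plan is to reduce everything to the explicit formula for $\|N_{M/\Q}v\|$ displayed just before the statement, namely
\begin{equation*}
\|N_{M/\Q}v\| = \Bigl|\tfrac12\det(\tau_k(w_j))_{k,j}\Bigr|^{a}\prod_{i=1}^{m_0}\|\tau_i v\|,
\end{equation*}
and then to compare the right-hand side with $\|v'\|^{m_0}$, where $v'=(\alpha_1 v,\dots,\beta_{s_0}v)$ so that $\|v'\|^2=\sum_{i=1}^{r_0+s_0}\|\alpha_i v\|^2$ (reading $\alpha_i$ for $i\le r_0$ and $\beta_{i-r_0}$ for $i>r_0$). The discriminant-type factor $|\det(\tau_k(w_j))|$ depends only on the chosen $\Z$-basis $\{w_1,\dots,w_{m_0}\}$ of $\OO_M$, hence only on $M$, so it may be absorbed into the constant $\kappa_5$; the exponent $a\le N$ is bounded in terms of $N$, so $|\tfrac12\det(\tau_k(w_j))|^{a}$ is bounded by a constant depending only on $M$ and $N$. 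Thus the lemma reduces to the purely linear-algebra inequality
\begin{equation*}
\prod_{i=1}^{m_0}\|\tau_i v\| \;\le\; C(M,N)\,\Bigl(\sum_{i=1}^{r_0+s_0}\|\alpha_i v\|^2\Bigr)^{m_0/2}.
\end{equation*}

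First I would note that each Galois conjugate $\tau_i v$ (for $v\in\bigwedge^a V(\OO_M)$, or indeed any $v\in\bigwedge^a V(M)$ after extending scalars) is, up to a complex scalar of modulus $1$ coming from the identification of $\C$-conjugate embeddings, equal to one of the $\alpha_k v$ or $\overline{\beta_k v}$; in particular $\|\tau_i v\|$ equals one of the $m_0$ numbers among $\|\alpha_1 v\|,\dots,\|\alpha_{r_0}v\|,\|\beta_1 v\|,\|\beta_1 v\|,\dots,\|\beta_{s_0}v\|,\|\beta_{s_0}v\|$ (each complex place contributing its norm twice). Then the inequality follows from the elementary AM–GM / norm-equivalence estimate: for nonnegative reals $x_1,\dots,x_{m_0}$ one has $\prod x_i \le \bigl(\tfrac1{m_0}\sum x_i\bigr)^{m_0}\le \bigl(\sum x_i^2\bigr)^{m_0/2}\cdot m_0^{-m_0/2}\cdot(\text{const})$ — more precisely $\prod_{i}x_i \le \bigl(\max_i x_i\bigr)^{m_0}\le \bigl(\sum_i x_i^2\bigr)^{m_0/2}$, and since each $\|\tau_i v\|$ is one of the $\|\alpha_k v\|,\|\beta_k v\|$, taking the product over all $m_0$ conjugates is bounded by $\bigl(\sum_k\|\alpha_k v\|^2+\sum_k\|\beta_k v\|^2\bigr)^{m_0/2}=\|v'\|^{m_0}$. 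Collecting the constants, set $\kappa_5:=\bigl|\tfrac12\det(\tau_k(w_j))_{k,j}\bigr|^{N}$ (or its maximum over $a\le N$, which is the same since the base may be $\ge$ or $\le 1$), which depends only on $M$ and $N$.

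I do not expect a genuine obstacle here; the only point requiring a little care is the bookkeeping matching the $m_0$ abstract Galois conjugates $\tau_i v$ to the $r_0$ real and $2s_0$ complex "coordinates" of $v'$, i.e. checking that each complex place is counted with multiplicity two and that $\|\tau_i v\|$ really is insensitive to the choice of representative within a conjugate pair (this uses that the Euclidean norm on $V(\C)$ is invariant under complex conjugation of coordinates). Once that correspondence is fixed, the inequality $\prod_i\|\tau_i v\|\le\|v'\|^{m_0}$ is immediate, and the worst constant $\kappa_5$ is harmless.
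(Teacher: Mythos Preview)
Your proposal is correct and follows essentially the same route as the paper: start from the explicit formula $\|N_{M/\Q}v\|=|\tfrac12\det(\tau_k(w_j))|^{a}\prod_i\|\tau_i v\|$, absorb the discriminant factor (with exponent $a\le N$) into the constant, and then bound $\prod_i\|\tau_i v\|$ by $\|v'\|^{m_0}$ via an elementary inequality. The only cosmetic difference is that the paper applies AM--GM directly, $\prod_i\|\tau_i v\|^2\le(\tfrac1{m_0}\sum_i\|\tau_i v\|^2)^{m_0}$, whereas you go through $\prod_i\|\tau_i v\|\le(\max_i\|\tau_i v\|)^{m_0}\le\|v'\|^{m_0}$; both are equally valid and the bookkeeping point about complex places counting twice that you flag is exactly the right thing to check.
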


\begin{proof}
Indeed,
\begin{equation*}
    \begin{aligned}
         \text{LHS}^2 &= |\frac{1}{2}\det{(\tau_k(w_j))_{k,j}} |^2\prod_{i}||\tau_iv||^2 \\
         &\leq |\frac{1}{2}\det{(\tau_k(w_j))_{k,j}} |^2 (\sum_i ||\tau_i v||^2 /m_0)^{m_0}= \text{RHS}^2
    \end{aligned}
\end{equation*}
  where $\kappa_4>0$ is defined by the last equation.
\end{proof}

Define $\PPP_0:=\{\text{ Primitive } \Z \text{-sub-lattices in } V'(\Z)\,\}$, $\PPP_{M}:=\{\Lambda \in \PPP_0 ,\, \OO_M\Lambda \subset \Lambda \}$.
To apply the theorem of Kleinbock-Margulis we need information on all primitive lattices from $\PPP_0$. But at first we will only be able to gain information about $\PPP_M$. The following sequence of lemmas is to build relations between them.

\begin{lem} \label{lem7}
Given $\Lambda \in \PPP_0$ a primitive $\Z$-module in $V'(\Z)$. If there is a full rank $\Z$-submodule $\Lambda'\subset \Lambda$ that is preserved under $\OO_M$-action, then so is $\Lambda$, i.e. $\Lambda\in \PPP_{M}$.
\end{lem}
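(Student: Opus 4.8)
The plan is to exploit the defining property of a primitive sublattice, namely that $V'(\Z)/\Lambda$ is torsion-free (equivalently $\Lambda = \Q\Lambda \cap V'(\Z)$, where $\Q\Lambda$ denotes the $\Q$-span). First I would fix an arbitrary $w\in\OO_M$ and an arbitrary $v\in\Lambda$ and reduce the statement to showing $w\cdot v\in\Lambda$; since $\Lambda$ already carries the restriction of the $\OO_M$-module structure on $V'(\Q)$, which preserves the integral lattice $V'(\Z)$ (this is immediate from the explicit $\Z$-basis of $V'(\Z)$ recorded above), we at least know $w\cdot v\in V'(\Z)$.

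Next I would use that $\Lambda'\subset\Lambda$ has full rank, so $\Lambda/\Lambda'$ is a finite abelian group; set $n:=[\Lambda:\Lambda']$ (or any positive integer annihilating this quotient). Then $nv\in\Lambda'$, and since $\Lambda'$ is $\OO_M$-stable, $n(w\cdot v)=w\cdot(nv)\in\OO_M\cdot\Lambda'\subseteq\Lambda'\subseteq\Lambda$. Hence the image of $w\cdot v$ in $V'(\Z)/\Lambda$ is torsion; as $\Lambda$ is primitive this quotient is torsion-free, forcing $w\cdot v\in\Lambda$. As $w$ and $v$ were arbitrary, $\OO_M\Lambda\subseteq\Lambda$, i.e. $\Lambda\in\PPP_M$. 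Equivalently and more slickly, one may simply observe $\OO_M\Lambda\subseteq\Q\Lambda\cap V'(\Z)=\Lambda$, the first inclusion holding because $\OO_M\Lambda'$ already spans $\Q\Lambda$ over $\Q$ and lies in $\Lambda'\subseteq V'(\Z)$.

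I do not expect any genuine obstacle here; this is a short commutative-algebra observation. The only points requiring a little care are invoking primitivity in precisely the right form (torsion-freeness of $V'(\Z)/\Lambda$) and noting that the $\OO_M$-action on $V'(\Q)$ restricts to $V'(\Z)$, both of which were already set up in the discussion preceding the lemma.
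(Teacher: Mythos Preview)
Your proof is correct and is essentially the same as the paper's. The paper argues that $\OO_M\Lambda \subset (\Lambda'\otimes_\Z\Q)\cap V'(\Z)=\Lambda$ using that $\Lambda'$ has full rank (so $\Lambda'\otimes\Q=\Lambda\otimes\Q$), that $\OO_M$ preserves $V'(\Z)$, and that primitivity gives $\Lambda=(\Lambda\otimes\Q)\cap V'(\Z)$; this is exactly your ``slick'' alternative, and your torsion-freeness formulation is just a rephrasing of the same argument.
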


\begin{proof}
Take $\Lambda'\subset \Lambda$ as in the lemma, then we can find a natural number R s.t. $\frac{1}{R} \Lambda \subset \Lambda'$. 

$\OO_M \Lambda'\subset \Lambda'$ implies 
$\OO_M 
(\frac{1}{R} \Lambda) \bigotimes_{\Z}\Q  
= \OO_M \Lambda \bigotimes_{\Z}\Q \subset \Lambda'\bigotimes_{\Z}\Q$. 
But $\Lambda$, being primitive, is equal to $(\Lambda'\bigotimes_{\Z}\Q) \cap V'(\Z)$ and $\OO_M$ preserves $V'(\Z)$. So we are done.
\end{proof}

\begin{lem}\label{freemoduleVectCorrep}
Given $v=v_1\wedge...\wedge v_a \in \bigwedge^a V(\OO_M)$.  The $\Z$-module generated by $\{w_k\cdot v'_i\}_{k,i}$, the one corresponding to $N_{M/\Q}v$, is also a free $\OO_M$-module with basis $v'_1,...,v'_a$. Conversely all free $\OO_M$-modules in $V'(\Z)$ arise this way.
\end{lem}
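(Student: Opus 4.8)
The plan is to transport the whole statement through the $\OO_M$-linear isomorphism $v \mapsto v'$, under which $V(M) \cong V'(\Q)$ and $V(\OO_M) \cong V'(\Z)$; once this is done the lemma becomes an elementary observation about $\OO_M$-spans inside $V'(\Z)$. First I would identify ``the $\Z$-module corresponding to $N_{M/\Q}v$'' explicitly: since $\{w_1,\dots,w_{m_0}\}$ is a $\Z$-basis of $\OO_M$, the $\Z$-linear span of $\{w_k\cdot v'_i\}_{k=1}^{m_0}$ for a fixed $i$ is precisely the $\OO_M$-submodule $\OO_M v'_i$, so the module in question is $\Lambda_v := \OO_M v'_1 + \cdots + \OO_M v'_a \subset V'(\Z)$, which is visibly stable under the $\OO_M$-action since $\OO_M\cdot\OO_M = \OO_M$; this is exactly the module whose $\Z$-basis $\{w_k\cdot v'_i\}_{k,i}$ was observed, right after the definition of $N_{M/\Q}$, to wedge (up to sign) to $N_{M/\Q}v$.

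For the first assertion I may assume $v_1,\dots,v_a$ are linearly independent over $M$ (otherwise $v=0$ and there is nothing to say); then $v'_1,\dots,v'_a$ are linearly independent over $M$ in $V'(\Q)$ because $v\mapsto v'$ is $M$-linear, so any relation $\sum_i c_i v'_i = 0$ with $c_i\in\OO_M$ forces every $c_i = 0$. Hence $v'_1,\dots,v'_a$ is an $\OO_M$-basis of $\Lambda_v$, as claimed. For the converse, given a free $\OO_M$-module $\Lambda \subset V'(\Z)$ with $\OO_M$-basis $u_1,\dots,u_a$, I would use that $v\mapsto v'$ restricts to a bijection $V(\OO_M)\to V'(\Z)$ to pull each $u_j$ back to a unique $v_j\in V(\OO_M)$ with $v'_j = u_j$; since $\OO_M$-linear independence implies $M$-linear independence over the domain $\OO_M$, the wedge $v := v_1\wedge\cdots\wedge v_a \in \bigwedge^a V(\OO_M)$ is nonzero, and by construction $\Lambda = \sum_j \OO_M u_j = \sum_j \OO_M v'_j = \Lambda_v$ is the $\Z$-module attached to $N_{M/\Q}v$.

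The hard part here is essentially nil: the only points needing a moment's care are that the $\Z$-span of $\{w_k v'_i\}_k$ coincides with $\OO_M v'_i$ (because $\{w_k\}$ is a $\Z$-basis of $\OO_M$), that $v\mapsto v'$ is $\OO_M$-equivariant and carries $V(\OO_M)$ onto $V'(\Z)$ (immediate from how $V'$ and its integral structure were defined), and that $\OO_M$-linear independence of a tuple of vectors implies its $M$-linear independence, which follows by tensoring with the fraction field $M$ of $\OO_M$.
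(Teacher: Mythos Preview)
Your proposal is correct and follows essentially the same approach as the paper's proof, which is extremely terse: the paper simply notes that $\{w_k\cdot v'_i\}_{k,i}$ is $\Q$-independent for the first assertion, and for the converse pulls back an $\OO_M$-basis through $v\mapsto v'$ exactly as you do. Your write-up is a careful unpacking of these two lines, making explicit the identification of the $\Z$-span with $\sum_i \OO_M v'_i$ and the passage from $M$-independence of the $v_i$ to $\OO_M$-independence of the $v'_i$.
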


\begin{proof}
The first assertion is because $\{w_k\cdot v'_i\}_{k,i}$ is $\Q$-independent. Conversely, take a free $\OO_M$-module of rank $a$ with basis $v'_1,...,v'_a$, then it comes from $v:=v_1\wedge...\wedge v_a$.
\end{proof}

\begin{lem}\label{classnumber}
There is a constant $\kappa_5>0$  depending only on the field $M$ and dimension $N$ of $V$ such that any $\OO_M$-submodule of $V'(\Z)$ has a free $\OO_M$-submodule of index smaller than $\kappa_5$.
\end{lem}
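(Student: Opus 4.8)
\subsection*{Plan of proof of Lemma \ref{classnumber}}

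The plan is to combine the structure theory of finitely generated modules over the Dedekind domain $\OO_M$ with Minkowski's lattice-point bound. Recall that $V'(\Z)\cong V(\OO_M)=\OO_M^N$ as $\OO_M$-modules, so it suffices to treat an arbitrary $\OO_M$-submodule $\Lambda\subseteq\OO_M^N$. Since $\OO_M$ is Noetherian, $\Lambda$ is finitely generated, and being a submodule of a free module it is torsion-free, hence projective over the Dedekind domain $\OO_M$. Consequently $\Lambda$ admits a pseudo-basis: there are $M$-linearly independent vectors $v_1,\dots,v_a\in M^N$ (with $a=\mathrm{rank}_{\OO_M}\Lambda\le N$) and nonzero fractional ideals $\mathfrak{a}_1,\dots,\mathfrak{a}_a$ of $M$ with
\[
\Lambda=\mathfrak{a}_1 v_1\oplus\cdots\oplus\mathfrak{a}_a v_a .
\]

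Next I would invoke the standard consequence of Minkowski's theorem that there is a constant $C_M>0$, depending only on $M$ (one may take $C_M=(2/\pi)^{s_0}\sqrt{|d_M|}$), such that every nonzero fractional ideal $\mathfrak{a}$ of $M$ contains a nonzero element $x$ with $|\Nm_{M/\Q}(x)|\le C_M\,\mathrm{N}(\mathfrak{a})$, where $\mathrm{N}$ denotes the ideal norm extended multiplicatively to fractional ideals. Applying this to each $\mathfrak{a}_i$ produces $x_i\in\mathfrak{a}_i\setminus\{0\}$, and since $x_i\OO_M\subseteq\mathfrak{a}_i$ and $\mathrm{N}(x_i\OO_M)=|\Nm_{M/\Q}(x_i)|$, the index is $[\mathfrak{a}_i:x_i\OO_M]=|\Nm_{M/\Q}(x_i)|/\mathrm{N}(\mathfrak{a}_i)\le C_M$. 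Note also $x_i v_i\in\mathfrak{a}_i v_i\subseteq\OO_M^N$.

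Then $\Lambda_0:=\bigoplus_{i=1}^a\OO_M\,(x_i v_i)$ is a free $\OO_M$-submodule of $\Lambda$, since the $x_i v_i$ are still $M$-linearly independent, hence an $\OO_M$-basis of $\Lambda_0$. As both $\Lambda$ and $\Lambda_0$ are internal direct sums along the same lines $M v_i$, the quotient splits accordingly, so
\[
[\Lambda:\Lambda_0]=\prod_{i=1}^a[\mathfrak{a}_i:x_i\OO_M]\le C_M^{\,a}\le C_M^{\,N}.
\]
Setting $\kappa_5:=C_M^{\,N}+1$, which depends only on $M$ and $N$ because $a\le N$ while $C_M$ is intrinsic to $M$, then completes the argument.

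I do not expect a genuine obstacle here; the two points needing care are the justification that $\Lambda$ is projective and admits a pseudo-basis (exactly where the Dedekind property of $\OO_M$ is used) and the observation that the bound is uniform in $\Lambda$, which holds because the rank is at most $N$ and $C_M$ does not depend on $\Lambda$. Alternatively one could run the argument through the finiteness of the ideal class number of $M$ in place of the explicit Minkowski estimate, choosing once and for all a nonzero element of bounded index inside a fixed representative of each ideal class; the Minkowski formulation is slightly more convenient since it yields an explicit value for $\kappa_5$.
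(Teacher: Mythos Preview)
Your proof is correct. Both you and the paper start from the structure theory of finitely generated torsion-free modules over the Dedekind domain $\OO_M$: you phrase it via a pseudo-basis $\Lambda=\bigoplus_i\mathfrak{a}_iv_i$, while the paper uses the Steinitz normal form $\Lambda\cong\OO_M^{\oplus l}\oplus I$. These are equivalent reductions.

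The genuine difference is in how the ideal summands are handled. The paper invokes only the finiteness of the class group: it fixes a set of ideal-class representatives, and for each representative $I$ takes a power $I^k$ that becomes principal, so $I^k\subset I$ is free of index $N(I)^{k-1}$; the constant $\kappa_5$ is then the maximum of these finitely many indices. You instead apply Minkowski's bound to each $\mathfrak{a}_i$ to produce $x_i\in\mathfrak{a}_i$ with $[\mathfrak{a}_i:x_i\OO_M]\le C_M$, yielding the explicit value $\kappa_5=C_M^{\,N}+1$. Your approach is a bit sharper and gives a concrete constant; the paper's is more elementary in that it avoids geometry of numbers entirely. Amusingly, the alternative you sketch in your final sentence is essentially the paper's argument.
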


\begin{proof}
Take $\Lambda$ to be such an $\OO_M$-submodule of rank $l+1$. As $\OO_M$ is a Dedekind domain, by Theorem 1.32 in \cite{Nar04} and that our $\OO_M$-module is torsion-free, we see $\Lambda\cong \OO_M^{\oplus l} \bigoplus I$ as an $\OO_M$-module for some ideal $I \subset \OO_M$. Hence it suffices to find a copy of $\OO_M$ inside $I $ of index bounded by some $\kappa_5>0$.

Recall the class group of a number field is finite, hence we can find a finite set of ideals $\{I_1,...,I_k\}$ such that any other ideal maps bijectively to one of them by multiplying by some element in M. Clearly this bijection would be an $\OO_M$-module isomorphism. So to prove the lemma, it suffices to prove it for a single $I$ allowing the  index of a free submodule to depend on $I$. We can find $l \leq $ class number such that $I^l \cong \OO_M$ by finiteness of class number. So $I^k\subset I$ is a free module of index $|I/I^k|=|R/I^{k-1}|=|R/I|^{k-1}$ which is finite. So we are done. 
\end{proof}

Just as $G$ acts on $V$ we have $G'$ acts on $V'$. This gives a model of $G'$ to work with. For $g\in G(M)$ we let $g'\in G'(\Q)$ be
$(\alpha_1(g),...,\alpha_{r_0}(g),\beta_{1}(g),...,\beta_{s_0}(g) ) \in \SL_N(\R)^{\oplus r_0} \times \SL_N(\C)^{\oplus s_0}$.
The following compatibility can be checked directly:

\begin{lem}\label{lem10}
For $g\in G(M)$ and $v\in \bigwedge^a V(M)$, 
\begin{equation*}
    g' N_{M/\Q}v = N_{M/\Q}(gv)
\end{equation*}
\end{lem}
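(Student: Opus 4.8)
\textbf{Proof plan for Lemma \ref{lem10}.}

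The plan is to unwind both sides using the explicit formula for $N_{M/\Q}$ established just above and the definition of $g'$, and check that the Galois-twisted components match factor by factor. First I would reduce to the case of a pure wedge $v = v_1 \wedge \dots \wedge v_a$ with $v_i \in V(M)$; since $N_{M/\Q}$ was defined on pure wedges and extended, and since $g'$ acts linearly on $\bigwedge^{am_0} V'(\Q)$ while $g$ acts linearly on $\bigwedge^a V(M)$, both sides of the claimed identity are determined by their values on such pure wedges (one must be slightly careful here because $N_{M/\Q}$ is not linear, but it is multiplicative under wedge in the sense $N_{M/\Q}(v_1 \wedge \dots \wedge v_a) = N_{M/\Q}(v_1) \wedge \dots \wedge N_{M/\Q}(v_a)$, so it suffices to treat a single vector $v_1 \in V(M)$ and then wedge).

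So the heart of the matter is the single-vector identity $g' N_{M/\Q}(v_1) = N_{M/\Q}(g v_1)$ for $v_1 \in V(M)$. Here I would use $N_{M/\Q}(v_1) = w_1 \cdot v_1' \wedge \dots \wedge w_{m_0} \cdot v_1'$ from the displayed definition, where $v_1' = (\alpha_1 v_1, \dots, \beta_{s_0} v_1) \in V'(\Q)$ and $w \cdot (-)$ denotes the $M$-module action on $V'(\Q)$ given by $w \cdot (u_1,\dots,u_{m_0}) = (\alpha_1(w) u_1, \dots, \beta_{s_0}(w) u_{r_0+s_0})$. The key observations are: (i) the $G'$-action on $V'$ is, componentwise, the action of $\alpha_k(g)$ on the $k$-th real slot and $\beta_j(g)$ on the $j$-th complex slot, so $g' v_1' = (\alpha_1(g) \alpha_1(v_1), \dots, \beta_{s_0}(g)\beta_{s_0}(v_1)) = (g v_1)'$ since each embedding is a ring homomorphism; and (ii) the $G'$-action commutes with the $M$-module structure on $V'$, because for each embedding $\alpha_k$ (resp.\ $\beta_j$) one has $\alpha_k(g)(\alpha_k(w) u) = \alpha_k(w)(\alpha_k(g) u)$ as scalar multiplication commutes with a linear map. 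Combining (i) and (ii), $g'(w_k \cdot v_1') = w_k \cdot (g' v_1') = w_k \cdot (g v_1)'$, and taking the wedge over $k = 1, \dots, m_0$ gives $g' N_{M/\Q}(v_1) = N_{M/\Q}(g v_1)$. Wedging these single-vector identities over $i = 1, \dots, a$ and using multiplicativity of $N_{M/\Q}$ and the fact that $g'$ is an algebra map on exterior powers yields the lemma.

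I expect no serious obstacle here; this is a direct bookkeeping verification. The only mildly delicate point is keeping track of the identification $V(M) \cong V'(\Q)$, $v \mapsto v'$, and checking that under it the $G(M)$-action on $V(M)$ corresponds exactly to the $G'(\Q)$-action on $V'(\Q)$ (this is essentially the defining property of restriction of scalars), together with confirming that the sign/scalar factor $(-1)^{a(a-1)m_0(m_0-1)/4}(\tfrac{1}{2i}\det(\tau_k(w_j)))^a$ appearing in the closed form of $N_{M/\Q}(v)$ is a fixed constant independent of $v$, hence is carried along identically on both sides. Since $g'$ acts trivially on that scalar and the underlying wedge of slot-vectors transforms correctly by (i), the identity follows.
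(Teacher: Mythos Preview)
Your proposal is correct and is precisely the direct verification the paper has in mind; the paper gives no explicit proof, merely stating that the compatibility ``can be checked directly.'' Your reduction to a single vector via the multiplicativity $N_{M/\Q}(v_1\wedge\cdots\wedge v_a)=N_{M/\Q}(v_1)\wedge\cdots\wedge N_{M/\Q}(v_a)$, together with the observation that $g'(w_k\cdot v_1')=w_k\cdot(gv_1)'$ because the componentwise $G'$-action commutes with the $\OO_M$-module structure on $V'$, is exactly the intended computation.
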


\subsection{Polytope of non-divergence}

Similar to the definition \ref{defiPoly1} from Section \ref{section2},  we can define a polytope lying inside $\text{Lie}(T'_s(\R))$. Up to $\Q$-conjugacy of $T'$, we may assume the split part of the torus $T$ is defined over $\Q$. Note in this case $T'_s(\R)$ consists of $(t,t,...,t)$ with $t\in T_s(\R)$.
For $B=(\alpha_1 B,...,\alpha_{s_0}B) \in G'(\Q)$ and $\ep >0$, we define

\begin{equation*}
\begin{aligned}
    \Omega'_{B,\ep} 
    :=& \big\{ (t,...,t) \in \text{Lie}(T'_s(\R)) \,\Big\vert\,
    ||B\Delta(\exp{t}) N_{M/\Q}e_{\xi}|| \geq \ep ,\; \forall \xi \in \A_0\setminus
    \{1,...,N\} 
    \big\}\\
    =& \big\{(t,..,t)\in \text{Lie}(T'_s(\R))  \Big\vert m_0\chi_I(t)\geq \log{\frac{\ep}{|\det(\tau_iw_j)|^{|\xi|}} }  -
    \log{\prod_{i=1,...,m_0}||\tau_i Be_\xi||} ,\\
    & \quad \forall \xi \in \A_0\setminus
    \{1,...,N\} 
    \big\}
\end{aligned}
\end{equation*}

This polytope can be defined for all $B=(B_1,...,B_{r_0+s_0}) \in G'(\R)$. One simply replace $\alpha_i B$ above by $B_i$ and $\beta_i B$ by $B_{r_0+i}$.

The action of $G'$ on $V'$ gives $G'$ a representation defined over $\Q$ and by passing to a finite index subgroup we may assume $\Gamma'$ is contained in $G'(\Z)$ and preserves $V'(\Z)$. Then we have a proper map from $G'(\R)/\Gamma'$ to $\SL(V'(\R))/\SL(V'(\Z))$. To show nondivergence in $G'(\R)/\Gamma'$ then reduces to show nondivergence in $\SL(V'(\R))/\SL(V'(\Z))$ where we can apply the theorem of Kleinbock-Margulis. However we will not be able to verify the assumptions from Theorem \ref{kleMarg1}. So instead we use Theorem 4.1 from \cite{KleMar98}(c.f. Theorem 3.9 in \cite{Kle10}). Some definitions need to be made before we state it (in a form that suits our situation better). 

For $\Omega\subset \text{Lie}(T'(\R))$ nonempty open bounded, $B\in G'(\R)$ and $v\in \bigwedge^{a'}V'(\R)$, let $\phi_{\Omega,B,v}$ be the function from $3^{m_0N^2}\Omega$ to $\R$ defined by $t \mapsto ||B\exp{(t)}v||$. It is known that such a function is $(C,\alpha)-$good for some $C, \alpha$ depending on $\Omega$ but not on $B$ or $v$. 

Recall $\PPP_M$ denotes the collection of primitive lattices of $V'(\Z)$ that are invariant under $\OO_M$. For a primitive lattice $\Lambda$, we also let $\Lambda$ to denote the exterior product of a $\Z$-basis of $\Lambda$. This is well-defined up to sign. 

Fix $0<\rho<1$. For $\delta>0$, $B$, $\Omega$ as above, we say a point $t\in \Omega$ is $\delta$-protected relative to $\PPP_M$ if there exists a flag 
$\mathcal{F}\subset \PPP_M$ such that (see Section 3.3 of \cite{Kle10})
\begin{enumerate}
\item $\delta \rho^{rk(\Delta)} \leq \phi_{\Omega,B,\Delta }(t) \leq \rho^{rk(\Delta)}$ for all $\Delta \in \PPP_M$.
\item $\phi_{\Omega,B,\Delta } \geq \rho^{rk(\Delta)}$ for all $\Delta\in \PPP_M\setminus \mathcal{F}$ that is comparable to every elements of $\mathcal{F}$.
\end{enumerate}
If we replace $\PPP_M$ by the larger set $\PPP_0$, then this property would imply $B \exp{(t)} \Gamma'$ is trapped in some compact set of $G'(\R)/\Gamma'$(Proposition 3.8 of \cite{Kle10}).

\begin{thm}\label{kleMarg2}
Fix $N$ a natural number, a number field $M$ and a maximal $\Q$-torus $T'$ in $G'$. Fix nonempty bounded open $\Omega\subset \Lie(T'(\R))$. Then there exist constants $\kappa_7>0$ and $\alpha>0$ such that for all $B\in G'(\R)$, all $\rho \in (0,1)$. If we assume for any 
$\Delta\in \PPP_M$,
$\sup_{t\in \Omega}||\phi_{\Omega,B,\Delta}(t)|| \geq \rho$, 
then for any $\delta\in (0,1)$
\begin{equation*}
\frac{1}{|\Omega|} 
|
\{
t\in \Omega\,,\, \text{t is not $\delta$-protected relative to $\PPP_M$}
\}
|
\leq \kappa_7 \delta^{\alpha} 
\end{equation*}
\end{thm}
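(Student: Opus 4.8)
The plan is to deduce Theorem \ref{kleMarg2} from the abstract quantitative non-divergence theorem of Kleinbock--Margulis (\cite{KleMar98}, Theorem 4.1; see also \cite{Kle10}, Theorem 3.9), applied not to the full poset of all primitive sublattices of $V'(\Z)$ but to the sub-poset $\PPP_M$, ordered by inclusion and equipped, for each fixed $B\in G'(\R)$, with the family of functions $t\mapsto\phi_{\Omega,B,\Delta}(t)=\|B\exp(t)\Delta\|$ indexed by $\Delta\in\PPP_M$. Since $\Omega$ lies in the abelian Lie algebra $\Lie(T'(\R))$, the ambient measure is Lebesgue measure, which is doubling, and Euclidean balls are Besicovitch, so the metric--measure hypotheses of the abstract theorem hold automatically; all the content lies in verifying the hypotheses attached to the poset $\PPP_M$ and the functions $\phi_{\Omega,B,\Delta}$.

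First I would record that, as noted immediately before the statement, there are constants $C,\alpha>0$ depending only on $N$, $M$ and $\Omega$ so that $\phi_{\Omega,B,v}$ is $(C,\alpha)$-good on $3^{m_0N^2}\Omega$ for every $B\in G'(\R)$ and every $v\in\bigwedge^{a'}V'(\R)$; specializing $v$ to the wedge of a $\Z$-basis of a lattice $\Delta\in\PPP_M$ gives the uniform goodness the theorem requires. Second, and this is the step I expect to need the most care, I would verify that $\PPP_M$ is a sublattice (in the order-theoretic sense) of the poset of all primitive sublattices, closed under the two operations used in the Kleinbock--Margulis induction, $\Delta_1\wedge\Delta_2:=\Delta_1\cap\Delta_2$ and $\Delta_1\vee\Delta_2:=$ the primitive hull of $\Delta_1+\Delta_2$, together with the submultiplicativity of covolumes
\[
\|\Delta_1\cap\Delta_2\|\cdot\|\Delta_1\vee\Delta_2\|\ \le\ \|\Delta_1\|\cdot\|\Delta_2\|.
\]
The inequality is the standard Pl\"ucker-type estimate for covolumes of sublattices, and passing from $\Delta_1+\Delta_2$ to its primitive hull only decreases the left-hand side. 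For closure: the intersection and the sum of two $\OO_M$-submodules of $V'(\Z)$ are again $\OO_M$-submodules, the intersection of two primitive lattices is primitive, and the primitive hull of the $\OO_M$-invariant lattice $\Delta_1+\Delta_2$ is again $\OO_M$-invariant by Lemma \ref{lem7}; hence $\Delta_1\cap\Delta_2,\Delta_1\vee\Delta_2\in\PPP_M$, and likewise any primitive lattice squeezed between two comparable elements of $\PPP_M$ corresponds to an $\OO_M$-submodule of the quotient and so stays in $\PPP_M$, which guarantees the induction never leaves $\PPP_M$. Finally, I would note that chains in $\PPP_M$ have length at most $N$: the $\Z$-rank of an $\OO_M$-submodule of $V'(\Z)$ is a positive multiple of $m_0$, so along a chain the ranks run through $m_0,2m_0,\dots$; this yields the uniform bound on the height of the poset that the abstract theorem needs.

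With these verifications in hand, the hypothesis ``$\sup_{t\in\Omega}\|\phi_{\Omega,B,\Delta}(t)\|\ge\rho$ for every $\Delta\in\PPP_M$'' is exactly the input feeding the inductive argument of \cite{KleMar98} (equivalently \cite{Kle10}), and its conclusion specializes to the stated bound $\kappa_7\delta^{\alpha}$ on the fraction of $t\in\Omega$ failing to be $\delta$-protected relative to $\PPP_M$, with $\kappa_7$ and $\alpha$ depending only on $N$, $M$ and $\Omega$. The genuine obstacle is precisely the closure of $\PPP_M$ under meet, join and quotient --- without it one could only invoke the theorem for the full poset $\PPP_0$, whereas the point of restricting to $\PPP_M$ is that the hypothesis $\sup_t\|\phi_{\Omega,B,\Delta}\|\ge\rho$ is only going to be verifiable for $\OO_M$-invariant lattices (through the non-divergence estimates of Section \ref{section2}). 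Everything else is either standard lattice geometry or has already been quoted above.
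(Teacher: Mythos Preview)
The paper does not actually prove Theorem~\ref{kleMarg2}: it is simply stated as a quotation of Theorem~4.1 from \cite{KleMar98} (equivalently Theorem~3.9 of \cite{Kle10}), specialized to the poset $\PPP_M$, with the surrounding text only setting up the definitions needed to phrase the conclusion. Your proposal is therefore more explicit than the paper itself, and the route you take --- verify $(C,\alpha)$-goodness, closure of $\PPP_M$ under meet and join, bounded chain length, and the covolume submultiplicativity, then invoke the abstract theorem --- is exactly what the citation is implicitly asking the reader to do.

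One correction is needed, though it is harmless to your overall argument. Your claim that ``any primitive lattice squeezed between two comparable elements of $\PPP_M$ corresponds to an $\OO_M$-submodule of the quotient and so stays in $\PPP_M$'' is false: take $\Delta_1=\{0\}$ and $\Delta_2=V'(\Z)$, and observe that a generic primitive rank-$m_0$ sublattice of $V'(\Z)$ is not $\OO_M$-invariant. Fortunately this claim is not required. The Kleinbock--Margulis induction only moves through meets $\Delta_1\cap\Delta_2$ and joins $\Delta_1\vee\Delta_2$ of elements already in the poset, and you have already verified (correctly, using Lemma~\ref{lem7} for the join) that $\PPP_M$ is closed under these two operations; that is all the abstract theorem needs to guarantee the induction never leaves $\PPP_M$. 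Simply delete the offending sentence and your verification goes through.
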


The assumption is verified by the following proposition for $\rho= \min\{0.5, \ep'/\kappa_6\}$
as all $\Lambda\in\PPP_M$ can be written as a bounded multiple(bounded by $\kappa_6$) of $N_{M/\Q}v$ for some $v\in \bigwedge^a(\OO_M)$ thanks to Lemma \ref{freemoduleVectCorrep} and Lemma \ref{classnumber}. 

\begin{prop}\label{nondivRestriclemma}
Take $\ep>0$ and $\Omega'$ bounded nonempty open set in $\Lie(T'(\R))$.
  There exists $\ep'=\ep'(\ep,\Omega')>0$. For all $B\in G'(\R)$, $(t,...,t) \in \Omega'_{B,\ep}$, $v_{\neq 0}=v_1\wedge ...\wedge v_a$ with $v_i\in V(O_M)$, we have 
   \begin{equation*}
     \sup_{t'\in\Omega'} ||B \Delta(\exp{t})\exp{(t')} N_{M/\Q}v|| \geq \ep'
 \end{equation*}
\end{prop}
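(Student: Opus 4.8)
The idea is to reduce Proposition~\ref{nondivRestriclemma} to Proposition~\ref{lemmanondivergence} (or rather to the estimate underlying Proposition~\ref{nondivgEst1}), applied to the $M$-group $G=\SL_N$ over the field $M$ instead of over $\Q$. The point is that the vector $N_{M/\Q}v \in \bigwedge^{am_0} V'(\Z)$ is, up to the fixed scalar $\tfrac1{2i}\det(\tau_k(w_j))$, nothing but the ``$\R$-decoupled'' exterior product $(\alpha_1 v,0,\dots)\wedge\cdots\wedge(0,\dots,i\beta_{s_0}v)$, so that by Lemma~\ref{lem10} the quantity $\|B\Delta(\exp t)\exp(t')N_{M/\Q}v\|$ factors (again up to the fixed constant depending only on $M,N$) as a product over the archimedean places:
\begin{equation*}
\|B\Delta(\exp t)\exp(t')N_{M/\Q}v\|
= \kappa \prod_{i=1}^{r_0+s_0} \|B_i \exp(\sigma_i(t))\exp(t'_i)\,\sigma_i(v)\|,
\end{equation*}
where $\sigma_i$ runs over the real embeddings and one representative from each complex pair, and $t'_i$ is the $i$-th component of $t'$. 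Here one must be slightly careful about the complex places: a factor $\|B_{r_0+i}\exp(\cdots)\beta_i(v)\|$ over $\C$ should be read as an estimate for a real $2\times(\text{something})$ block, but since $v$ and $iv$ span the same real structures the same lower bounds apply.

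First I would record this factorization precisely, tracking the constant $\kappa = \kappa(M,N)$ coming from $\det(\tau_k(w_j))$ and from the comparison of the Euclidean norm on $\bigwedge^{am_0}V'(\R)$ with the product of Euclidean norms on the individual archimedean blocks. Second, I would observe that the hypothesis $(t,\dots,t)\in\Omega'_{B,\ep}$ translates, via the displayed description of $\Omega'_{B,\ep}$, into a lower bound $\prod_i \|B_i \exp(\sigma_i(t)) e_\xi\| \geq \ep/\kappa'$ for every weight $\xi\in\A_0\setminus\{1,\dots,N\}$; in particular each individual archimedean component $B_i\exp(\sigma_i(t))$ lies in the $M$-analogue of the polytope $\Omega^s_{\bullet,\ep''}$ for a suitable $\ep''$ (this uses that $T_s$, being $\Q$-split, has the same split character lattice at every place). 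Third, I would apply Proposition~\ref{lemmanondivergence} place-by-place: for each archimedean embedding $\sigma_i$, with $B_i\exp(\sigma_i t)$ playing the role of ``$B\exp t$'' and $\Omega'_i$ (the $i$-th projection of $\Omega'$) playing the role of ``$\Omega$'', we get
\begin{equation*}
\sup_{t'_i\in\Omega'_i}\|B_i\exp(\sigma_i t)\exp(t'_i)\,\sigma_i(v)\| \geq \kappa_3 \cdot (\text{arithmetic term})_i \cdot (\text{polytope term})_i \geq \kappa_3\,\ep''^{\,|\B_0|_i},
\end{equation*}
exactly as in the proof of Proposition~\ref{nondivgEst1}, except that the arithmetic term is now an absolute value of a norm $|\Nm_{L/M}(\cdots)|^{1/|\Gal_0|}$ from $M$ down to a subfield, which a priori need not be $\geq 1$. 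Taking the product over all $i$ and using $\prod_i$ of these $M$-norms equals the $\Q$-norm $|\Nm_{L/\Q}(\cdots)|^{1/|\Gal_0|}$, which \emph{is} $\geq 1$ since it is a nonzero rational integer, recovers the desired uniform lower bound $\ep' = \ep'(\ep,\Omega')$.

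\textbf{Main obstacle.} The delicate point is precisely this last step: a single archimedean factor is only bounded below by $\kappa_3 \cdot |(\text{Galois-twisted coefficient})|_{\sigma_i} \cdot \ep''^{\,|\B_0|}$, and the coefficient $|\sum_\zeta \alpha_\zeta\beta^{\xi_0}_\zeta|$ can be tiny at one place while large at another; only the \emph{product} over all archimedean places is controlled, via the product formula / integrality of the global norm. So one cannot argue place-by-place naively — one must carry the full product of the estimates of Proposition~\ref{lemmanondivergence} through all $r_0+s_0$ embeddings \emph{simultaneously}, with a \emph{common} choice of the combinatorial data $(d_1,\dots,d_{a_0})$, $J_0$, $\xi_0$, $\theta$, $\B_0$ (which is legitimate because those data are Galois-stable and the sup defining them can be taken before splitting into places), and only then invoke $|\Nm_{L/\Q}|^{1/|\Gal_0|}\geq 1$. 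A secondary technical nuisance is matching the normalizations: $\Omega'$ is a subset of $\Lie(T'(\R))$ which need not be a product of the $\Omega'_i$, so one first shrinks $\Omega'$ to contain a product of nonempty open boxes $\prod_i \Omega'_i$ (losing only a constant) before applying the place-wise estimates. Once these bookkeeping issues are handled, the proof is a direct globalization of Proposition~\ref{nondivgEst1}.
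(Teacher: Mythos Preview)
Your approach is exactly what the paper has in mind: Proposition~\ref{lemmanondivergence} was deliberately stated for an arbitrary base field $M$ so that its inequality can be applied at every archimedean place and then multiplied, with the global norm $|\Nm_{L/\Q}(\cdots)|^{1/|\Gal_0|}\geq 1$ replacing the integrality argument from the proof of Proposition~\ref{nondivgEst1}. The paper itself omits the proof, so your write-up is more detailed than the original.

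There is, however, one genuine slip. In your ``Second'' step you assert that the hypothesis $(t,\dots,t)\in\Omega'_{B,\ep}$ implies that \emph{each individual} $B_i\exp(\sigma_i(t))$ lies in an $M$-polytope $\Omega^s_{\bullet,\ep''}$ for some uniform $\ep''$. This is false: the definition of $\Omega'_{B,\ep}$ only gives the product bound
\[
\exp\big(m_0\chi_\xi(t)\big)\prod_i\|B_i e_\xi\|\ \geq\ \ep/\kappa',
\]
and a single factor $\|B_i e_\xi\|$ can be arbitrarily small (compensated by the others). So the place-wise conclusion $\geq \kappa_3\,\ep''^{\,|\B_0|}$ in your ``Third'' step is not available. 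The good news is that the fix is exactly the one you already identified for the arithmetic term: when you multiply the inequality of Proposition~\ref{lemmanondivergence} over all places (with a common choice of $J_0,\xi_0,\theta,\B_0$, which is legitimate because that inequality holds for \emph{any} data satisfying condition~(5), not just the maximizing one), the polytope factors reassemble into
\[
\prod_{j=0}^{a_0}\Big(\exp\big(m_0(\chi_{J_0}+\sum_{i\leq j}\chi_{[l_{\theta i}]})(t)\big)\prod_i\|B_i\,e_{J_0\cup[l_{\theta 1}]\cup\cdots\cup[l_{\theta j}]}\|\Big)^{|\B_0|(d_{\theta j}/l_{\theta j}-d_{\theta(j+1)}/l_{\theta(j+1)})},
\]
and it is precisely this product that the membership $(t,\dots,t)\in\Omega'_{B,\ep}$ bounds from below. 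In short: both the arithmetic term \emph{and} the polytope term are controlled only as products over the places, and once you carry the full product through (as you already propose for the arithmetic part) the argument goes through verbatim.
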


Proof is similar to that of Proposition \ref{nondivgEst1} and we omit it.

Now we need to understand what $\delta$-protected could bring to us. 

\begin{lem}\label{protected}
Fix $\Omega, B, \rho\in (0,1)$ as above. Let us abbreviate $\phi_{\Omega,B,\Lambda}(t)$ as $\phi(\Lambda)$.
Assume $t\in \Omega$ is $\delta$-protected for some $\delta>0$. Then 
$\phi(v')\geq \frac{\rho \min\{1,\delta\}^{1/m_0}}{\kappa_5^{1/m_0}}$ 
for all $v_{\neq 0}\in V(\OO_M)$.
\end{lem}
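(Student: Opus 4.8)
The plan is to translate the $\delta$-protected hypothesis from the lattice $\PPP_M$ to a single vector $v \in V(\OO_M)$ by relating $v$ to the free $\OO_M$-module it generates. First I would observe that given a nonzero $v \in V(\OO_M)$, the $\OO_M$-module $\OO_M \cdot v' \subset V'(\Z)$ is a rank-one free $\OO_M$-module, hence by Lemma \ref{freemoduleVectCorrep} corresponds to $N_{M/\Q}(v) \in \bigwedge^{m_0} V'(\Z)$. Its $\Q$-span, intersected with $V'(\Z)$, is a primitive lattice $\Lambda$; since $\Lambda$ contains the full-rank $\OO_M$-submodule $\OO_M \cdot v'$, Lemma \ref{lem7} gives $\Lambda \in \PPP_M$. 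By Lemma \ref{classnumber} (more precisely, its argument bounding the index of a free submodule), $\OO_M \cdot v'$ has index at most $\kappa_5$ in $\Lambda$, so the covolumes satisfy $\|N_{M/\Q}(v)\| = [\Lambda : \OO_M v'] \cdot \|\Lambda\| \leq \kappa_5 \|\Lambda\|$, and the same inequality persists after applying $B\exp(t)$ since that is a volume-scaling linear map on each graded piece (using Lemma \ref{lem10} to identify $B'\exp(t) N_{M/\Q}(v) = N_{M/\Q}$ of the transformed vector, or just directly that the index is preserved). Thus $\phi(N_{M/\Q}v) \leq \kappa_5 \, \phi(\Lambda)$, where $\Lambda$ has rank $m_0$ as a $\Z$-lattice.

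Next I would invoke the $\delta$-protected property: condition (1) applied to $\Delta = \Lambda \in \PPP_M$ gives $\phi(\Lambda) \geq \delta \rho^{rk(\Lambda)} = \delta \rho^{m_0}$ (and if $\Lambda \notin \mathcal{F}$ but is comparable to the flag, condition (2) gives the even better bound $\rho^{m_0}$; in all cases $\phi(\Lambda) \geq \min\{1,\delta\}\rho^{m_0}$). Combining with the previous step, $\phi(N_{M/\Q}v) \geq \frac{1}{\kappa_5}\min\{1,\delta\}\rho^{m_0}$, or after a harmless weakening, $\phi(N_{M/\Q}v) \geq \frac{\min\{1,\delta\}\rho^{m_0}}{\kappa_5}$. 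Finally I would relate $\phi(N_{M/\Q}v) = \|B\exp(t) N_{M/\Q}v\|$ back to $\phi(v') = \|B\exp(t)v'\|$. From the explicit formula $\|N_{M/\Q}w\| = |\tfrac12 \det(\tau_k(w_j))|^{a}\prod_i \|\tau_i w\|$ in rank-one form ($a=1$), applied to $w = \exp(t)$-transformed vectors, one gets $\phi(N_{M/\Q}v) \leq (\text{const}) \cdot \phi(v')^{m_0}$ up to the fixed determinant factor; absorbing constants appropriately yields $\phi(v')^{m_0} \geq (\text{const})\,\phi(N_{M/\Q}v) \geq \frac{\min\{1,\delta\}\rho^{m_0}}{\kappa_5}$, hence $\phi(v') \geq \frac{\rho\,\min\{1,\delta\}^{1/m_0}}{\kappa_5^{1/m_0}}$ after taking $m_0$-th roots and tracking that the determinant constant can be folded into $\kappa_5$.

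The main obstacle I anticipate is bookkeeping the constants correctly so that exactly $\kappa_5$ (the class-number/index constant from Lemma \ref{classnumber}) appears, rather than some larger product involving the determinant $|\det(\tau_k w_j)|$ and the factors from Lemma \ref{decreseVol}. The cleanest route is probably to choose the $\Z$-basis $\{w_j\}$ of $\OO_M$ so that these archimedean factors are normalized, or to simply note that the statement as written is invariant under rescaling $N_{M/\Q}$ by a fixed scalar (since $\phi$ is homogeneous and we compare ratios against the primitive $\Lambda$), so only the lattice-index $\kappa_5$ survives. I would also need to be mildly careful that $\exp(t)$ here ranges over $\Lie(T'(\R))$ and acts by a transformation preserving $\OO_M$-module structure up to the diagonal embedding, so that Lemma \ref{lem10} and the index-preservation argument genuinely apply; this is where writing $B\exp(t)$ as $(B\exp(t))'$ for an appropriate element and using the compatibility lemmas keeps everything honest. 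None of this is deep, but it is the only place an error could creep in.
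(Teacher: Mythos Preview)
Your argument has two gaps, one minor and one substantive.

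First, the inequality you derive goes the wrong way. You write $\phi(N_{M/\Q}v)\leq \kappa_5\,\phi(\Lambda)$ and then combine it with a lower bound on $\phi(\Lambda)$ to conclude a lower bound on $\phi(N_{M/\Q}v)$; that inference is invalid. In fact you do not need Lemma~\ref{classnumber} here at all: since $\OO_M v'$ is a finite-index sublattice of its primitive closure $\Lambda$, one has trivially $\phi(N_{M/\Q}v)\geq \phi(\Lambda)$, which is the inequality you actually want. (Note also that the index $[\Lambda:\OO_M v']$ is not bounded independently of $v$: take $v=n e_1$.) The $\kappa_5$ in the statement is the constant from Lemma~\ref{decreseVol}, not the class-number constant.

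Second, and more seriously, you apply condition (1) of $\delta$-protected directly to $\Lambda$, and for $\Lambda\notin\mathcal{F}$ you invoke condition (2). But condition (1) is meant to range only over $\Delta\in\mathcal{F}$ (the ``for all $\Delta\in\PPP_M$'' in the paper's definition is a typo; compare the reference to Kleinbock and note that otherwise condition (2) would be both redundant and contradictory). And condition (2) requires $\Lambda$ to be comparable to \emph{every} element of $\mathcal{F}$, which you do not verify and which is generally false for a rank-$m_0$ primitive lattice. The paper repairs this by choosing the index $i$ with $v'\in\Delta_{i+1}\setminus\Delta_i$ and forming the auxiliary primitive lattice $\Lambda':=(\Lambda_\Q\oplus(\Delta_i)_\Q)\cap V'(\Z)$. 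Then $\Delta_i\subset\Lambda'\subset\Delta_{i+1}$, so $\Lambda'$ is comparable to all of $\mathcal{F}$ and condition (2) yields $\phi(\Lambda')\geq\rho^{\mathrm{rk}\,\Delta_i+m_0}$; combined with $\phi(\Lambda')\leq\phi(\Lambda)\phi(\Delta_i)\leq\phi(\Lambda)\rho^{\mathrm{rk}\,\Delta_i}$ this gives $\phi(\Lambda)\geq\rho^{m_0}$. This sandwiching through $\Lambda'$ is the missing idea in your sketch. Once you have $\phi(N_{M/\Q}v)\geq\min\{1,\delta\}\rho^{m_0}$, the passage to $\phi(v')$ is exactly Lemma~\ref{decreseVol}, with no extra constants to absorb.
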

\begin{proof}
Take $v_{\neq 0}\in V'(\Z)$. By approximation we may assume $B\exp{(t)} \in G'(\Q)$.
First we claim that $\phi(N_{M/\Q}v)\geq \min\{1,\delta\}\rho^{m_0}$. Given the claim, recall Lemma \ref{decreseVol}, we have 
$\kappa_5 ||B\exp{(t)}v'||^{m_0} 
\geq ||N_{M/\Q}(B\exp{(t)}v)||=||B\exp{(t)}(N_{M/\Q}v)|| 
\geq  \min\{1,\delta\}\rho^{m_0}$, 
implying 
$$\phi(v') \geq \frac{\rho \min\{1,\delta\}^{1/m_0}}{\kappa_5^{1/m_0}}$$.

The proof of the claim is the same as that of the proof of Proposition 3.8 in \cite{Kle10} which we recall here.

$N_{M/\Q}v$ represents a $m_0$-dimensional $\Z$-module stable under $\OO_M$-action. Take the unique primitive lattice $\Lambda$ that contains it. Note $||\phi(N_{M/\Q}v)||\geq ||\phi(\Lambda)|| $.

By definition of $\delta$-protected we can find a flag, which we may assume contains $\{0\}$ and $V'(\Z)$, 
$\mathcal{F}:=\{\Delta_0=\{0\} \subset \Delta_1 \subset...\subset \Delta_{r+1}=V'(\Z)  \}$. If $\Lambda \in \mathcal{F}$, then $\phi(\Lambda) \geq \delta\rho^{m_0}$. 
So now we assume $\Lambda \notin \mathcal{F}$ and take $i$ such that $v' \in \Delta_{i+1}\setminus \Delta_i$.

Consider $\Lambda':= (\Lambda_{\Q}\oplus (\Delta_i)_{\Q}) \bigcap V'(\Z)$. 
It is of 
rk $\Lambda'=$ rk $\Delta_i +m_0$ and 
invariant under $\OO_M$ because the finite index sublattice 
$\Lambda \oplus \Delta_i$ is 
and we have Lemma \ref{lem7}. Also, $\Lambda'$ is comparable to all $\mathcal{F}$. Therefore
\begin{equation}
\begin{aligned}
     \rho^{\text{rk}(\Delta_i)+m_0} \leq \phi(\Lambda') \leq \phi(\Lambda \oplus \Delta_i) \leq \phi(\Lambda ) \phi(\Delta_i)
     \leq  \phi(\Lambda ) \rho^{\text{rk}\Delta_i}
\end{aligned}
\end{equation}
This implies $\phi(\Lambda) \geq \rho^{m_0}$ and we are done.
\end{proof}

In view of this lemma we may apply Theorem \ref{kleMarg2} together with Mahler's criterion to see
\begin{coro} \label{nondivg2}
Fix $N$, $T'$ and $M$ as before. Let 
$\Omega_s \subset \Lie(T'_s(\R)) $  
and $\Omega_a \subset \Lie(T'_a(\R))$ 
be nonempty open bounded subsets. 
Define 
$\Omega:= \Omega_a + \Omega_s$. Fix also $\ep >0$.  
Then for all sequence $\{g_i\}\subset G'(\R)$ and 
$t_i \in \Lie(T'(\R))$ with 
$t_i + \Omega_s \cap \Omega'_{g_i,\ep}\neq \emptyset$, 
all weak-$*$ limits of 
$(g_i\exp{t_i})_* (\widehat{\mu_{T'}|_{\exp{\Omega}\Gamma'}})$
are probability measures on $G'(\R)/\Gamma'$.
\end{coro}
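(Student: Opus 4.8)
The plan is to deduce Corollary \ref{nondivg2} from Theorem \ref{kleMarg2} exactly in the way Corollary \ref{nondivergence1} is deduced from Theorem \ref{kleMarg1}, with Lemma \ref{protected} playing the role of the bridge between the restricted set $\PPP_M$ of $\OO_M$-stable primitive lattices (on which Theorem \ref{kleMarg2} gives quantitative control) and the full set $\PPP_0$ of primitive lattices (which is what governs excursions to infinity in $G'(\R)/\Gamma'$ via Mahler's criterion). First I would fix $\{g_i\}$ and $t_i$ as in the statement, pick for each $i$ a point $s_i \in (t_i + \Omega_s) \cap \Omega'_{g_i,\ep}$, and write $s_i = t_i + r_i$ with $r_i \in \Omega_s$; then for $t' \in \Omega_a$ the point $s_i + t' \in \Omega'_{g_i,\ep} + \Omega_a$, and since $\Omega'_{g_i,\ep}$ is defined by the $\xi$-inequalities on the split directions only, the relevant translates $g_i \exp(t_i)\exp(r_i + t')$ all have their $\PPP_M$-data controlled by Proposition \ref{nondivRestriclemma}: for every $v \neq 0$ in $\bigwedge^a V(\OO_M)$ one has $\sup_{t' \in \Omega'} \|g_i \exp(s_i)\exp(t') N_{M/\Q}v\| \geq \ep'$, and since every $\Lambda \in \PPP_M$ is a bounded multiple (by $\kappa_6$, from Lemma \ref{freemoduleVectCorrep} and Lemma \ref{classnumber}) of such an $N_{M/\Q}v$, the hypothesis $\sup_{t \in \Omega}\|\phi_{\Omega, g_i\exp(s_i), \Delta}(t)\| \geq \rho$ of Theorem \ref{kleMarg2} holds with $\rho = \min\{1/2, \ep'/\kappa_6\}$, uniformly in $i$.

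Having verified the hypothesis, I would apply Theorem \ref{kleMarg2}: for each $\delta \in (0,1)$ the $\Omega_a$-measure of the set of $t'$ that are not $\delta$-protected relative to $\PPP_M$ (with respect to $B = g_i\exp(s_i)$) is at most $\kappa_7 \delta^\alpha$, a bound uniform in $i$ and independent of the excursion depth. By Lemma \ref{protected}, a $\delta$-protected point $t'$ satisfies $\|g_i \exp(s_i)\exp(t') v'\| \geq \rho\min\{1,\delta\}^{1/m_0}/\kappa_5^{1/m_0}$ for all nonzero $v \in V(\OO_M)$, i.e.\ for all nonzero primitive vectors of $V'(\Z)$; applying this also to the exterior powers (Lemma \ref{protected} as stated is for $v \in V(\OO_M)$, but the same argument with $\bigwedge^a$ in place of $V$, or rather the use of $N_{M/\Q}$ on $\bigwedge^a V$, gives the analogous bound for all nonzero $\Lambda \in \PPP_0$, which is exactly Proposition 3.8 of \cite{Kle10}), Mahler's criterion tells us that the orbit point $g_i\exp(s_i)\exp(t')\Gamma'$ lies in a compact subset $K_\delta \subset G'(\R)/\Gamma'$ depending only on $\delta$, $\rho$, $M$, $N$ and $\Omega$, not on $i$.

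Now I would translate this into the measure statement. Write $\mu_i := (g_i\exp{t_i})_* (\widehat{\mu_{T'}|_{\exp{\Omega}\Gamma'}})$. Since $\exp\Omega = \exp(\Omega_a + \Omega_s)$ and the normalized Haar measure on the torus piece factors, and since $\widehat{\mu_{T'}|_{\exp\Omega\Gamma'}}$ pushed forward by $g_i\exp(t_i)$ is (up to the bounded shift by $r_i$ which only moves the basepoint within $\Omega'_{g_i,\ep}$-translates and can be absorbed) an average of the orbit points $g_i\exp(s_i)\exp(t')\Gamma'$ over $t' \in \Omega$, the mass that $\mu_i$ places outside $K_\delta$ is controlled: the non-$\delta$-protected part has normalized measure $\leq \kappa_7\delta^\alpha$, and the $\delta$-protected part lands in $K_\delta$. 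Hence for any weak-$*$ limit $\nu$ of (a subsequence of) $\mu_i$ we get $\nu(G'(\R)/\Gamma' \setminus K_\delta) \leq \kappa_7 \delta^\alpha$ for every $\delta$, so $\nu$ is a probability measure (it has total mass $1$, being a limit of probability measures with no escape of mass). The main obstacle, as in Section \ref{section2}, is purely bookkeeping: making sure that the shift from $t_i$ to $s_i = t_i + r_i$ with $r_i \in \Omega_s$ bounded is harmless — this is fine because enlarging $\Omega$ to $\Omega + \Omega_s$ changes the $(C,\alpha)$-good constants and the set $3^{m_0 N^2}\Omega$ only by fixed amounts — and that Lemma \ref{protected}, which is literally stated for vectors in $V(\OO_M)$, does deliver boundedness against all of $\PPP_0$ and not merely against $\PPP_M$; the latter is exactly the content of Proposition 3.8 of \cite{Kle10}, whose proof is the same argument used to prove the claim inside Lemma \ref{protected}, now run for each graded piece.
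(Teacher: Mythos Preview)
Your proposal is correct and follows essentially the same route as the paper: verify the hypothesis of Theorem \ref{kleMarg2} via Proposition \ref{nondivRestriclemma} (together with Lemmas \ref{freemoduleVectCorrep} and \ref{classnumber}), apply Theorem \ref{kleMarg2} to control the measure of non-$\delta$-protected points, invoke Lemma \ref{protected} to get a uniform lower bound on shortest vectors at the $\delta$-protected points, and conclude by Mahler's criterion.

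One comment on the detour at the end. You worry that Lemma \ref{protected} is stated only for vectors $v\in V(\OO_M)$ and suggest extending it to exterior powers so as to control all of $\PPP_0$ and then appeal to Proposition 3.8 of \cite{Kle10}. This is unnecessary: under the bijection $v\mapsto v'$ the set $V(\OO_M)\setminus\{0\}$ is exactly $V'(\Z)\setminus\{0\}$, so Lemma \ref{protected} already gives a uniform lower bound on $\|B\exp(t')w\|$ for every nonzero $w\in V'(\Z)$, which is precisely what Mahler's criterion requires. Your proposed extension to arbitrary $\Lambda\in\PPP_0$ is also not obviously valid as stated, since the proof of Lemma \ref{protected} relies on associating to $v$ the $\OO_M$-stable lattice represented by $N_{M/\Q}v$, and a general $\Lambda\in\PPP_0$ has no such structure; fortunately you do not need it. Drop that paragraph and the argument is clean.
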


\subsection{Graph and convex polytope}

Similar to Proposition \ref{stablevolume}  we have
\begin{prop}\label{stablevolume2}
Assume $g_i$ diverges in $G'(\R)/Z_{G'}(S')(\R)$ for all $S'\subset T'_s$ subtorus.
There exists a sequence of real numbers $\omega'_i \to +\infty$ such that if we define $\widetilde{\Omega}'_{g_i,\ep}:=\Omega'_{g_i,\ep+\omega'_i}$, its volume remains asymptotically the same as $\Omega'_{g_i,\ep}$, i.e.
\begin{equation*}
    \lim_{i\to +\infty} \frac{\Vol  (\widetilde{\Omega}'_{g_i,\ep})}  
    {\Vol (\Omega'_{g_i,\ep})} =1
\end{equation*}
\end{prop}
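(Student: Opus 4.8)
The plan is to transcribe Section \ref{section3} to $G'$ in three steps: reduce to $g_i$ unipotent, run the graph argument, and conclude with the ``big ball $+$ facet-shrinking'' volume estimate. Throughout one identifies $\Lie(T'_s(\R))$ with $\Lie(T_s(\R))$ via $t\mapsto(t,\dots,t)$, and the only structural change in the polytope $\Omega'_{B,\ep}$ compared with Section \ref{section3} is the harmless scalar $m_0$ multiplying each weight $\chi_\xi$.

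\textbf{Reduction to unipotents.} Let $P\supset T$ be the standard $M$-parabolic of $G=\SL_N$ with Levi $Z_G(T_s)$ and unipotent radical $U_T$, and put $P':=R_{M/\Q}P$, $U'_T:=R_{M/\Q}U_T$, $L':=R_{M/\Q}(Z_G(T_s))=Z_{G'}(T'_s)$, so $P'=L'U'_T$ is a $\Q$-parabolic of $G'$. Write $g_i=\delta_i u_i\ell_i$ with $\delta_i$ in a maximal compact $K'$ of $G'(\R)$, $u_i\in U'_T(\R)$, $\ell_i\in L'(\R)$. Since $K'$ is a product of copies of $\SO(N)$ and $\SU(N)$, it acts by isometries on $V'(\R)$ and all its exterior powers, so $\delta_i$ drops out of the norms defining $\Omega'_{\cdot,\ep}$. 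For $\ell_i$: its component in $[L',L']=\prod_{j=1}^{a_0}R_{M/\Q}\SL_{l_j}$ fixes every $N_{M/\Q}e_\xi$ (each $e_\xi$ with $\xi\in\A_0$ is fixed by $\prod_j\SL_{l_j}$); and by Lemma \ref{lem10} the central torus $Z(L')=R_{M/\Q}T_s$ acts on $N_{M/\Q}e_\xi$ through the norm $R_{M/\Q}T_s\xrightarrow{\Nm}T_s$ followed by the weight $\chi_\xi$, hence trivially on the $\Q$-anisotropic norm-one subtorus of $R_{M/\Q}T_s$, and on the $\Q$-split part $T'_s$ by $\exp(m_0\chi_\xi(\cdot))$, which combines with $\Delta(\exp t)$ to a translation of $\Omega'_{\cdot,\ep}$ by an element of $\Lie(T'_s(\R))$. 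Therefore $\Omega'_{g_i,\ep}$ is $\Omega'_{u_i,\ep}$ up to translation, so $\Vol(\Omega'_{g_i,\ep})=\Vol(\Omega'_{u_i,\ep})$; and since $\ell_i\in L'=Z_{G'}(T'_s)\subseteq Z_{G'}(S')$ for every subtorus $S'\subseteq T'_s$, the divergence hypothesis passes to $u_i$. So we may assume $g_i=u_i\in U'_T(\R)$.

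\textbf{Graph argument and conclusion.} After passing to a subsequence and left-multiplying by a bounded sequence (Gaussian elimination), arrange that every block $u_i^{(\xi,\zeta)}$ with $\xi\neq\zeta$ either diverges or vanishes identically, and define the graph $\mathcal{G}(u_i)$ on $\mathcal{V}=\{1,\dots,l_0\}\sqcup\{[l_1],\dots,[l_{a_0}]\}$ with an edge exactly when the corresponding block diverges. The analogues of Propositions \ref{graphConnected} and \ref{graphUDS} hold with the same proofs: every subtorus of $T'_s$ is automatically $\Q$-defined, the ``equate the $(\xi,\zeta)$-blocks'' computation is insensitive to restriction of scalars, and since $u_i$ acts place-by-place on $V'(\R)=V(\R)^{\oplus r_0}\oplus V(\C)^{\oplus s_0}$ and $N_{M/\Q}e_\xi$ is the wedge of the archimedean copies of $e_\xi$, one has $u_iN_{M/\Q}e_\xi$ bounded iff $u_ie_\xi$ bounded iff $\xi$ is UDS. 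Thus $\mathcal{G}(u_i)$ is connected, so Lemma \ref{graphLem} yields real numbers $x_v$ on $\mathcal{V}$ summing to $0$ with $\sum_{v\in\xi}x_v>0$ for every proper UDS $\xi$; the corresponding point lies in the open cone $\{(t,\dots,t):\chi_\xi(t)>0\ \forall\text{ proper UDS }\xi\}$. As in Proposition \ref{bigball}, this cone together with the fact that all non-UDS facets of $\Omega'_{u_i,\ep}$ recede to infinity shows $\Omega'_{u_i,\ep}$ (hence $\Omega'_{g_i,\ep}$) contains a ball of radius $R_i\to+\infty$. Finally $\Omega'_{g_i,\ep e^{\iota_i}}$ is obtained from $\Omega'_{g_i,\ep}$ by pushing each defining facet inward by at least $\iota_i/(m_0\max_\xi|\chi_\xi|)$, so Lemma 4.4 of \cite{ShaZhe18} bounds the volume difference by $O(\iota_i/R_i)\Vol(\Omega'_{g_i,\ep})$; choosing $\iota_i\to+\infty$ with $\iota_i/R_i\to0$ and setting $\omega'_i:=\ep(e^{\iota_i}-1)$ gives $\Vol(\widetilde{\Omega}'_{g_i,\ep})/\Vol(\Omega'_{g_i,\ep})\to1$.

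The three steps are routine transcriptions of Section \ref{section3}; the only point needing genuine care is the reduction step, where one must know that the central-torus part of the Levi factor acts on each $N_{M/\Q}e_\xi$ through the norm to $T_s$ — so its $\Q$-anisotropic piece acts trivially and only a translation by $\Lie(T'_s(\R))$ survives — which is precisely the content of Lemma \ref{lem10}, together with the verification that the graph-connectedness and UDS dichotomy are unaffected by replacing $e_\xi$ with $N_{M/\Q}e_\xi$.
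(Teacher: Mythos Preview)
Your proof is correct and follows essentially the same approach as the paper, namely a direct transcription of Section \ref{section3} with $e_I$ replaced by $N_{M/\Q}e_I$. The only cosmetic difference is that the paper first approximates by elements of $G'(\Q)=G(M)$ and defines graph edges via divergence of the product $\prod_\tau \tau(u_i^{(\xi,\zeta)})$, whereas you work directly in $G'(\R)$ with Gaussian elimination performed place-by-place; since each factor $\|u_i^{(j)}e_\xi\|$ is bounded below by a positive constant, the two edge criteria are equivalent, and your treatment of the Levi reduction via Lemma \ref{lem10} is in fact more detailed than the paper's one-line sketch.
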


The proof follows the same argument as that from Section \ref{section3}. By approximation we may assume we are dealing with elements from $G'(\Q)= G(M)$. We first reduce to the case when $g_i=u_i$ are some unipotents. In the definition of graph, we replace $u_i^{(\xi,\zeta)}$ by $\prod_{\xi} \tau_{\xi}u_k^{(i,j)}$. And replace all $e_I$ by $N_{M/\Q}e_I$. The rest are the same.

\subsection{Limiting measures}

Now we are to prove Theorem \ref{theorem2}. With the same argument as before we first reduce to translates of a fixed piece.

\begin{thm}
Let $G'=R_{M/\Q}SL_N$ for $M$ a  number field and $T'$ be a maximal $\Q$-torus in $G'$. $\Gamma'\subset G'(\Q)$ an arithmetic lattice. 
Given a sequence $\{g_i\} \subset G'(\R)$ that diverges in 
$G'(\R)/Z_{G'}(S')(\R)$ for all nontrivial $\Q$-subtori $S' \subset T'$. 
Fix $\ep>0$ and a nonempty bounded open subset 
$\Omega$ of $\Lie(T'(\R))$. 
Take $t_i \in \Lie(T'(\R))$ s.t. $t_i + \Omega \cap \widetilde{\Omega}^s_{g_i,\ep} \neq \emptyset$. Then 
$$\lim_{i\to \infty} (g_i\exp{t_i})_* (\widehat{\mu_{T'}|_{\exp{(\Omega)}\Gamma'}}) = \mu_{G'}$$
\end{thm}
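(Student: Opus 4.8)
The plan is to mimic the argument used in Section \ref{section4} for the case $M = \Q$, now working inside the $\SL(V'(\R))/\SL(V'(\Z))$ model of $G'(\R)/\Gamma'$. First I would invoke Corollary \ref{nondivg2} to conclude that the sequence $(g_i\exp{t_i})_*(\widehat{\mu_{T'}|_{\exp{(\Omega)}\Gamma'}})$ is non-divergent, so that after passing to a subsequence it converges to a probability measure $\nu$ on $G'(\R)/\Gamma'$. Write $g_i\exp{t_i} = \delta_i\gamma_i$ with $\delta_i$ bounded in $G'(\R)$ and $\gamma_i\in\Gamma'$; by the boundedness of $\delta_i$ it suffices to show $(\gamma_i)_*(\widehat{\mu_{T'}|_{\exp{(\Omega)}\Gamma'}}) \to \mu_{G'}$. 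This is now set up exactly for Theorem \ref{Theom2.1}, applied with the reductive $\Q$-group $T'$ in place of $H$, and the $\Q$-homomorphisms $\rho_i: T' \to G'$ given by conjugation $t\mapsto \gamma_i t\gamma_i^{-1}$. Assumptions (2)--(5) of that theorem are verified exactly as in Section \ref{section4} (they are formal consequences of $\gamma_i\in G'(\Q)$ and conjugation being algebraic), so the only thing to check is assumption (1): no proper $\Q$-subgroup of $G'$ contains $\rho_i(T')$ for infinitely many $i$.

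To verify (1), I would translate the hypotheses on $g_i$ and $t_i$ into statements about $\gamma_i$, just as in Section \ref{section4}: the divergence of $g_i$ in $G'(\R)/Z_{G'}(S')(\R)$ for every nontrivial $\Q$-subtorus $S'\subset T'$ gives the same divergence for $\gamma_i$ (since $\delta_i$ is bounded and $\exp{t_i}$ centralizes $T'_s$, hence any $\Q$-subtorus of $T'$ after the relevant reductions); and the condition $t_i + \Omega_s \cap \widetilde{\Omega}'_{g_i,\ep}\neq\emptyset$ together with the definition of $\Omega'_{B,\ep}$ via the norm vectors $N_{M/\Q}e_\xi$ gives that $||\gamma_i\, N_{M/\Q}e_\xi|| = ||\delta_i^{-1}g_i\exp{(t_i)}N_{M/\Q}e_\xi||\to\infty$ for all $\xi\in\A_0\setminus\{1,\dots,N\}$, using Lemma \ref{lem10}. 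Then I would argue that any maximal proper $\Q$-subgroup of $G'$ is parabolic or reductive, and rule out each case by adapting Propositions \ref{parabolicQ} and \ref{reductiveQ}. The reductive case goes through essentially verbatim once Lemmas \ref{reductiveLem1} and \ref{reductiveLem2} are applied to the $M$-torus $T$ inside $\SL_N$ over $\C$ (conjugating componentwise), since $T' = R_{M/\Q}T$ and $Z_{G'}(S')$ corresponds to centralizers in each factor.

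The main obstacle is the parabolic case: a proper parabolic $\Q$-subgroup of $G' = R_{M/\Q}\SL_N$ corresponds to a tuple of parabolic subgroups, one in each Galois-conjugate copy of $\SL_N$, linked by the Galois action, and a proper $\Q$-flag in $V'$ need not descend to an $M$-flag in $V$. So the clean argument of Proposition \ref{parabolicQ} — producing an integral weight vector $e_{I_i}$ with $\gamma_i e_V = \lambda_i e_{I_i}$ and $|\lambda_i|\ge 1$ — does not directly apply, because the $T'$-weight vectors in $\bigwedge^\bullet V'$ are the $N_{M/\Q}e_\xi$ only when the flag subspace is an $\OO_M$-submodule. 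The fix, which is the technical heart of the proof, is to use Lemma \ref{classnumber} and Lemma \ref{freemoduleVectCorrep}: a $\Q$-flag stabilized by $\bigcup_i \gamma_i T'\gamma_i^{-1}$ forces the flag subspace $V$ to be $\OO_M$-stable (because $T'$-stability in the $R_{M/\Q}$-picture translates into $\OO_M\cdot V\subset V$), hence up to bounded index it is a free $\OO_M$-module represented (via $N_{M/\Q}$) by some $e_\xi$, and then the integrality/primitivity argument of Proposition \ref{parabolicQ} applies to $\gamma_i N_{M/\Q}e_\xi$, contradicting $||\gamma_i N_{M/\Q}e_\xi||\to\infty$. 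Once both maximal cases are excluded, assumption (1) holds, Theorem \ref{Theom2.1} yields $(\gamma_i)_*(\widehat{\mu_{T'}|_{\exp{(\Omega)}\Gamma'}})\to\mu_{G'}$, and since every subsequential limit equals $\mu_{G'}$, the full sequence converges, completing the proof.
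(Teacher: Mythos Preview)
Your proposal is correct and matches the paper's approach: non-divergence from Corollary~\ref{nondivg2}, writing $g_i\exp t_i=\delta_i\gamma_i$, and applying Theorem~\ref{Theom2.1} with assumption~(1) verified by eliminating parabolic and reductive maximal $\Q$-subgroups. The only difference is that your ``obstacle'' in the parabolic case is not really one: you already note that a $\Q$-parabolic of $G'=R_{M/\Q}\SL_N$ is a Galois-linked tuple of parabolics, which is precisely the statement $P'=R_{M/\Q}P$ for an $M$-parabolic $P\le\SL_N$, so the stabilized subspace \emph{is} automatically of the form $R_{M/\Q}W$ for an $M$-subspace $W\subset V$ and is represented by some $N_{M/\Q}e_\xi$. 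This is exactly what the paper means by ``a maximal reductive or parabolic subgroup of $G'$ always comes from restriction of scalars'', and it makes the argument of Proposition~\ref{parabolicQ} go through immediately without the extra detour through Lemmas~\ref{classnumber} and~\ref{freemoduleVectCorrep}.
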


Then using Theorem \ref{Theom2.1} as before the above theorem is a corollary of the following:

\begin{prop}
Same assumptions as above.

Then we can find $\gamma_i \in G'(\Z)$ and $\delta_i\in G'(\R)$ bounded with $\delta_i \gamma_i =g_i \exp{(t_i)}$ and the $\Q$-Zariski closure of the group generated by $\bigcup_i \gamma_iT'\gamma_i^{-1}$ is $G'$.
\end{prop}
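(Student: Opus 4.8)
The plan is to mirror the argument of the $M=\Q$ case (Propositions \ref{parabolicQ} and \ref{reductiveQ}) nearly verbatim, using the model of $G'$ acting on $V'$ developed in Section \ref{section5}. First I would record, exactly as before, that since the measures $(g_i\exp t_i)_*(\widehat{\mu_{T'}|_{\exp\Omega\Gamma'}})$ are non-divergent by Corollary \ref{nondivg2}, we may write $g_i\exp t_i=\delta_i\gamma_i$ with $\delta_i$ bounded in $G'(\R)$ and $\gamma_i\in\Gamma'\subset G'(\Z)$; after passing to a finite-index subgroup we assume $\Gamma'$ preserves the lattice $V'(\Z)$ and all its exterior powers. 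Translating the hypotheses of the theorem to $\gamma_i$: the sequence $\{\gamma_i\}$ still diverges in $G'(\R)/Z_{G'}(S')(\R)$ for every nontrivial $\Q$-subtorus $S'\subset T'$, and the condition $t_i+\Omega\cap\widetilde\Omega'^{s}_{g_i,\ep}\neq\varnothing$ gives, for every $\xi\in\A_0\setminus\{1,\dots,N\}$, that $\|\gamma_i\, N_{M/\Q}e_\xi\|=\|\delta_i^{-1}g_i\exp(t_i)\,N_{M/\Q}e_\xi\|\geq\omega_i$ for some $\omega_i\to+\infty$ (using Lemma \ref{lem10} and the definition of $\Omega'_{B,\ep}$).

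Next, as in the proof of the $\Q$-case, it suffices to rule out that $\bigcup_i\gamma_iT'\gamma_i^{-1}$ lies in a proper maximal $\Q$-subgroup of $G'$, and any such is either parabolic or reductive of maximal rank. In the parabolic case I would reproduce Proposition \ref{parabolicQ}: a $\bigcup_i\gamma_iT'\gamma_i^{-1}$-stable nontrivial proper $\Q$-subspace $W\subset\Q^{m_0N}=V'(\Q)$, once we pass to $\gamma_i^{-1}W$, is a $T'$-stable $\Q$-subspace; but the $\Q$-weight vectors of the $T'$-action on exterior powers of $V'$ are precisely the $N_{M/\Q}e_\xi$ up to $\Q$-scalars (this is the content of the discussion around $\bigwedge V'$ versus $\bigwedge V$ and the norm map in Section \ref{section5}, together with the description of $\A_0$ as the $M$-weight vectors). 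So $\gamma_i e_W$ is, for each $i$, a rational multiple of some $N_{M/\Q}e_{I_i}$; passing to a subsequence so that $I_i\equiv I$, primitivity and the fact that $\gamma_i^{-1}$ preserves $V'(\Z)$ force the scalar to have absolute value $\geq$ a positive constant, hence $\|e_W\|\geq c\,\|\gamma_i\,N_{M/\Q}e_I\|\to\infty$, a contradiction.

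For the reductive case I would adapt Proposition \ref{reductiveQ} together with Lemmas \ref{reductiveLem1} and \ref{reductiveLem2}. If $\bigcup_i\gamma_iT'\gamma_i^{-1}$ centralizes a nontrivial $\Q$-torus $S'$, set $S'':=\gamma_1^{-1}S'\gamma_1\subset T'$; then conjugation by $\gamma_i^{-1}\gamma_1$ carries $S''$ into $T'$, so by Lemma \ref{reductiveLem1} (applied over $\C$ to $G'$, noting $T'$ becomes diagonalizable over $\C$ once we identify $G'_\C\cong\SL_N(\C)^{m_0}$) we write $\gamma_i^{-1}\gamma_1=w_iz_i$ with $w_i\in N_{G'}(T')(\C)$ and $z_i\in Z_{G'}(S'')(\C)$; since $N_{G'}(T')/T'$ is finite we may assume $w_i=w_0$ constant after passing to a subsequence. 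Then $\gamma_i=\gamma_1w_0^{-1}(w_0z_iw_0^{-1})$ stays in a single coset of $Z_{G'}(w_0^{-1}S''w_0)(\C)$, and $w_0^{-1}S''w_0$ is a nontrivial $\Q$-subtorus of $T'$ (rationality follows because $w_0^{-1}S''w_0=\gamma_i^{-1}\gamma_1S''\gamma_1^{-1}\gamma_i$), contradicting divergence of $\{\gamma_i\}$ in $G'(\R)/Z_{G'}(w_0^{-1}S''w_0)(\R)$ via Lemma \ref{reductiveLem2}.

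The main obstacle is the parabolic step, specifically making precise that the $\Q$-weight vectors of $T'$ on $\bigwedge^\bullet V'$ are exhausted (up to $\Q$-scalars) by the $N_{M/\Q}e_\xi$ for $\xi\in\A_0$: a $\Q$-rational $T'$-eigenline need not be the base change of an $M$-rational $T$-eigenline, since $\bigwedge V'$ is genuinely larger than $\bigwedge V$. One has to argue that a $\Q$-rational line fixed by $T'$ is fixed by the $\C$-diagonal torus, decompose it into $\C$-weight spaces, and check that Galois-invariance of the line forces it to be spanned by a full $N_{M/\Q}$-orbit sum; this is exactly where the explicit formula for $N_{M/\Q}$ and the Galois-action bookkeeping from Section \ref{section5} and from the proof of Proposition \ref{lemmanondivergence} are needed. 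Everything else is a routine transcription of Section \ref{section4}.
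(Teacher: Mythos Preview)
Your reductive case and the overall reduction to $\gamma_i$ are correct and match the paper. The parabolic case, however, has a real gap, and the paper resolves it differently from what you propose.

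Your claim that ``the $\Q$-weight vectors of the $T'$-action on exterior powers of $V'$ are precisely the $N_{M/\Q}e_\xi$ up to $\Q$-scalars'' is false. Already in $V'$ itself: for $j\in[l_0]$ the whole $m_0$-dimensional $\Q$-space $M\cdot e_j'\subset V'(\Q)$ lies in a single $T'$-weight space (the character $\chi_j$ factors through $T'_s$), so \emph{every} $\Q$-line in $M\cdot e_j'$ is $T'$-stable. There are therefore uncountably many $T'$-stable $\Q$-lines, not finitely many, and most are not of the form you want. The Galois-bookkeeping fix you sketch cannot repair this, because a $\Q$-rational $T'$-eigenline need not have any special Galois symmetry beyond being defined over $\Q$.

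The paper's route avoids this entirely by invoking the structural fact it states in one line: every maximal $\Q$-subgroup of $G'=R_{M/\Q}G$ that is parabolic (resp.\ reductive of maximal rank) is $R_{M/\Q}P$ for a maximal $M$-parabolic (resp.\ $M$-reductive) subgroup $P$ of $G$. Thus the subspace $W$ stabilized by the parabolic is automatically an $M$-subspace of $V(M)=V'(\Q)$, not merely a $\Q$-subspace. Then $\gamma_i^{-1}W$ is a $T$-stable $M$-subspace of $V$, hence represented in $\bigwedge_M^{\bullet}V$ by $e_{I_i}$ with $I_i\in\A_0$; writing $\gamma_i e_{I}=\lambda_i e_W$ with $\lambda_i\in\OO_M\setminus\{0\}$ and applying $N_{M/\Q}$ via Lemma \ref{lem10} gives $\|\gamma_i N_{M/\Q}e_I\|\cdot|\Nm_{M/\Q}\lambda_i|^{-1}=\|N_{M/\Q}e_W\|$ with $|\Nm_{M/\Q}\lambda_i|\ge1$, contradicting $\|\gamma_i N_{M/\Q}e_I\|\to\infty$. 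In short: the missing ingredient in your argument is precisely the restriction-of-scalars description of $\Q$-parabolics of $G'$, which forces $W$ to be $M$-stable and thereby reduces the parabolic case to a verbatim copy of Proposition \ref{parabolicQ} carried out over $M$.
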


The proof of this proposition is still the same as before, i.e. we want to eliminate any possible intermediate reductive and parabolic groups. The connection is that a maximal reductive or parabolic subgroup of $G'$ always comes from restriction of scalar of some maximal reductive or parabolic subgroup of $G$. Hence we omit the details.

\section{Intermediate cases}\label{section6}

In this section we deduce Theorem \ref{theorem1} from Theorem \ref{theorem2}. 
The key thing here is to understand all possible $S_0$ and $H_0:=Z_G(S_0)$ that appears in the definition of $(S_0,T)$-clean. 

In the $\Q$-split case this is easy: $S_0$ can be defined by any partition of $\{1,...,N\}$ and $H_0 \cong $ products of $\SL_{\bullet}$'s and a $\Q$-split torus. 
Hence one may use Theorem \ref{theorem0} applied to this product. 
In the $\Q$-anisotropic case, for us this means $l_0=0$ and $a_0=1$, 
the work of Tomanov \cite{Tom00}, Proposition 3.3 and 3.5, 
shows $S_0$ corresponds to some subfield $K$ of $L_1$ and $H_0 \cong R_{K/\Q}\SL_{\bullet}$. 
We extend this analysis to all other cases and therefore this section will have some overlap with \cite{Tom00} which we do not assume. 

\begin{prop}\label{intermediate1}
Assume $M=\Q$ and there exists a sequence $\{g_i\}$ in $G(\R)$ that is $(S_0,T)$-clean. Then there are number fields $K_1,...,K_r$ and natural numbers $k_1,...,k_r$ with $H_0\cong (S_0)_s \times R_{K_1/\Q}\SL_{k_1} \times R_{K_r/\Q}\SL_{k_r} $. 
\end{prop}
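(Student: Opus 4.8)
## Proof proposal for Proposition \ref{intermediate1}

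The plan is to exploit the explicit coordinate description of $T = T_s T_a$ given in Section \ref{section2} together with the constraint $Z(Z_G(S_0)) = S_0$, which (as the excerpt notes) holds for the $S_0$ appearing in an $(S_0,T)$-clean sequence. First I would analyze a subtorus $S_0 \subseteq T$ purely in terms of how it acts on the standard basis $\{e_1,\dots,e_N\}$, decomposed as $[l_0]\sqcup[l_1]\sqcup\cdots\sqcup[l_{a_0}]$. Since $T_s$ acts by distinct characters on the blocks and $T_a$ acts block-diagonally via the anisotropic tori $T^i_a$, any $\Q$-subtorus $S_0$ of $T$ is determined by its $\Q$-character lattice, hence by a partition-with-identifications of the index set. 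Concretely, $S_0$ groups together certain of the split coordinates $\{1,\dots,l_0\}$ into common-eigenvalue classes, and for each anisotropic block $[l_i]$ it either centralizes all of $T^i_a$ (forcing that block to behave like a split coordinate, possibly merged with others) or it meets $T^i_a$ in a subtorus $S_0^i$ which, being defined over $\Q$, corresponds by Galois descent to an intermediate subfield $K \subseteq L_i$ — this is exactly the Tomanov analysis (\cite{Tom00}, Prop.\ 3.3, 3.5) that I would re-derive: a $\Q$-subtorus of $T^i_a = \ker(\Nm_{L_i/\Q})$ inside $\SL_{l_i}$ corresponds to $\ker(\Nm_{K/\Q})$ for some $\Q\subseteq K\subseteq L_i$, and its centralizer in $\SL_{l_i}$ is $R_{K/\Q}\SL_{l_i/[K:\Q]}$.

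Next I would take centralizers. Writing $S_0$ in terms of the data above, $Z_G(S_0)$ splits as a block-diagonal product: on each group of coordinates sharing a common $S_0$-eigenvalue we get a factor $\SL_{k}$ (or $R_{K/\Q}\SL_k$ if that group came from an anisotropic block where $S_0$ cuts out $\ker\Nm_{K/\Q}$), intertwined by the residual split torus; the global determinant-one condition is what produces the explicit central factor. The point is that centralizers of subtori in products and in restrictions of scalars are computed blockwise, and $Z_{\SL_{l_i}}(\ker\Nm_{K/\Q}) = R_{K/\Q}\SL_{l_i/[K:\Q]}$ by the standard structure of such centralizers (which one can check over $\overline{\Q}$ by diagonalizing, as in Lemma \ref{reductiveLem1}'s proof). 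Collecting the factors over all blocks yields $H_0 \cong (S_0)_s \times R_{K_1/\Q}\SL_{k_1}\times\cdots\times R_{K_r/\Q}\SL_{k_r}$, where $(S_0)_s$ absorbs the central split part — here the numbers $K_j, k_j$ are read off from the partition and the subfields, and $\sum [K_j:\Q]\,k_j \le N$ accounting for the split coordinates separately. I should also check the hypothesis $Z(Z_G(S_0)) = S_0$ is consistent with this description — it forces $S_0$ to be exactly the center of the product above, which rules out "redundant" mergings but does not change the shape of $H_0$; this is where the existence of an $(S_0,T)$-clean sequence gets used, and it is needed only to know we are in the "saturated" case, so that $H_0$ really is $Z_G(Z(H_0))$ with $Z(H_0)=S_0$.

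The main obstacle I expect is bookkeeping rather than conceptual depth: correctly tracking how a $\Q$-subtorus $S_0$ of the specific torus $T = T_s T_a$ interacts simultaneously with the split block $[l_0]$ and the several anisotropic blocks $[l_i]$, in particular that $S_0$ may merge a split coordinate with (the "scalar direction" of) an anisotropic block, and that the Galois descent identifying $S_0 \cap T^i_a$ with $\ker\Nm_{K/\Q}$ must be done compatibly with the chosen representatives $\sigma^i_j$ and basis $v^i_j$ of $L_i/M$. A clean way to organize this is to first pass to $\overline{\Q}$, describe $S_0$ as a subtorus of the diagonal maximal torus and its centralizer as a block Levi, then descend: the $\Gal$-action on the weight spaces permutes the blocks, and $\Q$-rationality of $S_0$ forces the block structure of $Z_G(S_0)$ to be $\Gal$-stable, which is precisely what packages into restrictions of scalars. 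Once the blockwise picture is set up, the isomorphism type of $H_0$ and the identification of the fields $K_j$ follow formally, so the proof should reduce to carefully stating this descent.
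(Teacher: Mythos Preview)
Your block-by-block approach in the first two paragraphs has a genuine gap: you analyze $S_0$ by intersecting it with each anisotropic factor $T^i_a$ separately, writing ``it meets $T^i_a$ in a subtorus $S_0^i$'' and invoking Tomanov on each block. But $S_0$ need not decompose as a product over the blocks $[l_i]$. The anisotropic part $(S_0)_a$ can sit diagonally across several $T^i_a$'s: Example~3 in Section~\ref{section7'} is exactly this situation, with two copies of the $\Q(\sqrt{2})$-torus and $S_0$ the diagonal, so that $S_0\cap T^1_a = S_0\cap T^2_a = \{e\}$ while $S_0$ itself is one-dimensional and $H_0 \cong R_{\Q(\sqrt{2})/\Q}\SL_2$. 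Your recipe would output the wrong group here.

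The paper's proof confronts this head-on. After the reduction to $l_0=0$, $(S_0)_s=\{e\}$ (Proposition~\ref{reduction1}), the crucial step is Lemma~\ref{connectedParti}: the partition $\zeta_0$ of $\{1,\dots,N\}$ coming from the diagonalized $S_0$ links every pair of blocks $[l_i],[l_j]$, precisely because $(S_0)_a$ anisotropic forbids any further $\Q$-split center in $Z_G(S_0)$. This connectedness forces a \emph{single} field $K$ that embeds into every $L_i$ simultaneously (via the stabilizer of one part $\zeta_1$ in $\Gal(\overline{\Q}/\Q)$), and then $H_0 \cong R_{K/\Q}\SL_{N/[K:\Q]}$ in the reduced case. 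The multiple $K_j$'s in the final statement arise not from the anisotropic blocks individually but from the partition cut out by $(S_0)_s$ before the reduction. Your ``clean way'' at the end --- diagonalize over $\overline{\Q}$, look at the Galois orbit structure on weight spaces, descend --- is in fact the correct argument and is what the paper does; but it is not merely organizational bookkeeping on top of the block-by-block picture, it replaces that picture. You should drop the per-block Tomanov application and instead run the Galois-descent argument directly, being sure to prove the connectedness lemma and to identify $K$ as the fixed field of the stabilizer of a part of $\zeta_0$.
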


\begin{proof}[Proof of Theorem \ref{theorem1} assuming Proposition \ref{intermediate1}]
$T\subset H_0$ is a maximal torus, we can write $T=(S_0)_s \times T_1 \times T_2 \times...\times T_{r}$ with each $T_i$ being maximal torus of $R_{K_i/\Q}\SL_{k_i} $. Applying Theorem \ref{theorem2} then concludes the proof.
\end{proof}

The rest of this section is devoted to a proof of the above proposition.

\subsection{A reduction}

In this subsection make the reduction to the case when $l_0=0$ and $S_0=(S_0)_a$.

\begin{lem}
Assume there are sequences that is $(S_0,T)$-clean, then $Z(H_0)=S_0$.
\end{lem}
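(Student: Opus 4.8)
The plan is to show the two inclusions $S_0 \subseteq Z(H_0)$ and $Z(H_0) \subseteq S_0$ separately. The first inclusion is automatic: since $H_0 = Z_G(S_0)$, every element of $S_0$ commutes with everything in $H_0$, so $S_0 \subseteq Z(H_0)$. The content is entirely in the reverse inclusion, and here is where the $(S_0,T)$-cleanness hypothesis must be used — without it the statement is simply false (e.g. if $S_0$ is not of the form $Z(Z_G(S_0))$, then $Z(H_0)$ will be strictly larger).

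\textbf{The reverse inclusion.} Let $S_0' := Z(H_0)$. Since $H_0 = Z_G(S_0)$ is reductive (being the centralizer of a torus in a reductive group, \cite{Spr98} Cor. 7.6.4) and contains the maximal torus $T$, its center $S_0' = Z(H_0)$ is a $\Q$-torus contained in $T$, and clearly $S_0 \subseteq S_0'$. Moreover $H_0 = Z_G(S_0) \subseteq Z_G(S_0')$, but also $S_0' = Z(H_0)$ means $H_0$ centralizes $S_0'$, i.e. $H_0 \subseteq Z_G(S_0')$; combined with $S_0' \subseteq T \subseteq H_0$ centralizing $H_0$, one gets $Z_G(S_0') \subseteq Z_G(S_0) = H_0$, hence $Z_G(S_0') = H_0 = Z_G(S_0)$. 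So $S_0$ and $S_0'$ have the same centralizer. Now suppose for contradiction that $S_0 \subsetneq S_0'$. Then $S_0'$ is a $\Q$-subtorus of $T$ strictly containing $S_0$ with $Z_G(S_0') = H_0$. By the definition of $(S_0,T)$-clean, $\{g_i\}$ must diverge in $G(\R)/Z_G(S_0')(\R) = G(\R)/H_0(\R)$. But also $(S_0,T)$-clean requires $\{g_i\} \subset Z_G(S_0)(\R) = H_0(\R)$, so the image of $\{g_i\}$ in $G(\R)/H_0(\R)$ is the constant coset $eH_0(\R)$, which does not diverge. This contradiction forces $S_0 = S_0'$, i.e. $Z(H_0) = S_0$.

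\textbf{Remarks on the write-up.} The only mildly delicate point is the identity $Z_G(Z(Z_G(S_0))) = Z_G(S_0)$, which is a general fact about centralizers (the "double centralizer" stabilizes after one step here because $Z_G(S_0)$ is reductive with maximal torus $T$ and $Z(Z_G(S_0))$ is a subtorus of $T$), and should be stated cleanly; it is exactly the observation already invoked in the paper's discussion preceding Theorem \ref{theorem1} that "$S_0$ satisfies $Z(Z_G(S_0)) = S_0$" — except that here we are proving that cleanness \emph{forces} this. I expect no real obstacle: the argument is a short formal manipulation plus one application of the divergence clause in the definition of $(S_0,T)$-clean. The one thing to be careful about is making sure $S_0' = Z(H_0)$ is genuinely a $\Q$-subtorus of $T$ (so that it is eligible to appear in the cleanness condition) — this follows because $H_0$ is defined over $\Q$, hence so is its center, and $Z(H_0)$ is connected and lies in $T$ since $T$ is a maximal torus of the connected reductive group $H_0$ and the center of a connected reductive group lies in every maximal torus.
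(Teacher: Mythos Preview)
Your proposal is correct and follows essentially the same argument as the paper's proof: set $S := Z(H_0)$, observe $Z_G(S) = H_0$, and use that $g_i \in H_0(\R)$ cannot diverge in $G(\R)/H_0(\R)$ to contradict cleanness unless $S = S_0$. Your write-up of the inclusion $Z_G(S_0') \subseteq H_0$ is slightly garbled --- it follows directly from $S_0 \subseteq S_0'$, not from ``$S_0' \subseteq T \subseteq H_0$ centralizing $H_0$'' --- and, as you rightly flag in your remarks, one should strictly speaking work with $Z(H_0)^\circ$ to ensure one has a genuine $\Q$-subtorus of $T$ (the paper glosses over this too); but the core logic is identical.
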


\begin{proof}
Let $Z(H_0)=S$ then $Z_G(S)=H_0 =Z_G(S_0)$ hence $g_i \in Z_G(S)$ with $S$ a larger torus also defined over $\Q$. This forces $S=S_0$.
\end{proof}

\begin{lem}
Assume there are sequences that is $(S_0,T)$-clean and let $(S_0)_s$ be the $\Q$-split part of $S_0$. 
Then $ZZ_G((S_0)_s)=(S_0)_s$.
\end{lem}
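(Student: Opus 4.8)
The plan is to show $ZZ_G((S_0)_s) = (S_0)_s$ by sandwiching this group between two torus centralizers and using the previous lemma together with the hypothesis that an $(S_0,T)$-clean sequence exists. First I would observe that $(S_0)_s \subseteq S_0$ always implies $Z_G(S_0) \subseteq Z_G((S_0)_s)$, hence $Z(Z_G((S_0)_s)) \subseteq Z(Z_G(S_0)) = Z(H_0)$, and by the preceding lemma $Z(H_0) = S_0$; so $ZZ_G((S_0)_s) \subseteq S_0$. On the other hand $(S_0)_s$ is central in $Z_G((S_0)_s)$ by definition of the centralizer, so $(S_0)_s \subseteq ZZ_G((S_0)_s)$. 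This gives the chain of inclusions
\begin{equation*}
(S_0)_s \subseteq ZZ_G((S_0)_s) \subseteq S_0.
\end{equation*}
Now $ZZ_G((S_0)_s)$ is the center of a reductive group (the centralizer of a torus in the semisimple $G$), hence a torus containing $(S_0)_s$; moreover it is defined over $\Q$ since $(S_0)_s$ is.

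The remaining point is to upgrade the inclusion $ZZ_G((S_0)_s) \subseteq S_0$ to an equality. Set $S' := ZZ_G((S_0)_s)$, a $\Q$-subtorus of $S_0$. Since $g_i \in Z_G(S_0) \subseteq Z_G((S_0)_s)$, and since $S' \subseteq Z(Z_G((S_0)_s))$, conjugation by $g_i$ fixes $S'$ pointwise; equivalently $\{g_i\}$ lies in $Z_G(S')(\R)$ and in particular does \emph{not} diverge in $G(\R)/Z_G(S')(\R)$. If the containment $(S_0)_s \subsetneq S'$ were strict, I would argue that this contradicts $(S_0,T)$-cleanness: the clean condition forces $\{g_i\}$ to diverge in $G(\R)/Z_G(S)(\R)$ for every $\Q$-subtorus $S$ of $T$ \emph{strictly containing} $S_0$. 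The subtlety is that $S'$ need not strictly contain $S_0$ — indeed we want $S' = S_0$ would be too strong; rather $S' \subseteq S_0$, so the clean condition does not directly apply to $S'$ itself.

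To close this gap I would instead use the structural decomposition $S_0 = (S_0)_s \cdot (S_0)_a$ (almost-direct product of split and anisotropic parts) and analyze $Z_G((S_0)_s)$ directly via the explicit form of $T$ and $T_s$ recorded in Section~\ref{section2}: the split part $(S_0)_s$, being a subtorus of $T_s$, is cut out by a partition of the "split coordinates" $\{1,\dots,l_0\}$ grouped with the blocks $[l_j]$, and its centralizer $Z_G((S_0)_s)$ is a product of general linear factors in which the anisotropic directions $[l_1],\dots,[l_{a_0}]$ are unconstrained; hence the center of $Z_G((S_0)_s)$ is exactly the split torus defined by the same partition, which is precisely $(S_0)_s$. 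Thus $S' = (S_0)_s$ directly, without needing the clean hypothesis for this lemma at all — the clean hypothesis is only what guarantees $Z(H_0) = S_0$, used to get the upper bound. The main obstacle I anticipate is bookkeeping: correctly identifying which coordinate blocks $(S_0)_s$ separates and verifying that no anisotropic block contributes to the center of $Z_G((S_0)_s)$; this is a routine but slightly tedious computation with the block-diagonal description of $T = T_s \cdot T_a$ given at the start of Section~\ref{section2}.
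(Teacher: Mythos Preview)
Your proposal is essentially correct, but the final paragraph contains a small circularity and takes a longer road than necessary. You assert that ``$(S_0)_s$, being a subtorus of $T_s$, is cut out by a partition'' and that the center of $Z_G((S_0)_s)$ ``is precisely $(S_0)_s$''; but an arbitrary subtorus of $T_s$ need not be given by a partition, and the second claim is exactly the statement to be proved. What your explicit computation \emph{does} correctly yield is that the center of $Z_G((S_0)_s)$ is a $\Q$-\emph{split} torus (being the center of a product of $\mathrm{GL}$ blocks). Combined with your already-established inclusion $ZZ_G((S_0)_s)\subset S_0$, this split torus must lie in the maximal $\Q$-split part $(S_0)_s$, and you are done. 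So the argument works once you rephrase that last step.

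The paper's proof reaches the same conclusion in one line without any block computation: from $(S_0)_s\subset T_s$ one gets $Z_G(T_s)\subset Z_G((S_0)_s)$ and hence $ZZ_G((S_0)_s)\subset ZZ_G(T_s)=T_s$; together with $ZZ_G((S_0)_s)\subset ZZ_G(S_0)=S_0$ this gives $ZZ_G((S_0)_s)\subset S_0\cap T_s=(S_0)_s$. This is exactly your ``center is split'' observation, packaged via the known fact $ZZ_G(T_s)=T_s$ rather than an ad hoc description of $Z_G((S_0)_s)$. Your middle paragraph, attempting to invoke the clean condition on $S'=ZZ_G((S_0)_s)$ directly, is a dead end (as you yourself note) and can be dropped.
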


\begin{proof}
Indeed $ZZ_G((S_0)_s)\subset T$ and hence 
$ZZ_G((S_0)_s)\subset ZZ_G(S_0)=S_0$ and 
$ZZ_G((S_0)_s)\subset ZZ_G(T_s)=T_s$ so 
$ZZ_G((S_0)_s)\subset S_0\cap T_s = (S_0)_s$. The converse inclusion is obvious so we are done.
\end{proof}

Let $H_s$ be the semisimple part of $Z_G((S_0)_s)$, then $(S_0)_a \subset H_s$
and $H_0= Z_{H_s}(S_a) \cdot (S_0)_s$. 
To continue, we need to review the correspondence between partitions and subtorus.

Let $D$ be the maximal torus of $G$ consisting of diagonal matrices. For a partition $\xi=\{\xi_1,...,\xi_k\} $ of 
$\{1,...,l_0,[l_1],...,[l_{a_0}]\}$, 
we let $D_{\xi}\subset T_s$ be the subtorus whose points are of the form 
$$\{ A= \text{diag}(t_1,...,t_{l_0}, t_{[l_1]}I_{l_1},...,t_{[l_{a_0}]}I_{l_{a_0}} ) 
\, \vert\, \det{A}=1 , \,t_i=t_j ,\,\forall i\sim j \}$$ 
where $i\sim j$ iff $i,j$ belong to the same part of this partition. Similarly, for a partitions of $\{1,...,N\}$ we use the same notations. And for partitions of $[l_i]$ we use $D^i_{\xi}$ for the corresponding subtorus.

Recall $
A_0=\text{diag}(  
I_{l_0} ,
(\sigma^1_{i}v^1_j)_{(i,j)\in[l_1]\times [l_1]},...,
(\sigma^{a_0}_{i}v^{a_0}_j)_{(i,j)\in[l_{a_0}]\times [l_{a_0}]}  )$ and $A_0 T {A_0}^{-1} =D$ and $A_0$ commutes with $T_s$. Hence by assumption, there exist partition 
$\zeta_0$ of $\{1,...,N\}$ s.t. $A_0 S_0 A_0^{-1} =D_{\zeta_0}$ 
and partition $\xi_0$ of 
$\{1,...,l_0,[l_1],...,[l_{a_0}]\}$ s.t. $A_0 (S_0)_s A_0^{-1} =D_{\xi_0}$. 

Let us also recall that the collection of $A_0tA_0^{-1}$ for $t\in T(\Q)$ consist of elements of the form
$t_s \cdot \text{diag}(I_{l_0}, 
\sigma^{1}_1(x^1), \sigma^1_2(x^1),....,
\sigma^1_{l_1}(x^1) , \sigma^2_{1}(x^2),....,\sigma^{a_0}_{l_{a_0}} (x^{l_{a_0}})  ) $
 with $x^i\in L_i$ of norm $1$ and 
 $t_s \in T_s(\Q)$ is equal to \
 $\text{diag}(  y_1,...,y_{l_0}, y_{[l_1]} I_{l_1} ,..., y_{[l_{a_0}]} I_{l_{a_0}}
 )$ with $y_i > 0$.
 We shall abbreviate them as $y(t) \cdot x(t)$.

\begin{lem} \label{6.4}
Take $k\in\{1,...,a_0\}$ and $i\in \{1,...,l_0\}$. TFAE: 
\begin{enumerate}
\item There exist $j\in [l_k]$ s.t. $i\sim_{\zeta_0} j$.
\item For all $j\in [l_k]$ we have $i\sim_{\zeta_0} j$.
\end{enumerate}
Moreover, when the above holds we have $i \sim_{\xi_0} [l_k]$.
\end{lem}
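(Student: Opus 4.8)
The plan is to exploit the Galois action on the index set $\{1,\dots,N\}=\{1,\dots,l_0\}\sqcup[l_1]\sqcup\cdots\sqcup[l_{a_0}]$. For $\sigma\in\Gal(\overline{\Q}/\Q)$, let $\pi_\sigma$ be the permutation of $\{1,\dots,N\}$ coming from the action of $\sigma$ on each coset space $\Gal(\overline{\Q}/\Q)/\Gal(\overline{\Q}/L_k)\cong[l_k]$; then $\pi_\sigma$ fixes $\{1,\dots,l_0\}$ pointwise and acts transitively on each $[l_k]$. From the explicit shape of $A_0$ and the fact that the entries $\sigma^k_\xi(v^k_\zeta)$ involve only $v^k_\zeta\in L_k$, applying $\sigma$ entrywise merely permutes the rows inside each block, so $\sigma(A_0)=P_\sigma A_0$ where $P_\sigma$ is the permutation matrix of $\pi_\sigma$ (which normalizes $D$).

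First I would prove that the partition $\zeta_0$ is $\pi_\sigma$-invariant for every $\sigma$. Since $S_0$ is defined over $\Q$ and $A_0S_0A_0^{-1}=D_{\zeta_0}$, one gets $\sigma(D_{\zeta_0})=\sigma(A_0)S_0\sigma(A_0)^{-1}=P_\sigma D_{\zeta_0}P_\sigma^{-1}=D_{\pi_\sigma(\zeta_0)}$. On the other hand $D_{\zeta_0}$ is cut out inside the $\Q$-split torus $D$ by the rational equations $x_a=x_b$ for $a\sim_{\zeta_0}b$, hence is $\sigma$-stable; since the assignment $\zeta\mapsto D_\zeta$ is injective (one recovers $a\sim_\zeta b$ from the vanishing of $x_a-x_b$ on $D_\zeta$), this gives $\pi_\sigma(\zeta_0)=\zeta_0$. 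The equivalence of (1) and (2) then follows at once: (2)$\Rightarrow$(1) is trivial, and for (1)$\Rightarrow$(2), given $i\sim_{\zeta_0}j_0$ with $j_0\in[l_k]$ and an arbitrary $j\in[l_k]$, transitivity provides $\sigma$ with $\pi_\sigma(j_0)=j$, and the $\pi_\sigma$-invariance of $\zeta_0$ together with $\pi_\sigma(i)=i$ yields $i\sim_{\zeta_0}j$.

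For the final assertion I would argue directly rather than through Galois. Recall that $(S_0)_s$, being a $\Q$-split subtorus of $T$, lies in $T_s$, and $A_0$ commutes with $T_s$; hence for any $s\in(S_0)_s$ we have $s=A_0sA_0^{-1}\in D_{\zeta_0}$, so the diagonal entries of $s$ are constant along the parts of $\zeta_0$. Writing $s=\diag(x_1,\dots,x_{l_0},y_1I_{l_1},\dots,y_{a_0}I_{l_{a_0}})$ and picking $j\in[l_k]$ with $i\sim_{\zeta_0}j$ (available by hypothesis), the $i$-th and $j$-th entries coincide, i.e.\ $x_i=y_k$. As this holds for all $s$ in $(S_0)_s=D_{\xi_0}$, the coordinate characters indexed by $i$ and by $[l_k]$ agree on $D_{\xi_0}$, which forces $i$ and $[l_k]$ to lie in the same part of $\xi_0$, that is, $i\sim_{\xi_0}[l_k]$.

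The only step I expect to need genuine care is the identity $\sigma(A_0)=P_\sigma A_0$: tracking precisely how $\sigma$ permutes the representatives $\{\sigma^k_1,\dots,\sigma^k_{l_k}\}$, hence the rows of each block of $A_0$, and matching this with $P_\sigma$-conjugation in the partition notation $D_\zeta$. Once $\pi_\sigma(\zeta_0)=\zeta_0$ is established, the rest is formal.
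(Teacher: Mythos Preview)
Your argument is correct and differs from the paper's in both parts. For $(1)\Rightarrow(2)$ the paper works pointwise: from $i\sim_{\zeta_0}j$ it reads off, for every $t\in S_0(\Q)$, the identity $\sigma^k_j(x^k)\,y_{[l_k]}=y_i$, concludes that $\sigma^k_j(x^k)\in\Q$, hence all embeddings of $x^k$ coincide and every $j'\in[l_k]$ satisfies $i\sim_{\zeta_0}j'$. Your route---proving $\pi_\sigma(\zeta_0)=\zeta_0$ from $\sigma(A_0)=P_\sigma A_0$ together with the fact that each $D_\zeta$ is already defined over $\Q$---is more structural, and in fact anticipates the Galois-orbit arguments the paper deploys later in Section~\ref{section6} (and already implicitly in the proof of Proposition~\ref{lemmanondivergence}). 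For the ``moreover'' clause the paper continues the same computation: $x^k\in\Q$ of norm one forces $x^k=\pm1$, and positivity of the split coordinates gives $y_{[l_k]}=y_i$. Your observation that $(S_0)_s=A_0(S_0)_sA_0^{-1}=D_{\xi_0}\subset D_{\zeta_0}$ (since $A_0$ commutes with $T_s$) bypasses this, yielding $x_i=y_k$ on all of $D_{\xi_0}$ directly; this is cleaner and avoids the mild awkwardness of invoking positivity on $\Q$-points. Either approach suffices; yours has the advantage of making the Galois-invariance of $\zeta_0$ explicit early, which is exactly what the next subsection needs.
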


\begin{proof}
We need to show $(1)$ implies $(2)$. $i\sim_{\zeta_0} j$ implies for all $t \in S_0$, $\sigma^k_j(x^{j}) y_{[l_k]} = y_{i}$. In particular $x^{j}$ is a rational number and so $\sigma^k_j(x^{j})$ does not depend on the choice of $j$ and they are all equal. So $(2)$ holds.

Now assume this is true. $\sigma^k_j(x^{j}) y_{[l_k]} = y_{i}$ becomes
$ \pm 1 y_{[l_k]} = y_{i}$ ($x^j$ is of norm $1$) hence $y_{[l_k]} = y_{i}$ because they are both positive. So we are done.
\end{proof}

Now if $\xi_0 =\{\xi_1,...,\xi_{|\xi|}\}$, then $H_s= \prod H_{\xi_i}$ with $H_{\xi_i} \cong \SL_{|\xi_i|}$. Take 
$H'_s := \prod_{\xi_i\cap\{1,...,l_0\} =\emptyset} H_{\xi_i}$.

\begin{prop}\label{reduction1}
For $i=1,...,|\xi|$ let $\pi_i$ be the projection from $H_s$ to $H_{\xi_i}$. Set $\pi_i((S_0)_a)=S_i$. Then 
$S_i=\{id\}$ when 
$\xi_i\cap\{1,...,l_0\} \neq \emptyset$, 
$Z(Z_{H_{\xi_i}}S_i)=S_i$ when
$\xi_i\cap\{1,...,l_0\} =\emptyset$ and 
$$(S_0)_a= \prod_{\xi_i\cap\{1,...,l_0\} =\emptyset} S_i\,,
\quad 
Z_{H_s'}(S_0)_a = \prod_{\xi_i\cap\{1,...,l_0\} =\emptyset} Z_{H_{\xi_i}}S_i
.$$
As a result,
$$H_0= (S_0)_s \cdot
\prod_{\xi_i\cap\{1,...,l_0\} \neq \emptyset} H_{\xi_i} \cdot
\prod_{\xi_i\cap\{1,...,l_0\} =\emptyset} Z_{H_{\xi_i}}S_i .$$
\end{prop}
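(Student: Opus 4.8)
The plan is to compute $H_0 = Z_G(S_0)$ explicitly, the essential step being to decompose the anisotropic part $(S_0)_a$ across the factors of $H_s$ using the diagonalization by $A_0$ together with Lemma~\ref{6.4}. Recall from the discussion above that $Z_G((S_0)_s) = A_0^{-1}Z_G(D_{\xi_0})A_0$, where $Z_G(D_{\xi_0})$ is the block-diagonal subgroup of $G$ whose blocks are the parts of $\xi_0$; that its semisimple part is $H_s = \prod_i H_{\xi_i}$ with $A_0 H_{\xi_i}A_0^{-1}$ the copy of $\SL_{|\xi_i|}$ in the $\xi_i$-block; that the identity component of its centre is $(S_0)_s$ (the lemma $ZZ_G((S_0)_s) = (S_0)_s$); and that $(S_0)_a \subseteq H_s$. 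Consequently $H_0 = Z_G((S_0)_s) \cap Z_G((S_0)_a) = (S_0)_s \cdot Z_{H_s}((S_0)_a)$, so the whole task reduces to understanding $Z_{H_s}((S_0)_a)$. It is also convenient to record at the outset that $\zeta_0$ refines $\xi_0$ and that $\xi_0$ is the join of $\zeta_0$ with the partition of $\{1,\dots,N\}$ into the blocks $[l_k]$: indeed $(S_0)_s$, being the maximal $\Q$-split subtorus of $S_0 = A_0^{-1}D_{\zeta_0}A_0$, equals $A_0^{-1}(D_{\zeta_0}\cap T_s)A_0$.

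First I would identify $S_i := \pi_i((S_0)_a)$. The matrix $A_0$ is block-diagonal for the partition $\xi_0$ (its block structure refines $\xi_0$), so conjugation by $A_0$ commutes with each $\pi_i$; moreover $A_0(S_0)_aA_0^{-1} \subseteq (A_0 S_0 A_0^{-1}) \cap (A_0 H_s A_0^{-1}) = D_{\zeta_0} \cap \prod_i \SL_{|\xi_i|} = \prod_i (D_{\zeta_0,\xi_i} \cap \SL_{|\xi_i|})$, where $D_{\zeta_0,\xi_i}$ is the subtorus of the diagonal torus of $\SL_{|\xi_i|}$ determined by the partition of the block $\xi_i$ induced by $\zeta_0$. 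A dimension count then forces equality of identity components: writing $n_i$ for the number of $\zeta_0$-classes inside $\xi_i$, and using that $S_0 = (S_0)_s(S_0)_a$ is an almost-direct product, one gets $\dim(S_0)_a = \dim D_{\zeta_0} - \dim D_{\xi_0} = (\text{number of }\zeta_0\text{-classes}) - (\text{number of }\xi_0\text{-classes}) = \sum_i(n_i - 1) = \dim\prod_i(D_{\zeta_0,\xi_i}\cap\SL_{|\xi_i|})$. Since $(S_0)_a$ is connected, $A_0(S_0)_aA_0^{-1}$ is the identity component of that product; hence $A_0 S_iA_0^{-1} = (D_{\zeta_0,\xi_i}\cap\SL_{|\xi_i|})^{\circ}$ and $(S_0)_a = \prod_i S_i$ (an equality over $\Q$, both sides being $\Q$-subtori of $H_s$ with the same complex points).

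Next I would prove that $S_i = \{\mathrm{id}\}$ exactly when $\xi_i \cap \{1,\dots,l_0\} \neq \emptyset$, which by the previous step is the same as $n_i = 1$, i.e. $\xi_i$ being a single $\zeta_0$-class. Fix $a \in \xi_i \cap \{1,\dots,l_0\}$ and let $C_0$ be its $\zeta_0$-class. Membership of an element in the join-class $\xi_i$ is witnessed by a chain from $a$ whose steps are either $\zeta_0$-equivalences or "lies in the same $[l_k]$" steps. The $[l_k]$-class of $a$ is the singleton $\{a\}$, so the chain must begin with a $\zeta_0$-step into $C_0$; and whenever it reaches some $b \in [l_k]$ with $b \in C_0$, Lemma~\ref{6.4} gives $[l_k] \subseteq C_0$, so the next step again stays in $C_0$. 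By induction the whole chain lies in $C_0$, hence $\xi_i = C_0$. For the remaining ("good") blocks I would compute $Z_{H_{\xi_i}}(S_i)$: when $n_i \geq 2$ a generic element of the subtorus $S_i$ has pairwise distinct eigenvalues on the $n_i$ sub-blocks (each coincidence locus is a proper subvariety of $S_i$), so $Z_{H_{\xi_i}}(S_i) = Z_{H_{\xi_i}}(A_0^{-1}D_{\zeta_0,\xi_i}A_0)$ is the block-diagonal subgroup of $H_{\xi_i} \cong \SL_{|\xi_i|}$ cut out by the induced partition of $\xi_i$, and the identity component of its centre is the scalars-per-sub-block intersected with $\SL_{|\xi_i|}$, namely $S_i$; this is assertion (2). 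The degenerate case $n_i = 1$ (exactly the bad blocks) reads $Z_{H_{\xi_i}}(S_i) = H_{\xi_i}$ with $Z(H_{\xi_i})^{\circ} = \{\mathrm{id}\} = S_i$, so it fits the same pattern; here and throughout $Z(\cdot)$ is to be read as the identity component of the centre, which is all that the subsequent application of Theorem~\ref{theorem2} needs.

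Putting this together, $Z_{H_s}((S_0)_a) = Z_{\prod_i H_{\xi_i}}(\prod_i S_i) = \prod_i Z_{H_{\xi_i}}(S_i) = \prod_{\xi_i \cap \{1,\dots,l_0\} \neq \emptyset} H_{\xi_i} \cdot \prod_{\xi_i \cap \{1,\dots,l_0\} = \emptyset} Z_{H_{\xi_i}}(S_i)$, so $H_0 = (S_0)_s \cdot Z_{H_s}((S_0)_a)$ is exactly the asserted product; the statements about $(S_0)_a = \prod_{\xi_i\cap\{1,\dots,l_0\}=\emptyset}S_i$ and about $Z_{H'_s}((S_0)_a)$ drop out of the same bookkeeping. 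I expect the combinatorial step — recognizing the "bad" blocks as single $\zeta_0$-classes — to be the real obstacle: it requires getting the refinement/join relation between $\zeta_0$ and $\xi_0$ exactly right and then feeding it carefully into Lemma~\ref{6.4} along the connecting chain. A secondary nuisance is carrying out the whole computation up to isogeny (consistently with identity components of centres) so that the displayed identities of algebraic groups hold on the nose.
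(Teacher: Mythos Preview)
Your argument is correct and reaches the same conclusion, but by a different route from the paper. The paper disposes of the bad blocks by Lemma~\ref{6.4} and then, for the key equality $(S_0)_a=\prod_{\text{good }i}S_i$, uses a double-centralizer trick: one checks that $\prod_i S_i\subset Z\big(Z_{H'_s}((S_0)_a)\big)$, and then invokes the standing hypothesis $Z(Z_G(S_0))=S_0$ to force $Z\big(Z_{H'_s}((S_0)_a)\big)=(S_0)_a$. Your approach instead pins down the partitions explicitly (recording that $\xi_0=\zeta_0\vee\tau$) and obtains $(S_0)_a=\prod_i S_i$ by a dimension count, after which the centralizer computations are read off from the block structure. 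The paper's argument is shorter and more conceptual; yours is more hands-on and has the virtue of making the chain argument behind ``follows immediately from Lemma~\ref{6.4}'' fully visible.

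One small inaccuracy: you assert that $n_i=1$ occurs \emph{exactly} for the bad blocks. The forward direction (bad $\Rightarrow n_i=1$) is your chain argument and is fine, but the converse can fail (a good block consisting of a union of $[l_k]$'s may well be a single $\zeta_0$-class; e.g.\ $l_0=0$, $a_0=1$, $S_0=\{\mathrm{id}\}$). This does not damage anything, since for such a good block $S_i=\{\mathrm{id}\}$, $Z_{H_{\xi_i}}(S_i)=H_{\xi_i}$, and $Z(H_{\xi_i})^\circ=\{\mathrm{id}\}=S_i$, so all displayed identities still hold; just drop the word ``exactly''.
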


It is clear that in light of this proposition, we may assume $l_0=0$ and $S_0=(S_0)_a$ in the proof of Proposition \ref{intermediate1}.

\begin{proof}
$S_i=\{id\}$ when 
$\xi_i\cap\{1,...,l_0\} \neq \emptyset$ follows immediately from Lemma \ref{6.4}. Then we have $(S_0)_a \subset \prod_{\xi_i\cap\{1,...,l_0\} =\emptyset} S_i$. Let us prove the converse inclusion.

Take $(g_i)_i \in Z_{H'_s}(S_0)_a$, then each component $i$, $g_i$ commutes with $S_i$ hence $(g_i)_i$ commutes with $\prod_i S_i$. This implies $ \prod_{\xi_i\cap\{1,...,l_0\} =\emptyset} S_i \subset Z(Z_{H'_s}(S_0)_a)$.  
But by definition of $H_s$, $H'_s$ and $Z_G(S_0)=S_0$, we see that $Z(Z_{H'_s}(S_0)_a)=(S_0)_a$. So we are done. The verification of the rest of the statement is pretty straight-forward.
\end{proof}

\subsection{Proof of the proposition}
We work under the assumption that  $l_0=0$ and $S_0=(S_0)_a$ in this subsection. The plan is, roughly speaking, to find a number field $K$ with embeddings into all $L_1,...,L_{a_0}$. Taking the diagonal embedding then gives $S_0$. We start by addressing "all".

\begin{lem}\label{connectedParti}
Recall $\zeta_0$ is the partition corresponding to $A_0S_0A_0^{-1}$. Then for all 
$i,j \in \{1,...,a_0\}$ there exists $s\in [l_i]$ and $t\in [l_j]$ with $s\sim_{\zeta_0}t$.
\end{lem}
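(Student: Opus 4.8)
The plan is to show that the partition $\zeta_0$ defining $S_0$ (equivalently $A_0 S_0 A_0^{-1} = D_{\zeta_0}$) cannot separate the blocks $[l_i]$ into two groups; this is precisely where the hypothesis that a $(S_0,T)$-clean sequence exists is used, via the requirement that such a sequence diverges in $G(\R)/Z_G(S)(\R)$ for every $\Q$-subtorus $S$ strictly containing $S_0$. First I would argue by contradiction: suppose the indices $\{1,\dots,a_0\}$ split into a nonempty proper subset $P$ and its complement $P^c$ such that no $s \in [l_i]$ with $i \in P$ is $\sim_{\zeta_0}$-equivalent to any $t \in [l_j]$ with $j \in P^c$. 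Since we have reduced to $l_0 = 0$, the partition $\zeta_0$ then refines into a ``block'' of coordinates coming from $\bigsqcup_{i\in P}[l_i]$ and another from $\bigsqcup_{j\in P^c}[l_j]$, with no part of $\zeta_0$ straddling the two.

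Given such a splitting, the plan is to produce a $\Q$-subtorus $S$ of $T$ with $S_0 \subsetneq S$ and $Z_G(S) = H_0 = Z_G(S_0)$, which immediately contradicts $(S_0,T)$-cleanness: the clean sequence $\{g_i\} \subset Z_G(S_0)(\R) = Z_G(S)(\R)$ would then fail to diverge in $G(\R)/Z_G(S)(\R)$. The candidate $S$ is obtained by enlarging the partition $\zeta_0$: merge the two super-blocks indexed by $P$ and $P^c$ appropriately — concretely, take the coarsest partition $\zeta_1$ lying between $\zeta_0$ and the two-block partition $\{\bigsqcup_{i\in P}[l_i],\ \bigsqcup_{j\in P^c}[l_j]\}$ that still has $A_0^{-1}D_{\zeta_1}A_0$ defined over $\Q$, or more simply observe that since $\zeta_0$ itself never straddles, the subtorus cutting out only the relation ``sum over $P$-block coordinates is constant'' is already defined over $\Q$ and sits strictly above $S_0$ inside $T_s \cdot (S_0)_a$; one checks $Z_G$ of this larger torus is unchanged because the extra relation only involves scalars that were already central in $H_0$. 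The rationality claim is where I would be careful: $A_0$ conjugates $T(\Q)$ to diagonal matrices whose entries in block $[l_i]$ are the Galois conjugates $\sigma^i_1(x^i),\dots,\sigma^i_{l_i}(x^i)$ of a norm-one element $x^i \in L_i$ together with a split part, so a subtorus of $D$ is $A_0$-conjugate to a $\Q$-torus precisely when its defining character relations are Galois-stable; a relation of the form ``product over all coordinates in $\bigsqcup_{i\in P}[l_i]$ equals $1$'' is manifestly Galois-stable.

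The main obstacle I anticipate is the rationality bookkeeping in the previous step: one must verify that the enlarged torus $S$ is genuinely defined over $\Q$ and genuinely strictly contains $S_0$ (not merely contains it), and that $Z_G(S) = Z_G(S_0)$ so that the contradiction with cleanness is exact rather than just ``close''. For the strict containment, the point is that if $\zeta_0$ already merged all of $\bigsqcup_{i\in P}[l_i]$ with $\bigsqcup_{j\in P^c}[l_j]$ we would not be in the assumed disconnected situation, so the new torus really does add a relation; equivalently $\dim S < \dim S_0$ forces $S_0 \subsetneq S$ — wait, rather one wants $S \supsetneq S_0$, i.e. $S$ cutting out fewer relations, so $\dim S > \dim S_0$, which holds exactly because we removed the ``straddling'' relations that $\zeta_0$ did not impose but a finer partition would. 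Once these three points — rational, strictly larger, same centralizer — are nailed down, Lemma \ref{connectedParti} follows, and this lemma is the combinatorial input that lets the subsequent argument build a common subfield $K \hookrightarrow L_1,\dots,L_{a_0}$ realizing $S_0$ diagonally.
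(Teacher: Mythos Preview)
Your core idea is correct and is essentially the paper's: the block separation produces a nontrivial $\Q$-split torus $S_{\mathrm{split}}=\{\diag(\lambda I_{|I_P|},\mu I_{|I_{P^c}|})\}\subset T_s$ that is incompatible with the standing hypothesis $S_0=(S_0)_a$. The paper packages this more directly: it picks an element $t\in S_0$ whose diagonalized entries on $I_1$ are all distinct from those on $I_2$, observes that $Z(Z_G(\{t\}))$ then contains $S_{\mathrm{split}}$, and invokes the already-established lemma $Z(Z_G(S_0))=S_0$ to conclude $S_{\mathrm{split}}\subset S_0$, contradicting anisotropy --- three lines in total. Your route (enlarge $S_0$ to $S=S_0\cdot S_{\mathrm{split}}$, check $Z_G(S)=Z_G(S_0)$, contradict the definition of $(S_0,T)$-clean) unpacks the same contradiction through the definition rather than its consequence; it works but is longer.

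Your exposition, however, has real confusion that obscures a short argument. The ``coarsest partition $\zeta_1$'' proposal goes the wrong way: coarser partitions yield \emph{smaller} tori $D_{\zeta_1}\subset D_{\zeta_0}$, not larger ones. The ``strict containment'' paragraph never lands either --- the phrase ``removed the straddling relations that $\zeta_0$ did not impose'' is vacuous, since by hypothesis $\zeta_0$ imposes no such relations. What you actually want to say is just this: $S_{\mathrm{split}}$ is nontrivial $\Q$-split and $S_0$ is $\Q$-anisotropic, so $S_{\mathrm{split}}\not\subset S_0$, hence $S:=S_0\cdot S_{\mathrm{split}}\supsetneq S_0$; and $Z_G(S_0)\subset Z_G(S_{\mathrm{split}})$ because every $\zeta_0$-part lies entirely within $I_P$ or within $I_{P^c}$, so $Z_G(S)=Z_G(S_0)\cap Z_G(S_{\mathrm{split}})=Z_G(S_0)$. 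No further ``rationality bookkeeping'' is needed, since both $S_0$ and $S_{\mathrm{split}}\subset T_s$ are already defined over $\Q$.
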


\begin{proof}

If not, then we can divide $\{[l_1],...,[l_{a_0}]\}$ into two parts $I_1,I_2$ and elements $t=\text{diag}(t_1,...,t_N)$ such that $t_i\neq t_j$ for all $i\in I_1$, $j\in I_2$. Thus $ZZ_G(\{t\})$ contains a $\Q$-split torus. However $ZZ_G(S_0)$ contains $ZZ_G(\{t\})$ and we arrive at a contradiction. 
\end{proof}

Then we starts to construct $K$ and its embeddings. 

For $i=1,...,a_0$, fix $b_i\in [l_i]$ such that $\{b_1,...,b_{a_0}\}$ belongs to the same partition and let $\mathcal{I}_i$ be $\{j\in [l_i] \,,\, j\sim b_i\}$.

Define $$K':=\{\sigma^{i}_{b_i}(k_i) \,\vert\, k_i \in L_i 
\text{ such that }\sigma^i_{j}(k_i) =\sigma^i_{b_i}(k_i) ,\,\forall j\in \mathcal{I}_i\}$$
It will be seen that this definition does not depend on the choice of $i$. One may take $i=1$ in the definition above for now.

Recall $j \mapsto [\sigma^i_j]$ gives a bijection between $\{1,...,l_i\} $ and 
$\Gal(\overline{\Q}/\Q)/\Gal(\overline{\Q}/L_i)$. $\Gal:=\Gal(\overline{\Q}/\Q)$ naturally acts on the latter set and hence on $[l_i]$ for each $i$. So it also acts on $\{1,...,N\}=[l_0]\sqcup ...\sqcup [l_{a_0}]$.
Assume the partition $\zeta_0=\{\zeta_1,...,\zeta_n\}$ is such that $\{b_i\} \subset \zeta_1$.
So $\mathcal{I}_i$ is equal to $\zeta_1 \cap [l_i]$.

\begin{lem}
For $\sigma \in \Gal(\overline{\Q}/\Q)$, TFAE:
\begin{enumerate}
\item $\sigma$ preserves $\zeta_1$.
\item $\sigma$ preserves $\mathcal{I}_i$ for all $i$.
\item $\sigma$ preserves $\mathcal{I}_i$ for some $i$
\end{enumerate}
\end{lem}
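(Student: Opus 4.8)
The plan is to reduce the whole equivalence to one structural fact about the Galois action and then run a short cyclic argument $(1)\Rightarrow(2)\Rightarrow(3)\Rightarrow(1)$. The fact I would isolate first is:

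$(\star)$ \emph{the action of $\Gal(\overline{\Q}/\Q)$ on $\{1,\dots,N\}=[l_1]\sqcup\cdots\sqcup[l_{a_0}]$ preserves each block $[l_i]$, and permutes the parts of the partition $\zeta_0$.} The blockwise statement is built into the definition of the action. For the second half I would use that $A_0S_0A_0^{-1}=D_{\zeta_0}$, that $D_{\zeta_0}$ is cut out of the diagonal torus by the \emph{rational} equations $t_s=t_t$ for $s\sim_{\zeta_0}t$ (so $D_{\zeta_0}$ is defined over $\Q$), and that the Vandermonde shape of $A_0\in\SL_N(L)$ gives $\sigma(A_0)=P_\sigma A_0$, where $P_\sigma$ is the permutation matrix implementing the Galois action of $\sigma$ on $\{1,\dots,N\}$ — indeed $\sigma$ permutes the coset representatives $\sigma^i_\xi$ while fixing each $v^i_\zeta\in L_i$. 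Then $\sigma(S_0)=S_0$, which holds because $S_0$ is defined over $\Q$, rewrites as $P_\sigma^{-1}D_{\zeta_0}P_\sigma=D_{\zeta_0}$, and this says precisely that $P_\sigma$ carries the partition $\zeta_0$ onto itself. (Alternatively, one can argue intrinsically: the surjection $X^*(T)\twoheadrightarrow X^*(S_0)$ is $\Gal$-equivariant, its kernel is spanned by the $\chi_s-\chi_t$ with $s\sim_{\zeta_0}t$, and $\Gal$-stability of that kernel is equivalent to $\Gal$ permuting the classes of $\zeta_0$.)

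Granting $(\star)$, the three implications are immediate. For $(1)\Rightarrow(2)$: if $\sigma(\zeta_1)=\zeta_1$ then, since $\sigma([l_i])=[l_i]$ and $\mathcal{I}_i=\zeta_1\cap[l_i]$, one gets $\sigma(\mathcal{I}_i)=\sigma(\zeta_1)\cap[l_i]=\zeta_1\cap[l_i]=\mathcal{I}_i$ for every $i$. The implication $(2)\Rightarrow(3)$ is trivial, as $a_0\ge 1$. For $(3)\Rightarrow(1)$: suppose $\sigma(\mathcal{I}_{i_0})=\mathcal{I}_{i_0}$ for some $i_0$. The set $\mathcal{I}_{i_0}=\zeta_1\cap[l_{i_0}]$ is nonempty because $b_{i_0}\in\mathcal{I}_{i_0}$; and by $(\star)$ the image $\sigma(\zeta_1)$ is one of the parts of $\zeta_0$, say $\zeta_k$. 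Then $\zeta_k\cap[l_{i_0}]=\sigma(\zeta_1)\cap\sigma([l_{i_0}])=\sigma(\mathcal{I}_{i_0})=\mathcal{I}_{i_0}=\zeta_1\cap[l_{i_0}]$, so $b_{i_0}\in\zeta_1\cap\zeta_k$; since $\zeta_1$ and $\zeta_k$ are parts of one partition, this forces $\zeta_k=\zeta_1$, i.e. $\sigma(\zeta_1)=\zeta_1$.

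The only place that requires care is $(\star)$ — precisely, verifying that $\sigma(A_0)=P_\sigma A_0$ with $P_\sigma$ exactly the Galois permutation on the index set $\{1,\dots,N\}$, and hence that $\zeta_0$ is a $\Gal$-stable partition. Once that is nailed down, everything else is a formal manipulation with subsets of $\{1,\dots,N\}$, so I do not expect any further obstacle.
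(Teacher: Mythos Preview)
Your proof is correct and follows essentially the same approach as the paper. The paper's key observation ``$x\sim y$ iff $\sigma x\sim\sigma y$'' is precisely your $(\star)$, and while the paper organizes the implications as $(1)\Leftrightarrow(2)$, $(2)\Rightarrow(3)$, $(3)\Rightarrow(2)$ (invoking Lemma~\ref{connectedParti} for the last step), your cycle $(1)\Rightarrow(2)\Rightarrow(3)\Rightarrow(1)$ is an equivalent repackaging; you also supply the justification for $(\star)$ via $\sigma(A_0)=P_\sigma A_0$ that the paper leaves as an unproved observation.
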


\begin{proof}
By definition $\sigma$ preserves each $[l_i]$ and hence (1),(2) are equivalent. Also (2) clearly implies (3).

But one observes that $x\sim y$ iff $\sigma x \sim \sigma y$. 
Then by Lemma \ref{connectedParti}, (3) also implies (2).
\end{proof}

The collection of $\sigma$ that satisfies the above lemma is a finite index subgroup and hence there is a finite extension $K$ of $\Q$ such that it is equal to $\Gal(\overline{\Q}/K)$. This field coincides with $K'$ constructed above.

\begin{prop}
$K=K'$. As a corollary, 
$$A_0 S_0(\Q) A_0^{-1} = \{ 
\diag (
\sigma^1_1 (\sigma^1_{b_1})^{-1}(k)    
, \sigma^1_2 (\sigma^1_{b_1})^{-1}(k),
...,
\sigma^{a_0}_{l_{a_0}} (\sigma^{a_0}_{b_{a_0}})^{-1}(k)
) \,\vert \, k\in K
\}.$$
\end{prop}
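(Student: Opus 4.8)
The plan is to recognize the equality $K=K'$ as a purely group-theoretic statement about blocks of the transitive $\Gal(\overline\Q/\Q)$-action on $[l_1]$, and then to unwind the definition of $K'$ against that description. Throughout write $\Gal:=\Gal(\overline\Q/\Q)$, and recall the relevant data: $\Gal$ acts transitively on $[l_i]\cong\Gal/\Gal(\overline\Q/L_i)$ with $b_i$ corresponding to the coset $\sigma^i_{b_i}\Gal(\overline\Q/L_i)$, so that $\sigma(\sigma^i_j(y))=\sigma^i_{\sigma\cdot j}(y)$ for $y\in L_i$; the partition $\zeta_0$ is $\Gal$-invariant (i.e. $x\sim_{\zeta_0}y\iff\sigma x\sim_{\zeta_0}\sigma y$), so $\Gal$ permutes its parts; and $\mathcal K:=\{\sigma\in\Gal:\sigma(\zeta_1)=\zeta_1\}=\Gal(\overline\Q/K)$. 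Also recall $K'=\sigma^1_{b_1}(\widetilde K)$ where $\widetilde K:=\{k\in L_1:\sigma^1_j(k)=\sigma^1_{b_1}(k)\ \forall j\in\mathcal I_1\}$ and $\mathcal I_1=\zeta_1\cap[l_1]$.

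First I would establish the block picture. Since $\Gal$ permutes the parts of $\zeta_0$ and preserves each $[l_1]$, it permutes the nonempty sets $\zeta_k\cap[l_1]$; these partition $[l_1]$, hence form a block system for the transitive action, and $\mathcal I_1$ is the block through $b_1$. By the lemma of the text, the setwise stabilizer of $\mathcal I_1$ is exactly $\mathcal K$. From the standard correspondence between blocks and subgroups I then get $\mathcal I_1=\mathcal K\cdot b_1$: the inclusion $\mathcal K b_1\subseteq\mathcal I_1$ is clear, and if $j\in\mathcal I_1$ then by transitivity $j=\sigma b_1$ for some $\sigma$, whence $\sigma(\mathcal I_1)$ and $\mathcal I_1$ are blocks sharing the point $j$, so $\sigma(\mathcal I_1)=\mathcal I_1$ and $\sigma\in\mathcal K$. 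I also record that the point stabilizer of $b_1$ is $\Gal(\overline\Q/E_1)$ with $E_1:=\sigma^1_{b_1}(L_1)$, and that it fixes $b_1\in\mathcal I_1$, so $\Gal(\overline\Q/E_1)\subseteq\mathcal K$ and hence $K=\overline\Q^{\mathcal K}\subseteq E_1$.

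With this in hand, $K=K'$ is a short computation. Given $\alpha=\sigma^1_{b_1}(y)$ with $y\in\widetilde K$ and $\sigma\in\mathcal K$, use $\sigma b_1\in\mathcal I_1$ (from $\mathcal I_1=\mathcal K b_1$) to get $\sigma(\alpha)=\sigma(\sigma^1_{b_1}(y))=\sigma^1_{\sigma b_1}(y)=\sigma^1_{b_1}(y)=\alpha$; thus $K'\subseteq\overline\Q^{\mathcal K}=K$. Conversely, for $\alpha\in K\subseteq E_1$ write $\alpha=\sigma^1_{b_1}(y)$ with $y\in L_1$; for any $j\in\mathcal I_1$ pick $\rho\in\mathcal K$ with $j=\rho b_1$ and compute $\sigma^1_j(y)=\rho(\sigma^1_{b_1}(y))=\rho(\alpha)=\alpha=\sigma^1_{b_1}(y)$, so $y\in\widetilde K$ and $\alpha\in K'$. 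The same argument with $i$ in place of $1$ (the stabilizer of $\mathcal I_i=\zeta_1\cap[l_i]$ is again $\mathcal K$ by the lemma) shows $\sigma^i_{b_i}(\widetilde K_i)=\overline\Q^{\mathcal K}=K$ for every $i$, which is the asserted independence of the choice of $i$ in the definition of $K'$.

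Finally the corollary: an element of $A_0 S_0(\Q)A_0^{-1}$ is, by the description of $A_0 T(\Q)A_0^{-1}$ together with $A_0 S_0 A_0^{-1}=D_{\zeta_0}$, a diagonal matrix $\diag(\sigma^i_j(x^i))$ with $x^i\in L_i$ (the split part drops out since $S_0$ is anisotropic) that is constant on the parts of $\zeta_0$; since $\zeta_1$ meets every $[l_i]$, the parts of $\zeta_0$ are exactly the $\Gal$-orbit of $\zeta_1$, and constancy on $\zeta_1$ with common value $k=\sigma^i_{b_i}(x^i)$ says precisely $x^i=(\sigma^i_{b_i})^{-1}(k)$ with $k\in\sigma^i_{b_i}(\widetilde K_i)=K$, constancy on the remaining parts being forced by $\Gal$-equivariance; this gives the displayed formula (with the evident $\Nm$-one constraint on $k$ making the determinant $1$). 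I expect the main obstacle to be the block step: one must check that $\mathcal I_1$ really is a block and that its stabilizer is $\mathcal K$ — this is exactly where $S_0$ being defined over $\Q$, via the $\Gal$-invariance of $\zeta_0$, is used — and, for the corollary, that $\zeta_1$ meeting all $[l_i]$ forces $\zeta_0$ to have no parts outside $\Gal\cdot\zeta_1$.
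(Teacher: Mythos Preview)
Your proof is correct and takes a genuinely different route from the paper's. For the inclusion $K'\subseteq K$ both arguments are essentially the same, but for the harder inclusion $K\subseteq K'$ the paper constructs a witness: using Zariski density of $S_0(\Q)$ in $S_0$ and the equality $A_0S_0A_0^{-1}=D_{\zeta_0}$, it finds $x_1\in L_1$ with $\sigma^1_{b_1}(x_1)=\sigma^1_j(x_1)$ for $j\in\mathcal I_1$ and $\sigma^1_{b_1}(x_1)\neq\sigma^1_j(x_1)$ for $j\notin\mathcal I_1$, so that $x:=\sigma^1_{b_1}(x_1)\in K'$; then a $\sigma\in\Gal(\overline\Q/K')\setminus\Gal(\overline\Q/K)$ would simultaneously fix and move $x$, a contradiction. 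You instead bypass the density input entirely by observing that $\mathcal I_1$ is a block for the transitive $\Gal$-action on $[l_1]$ with setwise stabilizer $\mathcal K$, so $\mathcal I_1=\mathcal K\cdot b_1$; from this, $K=\overline\Q^{\mathcal K}\subseteq\sigma^1_{b_1}(L_1)$ and the inclusion $K\subseteq K'$ follows by a two-line Galois computation. Your argument is cleaner and makes the role of the $\Gal$-invariance of $\zeta_0$ (equivalently, $S_0$ being defined over $\Q$) completely explicit as the source of the block structure; the paper's argument, while less structural, has the minor advantage of exhibiting a concrete element of $K'$ not in any proper subfield. Your treatment of the corollary via the $\Gal$-orbit description of the parts of $\zeta_0$ is also correct and somewhat more detailed than the paper, which leaves the corollary implicit.
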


\begin{proof}
We may assume $i=1$ in the definition of $K'$.

For $\sigma \in \Gal(\overline{\Q}/K)$ and $k=\sigma^1_{b_1}k_1\in K'$, $\sigma \sigma^1_{b_1}k_1 =\sigma^1_{j}k_1 $ for some $j\in \mathcal{I}_1$. But by definition $\sigma^1_{j}k_1 = \sigma^1_{b_1}k_1$. So $K'$ is fixed by $\Gal(\overline{Q}/K)$ and $K'\subset K$.

The converse is more delicate. First, because $A_0S_0A_0^{-1}=D_{\zeta_0}$, by denseness of rational points in $T_a$, we may find $x_1 \in L_1$ with 
$\sigma^1_{b_1}x_1=\sigma^1_{j}x_1$ for all $j\in \mathcal{I}_1$ but 
$\sigma^1_{b_1}x_1\neq \sigma^1_{j}x_1$ for all $j\notin \mathcal{I}_1$. By construction, $x:= \sigma^1_{b_1}x_1$ is in $K'$.

Now if $K'$ is a proper subfield of $K$ we would find
$\sigma \in \Gal(\overline{\Q}/K') \setminus \Gal(\overline{\Q}/K)$. 
This means (1) $\sigma$ fix $K'$ and 
(2) $\sigma \sigma^1_{b_1} = \sigma^1_{j} $(mod $\Gal(\overline{\Q}/L_1)$) for some $j\notin \mathcal{I}_1$.
By properties of $x$ above, this (1) implies  
$\sigma \sigma^1_{b_1}x_1= \sigma x=   x = \sigma^1_{b_1}x_1$ but (2) implies  $\sigma \sigma^1_{b_1}x_1 
\neq \sigma^1_{b_1}x_1$. This is a contradiction, so $K'=K$.
\end{proof}

It only remains to describe $H_0=Z_G(S_0)$ now.

\begin{proof}[Proof of Proposition \ref{intermediate1}]

Fix $\{\iota_1,...,\iota_r\}$ to be a set of embeddings of $K$ into $\overline{\Q}$ over $\Q$. For each $i$, arrange the order of $\sigma^i_1,...,\sigma^i_{l_i}$ such that the first $r$ elements coincide with $\iota_1\circ \sigma^i_{b_i},...,\iota_r\circ \sigma^i_{b_i} $ and so does the next $r$ elements and so on. Then fix a basis of $K$ over $\Q$, $u_1,...,u_r$. For each $i$, take 
$\{w^i_1,w^i_2,...,w^i_{l_i/r}\}$ to be a basis of $L_i$ over $(\sigma^i_{b_i})^{-1}K=:K_i$. Then we may take 
$v^i_1= (\sigma^i_{b_i})^{-1}(u_1) w^i_1, \,
v^i_2 = (\sigma^i_{b_i})^{-1}(u_2) w^i_1 ,\,
..., 
v^i_{l_i/r+1}=(\sigma^i_{b_i})^{-1}(u_1) w^i_2
$ and so on. 
The point of arranging them into this order is two-fold.

First if we write for $k\in K$, 
$M(k):=\text{diag}(\iota_1 k,...,\iota_r k)$. 
Then 
$A_0S_0(\Q)A_0^{-1}=\{\text{diag}(M(k),M(k),...,M(k)) \,\vert\, k\in K  \}$. If we decompose $A$, an $N$-by-$N$ matrix, into $r$-by-$r$ block matrices then we can see  
its centralizer are those with diagonal block matrices. We use $F$ to denote this group of matrices of coeffients in $\overline{\Q}$.

Second for each $i$, we define $A^{(i)}$ and $B^{(i)}$ by decomposing into $r$-by-$r$ block matrices. We have 
$(A^{a,b})_{j,k}=
\text{diag}(\sigma^i_{(a-1)r+1}w_b , 
\sigma^i_{(a-1)r+2}w_b,...,
\sigma^i_{ar}w_b
)$
 and 
$(B^{a,b})_{j,k} = \iota_j u_k
$ if $a=b$ and is zero matrix otherwise.

Then one can compute $(\sigma^i_jv_k)_{(j,k)} = A^{(i)} \cdot B^{(i)}$. 
Write $A=\text{diag}(A^{(1)}, ..., A^{(a_0)} )$ and  $B=\text{diag}(B^{(1)}, ..., B^{(a_0)} )$. So $A_0= A\cdot B$.
Then for all $s\in A_0S_0(\Q)A_0^{-1}$, 
$A_0^{-1}s A_0 = B^{-1}sB$, 
implying 
$ B^{-1} \{\text{diag}(M(k),M(k),...,M(k)) \,\vert\, k\in K  \} B$ 
 is defined over $\Q$ and that $B^{-1}FB \subset H_0(\overline{\Q})$.

Then we can compute that $B^{-1}fB \in H_0({\Q})$ iff each $r$-by-$r$ block of $f$ is $M(k)$ for some $k$. As $R_{K/\Q}\SL_{N/r}(\Q)$ can also be conjugate over $\overline{\Q}$ into such matrices(see 2.1.2 of \cite{PlaRap94}), we have an algebraic isomorphism between $H_0$ and $R_{K/\Q}\SL_{N/r}$ carrying $H_0(\Q)$ to $R_{K/\Q}\SL_{N/r}(\Q)$. As rational points are Zariski dense in the underlying variety of a connected $\Q$-linear algebraic group, such a morphism has to be defined over $\Q$. So we are done.
 
\end{proof}

\section{Comparison with related results and examples}\label{section7'}

In this section we compare our results with previously obtained ones and give proofs of examples in the introduction. These examples are not covered by previously known results.

\subsection{Comparison with related results}

The case in Theorem \ref{theorem0} or \ref{theorem2},  and when $T$ (or $T'$) is 
$\Q$-anisotropic follows from Corollary 1.13 of \cite{EskMozSha96}. Indeed, all reductive subgroups containing $T$ (or $T'$) is of the form $Z_G(S)$ for some subtorus $S$ of $T$ (or $T'$). But \cite{EskMozSha96} deals with much more general situations than just maximal $\Q$-anisotropic torus in $\SL_N$(or $R_{M/\Q}\SL_N$).

When $T$ is $\Q$-split we recover Theorem 2.4 of \cite{ShaZhe18}. Let us briefly explain why. 

Let $D$ be the full diagonal torus of $G$.
Recall the Definition 2.2 in  \cite{ShaZhe18} where $S$ is a subtorus of $D$ defined over $\R$:
\begin{equation*}
    \mathcal{A}(S(\R),g_i) = \{ 
    Y \in \Lie(S(\R))\, \vert \, \{\Ad(g_i)Y\} \text{ is bounded in } \Lie(G(\R))
    \}
\end{equation*}

Take $x \in G(\R)$ such that $D(\R)x\Gamma/\Gamma$ divergent in $G(\R)/\Gamma$. 
According to a theorem of Tomanov and Weiss (\cite{TomWei03}, Theorem 1.1), $T:=x^{-1}Dx$ is a $\Q$-split torus. And $(g_i)_* \mu_{D(\R)x\Gamma} = (g_i x )_* \mu_{T}$.

Without loss of generality let us assume $\{g_i x\}_{i\in \N} \subset G(\R)$ is $(S_0, T)$-clean and for any $Y \in \Lie(D(\R))\setminus \mathcal{A}(D(\R),g_i)$, $\{\Ad(g_i)Y\}$ diverges. Let $H_0=Z_G(S_0)$ and $H_1=Z_G(\mathcal{A}(D(\R),g_i))$.

Then Theorem \ref{theorem1} above asserts $(g_i)_* [\mu_{Dx\Gamma}] \to [\mu_{H_0}] $ 
while Theorem 2.4 from \cite{ShaZhe18} asserts $(g_i)_* [\mu_{D(\R)x\Gamma}]$ converges to some translates of $[\mu_{H_1(\R) x \Gamma}]$. 
Indeed,  
$x^{-1}H_1x = Z_G(\Ad(x^{-1})\mathcal{A}(D(\R),g_i))= Z_G (\mathcal{A}(T,g_ix))$. 
And 
$\Ad(g_ix)Y $ 
diverges for all $Y\in \Lie(T(\R))\setminus \mathcal{A}(T(\R),g_ix)$.  

On the other hand, $\{g_i x\}_{i\in \N} \subset G(\R)$ is $(S_0, T)$-clean, 
hence $g_ix y (g_ix)^{-1}$ diverges for all $y \in T(\R) \setminus S_0(\R) $ 
and so 
$\Ad(g_ix)Y$ diverges for all
$Y \in $ $\Lie(T(\R))\setminus \Lie(S_0(\R))$. 
Also $\Ad(g_ix)Y = Y$ 
for all $Y \in  \Lie(S_0(\R))$.
These facts show $\mathcal{A}(T(\R),g_ix)$
is no other than $\Lie(S_0(\R))$ 
and thus its centralizer is $H_0$. 
In conclusion we have $x^{-1}H_1x =H_0$, and $\mu_{H_0}$ is a translate of $\mu_{H_1(\R) x\Gamma}$. So our explanation completes.\\

Additionally one could see from Theorem 2.4 from \cite{ShaZhe18} that whether the limit measure is $G(\R)$-invariant is independent of the choice of base point $x$ if $D(\R)x\Gamma$ is divergent. But this could be false if $D(\R)x\Gamma$ is merely closed.
Although the condition posed in \cite{ShaZhe18}, Theorem 2.4 is certainly sufficient for the convergence to the $G(\R)$-invariant Haar measure.

\subsection{Examples}

\paragraph{Example 1} 
Let $N=3$ and $p$ be a non-square integer. 
let
$$  T = \bigg\{ A= \begin{bmatrix}
  b &    c  &            \\
     pc    & b &    \\
         &       & a   \\
  \end{bmatrix}  \, :\, \det{A}=1 \bigg\}.
  $$
Then one can see $T$ is indeed a maximal torus defined over $\Q$ that is neither $\Q$-split nor $\Q$-anisotropic.
Take $\displaystyle{
g_i=\begin{bmatrix}
   1 &    0  &   d_i    \\
         & 1 &  e_i  \\
         &       &  1\\
  \end{bmatrix} \in \SL_3(\R)}$ with $(d_i,e_i)$ diverging to infinity, then we have $(g_i)_*[\mu_{T}]=[\mu_{G}]$.\\
  
  \begin{proof}
We will check that $g_i$ diverges in $G(\R)/Z_G(S_0)(\R)$ for all $S_0$ nontrivial subtorus of $T$ defined over $\Q$.
Recall that the conjugacy class of a semisimple element in a semisimple linear algebraic group is closed and that $S_0(\Q)$ is dense in $S_0(\R)$. We have that $g_i$ diverges in $G(\R)/Z_G(S_0)(\R)$ iff there exists $s \in S_0(\Q)$, $g_i s g_i^{-1}$ diverges in $G(\R)$. 

Let us take 
$\displaystyle{
s= \begin{bmatrix}
  b &    c  &            \\
     pc    & b &    \\
         &       & a   
  \end{bmatrix} _{\neq id} \in T(\Q)}$. That means one of $a-1,c$ is nonzero, $(b^2-pc^2)a=1$ and $a,b,c\in\Q$. Note in addition that $\sqrt{p}\notin \Q$, we have $a\neq b\pm \sqrt{p} c$. 
  
Now $\displaystyle{g_i s g_i^{-1}=
\begin{bmatrix}
  b &    c  &     x_i       \\
     pc    & b &  y_i  \\
         &       & a   
  \end{bmatrix}}
$ with $\displaystyle{\begin{bmatrix}
  x_i         \\
   y_i   
  \end{bmatrix} = 
  \bigg(   \begin{bmatrix}
  a &    0           \\
   0 &   a    
  \end{bmatrix}
  -
  \begin{bmatrix}
  b &    c           \\
    pc    & b 
  \end{bmatrix}    \bigg) 
  \begin{bmatrix}
  d_i        \\
  e_i   
  \end{bmatrix}}
  $. 
  But $a\neq b\pm \sqrt{p} c$ implies that the matrix 
  $\displaystyle{
  \begin{bmatrix}
  a &    0           \\
   0 &   a    
  \end{bmatrix}-
  \begin{bmatrix}
     b   &   c           \\
    pc   &   b 
  \end{bmatrix}}
  $
  is invertible, so $(d_i,e_i)$ going to infinity is equivalent to $(x_i,y_i)$ going to infinity and hence $g_isg_i^{-1}$ diverges in this case.
\end{proof}

  \paragraph{Example 2}
Take $N=4$ and $(p,q)$ to be a pair of non-zero integers such that none of $(p,q,p/q)$ are squares of rational numbers.
Let $$
  T=\bigg\{ A=\begin{bmatrix}
  b  &    c  &       &     \\
  pc   & b  &    &  \\
     &       & d  & e\\
     &    &   qe  & d\\
  \end{bmatrix}\,:\, \det{A}=1
  \bigg\}.$$
  One can see this is also a maximal torus defined over $\Q$ that is neither $\Q$-split nor $\Q$-anisotropic.

Take $g_i= \begin{bmatrix}
  I_2  &    F_i    \\
  0  &  I_2    \\
  \end{bmatrix}$ where $F_i$ is a divergent sequence of $2$-by-$2$  matrices, then $(g_i)_*[\mu_{T}] \to [\mu_{G}]$.\\

\begin{proof}

It is clear here $l_0 =0$. If $S_0$ has nontrivial anisotropic part, according to the analysis in Section \ref{section6}, especially the construction of $K$(or $K'$), we must have a nontrivial extension of $\Q$ that simultanesously embeds into $\Q(\sqrt{p})$ and $\Q(\sqrt{q})$.
But one checks that $\Q(\sqrt{p}) \neq \Q(\sqrt{q})$ and they are Galois, hence  $S_0$ is $\Q$-split and the only candidate is $\{ A \in T \vert \,c=e=0 \}$.  Now it suffices to show that for $A= \diag(b,b,d,d)$ with $b^2d^2=1$ and $b \neq d$, we have $g_iAg_i^{-1}$ diverges.

Indeed $$\displaystyle{g_iAg_i^{-1}
= \begin{bmatrix}
  I_2  &    F_i    \\
  0  &  I_2
  \end{bmatrix}
 \begin{bmatrix}
  bI_2  &    0    \\
  0  &  dI_2
  \end{bmatrix}
 \begin{bmatrix}
  I_2  &    -F_i    \\
  0  &  I_2
  \end{bmatrix}
  =
  \begin{bmatrix}
  bI_2  &    (d-b)F_i     \\
  0  &  dI_2 
  \end{bmatrix}}
  $$
  diverges since $d-b\neq 0$.
\end{proof} 

\paragraph{Example 3} Let $N=4$ and 
   $$
  T=\bigg\{A= \begin{bmatrix}
  b  & c  &     &     \\
  2c & b  &      &  \\
     &    &  d   & e\\
     &    &  2e  & d\\
  \end{bmatrix}\, : \, \det{A}=1
  \bigg\}.$$ Then $T$ is a maximal $\Q$-torus of $\SL_4$ that is neither anisotropic over $\Q$ nor split over $\Q$. Let $S$ be the subgroup of $T$ with the additional requirement that $b=d$, $c=e$. Then it has a centralizer given by
  $$\displaystyle{ Z_G(S)=H=\bigg\{
  A= \begin{bmatrix}
  b  & c  &  f   &  h   \\
  2c & b  &  2h  &  f\\
  j  & l  &  d   & e\\
  2l & j  &  2e  & d\\
  \end{bmatrix}\, :\, \det{A}=1 \bigg\}
  }.$$
  Take $g_i= \begin{bmatrix}
  1  &    &  f_i  & h_i    \\
     & 1  &   2h_i & f_i  \\
     &    & 1  &  \\
     &    &    & 1 \\
  \end{bmatrix} \in Z_G(S)(\R)$ with $(f_i,h_i)$ diverges, then $(g_i)_*[\mu_{T}] \to [\mu_{H}]$.\\

  \begin{proof}
  Take arbitrary $A= \begin{bmatrix}
  b  & c  &     &     \\
  2c & b  &    &  \\
    &   &  d   & e\\
   &   &  2e  & d\\
  \end{bmatrix} \in T(\Q) \setminus S(\Q)$. So $b\neq d$ or $c \neq e$ and we need to show $g_iA g_i^{-1}$ diverges. 
  Indeed $\displaystyle{
  g_iA g_i^{-1}
  = 
  \begin{bmatrix}
  b  & c  &   x_{1i}  & x_{2i}    \\
  2c & b  &  x_{3i}  &  x_{4i}\\
    &   &  d   & e\\
   &   &  2e  & d\\
  \end{bmatrix}}
  $ with 
  $\displaystyle{
  \begin{bmatrix}
  x_{1,i}  & x_{2,i}    \\
    x_{3,i}  &  x_{4,i}\\
  \end{bmatrix}=
  \begin{bmatrix}
  d-b  & e-c    \\
   2(e-c)  &  d-b\\
  \end{bmatrix}
  \begin{bmatrix}
  f_i & h_i   \\
   2h_i  &  f_i\\
  \end{bmatrix}}
  $. 
  Note once $d-b$ or $e-c$ is nonzero, we would have 
  $\displaystyle{ \begin{bmatrix}
  d-b  & e-c    \\
   2(e-c)  &  d-b\\
  \end{bmatrix}}$ invertible as $\sqrt{2}$ is not a rational.
  Hence 
  $(x_{1,i}, x_{2,i},$ $x_{3,i},x_{4,i})$ diverges iff $(f_i,h_i)$ does. And we are done.
  \end{proof}

\section{Translates of reductive subgroups containing T}\label{section7}

In this section we prove Theorem \ref{theorem3} from the introduction. The proof will be the same as that in \cite{ShaZhe18} (see Section 10 therein) except that we work directly at the group level instead of Lie algebra. Recall $(S_0,S_1)$-clean says we are given a sequence $\{g_i\}$ in a reductive group $H_1(\R)$ containing a maximal torus $T$ and a subtorus $S_0$ of $S_1:=Z(H_1)$, the center of $H_1$, such that:
\begin{enumerate}
\item  $\{g_i\}$ is contained in $Z_G(S_0)(\R)$.
\item  $\{g_i\}$ diverges in $G(\R)/Z_G(S)(\R)$ for all $\Q$-subtorus $S$ of $S_1$ that strictly contains $S_0$.
\end{enumerate}

Let $T'$ be the $\R$-split part of $T$. Fix a maximal compact Lie subgroup $K$ of $G(\R)$ such that the corresponding Cartan involution preserves $T'$. We also fix a Weyl chamber $C$ in
$\Lie(T'(\R))$. 
For $i=0,1$, take a generic cocharacter $\alpha_i$ of $S_i$,  defined over $\R$. 
Thus they land in $T'$ and we assume they belong to $C$. 
Define $U_i$ to be the $\R$-subgroup of $G$ contracted by conjugate action of $\alpha_i$. 
So $U_0 \subset U_1$. 
Let $K_i = K \cap H_i(\R)$,  then we have the Iwasawa decomposition $H_i(\R)= K_i U_i(\R) T'(\R) $ for $i=0,1$.

Now according to this decomposition we may write $g_i=a_i u_i h_i$ with $a_i\in K_0$, $u_i \in U_0(\R)$, $h_i\in T'(\R)$ and so $(g_i)_*\mu_{H_1}=(a_iu_i)_*\mu_{H_1}$. 
Hence without loss of generality we may assume $g_i=u_i$, in other words $g_i$ is in $U_0(\R)$ (and in particular, is in $U_1(\R)$ ) in addition to being $(S_0,S_1)$-clean.

Let us also take another sequence $\{h_m\}$ that is $(S_1,T)$-clean and hence 
$\lim_m (h_m)_*$ $[\mu_{T}] = [\mu_{H_1}]$.

To show $(g_i)_*[\mu_{H_1}] \to [\mu_{H_{0}}]$, it suffices to show for $f_1,f_2\in C_c(G(\R)/\Gamma)$ and $(f_2, \mu_{H_{0}})\neq 0$,
\begin{equation*}
    \frac{(f_1, \mu_{H_{0}})}{(f_2, \mu_{H_{0}})}= 
    \lim_n \frac{(f_1, (g_n)_*\mu_{H_1})}{(f_2, (g_n)_*\mu_{H_1})} =
    \lim_n \lim_m \frac{(f_1, (g_nh_m)_*\mu_{T})}{(f_2, (g_nh_m)_*\mu_{T})}
\end{equation*}
if this fails, then we can find $\ep_2>0$ and $n_i\to \infty$ such that for all $n_i$,
\begin{equation*}
    \bigg\vert\frac{(f_1, \mu_{H_{0}})}{(f_2, \mu_{H_{0}})}- \lim_m \frac{(f_1, (g_{n_i}h_m)_*\mu_{T})}{(f_2, (g_{n_i}h_m)_*\mu_{T})}\bigg\vert \geq \ep_2 
\end{equation*}
Hence there exists a sequence $\{g_n\}$ in $U_1(\R)$ that is $(S_0,S_1)$-clean 
and $\{h_n\}$ that is $(S_1,T)$-clean
such that
$\lim_n(g_nh_n)_*[\mu_{T}] \neq [\mu_{H_{0}}]$. 
This is a contradiction against the following proposition.

\begin{prop}
 Assume we have a sequence $\{g_n\}$ in $U_1(\R)$ that is $(S_0,S_1)$-clean 
 and $\{h_n\}$ that is $(S_1,T)$-clean,
 then their product $\{g_nh_n\}$ is $(S_0,T)$-clean. As a result, 
 $\lim_n(g_nh_n)_*[\mu_T] = [\mu_{H_{0}}]$.
\end{prop}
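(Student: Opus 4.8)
The plan is to verify directly that the product $\{g_nh_n\}$ is $(S_0,T)$-clean; the conclusion $\lim_n(g_nh_n)_*[\mu_T]=[\mu_{H_0}]$ then follows at once from Theorem~\ref{theorem1}. One of the two defining properties is trivial: $g_n\in Z_G(S_0)(\R)$ by hypothesis, and $h_n\in H_1(\R)=Z_G(S_1)(\R)\subset Z_G(S_0)(\R)$ since $S_0\subset S_1$, so $g_nh_n\in Z_G(S_0)(\R)$. It therefore remains, for each $\Q$-subtorus $S$ with $S_0\subsetneq S\subset T$, to show that $\{g_nh_n\}$ diverges in $G(\R)/Z_G(S)(\R)$.

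I will use the criterion already exploited for the Examples in Section~\ref{section7'}: since $Z_G(S)(\R)$ is the pointwise $\Ad$-stabilizer of $\Lie(S(\R))$ and conjugacy classes of semisimple elements of $G$ are Zariski closed, a sequence $\{x_n\}\subset G(\R)$ diverges in $G(\R)/Z_G(S)(\R)$ if and only if $\{\Ad(x_n)Y\}$ is unbounded in $\Lie(G(\R))$ for some $Y\in\Lie(S(\R))$. I then split into two cases. If $S\subset S_1$, then $h_n\in Z_G(S_1)(\R)\subset Z_G(S)(\R)$, so $g_nh_n$ and $g_n$ project to the same point of $G(\R)/Z_G(S)(\R)$; since $S_0\subsetneq S\subset S_1$, the $(S_0,S_1)$-cleanness of $\{g_n\}$ already forces divergence there. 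This case is easy.

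The main work is the case $S\not\subset S_1$. Here I would form the $\Q$-subtorus $\widetilde S:=S\cdot S_1$ of $T$, which strictly contains $S_1$, and apply the $(S_1,T)$-cleanness of $\{h_n\}$ to produce $\widetilde Y\in\Lie(\widetilde S(\R))=\Lie(S(\R))+\Lie(S_1(\R))$ with $\{\Ad(h_n)\widetilde Y\}$ unbounded. Splitting $\widetilde Y=Y+Y_1$ with $Y\in\Lie(S(\R))$, $Y_1\in\Lie(S_1(\R))$, and using $\Ad(h_n)Y_1=Y_1$ (because $h_n$ centralizes $S_1$), I obtain a vector $Y\in\Lie(S(\R))$ with $\{\Ad(h_n)Y\}$ still unbounded. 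The remaining point is that conjugating by the unipotent $g_n$ cannot destroy this unboundedness: writing $g_nh_n=h_ng_n'$ with $g_n':=h_n^{-1}g_nh_n\in U_1(\R)$ (using that $H_1$ normalizes $U_1$), and noting that $Y\in\Lie(T(\R))\subset\Lie(H_1(\R))$ forces $\Ad(g_n')Y-Y\in\Lie(U_1(\R))$ (the nilradical-modulo-Levi identity, as $U_1$ is the unipotent radical of a parabolic whose Levi contains $H_1$), one gets
\[
\Ad(g_nh_n)Y=\Ad(h_n)Y+\Ad(h_n)\big(\Ad(g_n')Y-Y\big),
\]
whose first summand lies in $\Lie(H_1(\R))$ and whose second lies in $\Lie(U_1(\R))$. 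Since $\Lie(H_1(\R))\cap\Lie(U_1(\R))=0$, the norm of the left-hand side is bounded below by a fixed positive multiple of $\|\Ad(h_n)Y\|$, which tends to $\infty$; the criterion then gives the desired divergence in $G(\R)/Z_G(S)(\R)$.

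I expect the hardest step to be this last one, and the hypothesis $g_n\in U_1(\R)$ is exactly what makes it go through: a priori the unboundedness supplied by $\{h_n\}$ lives only in directions transverse to $\Lie(S_1(\R))$, and conjugating by $g_n$ could in principle cancel it, but the correction term $\Ad(g_n')Y-Y$ is forced into the complementary subspace $\Lie(U_1(\R))$ precisely because $g_n$ is unipotent in $U_1$ — which is why the reduction to $g_n\in U_1(\R)$ was made earlier in the section. A secondary point to keep track of is that the divergence criterion is being invoked for non-maximal, possibly $\R$-anisotropic subtori $S$, so one should record that $Z_G(S)(\R)$ is genuinely the $\Ad$-stabilizer of $\Lie(S(\R))$ and that generic $Y\in\Lie(S(\R))$ satisfies $Z_G(Y)=Z_G(S)$. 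Once $(S_0,T)$-cleanness of $\{g_nh_n\}$ is established, Theorem~\ref{theorem1} yields $\lim_n(g_nh_n)_*[\mu_T]=[\mu_{H_0}]$.
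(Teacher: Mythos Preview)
Your proof is correct and follows essentially the same strategy as the paper's: both split into the cases $S\subset S_1$ versus $S\not\subset S_1$ (the paper phrases this as $t\in S_1(\R)$ versus $t\notin S_1(\R)$), and in the second case both exploit that the conjugate of an element of $H_1(\R)$ by an element of $U_1(\R)$ differs from it by something in $U_1(\R)$, so the $H_1$- and $U_1$-components can be separated. The only cosmetic difference is that you work at the Lie algebra level with $\Ad$ and with $\Q$-subtori $S$ directly, whereas the paper works at the group level with elements $t\in T(\R)$; your formulation is arguably a bit cleaner since it matches the definition of $(S_0,T)$-clean verbatim.
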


\begin{proof}
Take $t\in T(\R)\setminus S_0(\R)$, we want to show $(g_nh_n)t(g_nh_n)^{-1}$ diverges.

Case I: $t\in S_1(\R)\setminus S_0(\R)$. \\
$(g_nh_n)t(g_nh_n)^{-1}=(g_n)t(g_n)^{-1}$ diverges by assumption on $g_n$.

Case II: $t\in T(\R) \setminus S_1(\R) $.\\
Write $t=t_0t_1$ with $t_1\in S_1(\R)$ and 
${t_0}_{(\neq id)} \in (H_1)_{ss}(\R) \cap T(\R)$, 
where $(H_1)_{ss}$ denotes the semisimple part of $H_1$. Then
\begin{equation*}
    \begin{aligned}
          (g_nh_n)t_0t_1(g_nh_n)^{-1} &= g_n h_n t_0 h_n^{-1} t_1  g_n^{-1} \\
         &= \big(g_n h_n t_0 h_n^{-1} t_1  g_n^{-1} (h_n t_0 h_n^{-1} t_1)^{-1}\big) (h_n t_0 h_n^{-1} t_1)
    \end{aligned}
\end{equation*}
The first parenthesis belongs to
$[U_1(\R), H_1(\R)]= U_1(\R)$
whereas the second is from $H_{1}(\R)$. Hence it diverges iff at least one of them diverges. But the latter one indeed diverges because of our assumption on $h_n$ and $t_0 \in T(\R) \setminus S_1(\R)$.
\end{proof}

\section{A counting problem}\label{section8}

In this section we apply the results obtained to a counting problem, see \cite{EskMozSha96}, \cite{Shah00}, \cite{OhSha14}, \cite{KelKon18}, \cite{ShaZhe18} for counting problems directly related to the one considered below.

Let $M/\Q$ be a number field and $\{\alpha_1,...,\alpha_{r_0}\}$ be its real embeddings and 
$\{\beta_1,...,\beta_{s_0}\}$ $\sqcup \{\overline{\beta_1},...,\overline{\beta_{s_0}}\} $ 
be conjugate pairs of complex embeddings. 
Let $m_0=r_0+2s_0$ be the extension degree of $M/\Q$.

Let $p(x)\in \OO_M[x]$ be fixed of degree $N$ with distinct roots in $\overline{\Q}$. Assume $p(x)=p_0(x)\cdot p_1(x)\cdot...\cdot p_{{a_0}}(x)$ in $M[x]$ where $p_0(x)$ splits in $M[x]$ of degree $l_0$ and for $i\geq 1$, $p_i(x)$ is irreducible in $M[x]$ of degree $l_i>1$. For a square matrix $A$, let $p_A(x):=\text{det}(xI-A)$ be its characteristic polynomial. 

Consider $X_p(\OO_M)=\{A\in M_{N}(\OO_M)\, \vert \,p_A(x)=p(x)  \}$. To state a meaningful counting problem, we use the geometric embedding

\begin{equation*}
\begin{aligned}
X_p'(\Z)=X_p(\OO_M)=& \{ (A_1,...,A_{r_0}, B_1,...,B_{{s_0}})\in M_N(\R)^{r_0} \times M_N(\C)^{s_0} \, \big\vert \\
& \,
\exists A\in X_p(\OO_M), A_i=\alpha_iA ,\, B_i= \beta_i A
\}
\end{aligned}
\end{equation*}

We also consider

\begin{equation*}
\begin{aligned}
X_p'(\R)=X_p(M\otimes_{\Q}\R)=
& \{ (A_1,...,A_{r_0}, B_1,...,B_{{s_0}})\in M_N(\R)^{r_0} \times M_N(\C)^{s_0} \,\big\vert\\
& p_{A_i}(x)=\alpha_i p(x) , \,p_{B_i}(x)= \beta_i p(x)
\}
\end{aligned}
\end{equation*}

For $R>0$, 
$B_R:=\{(A_1,...,B_{s_0})\in X'_p(\R) 
\,\big\vert \,\sum ||A_i||^2 + \sum ||B_i||^2 \leq R^2 \}$ where we let $||A||^2$ be $\sum|A_{i,j}|^2$.

\begin{thm}\label{thmcounting}
There is a constant $c_p>0$ such that 
 \begin{equation*}
    \lim_{R\to \infty} \frac{|X_p'(\Z) \cap B_R| }   {c_p R^{m_0 N(N-1)/2} (\log{R})^{l_0+a_0-1}} =1
 \end{equation*}
\end{thm}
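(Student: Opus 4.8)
The plan is to follow the Duke--Rudnick--Sarnak method as adapted in the last section of \cite{ShaZhe18}, combining the equidistribution result Theorem \ref{theorem2} with a volume asymptotic computation that must now account for the fact that the associated torus need not be $\R$-split. First I would set $G = \SL_N$ viewed over $M$, $G' = R_{M/\Q}G$, and observe that $X'_p(\R) = X_p(M\otimes_\Q \R)$ is a single $G'(\R)$-orbit under conjugation (since $p$ has distinct roots, any two matrices with characteristic polynomial $p$ are conjugate over the appropriate product of fields), so $X'_p(\R) \cong G'(\R)/Z'(\R)$ where $Z'$ is the centralizer of a fixed base point $A_0 \in X_p(M)$. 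This centralizer $Z'$ is exactly of the form $R_{M/\Q}$ applied to a maximal torus $T$ of $\SL_N$ over $M$ of the ``concrete form'' of Section \ref{section2}: the split part has dimension governed by $l_0$ (the completely split factor $p_0$) and the anisotropic part is a product of norm-one tori attached to the field extensions $M[x]/(p_i)$. Thus $Z' = T'$ is a maximal $\Q$-torus in $G'$, and $X'_p(\Z)$ is (a union of finitely many, by class number considerations) $G'(\Z)$-orbit(s) inside this homogeneous variety, corresponding to the closed orbit $T'(\R)\Gamma'/\Gamma'$ in $G'(\R)/\Gamma'$.

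The core analytic step: for $f \in C_c(X'_p(\R))$ one wants $\sum_{v \in X'_p(\Z)} f(R^{-1}v)$ to be asymptotic to $\Vol(B_1^f) \cdot (\text{normalizing volume factor})$ where the volume factor is the volume, with respect to $\mu_{G'}$ pushed to the orbit, of the preimage of $B_R$. The standard mechanism (c.f. \cite{EskMcM93}, \cite{DukRudSar93}) is: write the sum as $\int_{G'(\R)/\Gamma'} F_R \, d\mu_{G'}$ where $F_R(g\Gamma') = \sum_{\gamma \in \Gamma'/(\Gamma'\cap T'(\R))} \widetilde{f}_R(g\gamma)$ for a suitable $\widetilde f_R$ on $G'(\R)$ supported near the region mapping into $B_R$; then the translates of $\mu_{T'}$ by the relevant one-parameter family equidistribute to $\mu_{G'}$ by Theorem \ref{theorem2}, which gives the main term, while the non-divergence estimates of Section \ref{section2} (Corollary \ref{nondivg2}) control the tails. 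I would organize this exactly as in \cite{ShaZhe18}, Section 11: reduce to understanding the ``wavefront''/volume growth of the sets $\{g \in G'(\R) : g \cdot A_0 \in B_R\}$ in the Cartan/Iwasawa coordinates adapted to $T'$, and check that the divergence hypothesis of Theorem \ref{theorem2} is satisfied by the relevant sequence $g_i$ of translating elements (this is where one verifies $g_i$ diverges in $G'(\R)/Z_{G'}(S')(\R)$ for every nontrivial $\Q$-subtorus $S' \subset T'$, using that $B_R$ exhausts the orbit).

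The volume computation is where the exponents $R^{m_0 N(N-1)/2}$ and $(\log R)^{l_0 + a_0 - 1}$ come from, and this is the step I expect to be the main obstacle — more precisely, the bookkeeping when $T$ is not $\R$-split. The factor $R^{m_0 N(N-1)/2}$ is $R^{\dim_\R(G'/T')}$: indeed $\dim G' = m_0(N^2-1)$ and $\dim T' = m_0(N-1)$, so $\dim(G'/T') = m_0(N-1)N = 2\cdot m_0 N(N-1)/2$, and the orbit sits in a space where the natural radial parameter scales as a square root of the matrix-entry norm, producing the halved exponent; one checks this via the root-space decomposition, each root contributing a factor and the norm on $B_R$ being quadratic. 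The logarithmic factor is subtler: it is the polynomial-times-log growth of the volume of a ball in the cone $\Omega^s_{\text{id},1}$-type region inside $\Lie(T'_s(\R))$ — the split directions contribute genuine exponential-in-the-linear-coordinate, i.e. polynomial-in-$R$ growth that is \emph{not} captured by the $R^{m_0N(N-1)/2}$ factor but instead produces the $(\log R)^{\#\{\text{split pieces}\}-1}$ term, where the number of relevant split pieces is $l_0$ (the simple split roots from $p_0$) plus $a_0$ (one per irreducible anisotropic block, since each such block contributes one split direction in $T_s$), minus $1$ from the $\SL$ (determinant-one) constraint. Concretely I would: (i) parametrize $T'(\R) = T'_s(\R) \times T'_a(\R)$ with $T'_a(\R)$ compact-by-$\R^{s_i}$ and integrate out the anisotropic directions (they contribute a bounded constant to $c_p$); (ii) on the split part, compute $\Vol\{t \in \Lie(T'_s(\R)) : \|B\exp(t) e_\xi\| \le R \ \forall \xi\}$, which by a direct Laplace-type estimate is $\sim c R^{(\text{top exponent})}(\log R)^{(\dim T'_s) - 1}$ with $\dim T'_s = l_0 + a_0 - 1$; (iii) assemble with the unipotent-direction volume (pure power of $R$) to get the stated asymptotic. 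The positivity and finiteness of $c_p$ follows from the equidistribution (which identifies the limit measure as the finite $\mu_{G'}$) together with a standard argument that the exhaustion is ``well-rounded'' in the sense of \cite{EskMcM93}.
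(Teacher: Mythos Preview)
Your overall framework is correct and matches the paper: unfold the counting function \`a la Duke--Rudnick--Sarnak to get $(\psi_R,\phi)=\int_{B_R}\int_{T'(\R)/T'(\R)\cap\Gamma'}\phi(gt\Gamma')\,d\mu_{T'}(t)\,d\mu_{G'/T'}(g)$, invoke Theorem~\ref{theorem2} on the inner integral, and compute two volume asymptotics. The factor $R^{m_0N(N-1)/2}$ is exactly $\mu_{G'/T'}(B_R)$, which the paper simply imports from \cite{EskMozSha96}; your square-root heuristic is unneeded. The $(\log R)^{l_0+a_0-1}$ factor is the volume of the nondivergence polytope $\Omega'_{g_R,\ep'}\subset\Lie(T'_s(\R))$ for $g_R\in B_R$: since $\phi$ is compactly supported the inner integral is effectively supported there, and Theorem~\ref{theorem2} (in its bounded-piece form, Lemma~\ref{theorem1'}) says that integral divided by $\Vol(\Omega'_{g_R,\ep'})$ tends to $(\phi,\mu_{G'})$. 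Your item (ii) has a slip---the exponent should be $\dim T'_s=l_0+a_0-1$, not $\dim T'_s-1$---and the set you write down (with $\|\cdot\|\le R$) is not the relevant polytope.

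The genuine gap is in how you handle the $\R$-non-split case. You correctly flag it as the obstacle but then propose only to ``integrate out the anisotropic directions,'' which addresses the $\Q$-anisotropic part $T'_a$ and not the actual difficulty. The problem is that at a real place $\alpha$ where $\alpha(p)$ has non-real roots, the eigenvectors $e_I$ defining the polytope sides $\|g_R e_I\|$ are not real, so one cannot read off $\Vol(\Omega'_{g_R,\ep'})$ from the natural \emph{real} Iwasawa coordinates $(x_{i,j})$ on $G'(\R)/T'(\R)$ in which $B_R$ and the Haar measure are tractable. The paper resolves this by introducing a second, complex-diagonalized coordinate system $(y_{i,j})$ via a further conjugation by $\delta$ and a complex Iwasawa decomposition $uh(w)\delta=k_{u,w}n_{u,w}t_{u,w}$, defining a ``good'' region $B'_{R,\ep}$ in the $y$-coordinates, and then proving the inclusion $B_{R,\ep'}\subset B'_{R,\ep}$ by an explicit estimate on the $\SU(2)$-factor $k_{u,w}$ (the key computation being $|b_l|=|a_l|/(1+w_l)$). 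Only after this comparison can one invoke the polytope-volume asymptotic of \cite{ShaZhe18}, Proposition~11.8, on $B'_{R,\ep}$, together with a crude upper bound $O((\log R)^{l_0+a_0-1})$ on the complement. Your proposal omits this coordinate comparison entirely.
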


Let $G=\SL_N$ and $G'=R_{M/\Q} G$. Then 
$G'(\R)= \SL_N(\R)^{r_0}\times \SL_N(\C)^{s_0}$
acts on $X_p'(\R)$ by conjugation. 
By linear algebra, one knows this action decomposes $X_p'(\R)$ into finitely many orbits, each of them defined over $\Q$.  Now we replace $X_p'(\R)$ by one of them, call it $Y_p'(\R)$, then $Y_p'(\R)\cong G'(\R)/T'(\R) $ for some maximal torus $T'$. 
By choosing the base point $x_0$ to be defined over $\Z$, we may assume $T'$ to be defined over $\Q$. By a theorem of \cite{BorHar62}, $X'_p(\Z)$ decomposes into finitely many(say, $c_1$) $G'(\Z)$-orbits. It remains to find the asymptotic of $|G'(\Z)\cdot x_0 \cap B_R|$ for each individual orbit.

To apply the equidistribution result as proved in previous sections, we need to further find the asymptotics of two quantities. The first one is the volume growth of $B_R$. For $T_0$ the stablizer of $x_0$ in $G'$, let $\mu_{G'/T_0}$ be the Haar measure on $G'(\R)/T_0(\R)$, then

\begin{prop}\label{VolumeAsymp}
   There is a constant $c_2>0$ such that 
   \begin{equation*}
       \lim_{R\to +\infty}  \frac{\mu_{G'/T_0}(B_R)  } {c_2 R^{m_0 N(N-1)/2}} =1
   \end{equation*}
\end{prop}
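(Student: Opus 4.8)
The plan is to reduce to a product over the archimedean places of $M$, to express the volume of $B_R$ in each factor through a Cartan ($KAK$) decomposition, and to extract the leading term by a rescaling/dominated‑convergence argument; the product of the resulting one–variable asymptotics is then assembled by an elementary Dirichlet integral. This follows the blueprint of the last section of \cite{ShaZhe18} (which treats the $\R$‑split case); the new work is the bookkeeping forced by the $\R$‑anisotropic part of $T_0$ and by the complex places.

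First I would use the structure of maximal $\Q$‑tori of $G'=R_{M/\Q}\SL_N$ recalled in Section \ref{section5} to write $T_0=Z_{G'}(x_0)=R_{M/\Q}T$ for a maximal $M$‑torus $T$ of $\SL_N$, so that
$$G'(\R)/T_0(\R)=\prod_{v\mid\infty}\SL_N(M_v)/T(M_v),\qquad Y'_p(\R)=\prod_{v\mid\infty}Y_v,$$
where $Y_v=\{A\in M_N(M_v):p_A=p_v\}$ ($p_v$ the image of $p$ under $M\hookrightarrow M_v$) is the $\SL_N(M_v)$‑conjugacy orbit of the $v$‑component $x_{0,v}$ of $x_0$, and $\mu_{G'/T_0}=\bigotimes_v\mu_v$ up to a positive scalar. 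Since $x_0$ is regular semisimple, its conjugacy class is Zariski closed, hence $Y_v$ is closed in $M_N(M_v)$ and $\|\cdot\|_v$ restricted to it is proper; in particular each $\theta_v(r):=\mu_v(\{A\in Y_v:\|A\|_v\le r\})$ is finite. From $\|w\|^2=\sum_v\|w_v\|_v^2$,
$$\mu_{G'/T_0}(B_R)=\int_{\{\sum_v r_v^2\le R^2,\ r_v\ge0\}}\ \prod_v d\theta_v(r_v),$$
so it suffices to prove the single‑place statement $\theta_v(r)\sim c_v r^{d_v}$ with $d_v=\tfrac12\dim_\R Y_v$ and \emph{no} logarithmic factor; granting this, rescaling $r_v=Rs_v$ turns the display into a Dirichlet integral over the positive part of the unit ball, giving $\mu_{G'/T_0}(B_R)\sim c_2 R^{\sum_v d_v}$, and $\sum_v d_v=\tfrac12\dim_\R(G'(\R)/T_0(\R))=\tfrac12 m_0N(N-1)$.

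For the single‑place estimate I fix $v$, write $G_v=\SL_N(M_v)$, choose a maximal compact $K_v$ and the diagonal maximal split torus with closed positive chamber $A_v^+$, so $G_v=K_vA_v^+K_v$ and $dg=\xi_v(a)\,dk_1\,da\,dk_2$ with $\xi_v(a)\asymp\prod_{i<j}(\sinh|t_i-t_j|)^{[M_v:\R]}$ for $a=\mathrm{diag}(e^{t_1},\dots,e^{t_N})$. Since $Z_v=Z_{G_v}(x_{0,v})$ commutes with $x_{0,v}$, the function $\|g\,x_{0,v}\,g^{-1}\|_v$ descends to $G_v/Z_v\cong Y_v$, and for $g=k_1ak_2$ it equals $(\sum_{i,j}e^{2(t_i-t_j)}|(k_2x_{0,v}k_2^{-1})_{ij}|^2)^{1/2}$. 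Choosing a weight $\phi_v$ on $G_v$ with $\int_{Z_v}\phi_v(gz)\,dz\equiv1$ — necessarily not compactly supported, with exponential decay along the split directions of $Z_v$ — one writes $\theta_v(r)=\int_{A_v^+}\Phi_v(a,r)\,\xi_v(a)\,da$, where $\Phi_v(a,r)$ is the integral over $k_2\in K_v$ of $\mathbf 1[\cdots\le r^2]$ against $\int_{K_v}\phi_v(k_1ak_2)\,dk_1$, a bounded continuous function that decays exponentially in the $\log Z_v$‑split directions. The norm constraint forces $t_1-t_N\le\log r+O(1)$ (the highest root $e_1-e_N$), hence all $t_i-t_j\le\log r+O(1)$; together with the exponential weight this concentrates the integral, up to lower order, on a bounded neighbourhood of the single ray $\R_{\ge0}(e_1-e_N)$ inside the truncated chamber — this vertex‑concentration is exactly what prevents a logarithmic factor (contrast $|Y'_p(\Z)\cap B_R|$, where the logs come from the infinite volume of $T_0(\R)/T_0(\Z)$, not from $\mu_{G'/T_0}$). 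Substituting $t=(\log r)u+(\text{bounded})$, the rescaled integrand converges pointwise, and using that $(k_2x_{0,v}k_2^{-1})_{ij}\ne 0$ for a.e.\ $k_2$ and that $2\rho=\sum_{i<j}(e_i-e_j)$ is strictly dominated on $A_v^+$ by a multiple of $e_1-e_N$ off that ray, one builds an integrable majorant and concludes $\theta_v(r)\sim c_vr^{d_v}$ by dominated convergence, with $c_v>0$ an explicit integral over $K_v$ and the rescaled chamber.

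The main obstacle is this last step: obtaining a genuine asymptotic equivalence rather than a two‑sided bound $\theta_v(r)\asymp r^{d_v}$ requires a uniform integrable majorant for the rescaled $K_v\times A_v^+$ integrand, and for complex $v$ one must simultaneously track the doubled root multiplicities and the compact part of $Z_v$. A technically lighter alternative — the one that makes the case $N=2$ transparent — is to bypass $KAK$ entirely and express $\mu_v$ via the coarea formula relative to the characteristic‑polynomial map $\SL_N(M_v)\to(\text{coefficient space})$ (a Liouville‑type measure: ambient Lebesgue on $Y_v$ divided by the coarea Jacobian), reducing $\theta_v(r)$ to a real‑analytic estimate on the explicit semialgebraic slice $\{A\in M_N(M_v):p_A=p_v,\ \|A\|_v\le r\}$; either route produces the constants $c_v$, whose Dirichlet‑weighted product is $c_2$.
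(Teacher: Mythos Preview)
The paper does not give its own proof here: immediately after the statement it simply records that the case $M=\Q$ is Section~6 of \cite{EskMozSha96} and that passing from $\SL_N(\R)$ to $\SL_N(\R)^{r_0}\times\SL_N(\C)^{s_0}$ ``makes no difference''. So you are being compared to the Eskin--Mozes--Shah argument, and the relevant point is that their proof is \emph{not} a $KAK$ computation. They (and \cite{Shah00}, which the present paper invokes in Section~\ref{section8}) first conjugate $x_0$ over $\R$ to block-diagonal form $x_1$ and then parametrize the orbit by $K\times U_1\times\R_+^{|\mathscr I_1|}$ exactly as in the paper's Proposition following equation~\eqref{x-coord}; in those coordinates $\mu_{G'/T_0}$ is a constant times $dk\,du\,dw$, and the strictly-upper-triangular entries $(x_{i,j})_{i<j}$ of $uh(w)\cdot x_1$ give a global chart in which the measure is a constant times Lebesgue. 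Since the diagonal and sub-diagonal entries of $uh(w)\cdot x_1$ are bounded independently of $(u,w)$, the condition $\|\,\cdot\,\|\le R$ becomes, up to $O(1)$, a Euclidean ball of radius $R$ in the $(x_{i,j})$-coordinates, whence the volume is $\sim c\,R^{\dim}$ immediately. For general $M$ one does this at every archimedean place simultaneously; the product of the upper-triangular charts is again a single Euclidean chart, so there is no need to factor $\theta_v$ and reassemble with a Dirichlet integral (though your reduction is of course valid).

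Your $KAK$ route is not wrong, but it is a detour: the ``vertex concentration'' step you isolate as the main obstacle is exactly the work that the explicit coordinates render unnecessary, and building the integrable majorant you describe (with the weight $\phi_v$ whose $K$-behaviour depends on how the non-split torus $Z_v$ sits relative to $A_v^+$) is genuinely delicate. Your proposed alternative via the coarea formula for the characteristic-polynomial map is in spirit the EMS approach --- that is the route to take, and it is what the paper's own Section~\ref{section8} is already setting up in the paragraphs after this proposition.
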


Indeed this is done in \cite{EskMozSha96}, Section 6 in the case $M=\Q$. The general case is just replacing $\SL_N(\R)$ by $\SL_N(\R)^{r_0}\times \SL_N(\C)^{s_0}$ and makes no difference. 

The second quantity is the volume of polytopes $\Omega^{'}_{g,\ep}$ averaging over $g\in B_R$. We will construct a subset $B'_{R,\ep}$ of $B_R$ such that for $g_R \in B'_{R,\ep}$ we have a good estimate of $\Vol(\Omega^{'}_{g_R,\ep})$. We need to show $B'_{R,\ep}$ is indeed big inside $B_R$. We first construct some $B_{R,\ep}$ which can be seen to be large easily and show $B'_{R,\ep}$ contains in $B_{R,\ep'}$ for some other $\ep'>0$ but decreses to $0$ as $\ep $ does so. And for those $g \in B_R \setminus B'_{R,\ep}$ we have a upper bound for $\Vol(\Omega^{'}_{g,\ep})$. 

In the case when the splitting field of $p(x)$ is totally real or when $M$ is a CM field, one can apply the computation in \cite{ShaZhe18} rather directly. Here, however, we need a few further computations(based on \cite{Shah00}) to make use of that. To keep the notation simple, we shall work with a single coordinate with the understanding that the full coordinate is just a tuple of everything that appears below. 
Let us emphasize that the analysis below is only necessary at those coordinates for real embeddings $\alpha$ of $M$ such that $\alpha(p(x))$ does not decompose completely over $\R$.

 \subsection{Conjugation over $\R$ and coordinates $(x_{i,j})$}

  Take $g_1 G'(\R)$ such that $x_1=g_1\cdot x_0$ takes the form $x_1=\begin{bmatrix}
    z_1 &      &      &      \\
         & z_2 &    &\\
         &       &  \ddots &  \\
         &      &      &  z_{b_0}  \\
  \end{bmatrix} $ with each $z_i$ being either a number( a real root of $\alpha p(x)$ ) or
  $z_i=\begin{psmallmatrix} u_i & -v_i\\ v_i & u_i\end{psmallmatrix}$(corresponding to a pair of complex roots $u_i \pm i v_i$ of $\alpha(p(x))$.). 
  
  Let us collect those
  
  \begin{equation*}
      \begin{aligned}
      \mathscr{I}'_1 :=& \{i\in \{1,...,N \} \,\vert\, (i,i)\text{-th entry is the (1,1)-entry of a 2-by-2 } z_{*}\}\\
      =& \{i\,,\, (x_1)_{i,i+1}\neq 0 \}
      \end{aligned}
  \end{equation*}
   Let $\mathscr{I}_1$ be the corresponding $i\in \{1,...,b_0\}$ such that $z_i$ is a $2$-by-$2$ matrix.
  
  The stablizer of $x_1$,
  $T_1:=g_1 T_0 g_1^{-1}$ consists of 
  $t_1=\text{diag}(d_1,d_2,...,d_{b_0})$ with each $d_i$ being 
  a number if $i\notin \mathscr{I}'_1$ 
  or a $2$-by-$2$ matrix of the form 
  $\begin{psmallmatrix} u & -v\\ v & u\end{psmallmatrix}$
  if $i\in \mathscr{I}_1$.

  Let $U_1$ be the subgroup of matrices with $d_i$ replaced by identity matrices of the same size and arbitrary entries above $\{d_i\}'$s. This is just those that are contracted by suitable elements from the $\R-$split part of $T_1$.
  
  For $w=(w_i)_{i\in \mathscr{I}_1}\in \R_{+}^{|\mathscr{I}_1|}$, let $h(w_i)=\begin{psmallmatrix} 1 & \sqrt{w_i}\\ 0 & 1\end{psmallmatrix}$ and 
  $h(w)$ be the matrix built on $t_1= \text{diag}(d_1,...,d_{b_0})$:
  \begin{enumerate}
  \item For $i\in \mathscr{I}_1$, replace $d_i$ by $h(w_i)$.
  \item For $i\notin \mathscr{I}_1$, replace $d_i$ by $1$.
  \end{enumerate}
   We have
  
  \begin{prop}[Proposition 3.2 from \cite{Shah00}]
  Under the bijective map
  $SO_n(\R)\times U_1(\R) \times  \R_{+}^{|\mathscr{I}_1|}\cong G'(\R)/T_{1}(\R)$ by 
  $(k,u,h(w)) \mapsto kuh(w)T_1$, 
  there is a constant $c_3>0$ such that,
  $\mu_{G'/T_1} = c_3 dk dw du$.
  \end{prop}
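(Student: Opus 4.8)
The plan is to identify $\mu_{G'/T_1}$ in the given coordinates by an explicit Jacobian computation, reduced block by block. Since $G'$ and $T_1$ are reductive, hence unimodular, $G'(\R)/T_1(\R)$ carries a $G'(\R)$-invariant measure that is unique up to a positive scalar, so it suffices to pull $\mu_{G'/T_1}$ back along $(k,u,h(w))\mapsto k\,u\,h(w)T_1$ and check that the result is a constant multiple of $dk\,du\,dw$. By left $SO_n(\R)$-invariance of $\mu_{G'/T_1}$ the pulled-back measure is left-invariant in the $k$-variable, so its $k$-component is automatically a Haar measure on $SO_n(\R)$; the whole problem then reduces, over a fixed $k$, to the $(u,w)$-directions, and in fact to each $2$-by-$2$ block of $x_1$ separately.

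First I would reduce to the Levi. Let $P_1=L_1U_1$ be the $\R$-parabolic with unipotent radical $U_1$ and block-diagonal Levi $L_1$ (blocks of size $1$ for real roots of $\alpha(p(x))$, of size $2$ for conjugate pairs of complex roots); note $T_1\subset L_1$ and $h(w)\in L_1(\R)$. From the Iwasawa decompositions $G'(\R)=SO_n(\R)\,P_1(\R)$ and $P_1(\R)=U_1(\R)\,L_1(\R)$, the invariant measure on $G'(\R)/T_1(\R)$ is, up to a constant, the push-forward of the product of Haar measure on $SO_n(\R)$, Haar measure on $U_1(\R)$, and the $L_1(\R)$-invariant measure on $L_1(\R)/T_1(\R)$. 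The modular character of $P_1$ that would normally twist the $U_1$-factor is trivial on the only representatives of $L_1(\R)/T_1(\R)$ entering the parametrisation — the unipotent elements $h(w)$ and the compact rotations supplied by $SO_n(\R)$ — so no twisting factor appears. The measure on $L_1(\R)/T_1(\R)$ then factors over the diagonal blocks: a size-$1$ block contributes nothing, a size-$2$ block contributes the invariant measure on $\SL_2(\R)/\mathrm{SO}(2)$, i.e. the hyperbolic area $y^{-2}\,dx\,dy$ on the upper half-plane $\{x+iy:\,y>0\}$.

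The heart of the argument, and the step I expect to be the main obstacle, is the size-$2$ block; it is also where the coordinate $\sqrt{w_i}$ rather than $w_i$ is forced. The point attached to $(k,u,h(w))$ in such a block is $k_\theta\,h(w_i)\cdot z_0$, where $k_\theta$ is the rotation carried by the $SO_n(\R)$-factor in that block and $z_0$ is the base point; taking $z_0=i$ and $h(w_i)=\begin{psmallmatrix}1 & \sqrt{w_i}\\ 0 & 1\end{psmallmatrix}$ we get $h(w_i)\cdot i=i+\sqrt{w_i}$, whose $\mathrm{SO}(2)$-orbit about $i$ is the hyperbolic circle of radius $r$ with $\cosh r=1+\tfrac12 w_i$. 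In geodesic polar coordinates about $i$ the hyperbolic area element is $\sinh r\,d\theta\,dr$, while differentiating $\cosh r=1+\tfrac12 w_i$ gives $\sinh r\,dr=\tfrac12\,dw_i$; the two occurrences of $\sinh r$ cancel and the area element becomes $\tfrac12\,d\theta\,dw_i$, flat in $(\theta,w_i)$. It then remains to check that these $\theta$'s are precisely the block-directions absorbed into the Haar measure $dk$ on $SO_n(\R)$ and that the surviving $U_1$-directions contribute exactly Haar measure $du$ on $U_1(\R)$ with no residual $w$-dependence; the rest is bookkeeping of constants.

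Finally I would assemble the block contributions $\tfrac12\,d\theta_i\,dw_i$ over $i\in\mathscr{I}_1$ with the Haar measure on $U_1(\R)$, the Haar measure on the remaining directions of $SO_n(\R)$, and the Iwasawa constant, and absorb everything into a single $c_3>0$, obtaining $\mu_{G'/T_1}=c_3\,dk\,dw\,du$. For the restriction-of-scalars group $G'(\R)=\SL_N(\R)^{r_0}\times\SL_N(\C)^{s_0}$ this is done factor by factor: at complex places and at real places where $\alpha(p(x))$ splits completely one has $\mathscr{I}_1=\varnothing$ and there are no $w$-variables, so only the real places with non-real roots contribute hyperbolic blocks and the constants multiply. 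Since the per-factor statement for a real $\SL_N(\R)$-component is exactly Proposition 3.2 of \cite{Shah00}, one may alternatively just invoke that reference.
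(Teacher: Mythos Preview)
The paper does not supply a proof of this proposition at all: it is quoted verbatim as ``Proposition 3.2 from \cite{Shah00}'', and the only comment the paper adds is that the order of the $U_1$- and $\R_+^{|\mathscr{I}_1|}$-factors is reversed compared with Shah's statement, which ``does not matter by computation in \cite{Shah00} above the Proposition 3.2, or one may also just check directly.'' So there is nothing in the paper to compare your argument against.

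Your sketch, on the other hand, is an honest reconstruction of the computation, and the key step --- the hyperbolic Jacobian on each $2\times 2$ block --- is correct: from $h(w_i)\cdot i=i+\sqrt{w_i}$ one gets $\cosh r=1+\tfrac12 w_i$, hence $\sinh r\,dr=\tfrac12\,dw_i$, and the area form $\sinh r\,dr\,d\theta$ collapses to $\tfrac12\,dw_i\,d\theta$. Your observation that the modular character of $P_1$ is trivial on both the unipotents $h(w)$ and on compact rotations is exactly what kills any $w$-dependent twist of $du$. The one place to be a bit more careful is the bookkeeping that the $\theta_i$-directions in each $2\times2$ block really do assemble into the Haar measure $dk$ on the global $SO_n(\R)$ (rather than just on the block-diagonal $\prod \SO(2)$); this follows from the standard Iwasawa integration formula $dg=dk\,da\,dn$ together with $T_{1,c}=\prod_{i\in\mathscr{I}_1}\SO(2)\subset SO_n(\R)$, but it deserves one explicit sentence. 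As you yourself note in the last line, citing \cite{Shah00} is also a complete proof and is precisely what the paper does.
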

  
  Note compared to Proposition 3.2 from  \cite{Shah00}, the order of 
  $U_1$ and $\R_{+}^{|\mathscr{I}_1|}$ is reversed, which does not matter by computation in \cite{Shah00} above the Proposition 3.2, or one may also just check directly. From now on we identify $G'(\R)/T_1(\R)$ with $SO_n(\R)\times U_1(\R) \times  \R_{+}^{|\mathscr{I}_1|}$. 
  Hence $B_R = R_{g_1} \cdot 
  \{ (k,u,w) \,\vert\, ||kuh(w) \cdot x_1|| \leq R
  \}
  $
  
  Let us record here a computation. For each $i\in \mathscr{I}_1$,
  \begin{equation}\label{x-coord}
  \begin{aligned}
   h(w_i)z_ih(w_i)^{-1} =& \begin{psmallmatrix} 1 & \sqrt{w_i}\\ 0 & 1\end{psmallmatrix} 
    \begin{psmallmatrix} x_i & -y_i\\ y_i & x_i\end{psmallmatrix}
 \begin{psmallmatrix} 1 & -\sqrt{w_i}\\ 0 & 1\end{psmallmatrix}
\\
&=  \begin{psmallmatrix} x_i+y_i\sqrt{w_i} & -y_i(1+w_i)\\ y_i & x_i-y_i\sqrt{w_i}\end{psmallmatrix}
  \end{aligned}
  \end{equation}
  As $w_i$ grows, the upper right corner dominates the other entries.
 
 Now we turn to the definition of coordinates $x_{i,j}(i<j)$.
 
 Let $\pi$ be the projection of a matrix to its coordinates strictly above the diagonal. Look at the map $(u,w)\mapsto uh(w)\cdot x_1$ composed with $\pi$, i.e. $(u,w) \mapsto \pi(uh(w)x_1h(w)^{-1}u^{-1})$. By the above equation, one sees this is a bijection
 from $U_1(\R) \times \R_{+}^{|\mathscr{I}_1|} $ 
 onto 
 $\{(x_{i,j})_{i<j} \vert x_{i,j } \in \R ;\, x_{i,i+1}\in -v_i\R_{>1} (i\in \mathcal{I}'_1)\}$.
 Moreover, the measure $dwdu$ is pushed to $c_4 dx_{ij}(i<j)$ for some constant $c_4 >0$(see the computation in \cite{EskMozSha96} or \cite{Shah00}). 

\begin{lem}\label{first BRep}
 For any $\ep>0$,  we define 
 $$B_{R,\ep}:=R_{g_1} \cdot \{(k,u,w)\in R_{g_1}^{-1}B_R\, \vert\,
 |x_{i,i+1}|\geq \ep R(\forall i) \}.$$
  Then
 \begin{equation*}
 \lim_{\ep \to 0} 
 \liminf_{R>0}
 \frac{\mu_{G'/T_0}(B_{R,\ep})}{\mu_{G'/T_0}(B_R)} =1
 \end{equation*}
\end{lem}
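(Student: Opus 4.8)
The plan is to reduce the statement to the elementary fact that a thin slab of a Euclidean ball occupies a small fraction of the ball, after passing to the coordinates $(x_{i,j})_{i<j}$ introduced above.

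First I would rewrite both $\mu_{G'/T_0}(B_R)$ and $\mu_{G'/T_0}(B_{R,\ep})$ as Lebesgue volumes in the $(x_{i,j})_{i<j}$-space. Since $R_{g_1}$ is a $G'(\R)$-equivariant identification of $G'(\R)/T_0(\R)$ with $G'(\R)/T_1(\R)$, it carries $\mu_{G'/T_0}$ to a positive multiple of $\mu_{G'/T_1}$; combining this with the two propositions recalled above ($\mu_{G'/T_1}=c_3\,dk\,dw\,du$ and the push-forward $dw\,du\mapsto c_4\,d(x_{i,j})_{i<j}$), and with the fact that the inequality $\|u\,h(w)\,x_1\,h(w)^{-1}u^{-1}\|\le R$ does not involve $k$ (so the $SO_n(\R)$-factor only contributes a finite Haar volume), one obtains a constant $C_0>0$ with
\[
\mu_{G'/T_0}(B_R)=C_0\,\mathrm{Leb}(\mathcal{R}_R),\qquad \mu_{G'/T_0}(B_{R,\ep})=C_0\,\mathrm{Leb}\big(\mathcal{R}_R\cap\{\,|x_{i,i+1}|\ge \ep R\ \text{for all }i\,\}\big),
\]
where $\mathcal{R}_R\subset\{(x_{i,j})_{i<j}\}$ is the image of $\{(u,w):\|u\,h(w)\,x_1\,h(w)^{-1}u^{-1}\|\le R\}$ under the bijection of the preceding paragraph, all $x_{i,j}$ being read as tuples over the archimedean places of $M$ and the coordinates $x_{i,i+1}$ with $i\in\mathscr{I}'_1$ being constrained to $-v_i\R_{>1}$. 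Hence the quantity to be estimated is unchanged if $\mu_{G'/T_0}$, $B_R$, $B_{R,\ep}$ are replaced by $\mathrm{Leb}$, $\mathcal{R}_R$, $\mathcal{R}_R\cap\{|x_{i,i+1}|\ge\ep R\ \forall i\}$.

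The key observation is that $\mathcal{R}_R$ is contained in the Euclidean ball of radius $R$: by construction the strictly-upper-triangular part of $u\,h(w)\,x_1\,h(w)^{-1}u^{-1}$ is precisely $(x_{i,j})_{i<j}$, so on $\mathcal{R}_R$ one has $\sum_{i<j}|x_{i,j}|^2\le\|u\,h(w)\,x_1\,h(w)^{-1}u^{-1}\|^2\le R^2$. The coordinate space has real dimension $d:=m_0N(N-1)/2$ — the very exponent occurring in Proposition \ref{VolumeAsymp} — so for each archimedean place and each $i$,
\[
\mathrm{Leb}\big(\mathcal{R}_R\cap\{|x_{i,i+1}|<\ep R\}\big)\le \mathrm{Leb}\big(\{\|x\|\le R\}\cap\{|x_{i,i+1}|<\ep R\}\big)\le 2\,\omega_{d-1}\,\ep\,R^{d},
\]
where $\omega_{d-1}$ is the volume of the unit ball in $\R^{d-1}$; summing over the at most $m_0(N-1)$ coordinates of the form $x_{i,i+1}$ gives $\mathrm{Leb}\big(\mathcal{R}_R\setminus(\mathcal{R}_R\cap\{|x_{i,i+1}|\ge\ep R\ \forall i\})\big)\le C_1\,\ep\,R^{d}$ with $C_1$ independent of $R$ and $\ep$.

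Finally I would invoke Proposition \ref{VolumeAsymp} for the lower bound: $\mathrm{Leb}(\mathcal{R}_R)=\mu_{G'/T_0}(B_R)/C_0\sim(c_2/C_0)R^{d}$, so $\mathrm{Leb}(\mathcal{R}_R)\ge c'\,R^{d}$ for all large $R$ and some $c'>0$. Then, for all large $R$,
\[
\frac{\mu_{G'/T_0}(B_R\setminus B_{R,\ep})}{\mu_{G'/T_0}(B_R)}=\frac{\mathrm{Leb}\big(\mathcal{R}_R\setminus(\mathcal{R}_R\cap\{|x_{i,i+1}|\ge\ep R\ \forall i\})\big)}{\mathrm{Leb}(\mathcal{R}_R)}\le\frac{C_1}{c'}\,\ep,
\]
whence $\liminf_{R}\mu_{G'/T_0}(B_{R,\ep})/\mu_{G'/T_0}(B_R)\ge 1-(C_1/c')\,\ep$, and letting $\ep\to0$ proves the lemma. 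The only genuinely delicate step is the first paragraph: one must unwind the change of variables carefully enough to be sure that all Jacobian factors are honest positive constants (so that the passage from $\mu_{G'/T_0}$ to Lebesgue measure loses nothing), and check that the half-line constraints $x_{i,i+1}\in-v_i\R_{>1}$ merely restrict $\mathcal{R}_R$ to a subset of the ball and therefore cause no harm; everything after that is the standard slab-of-a-ball estimate.
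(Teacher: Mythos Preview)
Your argument is correct and arrives at the same slab-of-a-ball estimate the paper has in mind, but your route to the lower bound on $\mathrm{Leb}(\mathcal{R}_R)$ is slightly different from what the paper's one-line proof gestures at. The paper invokes Equation~\ref{x-coord} to say that the diagonal and sub-diagonal entries of $uh(w)x_1h(w)^{-1}u^{-1}$ are dominated by the super-diagonal ones (the $y_i$ below the diagonal are constants; the diagonal entries $x_i\pm y_i\sqrt{w_i}$ are $O(|x_{i',i'+1}|^{1/2})$), so that $\mathcal{R}_R$ is directly sandwiched between two Euclidean balls of comparable radii, and the slab estimate finishes. You instead use only the trivial inclusion $\mathcal{R}_R\subset\{\|x\|\le R\}$ for the upper bound on the bad set and import the lower bound $\mathrm{Leb}(\mathcal{R}_R)\gtrsim R^d$ as a black box from Proposition~\ref{VolumeAsymp}. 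Both are valid; yours trades the explicit dominance computation for an appeal to an already-proved volume asymptotic, which is arguably cleaner but makes the lemma depend on that proposition.
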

 
 \begin{proof}
 This follows because Equation \ref{x-coord} shows the rest of the coordinates are dominated by those coming from $|x_{i,i+1}|$ . 
 \end{proof}
 
 \subsection{Diagonalization and coordinates $(y_{i,j})$}

  It is still not easy to work with elements from $B_{R,\ep}$ mainly because $x_1$ may not be diagonalized. We start by diagonalizing it. As before, we concentrate on some real coordinate $\alpha$ here.
  
  Consider $\delta = \diag(\delta_1, \delta_2, ...,\delta_{b_0})$.
  For $i\notin \mathscr{I}_1$, $\delta_i :=1$. 
  For $i \in \mathscr{I}_1$, $\delta_i := \frac{1}{\sqrt{-2i}}\begin{psmallmatrix} 1 & 1\\ i & -i\end{psmallmatrix}$. 
  This matrix has the property that 
  $\delta_i^{-1} \begin{psmallmatrix} u & -v\\ v & u\end{psmallmatrix} \delta_i = 
  \begin{psmallmatrix} -2(v+iu) & 0\\ 0 & 2(v-iu)\end{psmallmatrix} $ 
  and $\delta \in Z_{G'}(T_s)_{ss}(\C)$. 
  Hence,  $x_2 := \delta^{-1}\cdot x_1$ is a diagonal matrix who entries consisting of roots of $p(x)$ and $\delta^{-1} T_1 \delta = T_2$ is a subgroup of the diagonal matrices(not defined over $\R$).

 Consider the Iwasawa decomposition for $G'(\C)$ as a real Lie group only at real coordinates.
 $G'(\C)\cong (\SU(\R) N^{+}(\C) D(\C))^{r_0}$ 
 where $N^+$ consists of all unipotent upper triangular matrices,
 we write $uh(w)\delta = k_{u,w}n_{u,w} t_{u,w}$, then 
 \begin{equation*}
  \begin{aligned}
  B_R= R_{g_1}\cdot \{(k,u,w) \,\vert\, ||uh(w)\delta \cdot x_2|| \leq R\} = 
  R_{g_1}\cdot \{(k,u,w) \,\vert \,||n_{u,w} \cdot x_2|| \leq R\}
  \end{aligned}
 \end{equation*}
 
 Define a new coordinate $y_{i,j} =(n_{u,w} \cdot x_2)_{i,j} $, we let 
 \begin{equation*}
 B'_{R,\ep} := R_{g_1}\cdot \{ (k,w,u)\in R_{g_1}^{-1}B_R \, \vert\, |y_{i,i+1}| \geq \ep R \,(\forall i)  \} 
 \end{equation*}
 
 We wish to show that  
 \begin{prop}
  \begin{equation*}
 \lim_{\ep \to 0} \liminf_{R>0}
 \frac{\mu_{G'/T_0}(B'_{R,\ep})    } 
 { \mu_{G'/T_0}(B'_{R,\ep})  } 
 =1
 \end{equation*}
 \end{prop}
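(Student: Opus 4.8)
The displayed ratio has coinciding numerator and denominator; the intended statement is evidently $\lim_{\ep\to 0}\liminf_{R}\mu_{G'/T_0}(B'_{R,\ep})/\mu_{G'/T_0}(B_R)=1$, the exact analogue of Lemma \ref{first BRep} with the coordinates $(y_{i,j})$ in place of $(x_{i,j})$, and the plan is to prove this by comparing the two coordinate systems and invoking Lemma \ref{first BRep}. As computed just above, $n_{u,w}\cdot x_2=n_{u,w}x_2n_{u,w}^{-1}=k_{u,w}^{-1}\big(uh(w)x_1h(w)^{-1}u^{-1}\big)k_{u,w}$, using that $t_{u,w}$ commutes with the diagonal $x_2$ and that $\delta x_2\delta^{-1}=x_1$ block-wise; hence $(y_{i,j})$ and $(x_{i,j})$ are the strictly-upper-triangular entries of two matrices differing by conjugation by the unitary $k_{u,w}$. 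It therefore suffices to produce a function $\ep\mapsto\ep'(\ep)>0$ with $\ep'(\ep)\to 0$ as $\ep\to 0$ and $B_{R,\ep'(\ep)}\subseteq B'_{R,\ep}$ for all large $R$: then monotonicity of $\mu_{G'/T_0}$ gives $\mu_{G'/T_0}(B'_{R,\ep})\ge\mu_{G'/T_0}(B_{R,\ep'(\ep)})$, and Lemma \ref{first BRep} (with $\delta=\ep'(\ep)\to 0$) forces $\liminf_R\mu_{G'/T_0}(B'_{R,\ep})/\mu_{G'/T_0}(B_R)\to 1$ as $\ep\to 0$.

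For the inclusion it is enough to exhibit a constant $c=c(p,g_1)>0$ such that, on the bulk region---the part of $B_R$, carrying almost all of its mass by Lemma \ref{first BRep}, where the $w_i$ are large---one has the two-sided bound $c^{-1}|x_{i,i+1}|\le|y_{i,i+1}|\le c\,|x_{i,i+1}|$ for every relevant index $i$; one may then take $\ep'(\ep):=\ep/c$, noting that membership in $B_{R,\ep'}$ does force the $w_i$ large, since by Equation \ref{x-coord} the entry $x_{i,i+1}$ is a fixed nonzero multiple of $1+w_i$, so $|x_{i,i+1}|\ge\ep'R$ makes $w_i$ large. To prove the two-sided bound, recall from the proof of Lemma \ref{first BRep} that on the bulk the super-diagonal entries $x_{i,i+1}=M_{i,i+1}$ of $M:=uh(w)x_1h(w)^{-1}u^{-1}$ dominate all its other entries: by Equation \ref{x-coord} the diagonal of $M$ grows only like $\sqrt{w_i}$ and its sub-diagonal part is bounded, while $|x_{i,i+1}|$ grows like $w_i$. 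Since $y_{i,i+1}=(k_{u,w}^{-1}Mk_{u,w})_{i,i+1}$ and conjugation by the unitary $k_{u,w}$ preserves the norm, what remains is to control $k_{u,w}$ so that it does not destroy this dominance; applying the Gram--Schmidt process to the columns of $uh(w)\delta$ in the regime $w_i\to\infty$---using that $\delta$ is a fixed block-diagonal element of $\SU$ (with $2$-by-$2$ or $1$-dimensional blocks) and that $u$ is block-upper-triangular unipotent---one checks that $k_{u,w}$ stays in a controlled region, on which conjugation distorts the super-diagonal of a super-diagonal-dominated matrix by at most a bounded factor. This is the adaptation of the computations of \cite{Shah00} (and, in the cases where the splitting field of $p$ is totally real or $M$ is a CM field, of \cite{ShaZhe18}) to our situation, carried out coordinate by coordinate as in the preceding subsections; combining the resulting comparison with Lemma \ref{first BRep} as above then completes the proof.

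The main obstacle will be the last step: controlling the Iwasawa $\SU$-component $k_{u,w}$ uniformly over the bulk of $B_R$, i.e.\ making the transition between the coordinate systems $(x_{i,j})$ and $(y_{i,j})$ quantitative. The delicate point is that the off-block-diagonal entries of $uh(w)\delta$ contributed by $u$ can themselves grow, like $\sqrt{w_j}$, so the Gram--Schmidt analysis is not purely block-wise and one must keep track of which column carries the dominant entry at each stage before concluding that $k_{u,w}$, and hence the distortion of the super-diagonal, remains bounded.
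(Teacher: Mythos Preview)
Your overall strategy---reduce to Lemma \ref{first BRep} by finding $\ep'(\ep)\to 0$ with $B_{R,\ep'}\subset B'_{R,\ep}$, via a comparison of $y_{i,i+1}$ and $x_{i,i+1}$ through the unitary $k_{u,w}$---is exactly the paper's approach. However, your execution of the comparison has a real gap, and your ``main obstacle'' paragraph shows you have not seen the structural fact that makes the argument work.

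You note that $u$ is block-upper-triangular and $\delta$ is block-diagonal, but then worry that the off-block-diagonal entries of $uh(w)\delta$ grow like $\sqrt{w_j}$, so that ``the Gram--Schmidt analysis is not purely block-wise.'' This is the wrong conclusion. Since $h(w)$ and $\delta$ are block-diagonal (for the $z_i$-block structure) and $u\in U_1$ is block-upper-triangular with identity diagonal blocks, the product $uh(w)\delta$ is block-\emph{upper}-triangular. Gram--Schmidt on the columns of a block-upper-triangular matrix always produces a block-\emph{diagonal} orthonormal part: the first block of columns spans exactly the first coordinate block, the next block of columns, after orthogonal projection, lands in the second coordinate block, and so on. All the large off-block-diagonal entries you worry about are absorbed into the $N^+$-factor $n_{u,w}$, not into $k_{u,w}$. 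Hence $k_{u,w}=\mathrm{diag}(k_1,\dots,k_{b_0})$ with $k_l=1$ for $l\notin\mathscr{I}_1$ and $k_l\in\SU(2)$ the Iwasawa $\SU$-part of the single $2\times 2$ matrix $h(w_l)\delta_l$ for $l\in\mathscr{I}_1$.

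This is what makes the comparison tractable: writing $k_l^{-1}=\begin{psmallmatrix}a_l & b_l\\ -\overline{b_l} & \overline{a_l}\end{psmallmatrix}$, one solves the $2\times 2$ Iwasawa problem explicitly and gets $|b_l|=|a_l|/(1+w_l)$, so $|a_l|\in(1/2,1)$ and $|b_l|$ is small. Conjugation by a block-diagonal unitary mixes only entries within and between adjacent $2\times 2$ blocks, so $y_{i',i'+1}$ is an explicit short linear combination of a few $x$-entries with coefficients $a_l,b_l$; the dominant contribution is $|a_l|^2|x_{i',i'+1}|$ (or $|a_l||x_{i',i'+1}|$ in the across-block cases), and the remaining terms are controlled using $|b_l|\sim 1/(1+w_l)$ together with Equation \ref{x-coord}. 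Your appeal to ``$k_{u,w}$ stays in a controlled region'' is not enough on its own: $k_{u,w}$ is always in the compact group $\SU$, but conjugation by a generic unitary (e.g.\ a permutation matrix) can annihilate a given super-diagonal entry entirely. What you need---and what the paper extracts---is precisely the block-diagonality of $k_{u,w}$ and the explicit $2\times 2$ computation of each block.
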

 
  \begin{proof}
  In virtue of Lemma \ref{first BRep}, it suffices to find $\ep'$ depending on $\ep$ but not depending on R such that 
   $B_{R,\ep'} \subset B'_{R,\ep}$.
 
      We need to relate $y$ to $x$. $(y_{i,j})= n_{u,w} \cdot x_2 = k^{-1}_{u,w} k_{u,w} n_{u,w} t_{u,w} \delta^{-1}\cdot x_1 = k^{-1}_{u,w} uh(w) \cdot x_1$. Recall $(x_{i,j})=uh(w) \cdot x_1$, so $y=k^{-1}_{u,w} x$ and $y_{i,i+1}= \sum_l (k^{-1}_{u,w})_{i,l}x_{l,i+1}$. 
      So we need to understand $k^{-1}_{u,w}$.
      
      First let $P_2:= N^{+} D$, then $k_{u,v} = u h(w) \delta$ mod $P_2(\C)$. Note $\delta$, $h(w)$ both normalize the group $U_1(\C)$. So $k_{u,w}$ takes the same form as $\delta$, i.e. $k_{u,w} = \text{diag}(k_1,...,k_{b_0})$.
      
      For $l \notin \mathscr{I}_1$, $k_l=1$ For $l\in \mathscr{I}_1$, let  
      $k^{-1}_l = \begin{psmallmatrix} 
      a_l & b_l\\ -\overline{b}_l & \overline{a_l}
      \end{psmallmatrix} $
      with $|a_l|^2 + |b_l|^2 =1$. 
      We know that $k^{-1}_l h(w_l) \delta_l $ 
      is an upper triangular matrix, in other words, 
      $$\frac{1}{\sqrt{-2i}}
      \begin{psmallmatrix} a_l & b_l\\ -\overline{b}_l & \overline{a_l}\end{psmallmatrix} 
      \begin{psmallmatrix} 1 &  \sqrt{w_l}\\ 0 & 1\end{psmallmatrix}
      \begin{psmallmatrix} 1 & 1\\ i & -i\end{psmallmatrix} = 
      \begin{psmallmatrix} * & *\\ 0& * \end{psmallmatrix}$$
      
      Solving these equations one get $\Re(b_l)-\sqrt{w_l}\Im(b_l)+\Im(a_l) =0$ and $\Im(b_l)+ \sqrt{w_l}\Re(b_l)+\Re(a_l) =0$,  implying $|b_l|=\frac{1}{1+w_l}|a_l|$. Note this implies $|a_l|\in (1/2,1)$. 
      
      Now we can compute $y_{i',i'+1}$,
      \[   
      y_{i',i'+1} = 
     \begin{cases}
       x_{i',i'+1} &\quad i'\notin \mathscr{I}'_1\cup \mathscr{I}'_1+1 \\
       a_{i}x_{i',i'+1} - \overline{b_i}x_{i'+1,i'+1}  &\quad i'\in \mathscr{I}'_1\\
        \overline{a_i}x_{i',i'+1} &\quad i'\in \mathscr{I}'_1 +1
     \end{cases}
     \]
     Note when $i' \in \mathscr{I}'_1$, $(k^{-1}_{u,w})_{i',i'}=a_i$ for some $i$ and when $i' \in \mathscr{I}'_1+1$, $(k^{-1}_{u,w})_{i',i'}=\overline{a_i}$ for some $i$.
     
     There is no issue with the first and the third case as $|a_i|>1/2$. We only need to show in the second case that there is $\ep'$,  $|x_{i',i'+1}|>\ep'R$ implies $|y_{i',i'+1}|>\ep R$ at least when $R$ is large enough.
     
     Recall the computation of $(h(w)x_1)_{i',i'+1}, (h(w)x_1)_{i',i'}$ made in Equation \ref{x-coord} and that 
     $x_{i',i'+1}=(uh(w)x_1)_{i',i'+1}=(h(w)x_1)_{i',i'+1}$, $x_{i'+1,i'+1}(uh(w)x_1)_{i'+1,i'+1}=(h(w)x_1)_{i'+1,i'+1}$ for $i'\in \mathscr{I}'_1$.
     If $(k,u,w)\in B'_{R,\ep'}$, then $|x_{i',i'+1}|= |y_i(1+w_i)|\geq \ep'R$. 
     
     \begin{equation*}
     \begin{aligned}
         |y_{i',i'+1}| &\geq |a_{i}x_{i',i'+1}| - |{b_i}x_{i'+1,i'+1}| \\
         &= 
         |a_i| |y_i(\frac{1}{w_i+1})| |w_i|- |b_i| |\frac{x_i}{\sqrt{w_i}} - y_i| \sqrt{w_i} \\
         &= w_i|a_i|\big( |y_i(1+\frac{1}{w_i})| - \frac{1}{\sqrt{w_i}(1+w_i)}
         |\frac{x_i}{\sqrt[]{w_i}} - y_i |
         \big)
     \end{aligned}
     \end{equation*}
     When $R$ is large enough(once $\ep'$ is fixed), we may assume
     $|1+w_i|\geq \max{\{\frac{|x_i+y_i|}{|y_i|},2 \}}$ so
     $\frac{1}{(1+w_i)}
         |\frac{x_i}{\sqrt[]{w_i}} - y_i | \leq |y_i|$ and 
         
         \begin{equation*}
     \begin{aligned}
         |y_{i',i'+1}| &\geq w_i |a_i| |y_i| (1-\frac{1}{\sqrt[]{2}})
         \geq R \ep' \frac{1}{2}|a_i|(1-\frac{1}{\sqrt[]{2}})
         \geq R \ep' \frac{1}{4}(1-\frac{1}{\sqrt[]{2}})
     \end{aligned}
     \end{equation*}
      Setting $\ep' := \frac{\ep}{\frac{1}{4}(1-\frac{1}{\sqrt[]{2}})}$  concludes the proof. The reader is reminded that $|y_i|$ are nonzero constants and $|a_i|\geq 1/2$.
   \end{proof}
   
   On the other hand, we need to compute the volume of the polytope.
   let $g=g_R = k_Ru_Rh(w_R)g_1 \in  B'_{R,\ep}$,
   
   \begin{equation*}
   \begin{aligned}
     &\text{Vol}(\Omega_{g,\ep'}) \\
     = & \Vol \{t\in \Lie (T_s(\R))\, \big\vert \, 
     ||g\exp{(t)}N_{M/\Q}(e_I) || \geq \ep' \,,\forall I \in \A_0     \}\\
        =& \text{Vol} \{t \in \text{Lie}( T_s(\R)) \, \big\vert\, 
        \exp{m_0\chi_I(t)}||k_{u,w}n_{u,w}t_{u,w}\delta g_1 N_{M/\Q}(e_I)|| 
        \geq \ep' \,,\forall I \in \A_0   \}   \\
        =& \text{Vol} \{t \in \text{Lie} (T_s(\R))  \, \big\vert \, 
        \exp{m_0\chi_I(t)} ||n_{u,w} N_{M/\Q}(e_I)|| 
        \geq \ep' \,,\forall I \in \A_0   \} 
   \end{aligned}
   \end{equation*}
    Then one can use Proposition 11.8 from \cite{ShaZhe18} to conclude:

\begin{prop}
There is a constant $c_4>0$, such that for $g_R \in B'_{R,\ep}$ for some fixed $\ep >0$, we have for any $\ep'>0$,
    \begin{equation*}
    \lim_{R\to \infty} \frac{\Vol(\Omega_{g_R,\ep'}) }{ c_4 (\log R)^{l_0+a_0-1} } =1
    \end{equation*}     
And for general $g_R \in B_R$, we have 
$\Vol(\Omega_{g_R,\ep}) = O ((\log R)^{l_0+a_0-1})$.
\end{prop}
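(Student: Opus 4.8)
The plan is to read off the asymptotics from the volume formula for such polytopes, i.e.\ Proposition 11.8 of \cite{ShaZhe18}, applied to the reformulation
\[
\Vol(\Omega_{g_R,\ep'}) \;=\; \Vol\Big\{\, t\in\Lie(T_s(\R)) \;:\; m_0\chi_I(t) \;\ge\; \log\ep' - \log\|n_{u,w}N_{M/\Q}(e_I)\|,\ \ \forall\, I\in\A_0\setminus\{1,\dots,N\}\,\Big\}
\]
established just above, where $n_{u,w}$ denotes the tuple of unipotent upper–triangular Iwasawa factors of $g_Rg_1^{-1}\delta$. First I would record that this convex polytope is always bounded, and of full dimension $\dim T_s=l_0+a_0-1$ once $R$ is large: for $\xi\in\A_0\setminus\{\emptyset,\{1,\dots,N\}\}$ the complementary tuple $\xi^{c}$ again lies in $\A_0$ and $\chi_\xi+\chi_{\xi^{c}}=\chi_{\{1,\dots,N\}}=0$, so each form $\chi_\xi$ is pinned into a bounded interval, and the $\chi_\xi$ span $\Lie(T_s(\R))^{*}$; full dimensionality follows because a unipotent matrix does not shrink the covolume of a coordinate sublattice, so $\|n_{u,w}N_{M/\Q}(e_I)\|$ is bounded below by a constant depending only on $M$ and the polytope contains a ball whose radius tends to infinity with $R$. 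For \emph{any} $g_R\in B_R$ the estimate $\|u_Rh(w_R)\cdot x_1\|\le R$ forces every matrix coefficient of $n_{u,w}$ to be polynomially bounded in $R$, hence $\log\|n_{u,w}N_{M/\Q}(e_I)\|=O(\log R)$ and the polytope sits in a box of side $O(\log R)$; this already yields the asserted bound $\Vol(\Omega_{g_R,\ep})=O\big((\log R)^{l_0+a_0-1}\big)$.

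For the sharp asymptotic on $B'_{R,\ep}$ the decisive input is the size of the superdiagonal entries of $n_{u,w}$. With $x_2$ the diagonalization of $x_1$, the identity $(y_{i,j})=n_{u,w}\cdot x_2$ gives, for adjacent indices, $y_{i,i+1}=(n_{u,w})_{i,i+1}\big((x_2)_{i+1,i+1}-(x_2)_{i,i}\big)$, and since $p(x)$ has distinct roots the differences $(x_2)_{i+1,i+1}-(x_2)_{i,i}$ are nonzero constants; thus $g_R\in B'_{R,\ep}$ forces $|(n_{u,w})_{i,i+1}|\asymp R$ at every archimedean place of $M$. Feeding this into the unipotent covolume estimates of the kind carried out in Section \ref{section2} (applied place by place, after checking that $\|n_{u,w}N_{M/\Q}(e_I)\|$ factors, up to the fixed constant $|\tfrac12\det(\tau_k w_j)|^{|I|}$, as a product over the archimedean places of $M$), one obtains $\log\|n_{u,w}N_{M/\Q}(e_I)\|=c(I)\log R+O(1)$, the error being uniform over $B'_{R,\ep}$ and $c(I)$ an explicit exponent depending only on $I$ and the combinatorial type of $T$. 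Consequently the rescaled polytopes $(\log R)^{-1}\,\Omega_{g_R,\ep'}$ converge, in Hausdorff distance with error $O(1/\log R)$, to the fixed polytope $\Omega_0:=\{\,t:m_0\chi_I(t)\ge -c(I)\,\}$; this is exactly the situation treated by Proposition 11.8 of \cite{ShaZhe18}. Setting $c_4:=\Vol(\Omega_0)$ gives $\Vol(\Omega_{g_R,\ep'})=c_4(\log R)^{l_0+a_0-1}+O\big((\log R)^{l_0+a_0-2}\big)$, uniformly for $g_R\in B'_{R,\ep}$ and for every (fixed, sufficiently small) $\ep'>0$, the $\ep'$–dependence being absorbed into the lower–order term.

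The main obstacle I expect is bookkeeping rather than anything conceptual. One must carefully follow the interaction of the norm map $N_{M/\Q}$ of Section \ref{section5} with the partial diagonalization $\delta$, so that $\|n_{u,w}N_{M/\Q}(e_I)\|$ genuinely factors over the archimedean places into quantities to which the unipotent covolume lemmas apply; one must verify that the exponents $c(I)$ produced at the different places all agree, so that the limiting polytope $\Omega_0$ is the predicted one, and that $c_4$ is independent of the chosen orbit $Y_p'(\R)$ and base point $x_0$; and, at the real places where $\alpha(p(x))$ does not split over $\R$, one must manage the $2\times 2$–block bookkeeping relating the coordinates $(x_{i,j})$, $(y_{i,j})$ and the entries of $n_{u,w}$ (as carried out in the preceding subsections). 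Modulo assembling these ingredients — all of which rely on the distinct–roots hypothesis and on Proposition 11.8 of \cite{ShaZhe18} — the two assertions follow.
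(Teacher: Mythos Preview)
Your proposal is correct and follows essentially the same approach as the paper: both reduce the volume asymptotic to Proposition~11.8 of \cite{ShaZhe18} via the reformulation $\Vol(\Omega_{g_R,\ep'})=\Vol\{t:m_0\chi_I(t)\ge\log\ep'-\log\|n_{u,w}N_{M/\Q}(e_I)\|\}$ and the observation that on $B'_{R,\ep}$ the superdiagonal entries of $n_{u,w}$ are $\asymp R$. The paper's proof is a one-line appeal to \cite{ShaZhe18} (noting only that the computations there go through for $n\in N^+(\C)$ with distinct diagonal entries of $x_2$), whereas you unpack the mechanism in more detail; your intermediate claim $\log\|n_{u,w}N_{M/\Q}(e_I)\|=c(I)\log R+O(1)$ is in fact justified precisely because the ambient constraint $\|n_{u,w}\cdot x_2\|\le R$ pins down the higher entries of $n_{u,w}$ as well, which is exactly the content of the cited computation.
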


\begin{proof}
Indeed, most computations in Section 11 of \cite{ShaZhe18} works through as long as  entries $\diag(\alpha_1,...,\alpha_n)$ are distinct and for $h \in N(\C)$ instead of just $N(\R)$ and with $N(\ep,R)$ there replaced by our $B'(R,\ep)$. Hence the conclusion of Proposition 11.8 in \cite{ShaZhe18} applies equally well here.
\end{proof}

\subsection{Conclude}
The rest of the proof is standard, (at least)  originating from \cite{DukRudSar93},\cite{EskMcM93}. 
   
\begin{proof}[Proof of Theorem \ref{thmcounting}]

By Proposition \ref{VolumeAsymp}, $\{B_R\}_R$ is a family of well-rounded sets. So to prove the counting asymptotics, it suffices to show there is a constant $c_p'>0$ such that if 
$$\psi_R(g\Gamma'):=\sum_{\Gamma'/\Gamma'\cap T_0(\R)} \chi_{B_R}(g\gamma\cdot x_0) ,$$
then 
$$ \frac{\psi_R}{c'_p R^{m_0N(N-1)/2} (\log{R})^{a_0+l_0-1}} \longrightarrow 1$$
weakly. 
So take $\phi \in C_c(G'(\R)/\Gamma')$ and $c_p'$ to be determined,

\begin{equation*}
    \begin{aligned}
     & (\psi_R,\phi) \\
     =& 
     \int_{G'(\R)/\Gamma'} \sum_{\Gamma'/\Gamma'\cap T_0(\R)} \chi_{B_R}(g\gamma\cdot x_0) \phi (g\Gamma') \hat{\mu_{T_0}(g\Gamma')}\\
    =&
    \int_{G'(\R)/\Gamma'\cap T_0(\R)} \chi_{B_R}(g\cdot x_0) \phi (g\Gamma'\cap T_0(\R)) \mu_{G'(\R)/\Gamma'\cap T_0(\R)}(g\Gamma'\cap T_0(\R)) \\
    = &
    \int_{B_R} \int_{T_0(\R)/T_0(\R)\cap \Gamma'}  \phi(g  tT_0(\R)\cap \Gamma') \mu_{T_0(\R)/T_0(\R)\cap \Gamma'}(tT_0(\R)\cap \Gamma') \mu_{G'/T_0}(gT_0(\R))\\
     = &
     \int_{B_R} \int_{G'(\R)/\Gamma'}  \phi(t \Gamma') g_*\mu_{T_0}(t\Gamma') \mu_{G'/T_0}(gT_0(\R))
    \end{aligned}
\end{equation*}
Now by propositions above,
$c_2R^{m_0N(N-1)/2}\sim \mu(B_R)$ 
and for any $\ep,\ep'>0$, $g\in B'(R,\ep)$, $c_4(\log{R})^{a_0+l_0-1} \sim \text{Vol}(\Omega'_{g_R,\ep'})$.
Also note that $g_R \in B'(R,\ep)$ satisfies the condition in Theorem \ref{theorem2}
for any choice of sequence $\{g_R\}$. If we define $c'_p=c_2c_4$ then for $\ep'$ small enough 
 (depending on the support of $\phi$), we have
\begin{equation*}
    \begin{aligned}
    &\frac{(\psi_R, \phi) }
    { c'_p R^{m_0N(N-1)/2} (\log{R})^{a_0+l_0-1}  }
     \\
     =& \frac{1}{\mu_{G'/T_0}(B'_{R,\ep})}
     \int_{B'_{R,\ep}} 
     \int_{\exp(\Omega_{g,\ep'})T'_a(\R)/\Gamma'} 
     \phi(t\Gamma') 
     \frac{g_*\mu_{T_0}(t)}{\Vol(\Omega_{g_R,\ep'})}
     \mu_{G'/T_0}(gT_0(\R))
     \\
     & +  \frac{1}{\mu_{G'/T_0}(B_R\setminus B'_{R,\ep})}
     \int_{B'_{R,\ep}} 
     \int_{\exp(\Omega_{g,\ep'})T'_a(\R)/\Gamma'} 
     \phi(t\Gamma') 
     \frac{g_*\mu_{T_0}(t)}{(\log{R})^{a_0+l_0-1}   }
     \mu_{G'/T_0}(gT_0(\R))
    \end{aligned}
\end{equation*}
Now the first term converges to $(\phi,1)$ but the $\lim_{\ep \to 0}\limsup_R$ of the absolute value of the second term converges to $0$. So we are done.

\end{proof}
  
  \subsection*{Acknowledgements}
  The author is grateful to Nimish Shah for suggesting the problem and for many useful suggestions.
We thank Cheng Zheng for a very careful explanation of their work and Uri Shapira for suggesting generalizing their work. We also thank Yongxiao Lin, Pengyu Yang, Lei Yang for helpful discussions.

\bibliographystyle{amsalpha}
\bibliography{ref}

\end{document}